\newtheorem{theorem}{Theorem}[section]
\newtheorem{lemma}[theorem]{Lemma}
\newtheorem{claim}[theorem]{Claim}
\newtheorem{definition}[theorem]{Definition}
\newtheorem{corollary}[theorem]{Corollary}
\newtheorem{proposition}[theorem]{Proposition}
\newtheorem{remark}[theorem]{Remark}
\newtheorem{condition}[theorem]{Condition}
\title{Minimum Degree Threshold for $H$-factors with High Discrepancy}
\author{Domagoj Brada{\v{c}}\and Micha Christoph \and Lior Gishboliner}
\date{February 2023}
\begin{document}

\maketitle

\begin{abstract}
    Given a graph $H$, a perfect $H$-factor in a graph $G$ is a collection of vertex-disjoint copies of $H$ spanning $G$. K{\"u}hn and Osthus showed that the minimum degree threshold for a graph $G$ to contain a perfect $H$-factor is either given by $1-1/\chi(H)$ or by $1-1/\chi_{cr}(H)$ depending on certain natural divisibility considerations. Given a graph $G$ of order $n$, a $2$-edge-coloring of $G$ and a subgraph $G'$ of $G$, we say that $G'$ has high discrepancy if it contains significantly (linear in $n$) more edges of one color than the other. 
    Balogh, Csaba, Pluh\'ar and Treglown asked for the minimum degree threshold guaranteeing that every 2-edge-coloring of $G$ has an $H$-factor with high discrepancy and they settled the case where $H$ is a clique. Here we completely resolve this question by determining the minimum degree threshold for high discrepancy of $H$-factors for every graph $H$. 
\end{abstract}
\section{Introduction}
Combinatorial discrepancy concerns itself with problems of the following form: Given a ground set $S$ and a family of subsets $\mathcal{H}$ of $S$, does there exist a $2$-coloring (or $k$-coloring) of $S$ such that each set in $\mathcal{H}$ contains roughly the same number of elements from each of the colors? The theory studies conditions guaranteeing that such a coloring does or does not exist. We refer the reader to \cite[Chapter 4]{Matousek} for an overview. In recent years there has been considerable interest in discrepancy-type problems on graphs. Here, $S$ is the set of edges of a graph $G$, and $\mathcal{H}$ is a family of subgraphs of $G$ (e.g. Hamilton cycles, perfect matching, clique-factors). Thus, the goal is to find conditions on $G$ which guarantee that in every 2-coloring of the edges of $G$, there exists a subgraph of a certain type whose coloring is unbalanced, namely one color appears significantly more than the other. 
One of the first investigations of this type is by Erd{\H{o}}s, F{\"u}redi, Loebl and S{\'o}s~\cite{erdHos1995discrepancy}, who studied the discrepancy of bounded-degree spanning trees in $2$-colorings of the complete graph. In recent years the subject was revived and there are many new works: subgraph discrepancy problems have been studied for Hamilton cycles \cite{Balogh2020HamiltonCycles,GKM_hamilton,GKM_trees,FHLT}, spanning trees \cite{GKM_trees}, clique-factors \cite{balogh2021discrepancy} and powers of Hamilton cycles \cite{bradavc2021powers}, among others. 
In this paper we settle the problem of minimum degree thresholds for the discrepancy of $H$-factors, resolving a question of Balogh, Csaba, Pluh\'{a}r and Treglown~\cite{balogh2021discrepancy}. Let us give the precise definitions.
\begin{definition}
For a graph $G$, a $2$-edge-coloring (or just $2$-coloring) of $G$ is a function $f:E(G)\rightarrow\{-1,1\}$. For a $2$-coloring $f$ and a subgraph $G'$ of $G$, the {\em discrepancy} of $G'$ is defined as 
$$ f(G')=\sum_{e\in E(G')}f(e).$$
\end{definition}

Given a graph $H$, an {\em $H$-factor} is a graph consisting of vertex-disjoint copies of $H$. A {\em perfect $H$-factor} of a graph $G$ is an $H$-factor which is a spanning (i.e. covering all vertices) subgraph of $G$. Clearly, this is only possible if $|G|$, the number of vertices in $G$, is divisible by $|H|$. 
Our main result determines the minimum degree threshold guaranteeing that in every $2$-edge-coloring of $G$, there is a perfect $H$-factor with discrepancy linear in $n$. Before stating this result, we give some background. 

The study of perfect $H$-factors of graphs has a long and rich history. Tutte's famous theorem gives a necessary and sufficient condition for a graph to have a perfect $K_2$-factor, namely a perfect matching. 
On the computational side, Kirkpatrick and Hell~\cite{kirkpatrick1983complexity} showed that for a fixed graph $H$, finding a perfect $H$-factor in an input graph $G$ is NP-complete whenever $H$ has a connected component of size at least three. 
It is therefore desirable to find sufficient conditions ensuring that a graph $G$ has a perfect $H$-factor. 
One such direction of research is the study of minimum degree conditions. 
The fundamental Hajnal-Szemer{\'e}di\cite{hajnal1970proof} theorem states that for every $r\geq 2$, every graph $G$ with order $n$ divisible by $r$ and with minimum $\delta(G)\geq (1-1/r)n$ has a perfect $K_r$-factor. This bound is tight, as can be seen by taking a balanced complete $r$-partite graph and moving one vertex from one part to another. Indeed, the resulting graph has minimum degree $(1-1/r)n-1$ but no perfect $K_r$-factor. 
Alon and Yuster \cite{AlonYuster} proved an asymptotic generalization of the Hajnal-Szemer\'edi theorem to all graphs, by showing that for every graph $H$, if $G$ is an $n$-vertex graph with $n$ divisible by $|H|$ and with $\delta(G) \geq (1 - \frac{1}{\chi(H)} + \varepsilon)n$, then $G$ has a perfect $H$ factor (where $\varepsilon > 0$ is arbitrary and $n$ is large enough in terms of $\varepsilon$). Later, using their celebrated blow-up lemma, Koml\'os, S\'ark\"ozy and Szemer\'edi \cite{KSS_AlonYuster} improved the error term $\varepsilon n$ to a constant depending on $H$. It turns out, however, that $1 - \frac{1}{\chi(H)}$ is not always the correct 
threshold for forcing a perfect $H$-factor. Koml\'os \cite{KomlosCriticalChi} (see also \cite{AlonFischer,ShokoufandehZhao}) introduced the so-called {\em critical chromatic number} $\chi_{cr}(H)$ and showed that having minimum degree $(1 - \frac{1}{\chi_{cr}(H)} + \varepsilon)n$ already suffices for guaranteeing an $H$-factor that covers {\em almost all} vertices of $G$ (we give the precise definition of $\chi_{cr}$ shortly).
Finally, the ultimate result in this direction was obtained by K{\"u}hn and Osthus \cite{existence}, who determined the minimum degree threshold for the existence of a perfect $H$-factor for every graph $H$, showing that this threshold is either $1 - \frac{1}{\chi(H)}$ or $1 - \frac{1}{\chi_{cr}(H)}$, depending on certain divisibility conditions. To state this result, we need to introduce the following definitions.

Given a graph $H$, let $r = \chi(H)$ be the chromatic number of $H$. Let $\mathcal{C}$ be the class of all $r$-vertex-colorings of $H$. 
For $c \in \mathcal{C}$, let $\sigma(c)$ denote the size of the smallest color class in $c$. Let $\sigma(H) = \min_{c \in \mathcal{C}}\sigma(c)$.
The following is the definition of the critical chromatic number:
$$
\chi_{cr}(H) := \frac{(\chi(H)-1)|H|}{|H|-\sigma(H)}.
$$
For each $c\in \mathcal{C}$ with color classes of size $s_1\leq s_2\leq ... \leq s_{r}$, let 
$$
D(c):=\{s_{i+1}-s_i: 1\leq i\leq r-1\}.
$$
Let $D(\mathcal{C})$ be the union of $D(c)$ over all $c\in \mathcal{C}$ and let $hcf_{\chi}(H)$ be the greatest common divisor of the elements in $D(\mathcal{C})$. Let $hcf_c(H)$ denote the largest common divisor of the orders of the connected components of $H$. 
Define a parameter $hcf(H)$ as follows:
If $r\geq 3$, then set $hcf(H) = 1$ if $hcf_\chi(H) = 1$, and if $r=2$, then set $hcf(H)=1$ if $hcf_\chi(H)\leq 2$ and $hcf_c(H) =1$. In all other cases, $hcf(H) \neq 1$. Now define
$$
\chi^*(H) =
\begin{cases}
    \chi(H) & \text{if $hcf(H)\neq 1$,} \\
    \chi_{cr}(H) & \text{otherwise.}
  \end{cases}
$$
Note that $r-1 \leq \chi_{cr}(H) \leq \chi^*(H) \leq r$ for every $H$ with $\chi(H)=r$. Also, if $H$ has only balanced $r$-colorings (i.e. if in every $r$-coloring of $H$, all color-classes have the same size), then $hcf(H) \neq 1,$ hence $\chi^*(H) = \chi(H) = r$.

The aforementioned result of K{\"u}hn and Osthus \cite{existence} states that $1 - 1/\chi^*(H)$ is the minimum degree threshold for the existence of a perfect $H$-factor. More precisely, they prove the following:
\begin{theorem}[\cite{existence}]\label{existence}
For every graph $H$ there exists a constant $C$ such that every graph $G$ whose order $n$ is divisible by $|H|$ with 
$$
\delta(G)\geq (1-1/\chi^*(H))n+C
$$
contains a perfect $H$-factor.
Moreover, for every $m_0$ there exists a graph $J$ of order $m\geq m_0$ such that $m$ is divisible by $H$ with 
$$
\delta(J) = (1-1/\chi^*(H))m-1
$$
such that $J$ does not contain a perfect $H$-factor.
\end{theorem}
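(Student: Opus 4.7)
The plan splits naturally into the two directions of the statement. For the existence direction, I would follow the Komlós--Sárközy--Szemerédi regularity-plus-blow-up framework combined with the absorbing method. Apply Szemerédi's regularity lemma to $G$ with parameters $\varepsilon \ll d \ll 1/|H|$ and pass to the reduced graph $R$, which inherits minimum degree at least $(1 - 1/\chi^*(H))|R| - O(d \cdot |R|)$. A Komlós-type almost-tiling result then produces an $H$-tiling of $R$ covering all but $o(|R|)$ clusters, and the blow-up lemma converts each tile of $R$ into vertex-disjoint copies of $H$ drawn from the corresponding $\varepsilon$-regular blocks in $G$. This yields an $H$-packing in $G$ missing at most $o(n)$ vertices.

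To convert this almost-perfect packing into an exact perfect $H$-factor, I would prepare an absorbing structure in advance. Before running the regularity argument, sample a random small set $A \subseteq V(G)$; using $\delta(G) \geq (1 - 1/\chi^*(H))n + C$ one shows that every vertex $v$ has many ``absorbing gadgets'' inside $A$, where a gadget for $v$ is a vertex set admitting an $H$-factor both with and without $v$ included. A Rödl--Ruciński--Szemerédi style probabilistic argument then produces $A_0 \subseteq A$ that absorbs any small leftover set $T$ with $|A_0 \cup T|$ divisible by $|H|$. The role of $\chi^*(H)$, and the main technical obstacle, lies in constructing these gadgets. When $hcf(H) = 1$ the differences of color-class sizes across the various $r$-colorings of $H$ have $\gcd$ equal to $1$ (or, for $r = 2$, the component sizes do), so ``shifted'' copies of $H$ can correct any residue class of the leftover, which is exactly what permits pushing the bound down to $1 - 1/\chi_{cr}(H)$. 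When $hcf(H) \neq 1$, all such differences share a common factor, flexibility disappears, and any absorber must be essentially balanced, forcing the bound up to $1 - 1/\chi(H)$. Translating these arithmetic facts into explicit absorbers compatible with the blow-up step is what I expect to be the delicate core of the proof.

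For the extremal construction, I would split on $hcf(H)$. If $hcf(H) = 1$, so $\chi^*(H) = \chi_{cr}(H)$, take a complete $\chi(H)$-partite graph $J$ whose smallest part has size roughly $\sigma(H) \cdot m/|H| - 1$ and whose remaining parts are as balanced as possible; then $\delta(J) = (1 - 1/\chi_{cr}(H))m - 1$, and in any would-be $H$-factor each copy of $H$ must contribute at least $\sigma(H)$ vertices to the class containing the smallest part, which overshoots that part's size, a contradiction. If $hcf(H) \neq 1$, start with a balanced complete $\chi(H)$-partite graph on $m$ vertices and move a single vertex from one part to another; the resulting part sizes have pairwise differences incompatible with $hcf_\chi(H) > 1$ (or with $hcf_c(H) > 1$ in the bipartite case), so no $H$-factor exists, while $\delta(J) = (1 - 1/\chi(H))m - 1$. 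Matching these constructions against the positive result of the previous paragraph is what pins down $1 - 1/\chi^*(H)$ as the true threshold.
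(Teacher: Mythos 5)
The paper does not prove this statement; it is imported verbatim from K\"uhn and Osthus \cite{existence}, so there is no in-paper proof to compare your sketch against. What follows is therefore a comment on your sketch in isolation.

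Your high-level plan --- regularity lemma, almost-perfect tiling via Koml\'os-type results, blow-up, and an absorbing structure whose gadgets encode the divisibility flexibility coming from $hcf_\chi(H)$ (resp.\ $hcf_c(H)$ when $r=2$) --- is a reasonable modern framework for this kind of result and is broadly in the spirit of what one finds in contemporary $H$-factor papers. It is, however, not the route taken by K\"uhn and Osthus themselves; their 2009 argument predates the clean absorbing-method template you describe and instead proceeds by a more hands-on sequence of local rebalancing/adjustment steps after the regularity decomposition. So your sketch, if it went through, would be a genuinely different (and arguably more streamlined) proof rather than a reconstruction of theirs. The ``delicate core'' you flag --- converting the $\gcd$ condition into concrete gadgets compatible with the blow-up step --- is exactly where the work lies and is left at the level of a hope in your write-up.

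On the extremal side, your first construction (complete $\chi(H)$-partite graph with one part of size just below $\sigma(H)m/|H|$) is correct in spirit: since every $H$-copy in a complete $r$-partite host must send at least $\sigma(H)$ vertices into \emph{each} part, the smallest part forces fewer than $m/|H|$ copies, so no perfect tiling exists. Your second construction (balanced complete $r$-partite plus one moved vertex) is the standard parity obstruction and is fine for $r\ge 3$. Two points are glossed over, though. First, for $r=2$ the definition of $hcf(H)\ne 1$ in this paper allows two distinct causes ($hcf_\chi(H)>2$, or $hcf_c(H)>1$), and the latter requires a construction that exploits component sizes rather than color-class sizes; you name $hcf_c$ but do not give a construction for it. Second, your moved-vertex construction creates part-size differences of both $1$ and $2$; one has to check that in all the residual subcases (in particular $hcf_\chi(H)=2$ combined with $hcf_c(H)=1$, which the paper classifies as $hcf(H)=1$) your obstruction does not accidentally apply where the theorem promises a factor exists. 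These are fixable but real gaps in the blind sketch.
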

We now move on to discrepancy of $H$-factors. 
For a graph $H$, the {\em $H$-factor discrepancy threshold} for $H$, denoted by $\delta^*(H)$, is defined as the infimum $\delta$ which satisfies the following: for every $\eta > 0$ there exists $\gamma > 0$ and $n_0$, such that for every graph $G$ of order $n \geq n_0$ and $\delta(G) \geq (\delta + \eta)n$, with $|H|$ dividing $n$ and for every $2$-edge-coloring $f$ of $G$ there exists a perfect $H$-factor $F$ in $G$ with $|f(F)| \geq \gamma n$. In other words, $\delta^*(H)$ is the (normalized) minimum degree threshold guaranteeing an $H$-factor with linear discrepancy. Trivially, $\delta^*(H)\geq1-1/\chi^*(H)$, because $1-1/\chi^*(H)$ is the minimum degree threshold for the existence of a perfect $H$-factor.

The study of minimum degree discrepancy thresholds for $H$-factors was initiated by Balogh, Csaba, Pluh\'ar and Treglown \cite{balogh2021discrepancy}, who determined $\delta^*(K_r)$.

\begin{theorem}[\cite{balogh2021discrepancy}]\label{kr_discrepancy}
$
\delta^*(K_r) = \max\{3/4,1-1/(r+1)\}
$
\end{theorem}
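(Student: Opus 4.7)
I would establish $\delta^*(K_r)\geq 1-1/(r+1)$ by taking $G$ to be the balanced complete $(r+1)$-partite graph on $n$ vertices with parts $V_1,\dots,V_{r+1}$, each of size $n/(r+1)$; then $\delta(G)=(1-1/(r+1))n$. A counting argument on vertex usage shows that in any perfect $K_r$-factor the number of $K_r$-copies that avoid $V_i$ equals $n/(r(r+1))$ for every $i$, and consequently the number of factor-edges between any two parts $V_i,V_j$ is the fixed constant $(r-1)n/(r(r+1))$. Hence if we color all edges within each pair $\{V_i,V_j\}$ monochromatically by $c_{ij}\in\{\pm1\}$ and arrange $\sum_{i<j}c_{ij}=0$, every $K_r$-factor has discrepancy $0$; small parity corrections handle the few cases where the sum cannot be made exactly zero. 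To establish $\delta^*(K_r)\geq 3/4$ (binding only for $r\in\{2,3\}$), I would partition $V$ into $A,B,C,D$ of size $n/4$ each and let $G$ be the union of the four cliques on these parts together with a ``$4$-cycle'' of complete bipartite graphs between $(A,B),(B,C),(C,D),(A,D)$, giving $\delta(G)=3n/4-1$. A symmetric monochromatic assignment on the four bipartite sides, combined with $+1$ inside the cliques, again forces every $K_r$-factor for $r\in\{2,3\}$ to have $o(n)$ discrepancy by a rigidity argument on which edges a factor must use.

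\textbf{Upper bound via flexible absorption.} Assume $\delta(G)\geq(\max\{3/4,1-1/(r+1)\}+\eta)n$ and let $f$ be a $2$-coloring. The plan is the absorption method, enhanced with discrepancy flexibility, in four steps. (1) \emph{Supersaturation:} the minimum-degree bound together with a Hajnal--Szemer\'edi-type packing argument yields $\Omega(n^{r+1})$ copies of $K_{r+1}$; in each copy the $r+1$ possible $K_r$-subgraphs have pairwise different discrepancies, providing a dense supply of ``local switches'' between $K_r$-packings of the same $(r+1)$-set. (2) \emph{Flexible absorber:} by a random selection argument, build an absorbing set $M\subseteq V$ of size $o(n)$ such that for every small $Q\subseteq V\setminus M$ with $r\mid|M\cup Q|$ the graph $G[M\cup Q]$ has a perfect $K_r$-factor, and moreover among such completions there exist two whose discrepancies differ by $\Omega(|M|)$; the second property is obtained by embedding many of the local switches from (1) into $M$. (3) \emph{Imbalanced almost-cover:} in $G\setminus M$ extract a $K_r$-packing covering all but $o(n)$ vertices and whose discrepancy has a chosen sign with magnitude $\Omega(n)$; this follows by supersaturation (averaging to pick predominantly same-sign $K_r$-copies) together with an almost-spanning packing produced via the Koml\'os--S\'ark\"ozy--Szemer\'edi blow-up lemma. (4) \emph{Combine:} absorb the leftover into $M$ using whichever completion reinforces the sign of the almost-cover, yielding a perfect $K_r$-factor $F$ with $|f(F)|=\Omega(n)$.

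\textbf{Main obstacle.} The technical heart is Step (2): constructing the discrepancy-flexible absorber at the claimed minimum-degree threshold. A standard absorber exists under the much weaker bound $\delta\geq(1-1/r)n+o(n)$ from Theorem~\ref{existence}, but discrepancy flexibility requires a positive density of local switches at \emph{every} vertex of $M$. For $r\geq 4$ the relevant switch is ``rotate which vertex of a $K_{r+1}$ is left out'', and a $K_{r+1}$ through every vertex is exactly what the bound $\delta\geq(1-1/(r+1))n$ provides. For $r\in\{2,3\}$ the necessary switch is a more subtle four-vertex gadget whose uniform existence forces the $3/4$ threshold, matching the lower-bound construction above. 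Verifying that the two extremal constructions genuinely preclude switches at the threshold --- so that the discrepancy of all factors is frozen --- is what pins the answer down to $\max\{3/4,1-1/(r+1)\}$.
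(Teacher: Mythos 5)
The statement you are trying to prove is a result of Balogh, Csaba, Pluh\'ar and Treglown that the present paper merely cites; the paper itself does not reprove it directly but recovers it as a special case of Theorems~\ref{bipartite},~\ref{tripartite1} and~\ref{rpartite} (since $K_r$ is regular and satisfies the $(r+1)$-wise but not the $(r+2)$-wise $C_4$-condition). Both that paper and this one use Szemer\'edi regularity together with the template machinery, not an absorption argument, so your overall plan is a genuine departure. Unfortunately it contains at least two concrete gaps.

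First, the lower-bound construction breaks on a parity obstruction that is not small. Your counting is correct that in a balanced complete $(r+1)$-partite graph every perfect $K_r$-factor places exactly $(r-1)n/(r(r+1))$ edges between each pair of parts. But then the discrepancy of \emph{every} factor is $(r-1)n/(r(r+1)) \cdot \sum_{i<j} c_{ij}$, and since $\sum_{i<j} c_{ij}$ is a sum of $\binom{r+1}{2}$ terms in $\{\pm 1\}$, it has the same parity as $\binom{r+1}{2}$. When $r \equiv 1,2 \pmod 4$ (e.g.\ $r=5,6,9,10,\dots$) this binomial coefficient is odd, so $\sum c_{ij}$ cannot be $0$; the best achievable is $|\sum c_{ij}|=1$, which gives every factor discrepancy exactly $(r-1)n/(r(r+1)) = \Omega(n)$. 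This is the opposite of what you need. The ``small parity correction'' you wave at is in fact the substantive content of Lemma~\ref{construction2}: one replaces a single cluster $V_v$ by two sub-clusters $V_+, V_-$ with an asymmetric coloring and carefully tuned sizes, so that the pairs $(V_v,V_u)$ are no longer monochromatic. That is a materially different construction, not a correction to yours. Your $3/4$ construction for $r\in\{2,3\}$ has a related problem: by placing cliques inside the parts you allow triangles to live entirely inside a part (discrepancy $3$) or across two adjacent parts (discrepancy $1+2c_{AB}$), and different factors can mix these types in different proportions, so the discrepancy is not frozen. The standard construction is simply a balanced blowup of the $(K_4,+)$-star (Lemma~\ref{H_regular}), where every factor element is a rainbow copy across parts and the count is rigid.

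Second, your supersaturation step (1) asserts that in each $K_{r+1}$ the $r+1$ copies of $K_r$ have pairwise different discrepancies. This is false: the discrepancy of $K_{r+1}-v$ is $c(K_{r+1})-c(v,K_{r+1})$, so two subcliques have different discrepancies iff the two removed vertices have different color-degrees, i.e.\ iff $K_{r+1}^+$ is irregular. In a monochromatic $K_{r+1}$ all $r+1$ subcliques have identical discrepancy. This is exactly the distinction Corollary~\ref{non_reg_clique} and Lemma~\ref{sharing_much_template} make precise: the useful local switch is not ``any $K_{r+1}$'' but a $K_{r+1}$ whose positive graph is irregular, or equivalently two $K_r$'s sharing $r-1$ vertices with different discrepancies, and one must argue that such a switch exists somewhere in the reduced graph or else the whole coloring is so balanced that a separate argument (Lemma~\ref{all k positive}) produces an unbalanced factor. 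Because this dichotomy drives the whole proof, the absorption step (2) as stated has no foundation: you cannot install ``a positive density of local switches at every vertex of $M$'' without first ruling out colorings in which no switches exist at all, and ruling those out is the heart of the matter.
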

Balogh et al.~\cite{balogh2021discrepancy} further asked for the discrepancy threshold of other graphs $H$. 
Our main result completely settles this problem, determining the value of $\delta^*(H)$ for every graph $H$. 
We split the statement into three cases: $\chi(H)=2$, $\chi(H)=3$ and $\chi(H)\geq 4$. First, for bipartite $H$, we have the following:
\begin{theorem}\label{bipartite}
For every graph $H$ with $\chi(H)=2$, it holds that
$$
\delta^*(H) =
\begin{cases}
    \frac{3}{4} & \text{if $H$ is regular,} \\
    1/2 & \text{if $H$ is non-regular and there exists $\rho>0$ such that for every connected } \\ & \text{component $U$ of $H$ it holds that $e_H(U)=\rho |U|$,} \\
    1-1/\chi^*(H) & \text{otherwise.}
  \end{cases}
$$
\end{theorem}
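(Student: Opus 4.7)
The statement prescribes a value of $\delta^*(H)$ in three cases, and for each case one needs matching lower and upper bounds.

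For the \emph{lower bounds}, in each case I would exhibit a graph $G$ with $|H| \mid |G|$ and $\delta(G) = (\delta^*(H) - o(1))n$ together with a $2$-coloring of $E(G)$ under which every perfect $H$-factor of $G$ has discrepancy $o(n)$. In the regular case the extremal construction is built on an equipartition $V_1 \cup V_2 \cup V_3 \cup V_4$ of $V(G)$ into four blocks of size $n/4$ by deleting some of the bipartite graphs $K_{V_i, V_j}$, yielding $\delta(G)$ just below $3n/4$; the coloring is a product of vertex labels $\ell(v) \in \{\pm 1\}$ so that the edge-sum of any embedded copy factors into a product of the label-sums on the two bipartition classes of the copy. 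The $k$-regularity of $H$, combined with the fact that regular bipartite components are balanced, forces this product to vanish on every copy, so every $H$-factor has zero discrepancy. In the non-regular balanced-density case I would take $G$ essentially $K_{n/2,n/2}$ and use a similar label-product coloring; the uniform density hypothesis $e_H(U) = \rho |U|$ implies that every embedded component contributes $\rho |U|$ times a combinatorial sign, and a careful choice of labels forces total cancellation. In the otherwise case, the Kuhn--Osthus extremal graph from Theorem~\ref{existence} has $\delta = (1 - 1/\chi^*(H))n - 1$ and contains no perfect $H$-factor at all, so the lower bound is immediate.

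For the \emph{upper bounds}, fix $\eta > 0$, a graph $G$ with $\delta(G) \geq (\delta^*(H) + \eta) n$, and any $2$-coloring $f$ of $E(G)$. By Theorem~\ref{existence}, $G$ admits a perfect $H$-factor $F_0$. If $|f(F_0)| \geq \gamma n$ we are done, so assume $|f(F_0)| < \gamma n$ and iteratively apply a \emph{swap lemma}: repeatedly locate a constant-sized family of vertex-disjoint copies of $H$ in the current factor and replace them with copies of $H$ on the same vertex set so as to strictly increase the factor discrepancy by $\Omega(1)$. Each swap uses $O_H(1)$ vertices, is vouched for by the minimum-degree hypothesis, and after $\Omega(n)$ vertex-disjoint swaps the factor's discrepancy exceeds $\gamma n$. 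The availability of $\Omega(n)$ disjoint swaps follows from a counting/averaging argument showing that a positive fraction of disjoint tuples of factor members admit a local exchange with constant gain.

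The \textbf{main obstacle} is establishing the swap lemma, particularly in the regular case, which explains the larger $3/4$ threshold. Since any two copies of a regular $H$ on the same vertex set contain the same number of edges, a swap cannot change the edge count: the entire discrepancy gain must come from redistributing which specific edges appear, and hence the argument is genuinely structural. Precisely, below $3/4$ our extremal construction traps the $H$-factor in a rigid label-symmetric family where every copy has the same edge-color profile, whereas above $3/4 + \eta$ the additional vertex-connectivity introduces sufficient flexibility to locate swap pairs with unequal color profiles. In the balanced-density case, swaps may also exchange whole components for each other, which adds edge-count freedom and lowers the threshold to $1/2$; in the otherwise case, components of distinct densities yield even more flexibility and reduce the problem to the existence threshold $1 - 1/\chi^*(H)$. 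A unified proof requires a careful case analysis of the component and degree structure of $H$, together with the structural swap lemma tailored to each regime.
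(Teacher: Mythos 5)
Your upper-bound strategy is genuinely different from the paper's: you propose starting from an arbitrary perfect $H$-factor and iterating a local ``swap lemma,'' whereas the paper applies the two-color regularity lemma, passes to the reduced graph $R$ with inherited coloring $f_R$, and shows that either $R$ is monochromatic (in which case every $H$-factor already has linear discrepancy) or $R$ contains a constant-size \emph{template} for $H$ (Definition~\ref{def:template}), which via the blow-up lemma and Theorem~\ref{existence} yields two perfect $H$-factors of $G$ whose discrepancies differ by $\Omega(n)$. These two routes are not equivalent presentations of the same idea: the template dichotomy is a global structural statement about $R$, and the work of Section~\ref{sec:templates} (in the bipartite case, Lemmas~\ref{bipartite template} and~\ref{sharing_much_template}) is precisely what establishes when a swap with nonzero gain exists. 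Your proposal leaves the crux of the matter --- ``a counting/averaging argument showing that a positive fraction of disjoint tuples of factor members admit a local exchange with constant gain'' --- entirely unjustified. There is no indication of how you would rule out colorings in which every local exchange has zero gain, which is exactly the situation in the extremal constructions just below the threshold; nor how you would certify that above the threshold a gainful exchange always exists, which is where the distinction between the three cases (regular, balanced density, otherwise) must enter. As written, the swap lemma is a restatement of the theorem rather than a proof of it.

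Your lower-bound reasoning also has a concrete error. In the regular case you assert that ``the edge-sum of any embedded copy factors into a product of the label-sums on the two bipartition classes of the copy.'' This identity $\sum_{uv\in H_0}\ell(u)\ell(v)=\bigl(\sum_{a\in A}\ell(a)\bigr)\bigl(\sum_{b\in B}\ell(b)\bigr)$ holds only when $H_0$ is complete bipartite. Already for $H=C_6$ it fails: with bipartition $\{1,3,5\},\{2,4,6\}$ and $\ell(1)=-1$, all other $\ell(v)=1$, the edge-sum is $2$ while the product of label-sums is $3$. Consequently, the per-copy cancellation you invoke does not hold. The correct argument (Lemma~\ref{H_regular}) is a global count: in the $(K_4,+)$-star blow-up with special cluster $V_{v_0}$ of size $m/4$, a perfect $H$-factor $F$ is $d$-regular, so $e(F)=dm/2$ and the number of edges of one color is exactly $d|V_{v_0}|=dm/4$, forcing $f(F)=0$ --- but individual copies $H_0\in F$ may well have $f(H_0)\neq 0$. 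Similarly, for the balanced-density case the paper uses two disjoint monochromatic cliques of size $m/2$ (Lemma~\ref{claim:bipartite_rho}); your $K_{n/2,n/2}$ construction is not clearly worked out and does not obviously yield the required zero-discrepancy property.
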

To state our results for $r$-chromatic graphs, $r \geq 3$, we first need to introduce some definitions.
Given a graph $G$, a {\em blowup} of $G$ is any graph obtained from $G$ by replacing each vertex $x\in V(G)$ with a vertex-set $V_x$ and replacing edges $xy\in E(G)$ with complete bipartite graphs $(V_x,V_y)$. The {\em $b$-blowup} of $G$ is the blowup where $|V_x|=b$ for every $x \in V(G)$. Given a $2$-edge-coloring $c$ of $G$, a {\em blowup} of $(G,c)$ is a blowup of $G$ whose edges are colored according to $c$, namely, where for $xy\in E(G)$, all the edges in the complete bipartite graph $(V_x,V_y)$ have color $c(xy)$. We denote the coloring of this blowup also by $c$. 
A central strategy of our argument is to find so-called {\em templates}, defined as follows:
\begin{definition}[Template] \label{def:template}
Given graphs $F,H$ and a $2$-edge-coloring $c$ of $F$, we say that $(F,c)$ is a {\em template} for $H$ if there exists a blowup $B$ of $(F,c)$ and two perfect $H$-factors of $B$ with different discrepancies.
\end{definition}
The {\em size} of the template $(F,c)$ is simply $|F|$.
Next, we introduce the following important parameters of a graph $H$.
\begin{definition}[$\mathcal{K}(H)$, $\delta_0(H)$]\label{delta_0_def}
Let $H$ be an $r$-chromatic graph. The set of {\em non-template colorings} of $H$, denoted $\mathcal{K}(H)$, is the set of all $2$-edge-colorings $c$ of $K_r$ such that $(K_r,c)$ is not a template for $H$. 

Let $\delta_0(H)$ be the maximum over all $\delta$ such that there exists a coloring $c\in\mathcal{K}(H)$ and a blowup $B$ of $(K_r,c)$, such that $\delta(B)=\delta \cdot |B|$ and $B$ has a perfect $H$-factor $F$ with $c(F) = 0$ (by the definition of $\mathcal{K}(H)$, this implies that $c(F) = 0$ for every perfect $H$-factor $F$ in $B$). If there exists no such $c \in \mathcal{K}(H)$ then let $\delta_0(H) = 0$.
\end{definition}
Note that $\delta_0(H) \leq 1 - 1/r$ because every $r$-partite graph $B$ has minimum degree at most $(1-1/r)|B|$.
In Section~\ref{sec:Hstar} we show that the maximum in Definition~\ref{delta_0_def} is attained and also provide an algorithm which computes $\delta_0(H).$
Note that if $r=2$ then $\delta_0(H) = 0$, because every blowup of $K_2$ is monochromatic so all its perfect $H$-factors have non-zero discrepancy.

Observe that $\delta^*(H)\geq \delta_0(H)$. Indeed, by the definition of $\delta_0(H)$, there exist $c\in \mathcal{K}(H)$ and a blowup $B$ of $(K_r,c)$ with $\delta(B)=\delta_0(H)|B|$ such that every perfect $H$-factor of $B$ (and there exists one) has discrepancy zero. Then, for every $b\in\mathbb{N}$, the $b$-blowup $B'$ of $(B,c)$ has a perfect $H$-factor with discrepancy zero. Note that $B'$ is also a blowup of $(K_r,c)$ and since $c\in \mathcal{K}(H)$, every perfect $H$-factor of $B'$ must have zero discrepancy. As we can choose $b$ arbitrarily large, we get that $\delta^*(H)\geq \delta(B)/|B| = \delta_0(H)$.

Next we state our result for $3$-chromatic graphs. Here the following graphs, called {\em butterflies}, play an important role. A butterfly is a 2-edge-colored graph $(L,c)$, where $L$ consists of two triangles $u,v_1,w_1$ and $u,v_2,w_2$ intersecting in a single vertex $u$, and the coloring $c$ is ``antisymmetric'' in the sense that $c(uv_1) = -c(uv_2)$, $c(uw_1) = -c(uw_2)$ and $c(v_1w_1) = -c(v_2w_2)$ (see Figure~\ref{fig:butterflies}).

\begin{figure}
    \centering
    \includegraphics{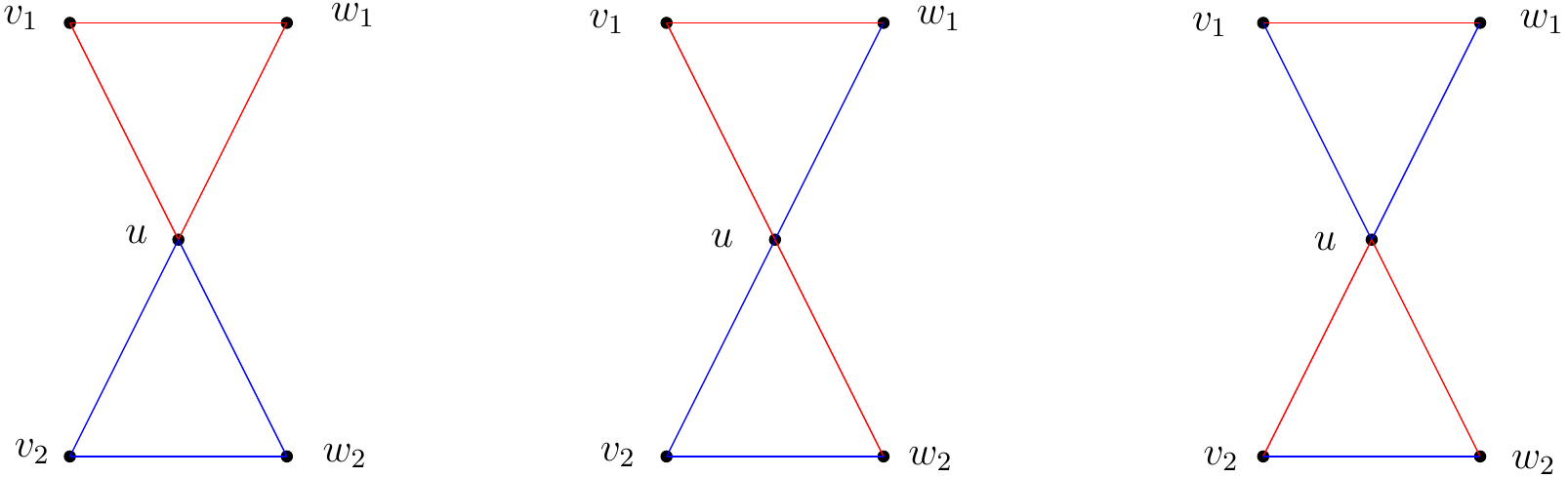}
    \caption{The three different types of butterflies up to isomorphism.}
    \label{fig:butterflies}
\end{figure} 
\begin{theorem}\label{tripartite1}
For every graph $H$ with $\chi(H) = 3$, it holds that
$$
\delta^*(H) =
\begin{cases}
    \frac{3}{4} & \text{if $H$ is regular,} \\
    \max\{1-1/\chi^*(H),\delta_0(H),4/7\} & \text{if $H$ is non-regular and some butterfly is not a template for $H$,}\\
    \max\{1-1/\chi^*(H),\delta_0(H)\} & \text{otherwise.}
  \end{cases}
$$
\end{theorem}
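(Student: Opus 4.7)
The plan is to establish Theorem~\ref{tripartite1} by combining lower bound constructions with an upper bound proof that follows the regularity/absorption/template paradigm used in earlier work on $H$-factor thresholds.

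For the lower bounds, $\delta^*(H)\geq 1-1/\chi^*(H)$ is immediate from Theorem~\ref{existence}, and $\delta^*(H)\geq \delta_0(H)$ is the iterated-blowup argument given in the paragraph following Definition~\ref{delta_0_def}. For the $4/7$ bound in the butterfly case, I would take a non-template butterfly $(L,c)$ (with central vertex $u$ and four leaves) and consider the blowup of $(L,c)$ in which $u$ is inflated to a set of size $3n/7$ and each of the four leaves to $n/7$: a direct computation gives minimum degree exactly $4n/7$, and since $(L,c)$ is not a template, every perfect $H$-factor in any blowup of $(L,c)$ has the same discrepancy, which one verifies is zero by exploiting the antisymmetry of the butterfly coloring (pairs of $H$-copies on the two ``wings'' cancel). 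For the $3/4$ bound for regular $H$, the plan is a two-clique-plus-bipartite host analogous to the clique construction of \cite{balogh2021discrepancy}: a balanced bipartition $(A,B)$ with all edges inside $A$ and $B$ coloured $+1$ and a near-complete $-1$-coloured bipartite graph between $A$ and $B$; $d$-regularity of $H$ forces the cross-edge count in every copy of $H$ to depend only on its partition type into $(A,B)$, and these contributions telescope to $o(n)$ over any perfect $H$-factor.

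For the upper bound, I would apply the Szemerédi regularity lemma to $(G,f)$ to obtain a reduced graph $R$ inheriting the minimum degree ratio, whose edges are labelled by the majority colour of the corresponding regular pair. The core step is to locate a template for $H$ in $R$: first a triangle whose coloring lies outside $\mathcal{K}(H)$, and failing that a butterfly subgraph of $R$ that is a template for $H$. Once such a template $(F,c)$ is found, the blow-up lemma together with a Kühn--Osthus-style absorbing structure produces two perfect $H$-factors of $G$ whose local discrepancies on the corresponding blowup of $F$ differ by $\Omega(n)$; a standard swapping argument then converts this into a single perfect $H$-factor of $G$ with $|f(F)|\geq \gamma n$.

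The main obstacle is the stability analysis handling the case in which no template triangle or template butterfly can be located in $R$. Here one must argue that a high-min-degree $G$ avoiding all such templates is essentially a blowup of some $(K_3,c)$ with $c\in\mathcal{K}(H)$, with cluster sizes matching either the extremal ratios defining $\delta_0(H)$ (shown to be attained in Section~\ref{sec:Hstar}) or those underlying Theorem~\ref{existence}. Concretely, this requires enumerating the finite collection of $K_3$-colorings in $\mathcal{K}(H)$, analyzing how two such colorings can share an edge without creating a butterfly template, and showing that every compatible gluing forces $\delta(G)/n$ down to one of the three values appearing in the maximum. The regular case needs a separate parity argument: the $d$-regularity of $H$ collapses the discrepancy of any copy of $H$ in a near-extremal host to a function of its partition type, and summing over a perfect $H$-factor shows that the total discrepancy lies in a lattice forcing it to be $o(n)$ unless $\delta(G)\geq (3/4+\eta)n$.
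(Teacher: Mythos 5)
Your lower bounds for $1-1/\chi^*(H)$, $\delta_0(H)$ and $4/7$ are essentially the paper's (the $4/7$ construction with $|V_u|=3n/7$ and leaves of size $n/7$, combined with the antisymmetric automorphism of the butterfly, is exactly Lemma~\ref{butterfly}). However, your $3/4$ lower bound for regular $H$ is genuinely broken. In the host you describe (a balanced bipartition $(A,B)$ with $+1$ inside the parts and $-1$ across), the cross-edge count of a copy of $H$ is \emph{not} determined by its partition type: already for $H=C_5$, placing two adjacent versus two non-adjacent vertices in $A$ gives $2$ versus $4$ cross edges, so the discrepancies do not telescope and nothing forces a perfect $H$-factor to have small discrepancy. (Moreover a complete host has minimum degree $n-1$, so it cannot witness a $3/4$ threshold in any case.) The construction that works is a balanced blowup of $(K_4,c)$ in which the $+1$ edges form a star $K_{1,3}$ centred at a part $V_{v_0}$: then \emph{every} $+1$ edge of a $d$-regular factor $F$ is incident to $V_{v_0}$, so $e(F^+)=d|V_{v_0}|=e(F)/2$ exactly, and the minimum degree is $3n/4$. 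This is Lemma~\ref{H_regular}, and it is the only place where regularity of $H$ is used for the lower bound.

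The more serious gap is in the stability analysis for the upper bound. Your plan — show that a template-free $R$ is ``essentially a blowup of some $(K_3,c)$ with $c\in\mathcal{K}(H)$'' and that every compatible gluing forces $\delta(G)/n$ down to one of the three extremal values — is not how the argument can go, and it omits the case that blocks it: if, say, every edge of $R$ has colour $+1$, then $R$ contains no template whatsoever and is certainly not forced to have small minimum degree; here one must instead find a high-discrepancy factor directly (Lemma~\ref{all k positive}, which itself needs the auxiliary complete tripartite graph $H^*$ and the fact, via Lemma~\ref{delta_0}, that $\delta(H^*)/|H^*|>\delta_0(H)$ forces every $H$-factor of a positively-coloured blowup of $(K_3,c)$, $c\in\mathcal{K}(H)$, to have positive discrepancy — this is where $\delta_0$ actually enters the upper bound). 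In the remaining case one finds two triangles of opposite discrepancy sign sharing a vertex; the absence of a template there does not yield structure on $G$ but rather linear constraints on $H$ (it forces $H$ to be $(s,t)$-structured in the sense of Lemma~\ref{template_example}), and the values of $(s,t)$ are pinned down by a case analysis of the two-triangle colouring. Two of those cases are eliminated by deriving a contradiction with $H$ having an unbalanced $3$-colouring; the $(1,2)$-structured case is eliminated by first proving the separate lower bound $\max\{\delta_0(H),1-1/\chi^*(H)\}\ge 5/8$ (Lemma~\ref{lower_delta_r=3}), which retroactively boosts the minimum-degree hypothesis; and the $(1,0)$ case requires showing $R$ splits into sets $W_+,W_-$ and either one is large (then an $H^*$-tiling has discrepancy $\rho(|V_+|-|V_-|)=\Omega(n)$) or an alternating cycle between them yields a contradiction. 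None of these steps is a routine ``enumeration of gluings of colourings in $\mathcal{K}(H)$,'' and without them the proof does not close.
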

For $k\geq 4$, the following definition, which we call the {\em $k$-wise $C_4$-condition}, plays an essential role in determining the value of $\delta^*(H)$.
\begin{definition}[$C_4$-Condition]\label{def:C4}
For an integer $k \geq 4$, we say that a graph $H$ fulfills the {\em $k$-wise $C_4$-condition} if for every (proper) $k$-vertex-coloring of $H$ with parts $A_1,A_2, \dots, A_k$, we have that
$$e_H(A_1,A_2)+e_H(A_3,A_4)=e_H(A_1,A_3)+e_H(A_2,A_4).$$
\end{definition}
Note that if $H$ satisfies the $k$-wise $C_4$-condition for some $k\geq 5$, then $H$ also satisfies the $(k-1)$-wise $C_4$-condition, as each proper $(k-1)$-coloring of $H$ is also a proper $k$-coloring (by taking the last color-class to be empty). Observe also that if $H$ satisfies the $k$-wise $C_4$-condition then so does every $H$-factor. 
To state our result for $r$-chromatic graphs with $r \geq 4$, it is convenient to state the following two conditions. In the entire paper, we write $a \equiv_m b$ for $a \equiv b \pmod m.$
\begin{condition}\label{cond1}
    $H$ fulfills the $(r+1)$-wise $C_4$-condition and additionally, $r\equiv_4 0$ or $H$ is regular.
\end{condition}
\begin{condition}\label{cond2}
    $H$ fulfills the $r$-wise $C_4$-condition and is regular.
\end{condition}
\noindent
The following result determines $\delta^*(H)$ for $H$ with $\chi(H) \geq 4$.
\begin{theorem}\label{rpartite}
For every graph $H$ with $r = \chi(H) \geq 4$, it holds that
$$
\delta^*(H)=
\begin{cases}
    1-1/(r+1) & $H$ \text{ fulfills Condition~\ref{cond1},} \\
    1-1/r & $H$ \text{ fulfills Condition~\ref{cond2} but not Condition~\ref{cond1},} \\
    \max\{1-1/\chi^*(H),\delta_0(H)\} & $H$ \text{ violates both Conditions~\ref{cond1} and \ref{cond2}.}
  \end{cases}
$$
\end{theorem}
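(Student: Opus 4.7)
The plan is to establish matching lower and upper bounds on $\delta^*(H)$ in each of the three cases. For the \textbf{lower bound}, we must exhibit, for each case, an $n$-vertex graph $G$ with the prescribed minimum degree and a 2-edge-coloring in which every perfect $H$-factor has discrepancy $o(n)$. For Case 3, the inequality $\delta^*(H) \geq \delta_0(H)$ is already recorded in the paragraph following Definition~\ref{delta_0_def}, and $\delta^*(H) \geq 1 - 1/\chi^*(H)$ follows from Theorem~\ref{existence} (an $H$-factor must exist in order to have large discrepancy). For Cases 1 and 2 we take $G$ to be (essentially) a balanced blowup of $K_k$ with $k=r+1$ or $k=r$ respectively, with part sizes divisible by $|H|$ and compatible with an $r$-coloring of $H$, and with a carefully chosen 2-edge-coloring. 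The $k$-wise $C_4$-condition is set up precisely so that, in such a blowup, the discrepancy of an $H$-factor depends only on the part sizes and the coloring of the reduced $K_k$, not on which edges of the factor are chosen; the auxiliary parity conditions ($r \equiv_4 0$ in Condition~\ref{cond1}, regularity in Condition~\ref{cond2}) are what allows us to tune this common discrepancy to exactly zero.

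For the \textbf{upper bound}, suppose $\delta(G) \geq (\mathrm{RHS}+\eta)n$ with 2-edge-coloring $f$. We apply Szemer\'edi's regularity lemma to $(G,f)$, working with the reduced graph $R$ whose edges inherit, from each dense regular pair, the majority color. The minimum degree of $R$ inherits (up to negligible error) that of $G$. The aim is to locate inside $R$ a \emph{template} $(F,c)$ for $H$ in the sense of Definition~\ref{def:template}. Once such a template is available, the blow-up lemma combined with the $H$-factor construction of \cite{existence,KSS_AlonYuster} lets us embed a perfect $H$-factor of $G$ which mirrors the blowup of the template; since the template admits two perfect $H$-factors in its blowup with distinct discrepancies, choosing the favorable one globally yields a perfect $H$-factor of $G$ with $|f(F)| \geq \gamma n$.

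The \textbf{main obstacle} is the converse structural analysis: showing that if $R$ contains no template for $H$, then the coloring of $R$ is rigid enough to force $\delta(R)$ — and hence $\delta(G)$ — down to the threshold claimed by the theorem. The key computation here is to translate "$(F,c)$ is not a template" into a linear invariance: every local swap between color classes in a potential $H$-factor embedding preserves the total discrepancy. A short calculation on the indicator edges between pairs of color classes of $H$ shows this invariance is exactly the $k$-wise $C_4$-condition applied with $k$ equal to the clique size supporting the template. Consequently, $R$ cannot contain a template-supporting $K_{r+1}$ unless $H$ satisfies the $(r+1)$-wise $C_4$-condition (plus the parity stipulation of Condition~\ref{cond1}), placing us in Case~1 with threshold $1-1/(r+1)$; and similarly for $K_r$ and Condition~\ref{cond2} in Case~2. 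When both conditions fail (Case~3), templates are plentiful once $\delta(G)$ exceeds $1-1/\chi^*(H)$ and the graph avoids the $\delta_0(H)$-extremal blowup structure, which completes the trichotomy.

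Putting the template search, the invariance calculation, and the extremal constructions together, each of the three cases of Theorem~\ref{rpartite} follows. The most delicate step, and the one we expect to occupy most of the technical work, is showing that the $k$-wise $C_4$-condition is not merely sufficient but also \emph{necessary} for the absence of templates on $K_k$: we will need to produce, whenever the $C_4$-condition fails, an explicit pair of perfect $H$-factors in some blowup whose discrepancies differ.
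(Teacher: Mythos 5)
Your high-level strategy matches the paper's: lower bounds via blowup constructions whose discrepancy the $C_4$-condition forces to zero (the paper's Lemmas~\ref{construction}, \ref{construction2}), and upper bounds via the regularity lemma together with a search for templates in the reduced graph (Lemma~\ref{template}). The identification of the $C_4$-condition as the ``linear invariance'' behind non-templates on a clique is the right instinct, realized in the paper through Lemma~\ref{regularC4} (regular colorings of $K_k$) and Lemma~\ref{no_c4_template} (non-regular, non-star colorings).

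However, there are two genuine gaps. First, the invariance picture is oversimplified: ``$(K_k,c)$ is not a template for $H$'' is \emph{not} equivalent to the $k$-wise $C_4$-condition. The $C_4$-condition makes \emph{regular} colorings of $K_k$ non-templates, but non-regular colorings of $K_k$ can still fail to be templates, and the paper needs additional structural conditions (balanced-uniform $H$-factors in Lemma~\ref{non-balanced-uniform template}, $(s,t)$-structuredness in Lemma~\ref{template_example}) and templates that are \emph{not} single cliques but unions of two $r$-cliques sharing $r-1$ or $r-2$ vertices (Lemmas~\ref{sharing_much_template}, \ref{like blowup2}, \ref{sharing_little_template}). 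Your proposal treats templates as living only on cliques, which is insufficient for the analysis. Second, your treatment of Case~3 (``templates are plentiful once $\delta(G)$ exceeds $1-1/\chi^*(H)$ and the graph avoids the $\delta_0(H)$-extremal structure'') skips what is in fact the technical heart of the proof. The paper has to split into sub-cases on $r \bmod 4$ (Lemmas~\ref{c4_r_1}, \ref{rest_4}, \ref{high_min_deg}), handle the situation where $R$ has \emph{no} small template at all by proving $R$ is so unbalanced that one color dominates, use the auxiliary complete $r$-partite graph $H^*$ to tile $G$, bootstrap the minimum degree up to $\frac{3r-5}{3r-2}|R|$ via Lemma~\ref{lower_delta}, and deal separately with the case where all $r$-cliques have the same discrepancy sign (Lemma~\ref{all k positive}). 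None of this falls out of ``templates are plentiful.'' As a proof sketch your proposal would therefore fail precisely where the paper does the most work, and it also silently omits the step that handles the regular-$H$ versus non-regular-$H$ dichotomy, which is what separates Condition~\ref{cond2} from the third case.
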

In the next section, we explain the notation that we use throughout the paper. Then, in Section~\ref{sec:overview}, we give a short overview of the proof and explain the main ideas. In Section~\ref{sec: general lemmas} we prove two general lemmas that will play an important role in our proofs. In Section~\ref{sec:Hstar}, we establish properties of the parameter $\delta_0(H)$, and describe a construction of a graph $H^*$ (depending on $H$) that is important in some of our arguments. 
Section~\ref{sec:regularity} is split into two subsections. First, we recall the notions related to Szemer\'edi's regularity lemma and the blowup lemma. And second, we introduce the general setup of how we use the regularity lemma in our proofs. This setup is used throughout the rest of the paper. 
Section~\ref{sec:templates} deals with templates, giving conditions on $H$ that guarantee that various colored graphs are templates for $H$. 
In Section~\ref{sec:lower_bounds} we prove the lower bounds on $\delta^*(H)$ in Theorems~\ref{bipartite},~\ref{tripartite1} and~\ref{rpartite}.
Sections~\ref{sec: K_r same discrepancy},~\ref{sec:no_C4} and~\ref{sec: nonregular} contain key lemmas that are used in the proofs of the main result.  
More specifically, in Section~\ref{sec: K_r same discrepancy} we show how to find perfect $H$-factors of high discrepancy if the coloring of $G$ is unbalanced in a specific way. Section~\ref{sec:no_C4} covers graphs $H$ which violate the $C_4$-condition, and Section~\ref{sec: nonregular} covers graphs $H$ which are non-regular. 
Using the tools from Sections~\ref{sec: K_r same discrepancy}-\ref{sec: nonregular}, we then easily derive the main results (Theorems~\ref{bipartite},~\ref{tripartite1} and~\ref{rpartite}) in Section~\ref{sec:main}. 
To end the paper, in Section~\ref{sec: examples} we give examples of graphs $H$ which fall into different cases of the three main theorems. The purpose of these examples is to show that all cases in Theorems~\ref{bipartite},~\ref{tripartite1} and~\ref{rpartite} are necessary. 

\section{Notation}
Given a graph $G$, let $V(G)$ denote the set of vertices of $G$, $E(G)$ the set of edges and $e(G)=|E(G)|$. Let $|G|$ denote the number of vertices in $G$. Given two sets $U,V\in V(G)$, we write $G[U]$ for the graph induced by $U$ on $G$ and $G[U,V]$ for the graph with edges with one endpoint in $U$ and the other in $V$. Also, let $e_G(U,V)=e(G[U,V])$, and let $e_G(U)$ denote the number of edges in $G[U]$. For $v\in V(G)$, let $d_G(v)$ denote the number of edges incident to $v$ in $G$.

For a 2-edge-coloring $c$ of a graph $G$, we write $G^+,G^-$ for the subgraph of $G$ consisting of the edges of color $+1$ and $-1$, respectively. Note that $c(G) = e(G^+) - e(G^-) = 2e(G^+) - e(G)$. For a color $c \in \{1,-1\}$, we use $G^c$ for $G^+$ if $c=1$ and for $G^-$ if $c=-1$.

Given a blowup $B$ of $G$ and a set of vertices $U\subseteq V(G)$, we write $V_U$ for $\cup_{u\in U}V_u$ and given a vertex $u\in V(B)$, we refer to the vertex in $V(G)$ corresponding to the cluster of $u$ by $V_u^G$. 

Throughout the paper, $H$ is a fixed graph and $r$ is the chromatic number of $H$. An $r$-coloring of $H$ always means a proper $r$-vertex-coloring. We identify an $r$-coloring with its set of color classes, usually denoted $A_1,\dots,A_r$. We think of the parts as ordered, namely permuting them gives a different $r$-coloring. An $r$-coloring $A_1,\dots,A_r$ is called {\em balanced} if $|A_1| = \dots = |A_r|$, and {\em unbalaned} otherwise. 
\section{Proof Overview}\label{sec:overview}
In this section we give a high level overview of our proofs. Some of our arguments apply to any graph $H,$ while some require $H$ to have certain properties. We start by explaining the general setup.

We employ a similar strategy to that used by Balogh, Csaba, Pluh\'{a}r and Treglown~\cite{balogh2021discrepancy} to determine $\delta^*(K_r).$ Given a $2$-edge-coloring of the graph $G$, we apply a colored version of Szemer\'{e}di's regularity lemma and consider the corresponding reduced graph $R$ which, by standard techniques, naturally inherits a $2$-edge coloring $f_R$ from $G$ and has essentially the same minimum degree relative to its number of vertices. A crucial ingredient of our proof is the notion of a template (Definition~\ref{def:template}) which has been introduced in a slightly different form in \cite{balogh2021discrepancy}. The importance of this notion is that if there is a subgraph $F \subseteq R$ of size independent of $n$ such that $(F,f_R)$ is a template for $H$, then, by definition, there is a blowup of $F$ such that there are two $H$-factors of $(F, f_R)$ with different discrepancies. A standard application of the blowup lemma then implies that we can tile a set $U$ of $\Omega(n)$ vertices of $G$ in the clusters of the regular partition corresponding to $V(F)$ with two different $H$-factors whose discrepancies differ by $\Omega(n).$ Taking $U$ to be of small linear size, the graph $G \setminus U$ still has high enough minimum degree to force a perfect $H$-factor by Theorem~\ref{existence}. It is then easy to see that by adding the two $H$-factors of $U$ to this perfect $H$-factor of $G\setminus U,$ we obtain two perfect $H$-factors of $G$ whose discrepancies differ by $\Omega(n)$, hence one of them must have absolute discrepancy $\Omega(n)$, as needed. This shows that finding small templates for $H$ in the reduced graph suffices for finding an $H$-factor of high discrepancy. 

Let us explain another important aspect of the notion of a template. If a certain coloured graph $(F, c)$ is not a template for $H,$ then by definition, for every blowup $B$ of $(F, c),$ all perfect $H$-factors of $B$ have the same discrepancy. If this discrepancy equals $0$ (e.g. this happens if $(F, c)$ is symmetric with respect to the two colours), then this provides us with a lower bound construction for $\delta^*(H).$ The most important special case is when $F = K_r$ (where $r = \chi(H)$)  which leads us to the definition of $\delta_0(H)$. Indeed, $\delta_0(H)$ is the best lower bound on $\delta^*(H)$ that one can obtain by considering blowups of a coloured $K_r$ (apart from the potential divisibility constraints which are encapsulated by the parameter $\chi^*(H)$.)

By the above discussion, we may assume that the reduced graph $R$ has no subgraph on $O(1)$ vertices which is a template for $H$. This can be exploited from two angles. Taking a fixed colored graph $(F, c)$ which is a template for $H,$ we obtain structural information about $R$ as it must be $(F, c)$-free. On the other hand, the fact that a certain coloured graph $F$ is not a template gives us structural information about $H$. More precisely, we obtain that for every $r$-coloring of $H$, the sizes of the color classes and the number of edges between the pairs of them must satisfy certain linear equations. A typical example is Lemma~\ref{template_example}.
(We sometimes also have constraints in terms of $k$-colorings of $H$ for $k = r+1$ or $r+2$. An example is the $C_4$-condition, see Definition~\ref{def:C4}.)

It is possible that there is no small subgraph of $R$ forming a template for $H$, e.g. if all edges in $R$ have color $1$. However, if the coloring of $R$ is so unbalanced, we can find a perfect $H$-factor in $G$ with high discrepancy. 
So, roughly speaking, our strategy is to show that either $R$ has a small template for $H$, or the coloring of $R$ must be in some sense unbalanced, allowing us to find a perfect $H$-factor with high discrepancy by other means. 
A concrete example is Lemma~\ref{all k positive}, which shows that if all $r$-cliques in $R$ have positive discrepancy, then we can indeed find a perfect $H$-factor of high discrepancy, provided we assume additionally that $H$ is not regular. 
So to illustrate our strategy in more detail, let us consider the case when $H$ is not regular, so that Lemma~\ref{all k positive} applies and we may assume that not all $r$-cliques in $R$ have positive discrepancy, and by symmetry not all $r$-cliques have negative discrepancy. Then, since $R$ has minimum degree larger than $1 - 1 / (r-1),$ it is not difficult to show that there are two $r$-cliques $L_1$ and $L_2$ sharing $r-2$ vertices, where one of them has positive discrepancy, while the other has negative discrepancy. If the coloring on $L_1 \cup L_2$ is a template for $H,$ we are done, and otherwise $H$ must have a certain structure. Now, by the minimum degree condition on $R,$ for $v \in L_1 \setminus L_2,$ there are many vertices $u$ such that $L_1 \cup \{u\} \setminus \{v\}$ forms an $r$-clique. We show that, essentially, the edges from $u$ to $L_1 \setminus \{v\}$ must be colored in the same way as those from $v$ to $L_1 \setminus \{v\}$ (or else $R$ contains a template for $H$). Such arguments eventually lead to showing that one of the colors is represented significantly more in $R$ than the other color. 
For example, in one of the cases in the proof of Lemma~\ref{high_min_deg}, we show that there is a set of size more than $3n/4$ which is entirely monochromatic. This allows us
to find a perfect $H$-factor with high discrepancy.

Another ingredient in our proof is the idea of using certain complete $r$-partite graphs (where $r = \chi(H)$). More precisely, in order to find a perfect $H$-factor, we sometimes instead find a perfect $H^*$-factor for a certain complete $r$-partite graph $H^*$, and then tile each copy of $H^*$ with copies of $H$. The advantage of working with complete $r$-partite graphs (rather than with general $r$-partite graphs) is that they consist of $r$-cliques, and our templates for $H$ also consist of $r$-cliques. Thus, assuming that there are no small templates for $H$ allows us to deduce things about the colors of the edges of copies of $H^*$. Typically, we show that $H^*$ is colored as a blowup of $K_r$, namely, that all bipartite graphs between color classes are monochromatic. In order to use this approach, $H^*$ must satisfy certain properties. First, it must contain a perfect $H$-factor. And second, the minimum degree threshold for the existence of an $H^*$-factor must be only slightly larger than that of $H$, so that our minimum degree assumptions guarantee the existence of an $H^*$-factor. Such a graph $H^*$ is constructed in Lemma~\ref{h_star}. 
\section{General Lemmas}\label{sec: general lemmas}
In this section we give two general lemmas which are essential to the proofs of our main results.
The following simple lemma allows us to find a ``chain" of $r$-cliques connecting two given $r$-cliques in a graph of sufficiently high minimum degree. Such a lemma has already appeared in previous works, see e.g. \cite{bradavc2021powers}. For completeness, we include a proof. 
\begin{lemma}\label{connectivity}
Let $k\in\mathbb{N}$, let $J$ be an $m$-vertex graph, and let $L,L'\subseteq J$ be two copies of $K_k$.
\begin{enumerate}
    \item If $\delta(J) > \frac{k-1}{k}m$, then there is a sequence $L_1,L_2,...,L_\ell\subseteq J$ of $k$-cliques with $L_1 = L$, $L_{\ell} = L'$ and $|L_i\cap L_{i+1}| = k-1$ for each $1\leq i< \ell$.
    \item If $\delta(J) > \frac{k-2}{k-1}m$, then there is a sequence $L_1,L_2,...,L_\ell\subseteq J$ of $k$-cliques with $L_1 = L$, $L_{\ell} = L'$ and $|L_i\cap L_{i+1}| \geq k-2$ for each $1\leq i< \ell$.
\end{enumerate}
\end{lemma}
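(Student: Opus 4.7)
The plan is to prove both parts by induction on $k$, with the trivial base case $k = 1$ (where distinct vertices suffice for part (1), and part (2) imposes no constraint). The main ingredient used throughout is the following inclusion–exclusion bound: if $\delta(J) > (s-1)m/s$ and $W \subseteq V(J)$ satisfies $|W| \leq s$, then $\bigcap_{v \in W} N(v) \neq \emptyset$, since $|V \setminus N(v)| < m/s$ for each $v$ gives $|V \setminus \bigcap_{v \in W} N(v)| < s \cdot m/s = m$.

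For part (2), I would invoke part (1) for $(k-1)$-cliques, which is exactly what the inductive hypothesis furnishes under the degree assumption $\delta(J) > (k-2)m/(k-1)$. Concretely, fix $(k-1)$-subcliques $T_L \subseteq L$ and $T_{L'} \subseteq L'$, apply the inductive hypothesis to produce a sequence $T_L = T_1, \ldots, T_\ell = T_{L'}$ of $(k-1)$-cliques with $|T_i \cap T_{i+1}| = k-2$, and extend each $T_i$ to a $k$-clique $L_i$ by choosing a vertex in its (nonempty) common neighborhood, arranged so that $L_1 = L$ and $L_\ell = L'$. Then $|L_i \cap L_{i+1}| \geq |T_i \cap T_{i+1}| = k-2$, as required.

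For part (1), I would first apply part (2) (already established in the inductive step) to obtain a sequence with consecutive intersections of size $\geq k - 2$. The remaining task is purely local: replace every consecutive pair $A, B$ with $|A \cap B| = k - 2$ by a short chain of $(k-1)$-intersections. Writing $S := A \cap B$, $A = S \cup \{a_1, a_2\}$, $B = S \cup \{b_1, b_2\}$, two cases arise. If some $a_i b_j$ is an edge, then the single $k$-clique $M := S \cup \{a_i, b_j\}$ shares $k-1$ vertices with both $A$ and $B$, and $A, M, B$ suffices. Otherwise, the inclusion–exclusion bound applied to the $k$-set $S \cup \{a_1, b_1\}$ yields a common neighbor $u$, and we insert the two $k$-cliques $M_1 := S \cup \{a_1, u\}$ and $M_2 := S \cup \{u, b_1\}$.

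The main subtlety is this second case in part (1): one must verify that the inserted vertex $u$ is distinct from $a_2$ and $b_2$, so that the intersections $|A \cap M_1|$ and $|M_2 \cap B|$ are exactly $k - 1$ rather than $k$. This is where the ``no cross-edges'' assumption of the case is crucial, as it forces $a_2 \notin N(b_1)$ and $b_2 \notin N(a_1)$ and hence rules out $u \in \{a_2, b_2\}$; and it is also where the stronger bound $\delta(J) > (k-1)m/k$ of part (1) is genuinely required, since this is precisely the minimum degree condition that makes the common neighborhood of a full $k$-set nonempty.
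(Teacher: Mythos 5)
Your proof of Item~2 mirrors the paper's exactly: both reduce to Item~1 with $k-1$ in place of $k$, connect $(k-1)$-subcliques $T_L \subseteq L$ and $T_{L'} \subseteq L'$, and extend each intermediate $(k-1)$-clique upward to a $k$-clique. For Item~1, however, you take a genuinely different route. The paper argues directly by \emph{reverse induction on} $t = |L \cap L'|$: if $t \le k-2$, it builds an explicit chain $M_1,\dots,M_{k-t-1}$ of $k$-cliques (each sharing $k-1$ vertices with the next) leading from $L$ to a clique $L''$ with $|L'' \cap L'| \ge t+1$, and then applies the induction hypothesis. You instead prove the two items by a mutual induction on $k$, and for Item~1 at level $k$ you first invoke Item~2 at the \emph{same} $k$ (legitimate, since $\frac{k-1}{k} > \frac{k-2}{k-1}$) to produce a coarse chain with $(k-2)$-intersections, and then locally refine each coarse step into a two- or three-clique subchain with $(k-1)$-intersections. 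The local repair is correct: in the cross-edge case $M = S \cup \{a_i,b_j\}$ is a $k$-clique with $|A\cap M|=|M\cap B|=k-1$; in the no-cross-edge case the common-neighbour argument applied to the $k$-set $S\cup\{a_1,b_1\}$ genuinely requires $\delta(J) > \frac{k-1}{k}m$, and the resulting $u$ cannot equal $a_2$ or $b_2$ precisely because there are no cross-edges, so the intersections come out to exactly $k-1$. One small omission: after applying Item~2 the coarse chain may also contain pairs with $|L_i\cap L_{i+1}|=k$, i.e.\ repeated cliques, which should be collapsed (the paper handles the analogous nuisance by noting it suffices to achieve $\ge k-1$ and then remove duplicates). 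Overall the two routes are comparable in length; the paper's keeps Item~1 self-contained while yours trades that for a modular ``coarsen then refine'' structure that cleanly separates the combinatorial work (the repair step) from the bookkeeping.
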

\begin{proof}
We start with the first item. Here we assume that $\delta(J) > \frac{k-1}{k}m$, which implies that every $k$ vertices have a common neighbour. 
It is enough to find a sequence $L = L_1,\dots,L_{\ell} = L'$ with $|L_i\cap L_{i+1}| \geq k-1$ (i.e. we can repeat cliques).  
We prove the claim by reverse induction on $t := |L \cap L'|$. If $t \geq k-1$ then there is nothing to prove. Suppose then that $t \leq k-2$. Write $L \cap L' = \{s_1,\dots,s_t\}$, $L \setminus L' = \{x_1,\dots,x_{k-t}\}$, $L' \setminus L = \{y_1,\dots,y_{k-t}\}$. We define vertices $z_1,\dots,z_{k-t-1}$ inductively as follows. Let $z_1$ be a common neighbour of $s_1,\dots,s_t,x_1,\dots,x_{k-t-1},y_1$. For $2 \leq i \leq k-t-1$, let $z_i$ be a common neighbour of $s_1,\dots,s_t,x_i,\dots,x_{k-t-1},z_1,\dots,z_{i-1},y_1$. Write $M_i = \{s_1,\dots,s_t,x_{i},\dots,x_{k-t-1},z_1,\dots,z_i\}$ for $1 \leq i \leq k-t-1$. Then $M_1,\dots,M_{k-t-1}$ are $k$-cliques, $|M_1 \cap L| \geq k-1$, and $|M_i \cap M_{i+1}| \geq k-1$ for $1 \leq i \leq k-t-2$. Also, $L'' := \{s_1,\dots,s_t,z_1,\dots,z_{k-t-1},y_1\}$ is a $k$-clique, $|L'' \cap M_{k-t-1}| \geq k-1$ and $|L'' \cap L'| \geq t+1$. By the induction hypothesis, there is a chain $L'' = N_1,\dots,N_{\ell}=L'$ with $|N_i \cap N_{i+1}| \geq k-1$ for $1 \leq i \leq \ell-1$. Now $L,M_1,\dots,M_{k-t-1},N_1,\dots,N_{\ell} = L'$ is the required chain. 

Next, we prove Item 2 by reducing to Item 1. Here we assume that $\delta(J) > \frac{k-2}{k-1}m$, which implies that every $k-1$ vertices have a common neighbour, and hence every $(k-1)$-clique is contained in a $k$-clique. Take $M \subseteq L, M' \subseteq L'$ of size $k-1$ each. By Item 1 with parameter $k-1$, there are $(k-1)$-cliques $M = M_1,\dots,M_{\ell} = M'$ with $|M_i \cap M_{i+1}| \geq k-2$ for each $1 \leq i\leq \ell-1$. Let $L_i$ be a $k$-clique containing $M_i$, where $L_1 = L$ and $L_{\ell} = L'$. Then $L_1,\dots,L_{\ell}$ is the required sequence. 
\end{proof}
The next lemma is a key reason why the $C_4$-condition is one of the determining factors for the value of $\delta^*(H)$. 
The lemma allows us to control the discrepancy of subgraphs fulfilling the $C_4$-condition in blowups of regular colorings of $K_k$ (i.e., 2-edge-colorings in which the color-classes form regular graphs). 
We will later apply this lemma to $H$-factors (using that an $H$-factor satisfies the $C_4$-condition if $H$ does), to deduce that a regular coloring of $K_k$ is not a template for $H$.
\begin{lemma}\label{regularC4}
Let $c$ be a $2$-edge-coloring of $K_k$, $k\geq 2$, and suppose that $K_k^+$ is $d$-regular for some $d\in \mathbb{N}$. Let $B$ be a blowup of $(K_k,c)$ and $J$ an arbitrary subgraph of $B$. If $J$ fulfills the $k$-wise $C_4$-condition, then
$$
c\left(J\right) = \frac{2d-k+1}{k-1} e(J).
$$
\end{lemma}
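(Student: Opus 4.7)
The plan is to rewrite the claimed identity as an orthogonality statement and verify it by exploiting two structural facts: the ``renormalized'' coloring vector has vanishing row sums, while the edge-count vector factors as a rank-one tensor. Let $V_1,\dots,V_k$ denote the blowup classes, and set $e_{ij} := e_J(V_i,V_j)$ for $i \neq j$, $\lambda := \frac{2d-k+1}{k-1}$, and $\beta_{ij} := c(ij) - \lambda$. Since $c(J) - \lambda\, e(J) = \sum_{i<j} \beta_{ij}\, e_{ij}$, the task reduces to showing that this sum vanishes. The first key fact is immediate from $d$-regularity: $\sum_{j\ne i} c(ij) = d - (k-1-d) = 2d-(k-1)$, and $\lambda$ is tuned exactly so that $\sum_{j\ne i}\beta_{ij} = 0$ for every $i$.

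For $k \geq 4$, the second key fact is that the $C_4$-hypothesis forces $e_{ij} = \alpha_i + \alpha_j$ for some reals $\alpha_1,\dots,\alpha_k$. The partition $V_1\cap V(J),\dots,V_k\cap V(J)$, together with every permutation of it, is a proper $k$-coloring of $J$, so the $k$-wise $C_4$-condition yields
\[
e_{ab} + e_{cd} \;=\; e_{ac} + e_{bd}
\]
for every four distinct indices $a,b,c,d \in [k]$. I would then set $\alpha_i := \tfrac{1}{2}(e_{ij} + e_{il} - e_{jl})$ for any choice of distinct $j,l \neq i$, and use the displayed relation (applied to the index set $\{i,j,l,j',l'\}$, or directly to $\{1,2,3,4\}$ when $k=4$) to verify that $\alpha_i$ is independent of $(j,l)$ and that $e_{ij}=\alpha_i+\alpha_j$ for all $i\neq j$.

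Combining the two facts closes the proof:
\[
\sum_{i<j} \beta_{ij}\, e_{ij} \;=\; \sum_{i<j} \beta_{ij}(\alpha_i + \alpha_j) \;=\; \sum_{i} \alpha_i \sum_{j\ne i} \beta_{ij} \;=\; 0.
\]
The cases $k \in \{2,3\}$, in which the $C_4$-condition is not even formally defined, are handled directly, since $d$-regularity of $K_k^+$ forces $c$ to be monochromatic ($K_2$ trivially; for $K_3$ the handshake identity forces $d\in\{0,2\}$), whence $c(J) = \pm e(J) = \frac{2d-k+1}{k-1}e(J)$. The only part requiring real work is verifying the well-definedness of the $\alpha_i$ in the second paragraph; this is a routine but slightly finicky permutation of a $C_4$-relation across five indices, and is the main obstacle to a completely formula-free proof.
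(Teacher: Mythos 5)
Your proof is correct, and it takes a genuinely different route from the paper's. The paper first reduces to the case where $d$ is even (by complementing), then invokes Petersen's theorem to decompose $K_k^+$ into $d/2$ $2$-factors, and shows via a careful edge-counting argument (Claim~\ref{2factor}) that the $C_4$-condition forces the edges of $J$ lying in the blowup of any one $2$-factor to make up exactly a $\tfrac{2}{k-1}$ fraction of $e(J)$; summing over $2$-factors gives $e(J^+)$ and hence $c(J)$. Your argument instead reformulates the identity as the orthogonality $\sum_{i<j}\beta_{ij}e_{ij}=0$ and exploits a rank-one structure: $d$-regularity makes $\beta_{ij}=c(ij)-\lambda$ have vanishing row sums, while the $C_4$-condition (applied across all permutations of the color classes) forces $e_{ij}=\alpha_i+\alpha_j$, and the two facts annihilate each other. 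Your approach avoids Petersen's theorem and the bookkeeping in Claim~\ref{2factor} entirely, trading them for the (routine but slightly fiddly) verification that the $\alpha_i$ are well-defined — which does go through for $k\ge4$, since any two unordered pairs $\{j,l\}\subseteq[k]\setminus\{i\}$ can be connected by changing one element at a time, and each single change is covered by one instance of $e_{il}+e_{jl'}=e_{il'}+e_{jl}$, itself a $C_4$-relation. Your direct treatment of $k\in\{2,3\}$ (monochromaticity forced by regularity) is also fine and mirrors what the paper's argument does implicitly in those degenerate cases. Both proofs ultimately rest on the same input — the $k$-wise $C_4$-condition applied to all orderings of the blowup classes — but package it differently: combinatorially (cycle decompositions) versus linearly (rank of the edge-count matrix).
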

\begin{proof}
First, we estimate the number of edges of $J$ contained in the blowup of a given $2$-factor of $K_k$. Here, by ``$2$-factor'' we mean a disjoint union of cycles covering $V(K_k)$.
\begin{claim}\label{2factor}
Let $C$ be a $2$-factor in $K_k$ and $C'$ the corresponding graph in $B$ (i.e., the blowup of $C$). Then
$$
e(J\cap C') = \frac{2}{k-1}e(J). 
$$
\end{claim}
\begin{proof}
Let $X=x_1x_2,...,x_\ell$ be an arbitrary cycle in $C$ of length $\ell$.
Consider any pair $i < j$ such that the vertices $x_i, x_j$ are not adjacent on $X$. Note that for each $i$ there are $\ell-3$ such $j$. 
Since $J$ satisfies the $k$-wise $C_4$-condition, we have
$$e_J(V_{x_i}, V_{x_{i+1}}) + e_J(V_{x_j}, V_{x_{j+1}}) - e_J(V_{x_i}, V_{x_j}) - e_J(V_{x_{i+1}}, V_{x_{j+1}}) = 0,$$
where indices are taken modulo $\ell$. Summing over all such pairs $i < j,$ we obtain
\begin{equation}\label{eq1}
  (\ell-3)\sum_{1\leq i\leq \ell}e_J(V_{x_i},V_{x_{i+1}})-2\sum_{i,j \; : \; |i-j| \not\equiv \pm 1 \; (\text{mod }\ell)}e_J(V_{x_i},V_{x_j}) = 0.  
\end{equation}
If $X$ is the only cycle in $C$, then $\ell = k$, and by \eqref{eq1},
$$
e(J\cap C') = \sum_{1\leq i\leq \ell}e_J(V_{x_i},V_{x_{i+1}}) = e(J) - \sum_{i,j \; : \; |i-j| \not\equiv \pm 1 \; (\text{mod }\ell)}e_J(V_{x_i},V_{x_j}) =  
\frac{2}{k-1}e(J),
$$
as required. 
Therefore, let us assume that there is a second cycle $Y=y_1,y_,...,y_h$ in $C$. By the $C_4$-condition,  
$$
\sum_{1\leq i \leq \ell}\sum_{1\leq j\leq h} \left[ e_J(V_{x_i},V_{x_{i+1}})+ e_J(V_{y_{j}},V_{y_{j+1}})-e_J(V_{x_i},V_{y_{j}})-e_J(V_{x_{i+1}},V_{y_{j+1}}) \right] = 0.
$$
Reordering the terms in the above equality, we get
\begin{equation}\label{eq2}
h\sum_{1\leq i\leq \ell}e_J(V_{x_i},V_{x_{i+1}})+\ell\sum_{1\leq j\leq h}e_J(V_{y_j},V_{y_{j+1}})-2\sum_{1\leq i\leq \ell}\sum_{1\leq j\leq h}e_J(V_{x_i},V_{y_j}) = 0.
\end{equation}
Now, we take the sum of (\ref{eq1}) over all cycles in $C$ and the sum of (\ref{eq2}) over all pairs of cycles in $C$. 
For each cycle $X$ in $C$, each edge of $X$ is counted $k-|X|$ times when summing \eqref{eq2} over pairs $X,Y$ with $Y \in C \setminus \{X\}$, and is counted $|X|-3$ times in \eqref{eq1}. Therefore, each edge of $C$ is counted exactly $k-3$ times. Also, each edge $e \in K_k \setminus C$ is counted twice (with a negative sign) when summing \eqref{eq1} and \eqref{eq2}; indeed, if $e$ goes between vertices of the same cycle $X$, then $e$ is counted twice in \eqref{eq1}, and if $e$ goes between vertices of two different cycles $X,Y$, then $e$ is counted twice in \eqref{eq2}. All in all, we get that 
\begin{align*}
    (k-3)\sum_{uv\in C}e_J(V_u,V_v)- 2\sum_{uv\in K_k\backslash C}e_J(V_u,V_v) = 0.
\end{align*}
It follows that
$$
e(J\cap C') = \sum_{uv\in C}e_J(V_u,V_v) = \frac{2}{(k-1)}e(J).
$$
\end{proof}
We now complete the proof of Lemma~\ref{regularC4} using Claim~\ref{2factor}.
Note that either $d$ or $k-1-d$ is even because $k$ and $d$ cannot both be odd. Without loss of generality, let us assume that $d$ is even (else apply the same argument to the complement coloring). By Petersen's theorem (see e.g. \cite[Corollary 2.1.5]{Diestel}), the edges of $K_k^+$ can be decomposed into $\frac{d}{2}$ $2$-factors. By applying Claim~\ref{2factor} to each of these $2$-factors, we get that 
$$
e(J^+) = \frac{d}{2} \cdot \frac{2}{(k-1)}e(J) = \frac{d}{(k-1)}e(J).
$$
The result follows since
$$
c(J) = e(J^+)-e(J^-) = 2e(J^+)-e(J).
$$
\end{proof}
\section{The parameters $\delta_0$ and $H^*$}\label{sec:Hstar}
The goal of this section is twofold. First, we consider the parameter $\delta_0(H)$; we show that it is well-defined, i.e. that the maximum in Definition \ref{delta_0_def} is attained, prove some useful properties of $\delta_0(H)$ and give an algorithm that computes $\delta_0(H)$ in finite time. 
And second, we describe a construction of a certain complete $r$-partite graph $H^*$ that will play an important role in our proofs. 

\begin{proposition}\label{prop:compute delta0}
    Let $H$ be a graph. The maximum in the definition of $\delta_0(H)$ (see Definition~\ref{delta_0_def}) is attained and $\delta_0(H) \in \mathbb{Q}$. Moreover, there is an algorithm which, given a graph $H,$ computes $\delta_0(H).$
\end{proposition}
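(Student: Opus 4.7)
The plan is to reduce the problem to a finite enumeration over the $2^{\binom{r}{2}}$ two-colorings $c$ of $K_r$, followed by a rational linear program for each. Let $\mathcal{F}$ be the finite set of ordered proper $r$-colorings of $H$; for $\chi\in\mathcal{F}$ with classes $A_1,\dots,A_r$ write $s(\chi)=(|A_1|,\dots,|A_r|)\in\mathbb{Z}_{\geq 1}^r$ (positivity uses $r=\chi(H)$), and for each $c$ put $d_c(\chi)=\sum_{i<i'}c(ii')\,e_H(A_i,A_{i'})$. A blowup of $(K_r,c)$ is specified by part sizes $n\in\mathbb{Z}_{\geq 0}^r$, and its perfect $H$-factors are in bijection with $x\in\mathbb{Z}_{\geq 0}^{\mathcal{F}}$ satisfying $\sum_\chi x_\chi s(\chi)=n$, with discrepancy $\langle d_c,x\rangle=\sum_\chi x_\chi d_c(\chi)$.

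\textbf{Characterizing $\mathcal{K}(H)$.} Letting $S$ be the $r\times|\mathcal{F}|$ matrix whose $\chi$-column is $s(\chi)$, I would prove
\[
(K_r,c)\in\mathcal{K}(H)\ \iff\ d_c\in\operatorname{rowspan}_{\mathbb{Q}}(S).
\]
The $(\Leftarrow)$ direction is immediate: if $d_c(\chi)=\sum_i\lambda_i s(\chi)_i$, then $\langle d_c,x\rangle=\sum_i\lambda_i n_i$ depends only on $n=Sx$, so any two $H$-factors of the same blowup have equal discrepancy. For $(\Rightarrow)$, given $y\in\mathbb{Z}^{\mathcal{F}}$ with $Sy=0$ and $\langle d_c,y\rangle\neq 0$, fix any $\chi_0\in\mathcal{F}$ and a large $N\in\mathbb{N}$ and use $x=y^++N\mathbf{1}_{\chi_0}$ and $x'=y^-+N\mathbf{1}_{\chi_0}$ as two tilings of the common blowup $Sx=Sx'$; this is an honest blowup of $K_r$ because $s(\chi_0)$ is componentwise positive, and the discrepancies of the two tilings differ by $\langle d_c,y\rangle\neq 0$, producing a template. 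This characterization reduces the check $c\in\mathcal{K}(H)$ to solving a rational linear system.

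\textbf{LP, attainment, and algorithm.} Since $\delta(B)/|B|=1-\max_i n_i/\sum_i n_i$ for every blowup $B$ of $K_r$, for each $c\in\mathcal{K}(H)$ the maximum of $\delta(B)/|B|$ over blowups admitting a discrepancy-$0$ perfect $H$-factor equals $1-t_c^*$, where $t_c^*$ is the optimum of the LP
\[
\min t\ \ \text{s.t.}\ \sum_\chi x_\chi s(\chi)_i\leq t\ \ \forall i,\ \ \sum_\chi x_\chi=\tfrac{1}{|H|},\ \ \langle d_c,x\rangle=0,\ \ x\geq 0.
\]
The LP has integer coefficients, so when feasible its optimum $t_c^*$ is rational and attained at a rational vertex $(x^*,t_c^*)$; componentwise positivity of each $s(\chi)$ then forces $n^*=Sx^*>0$, and clearing denominators produces an integer blowup of $(K_r,c)$ realizing $1-t_c^*$. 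Hence the maximum in Definition~\ref{delta_0_def} is attained and $\delta_0(H)\in\mathbb{Q}$. The algorithm is now evident: enumerate $c$, enumerate $\mathcal{F}$, compute $s$ and $d_c$, test membership in $\mathcal{K}(H)$ via a rational rank computation, solve the LP for each feasible $c$, and return the maximum of $1-t_c^*$ (or $0$ if no $c$ qualifies). The principal obstacle I anticipate is the $(\Rightarrow)$ direction of the $\mathcal{K}(H)$ characterization, where an abstract kernel vector must be upgraded into two honest $H$-factors of a single blowup; the positivity $s(\chi_0)>0$, a consequence of $r=\chi(H)$, is the feature that makes both the padding argument and the LP attainment go through.
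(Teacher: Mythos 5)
Your proposal is correct and takes essentially the same approach as the paper: a finite enumeration over 2-colorings $c$ of $K_r$, a membership test for $\mathcal{K}(H)$, and a rational linear program whose optimum realizes the best normalized minimum degree for each $c\in\mathcal{K}(H)$, with attainment and rationality following from standard LP theory. The one genuine (and worthwhile) difference is in the membership test: the paper checks whether $(K_r,c)$ is a template by setting up an auxiliary LP whose optimum is $0$ iff $c\in\mathcal{K}(H)$, whereas you prove the cleaner linear-algebraic characterization $c\in\mathcal{K}(H)\iff d_c\in\operatorname{rowspan}_{\mathbb{Q}}(S)$, reducing the test to a rank computation; your kernel-vector-plus-padding argument for the nontrivial direction, and your explicit use of componentwise positivity of $s(\chi_0)$ (both to make the padded $H$-factors live in an honest blowup of $K_r$ and to guarantee $n^*>0$ at the LP optimum), are slightly more careful than the paper's treatment, which glosses over the possibility of empty parts.
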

\begin{proof}
    We present an algorithm for computing $\delta_0(H).$ From the algorithm, it will be clear that the maximum in Definition~\ref{delta_0_def} is attained.
    
    The algorithm is as follows. We iterate over all possible $2^{\binom{r}{2}}$ $2$-edge-colorings of $K_r.$ For each coloring $c$ we need to check whether it is a template with respect to $H$ and if it is not, to find the maximum value of $\delta$ such that there is a blowup $B$ of $(K_r, c)$ with $\delta(B) = \delta |B|$ and $B$ has a perfect $H$-factor with $c(F) = 0.$ 
    
    Fix a $2$-edge-coloring $c$ of $K_r$. Let $\mathcal{C} \subseteq [r]^{V(H)}$ denote the set of all proper $r$-vertex-colorings of $H.$ Consider a blowup $B$ of $(K_r, c)$ with parts $A_1, A_2, \dots, A_r$ of sizes $|A_i| = a_i, i \in [r].$ For $f \in \mathcal{C},$ we define $a_i(f) = |\{ v \in V(H) \vert f(v) = i\}|, $ for $i \in [r],$ and $g(f) = \sum_{uv \in E(H)} c(f(u) f(v)),$ where we denote each vertex in $K_r$ by a color of $f$. We think of $f$ as an embedding of $H$ into $B.$ Then, $a_i(f)$ counts the number of vertices embedded into $A_i,$ while $g(f)$ denotes the discrepancy of the embedding. 
    
    Now, checking whether $(K_r, c)$ is a template for $H$ can be done using the following linear program.
    
    \begin{align*}
        \text{maximize } &\sum_{f \in \mathcal{C}} (x_f - y_f) \cdot g(f)\\
        \text{subject to } &\sum_{f \in \mathcal{C}} x_f \cdot a_i(f) = a_i, \forall i \in [r],\\
        &\sum_{f \in \mathcal{C}} y_f \cdot a_i(f) = a_i, \forall i \in [r],\\
        &\sum_{i=1}^r a_i = 1,\\
        &a_i \ge 0, \forall i \in[r]\\
        &x_f, y_f \ge 0, \forall f \in \mathcal{C}.
    \end{align*}
    
    We claim that if the maximum in the above linear program is $0,$ then $(K_r, c)$ is not a template, otherwise it is. Indeed, there exists an optimal feasible solution for which the vectors $x, y$ are fractional $H$-factors of a blowup of $(K_r, c)$ with parts of relative sizes $a_1, \dots, a_r,$ whereas the objective function corresponds to the difference in the discrepancies of the two fractional $H$-factors. Hence, if the maximum is $0,$ no two $H$-factors can have different discrepancies. On the other hand, if the maximum is nonzero, since the optimum is attained by some solution vector with rational entries, we may multiply it by a large number to get a solution with integer entries. It is not difficult to see that this corresponds to a blowup of $(K_r, c)$ and two $H$-factors of it with different discrepancies.
    
    Now, suppose we are given a coloring $c$ such that $(K_r, c)$ is not a template for $H.$ We wish to find a maximum $\delta$ such that there is a blowup $B$ of $(K_r, c)$ with $\delta(B) = \delta|B|$ for which there is an $H$-factor with discrepancy $0$.
    
    This can be found with the following linear program:
    \begin{align*}
        \text{maximize } &1 - a_r\\
        \text{subject to } &0 \le a_1 \le a_2 \le \dots a_r,\\
        &\sum_{i=1}^r a_i = 1,\\
        &\sum_{f \in \mathcal{C}} x_f \cdot g(f) = 0,\\
        &x_f \ge 0, \forall f \in \mathcal{C}.
    \end{align*}
    
    Again, there exists an optimal feasible solution to the above linear program corresponding to a blowup $B'$ of $(K_r,c)$ with relative part sizes $a_1 \le a_2 \le \dots a_r$ and a fractional $H$-factor $x$ of $B'$ with discrepancy $0$. Multiplying this optimal vector with an appropriate integer, we obtain an integral vector which corresponds to a blowup $B$ of $(K_r, c)$ and an $H$-factor with discrepancy $0$ with respect to $c.$ By the ordering of the $a_i$'s it follows that $\delta(B) = (1 - a_r)|B|,$ as needed. Finally, since the above linear program has integer coefficients, it has a rational solution, giving that $\delta_0(H) \in \mathbb{Q}$.
\end{proof}

Next, we prove some useful facts related to $\delta_0(H)$. We begin with the following simple claim, stating that the $b$-blowup of $K_r$ with $b = (r-1)! \cdot |H|$ can be tiled by copies of $H$ in a uniform manner. 
\begin{lemma}\label{lem:balanced blowup}
Let $B$ be the $(r-1)!|H|$-blowup of $K_r$ with parts $B_1,\dots,B_r$. Then, there exists a perfect $H$-factor $F$ in $B$ such that 
$e_F(B_i,B_j) = 2(r-2)!e(H)$ for every pair $1 \leq i < j \leq r$. 
Therefore, if $B$ is colored such that $B$ is the blowup of $(K_r,c)$ for a coloring $c$ of $K_r$, then $c(F) = c(K_r) \cdot 2(r-2)!e(H)$.

\end{lemma}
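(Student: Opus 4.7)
The plan is to construct $F$ by exploiting the natural action of the symmetric group $S_r$ on the $r$ parts of $B$. Fix an arbitrary proper $r$-vertex-coloring of $H$ with color classes $A_1,\dots,A_r$. For each permutation $\pi \in S_r$, I will include one copy $H_\pi$ of $H$ in $F$, embedded so that the color class $A_j$ of $H_\pi$ lands inside $B_{\pi(j)}$. Since $B$ is the complete $r$-partite blowup of $K_r$, any injection sending the color classes of $H$ to distinct parts of $B$ is automatically a valid embedding, so only the vertex-disjointness needs to be arranged. Together these $r!$ copies will use $r! \cdot |H| = r\cdot(r-1)!|H| = |B|$ vertices, so if disjointness can be arranged, the result is a perfect $H$-factor.

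The key step, and the only place where there is anything to check, is verifying that the embeddings can be made vertex-disjoint, i.e.\ that the demand placed on each part $B_i$ matches its size. In part $B_i$, the copy $H_\pi$ needs exactly $|A_{\pi^{-1}(i)}|$ vertices. For each fixed $j \in [r]$ there are $(r-1)!$ permutations $\pi$ with $\pi^{-1}(i)=j$, so the total demand on $B_i$ is
\[
\sum_{\pi \in S_r} |A_{\pi^{-1}(i)}| \;=\; (r-1)! \sum_{j=1}^r |A_j| \;=\; (r-1)!\,|H| \;=\; |B_i|.
\]
Hence we can partition each $B_i$ into $r!$ subsets, one per permutation $\pi$, where the subset indexed by $\pi$ has size $|A_{\pi^{-1}(i)}|$. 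Performing this partition independently in each of the $r$ parts and then choosing arbitrary bijections between $A_j$ and the chosen subset of $B_{\pi(j)}$ yields the $r!$ required vertex-disjoint copies.

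For the edge count, note that the copy $H_\pi$ contributes exactly $e_H(A_{\pi^{-1}(i)}, A_{\pi^{-1}(j)})$ edges between $B_i$ and $B_j$. Summing over $\pi \in S_r$ and using that each ordered pair $(k,l)$ with $k \neq l$ arises as $(\pi^{-1}(i),\pi^{-1}(j))$ for precisely $(r-2)!$ permutations, we obtain
\[
e_F(B_i,B_j) \;=\; \sum_{\pi \in S_r} e_H(A_{\pi^{-1}(i)}, A_{\pi^{-1}(j)}) \;=\; (r-2)! \sum_{k\neq l} e_H(A_k,A_l) \;=\; 2(r-2)!\,e(H),
\]
which is the desired equality. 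The final claim about the colored case then follows by summing the identity $c(F) = \sum_{i<j} c(ij)\cdot e_F(B_i,B_j)$ and pulling out the common factor $2(r-2)!e(H)$ to get $c(K_r)\cdot 2(r-2)!e(H)$. I do not anticipate any serious obstacle; the only subtlety is the double-counting that confirms the per-part demand equals $|B_i|$, and this is exactly why the blowup factor was chosen to be $(r-1)!|H|$.
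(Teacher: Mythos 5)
Your proof is correct and follows essentially the same approach as the paper: both constructions take one copy of $H$ for each permutation $\sigma \in S_r$, embedding $A_i$ into $B_{\sigma(i)}$. The only cosmetic difference is that you verify the per-part vertex count and the edge count by explicit double-counting over permutations, whereas the paper appeals to symmetry to conclude $e_F(B_i,B_j)=e(F)/\binom{r}{2}$; these are equivalent.
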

\begin{proof}
Let $A_1,A_2,\dots,A_r$ be the parts of an $r$-vertex-coloring of $H$. Then, there exists a perfect $H$-factor $F$ of $B$ that contains for every permutation $\sigma:[r]\rightarrow[r]$ a copy of $H$ with the vertices of $A_i$ in cluster $B_{\sigma(i)}$ for every $1\leq i\leq r$. Note that for every $1\leq i,j\leq r$, by the symmetry of $F$ we have that
$$e_{F}(B_{i},B_{j}) = \frac{e(H)|B|}{\binom{r}{2}|H|}.$$
Using that $|B| = r!|H|$, the statement follows. 
\end{proof}
\noindent
Lemma \ref{lem:balanced blowup} implies that if $\mathcal{K}(H)$ contains a coloring $c$ of $K_r$ with $c(K_r) = 0$, then $\delta_0(H) = 1 - 1/r$. Indeed, by taking $B$ to be the $(r-1)!|H|$-blowup of $(K_r,c)$, we get by Lemma~\ref{lem:balanced blowup} that $B$ has a perfect $H$-factor $F$ with $c(F) = c(K_r) \cdot 2(r-2)!e(H) = 0$. 
This implies that $\delta_0(H) \geq \delta(B)/|B| = 1 - 1/r$ by the definition of \nolinebreak $\delta_0$. 

The next lemma gives an important property of $\delta_0(H)$. 

\begin{lemma}\label{delta_0_temp}
For every $2$-edge-coloring $c\in\mathcal{K}(H)$ with $c(K_r) > 0$ the following holds. Let $B$ be a blowup of $(K_r,c)$ with $\delta(B)> \delta_0(H)|B|$. Then for every $H$-factor $F$ of $B$, $c(F) > 0$.
\end{lemma}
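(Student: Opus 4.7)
The plan is a proof by contradiction. Suppose some perfect $H$-factor $F$ of $B$ has $c(F) \leq 0$. Because $c \in \mathcal{K}(H)$, the colored graph $(K_r,c)$ is not a template, so every perfect $H$-factor of the same blowup $B$ carries the same discrepancy; in particular all of them have value $c(F) \leq 0$. If $c(F) = 0$, then the pair $(c,B)$ already witnesses $\delta_0(H) \geq \delta(B)/|B|$ by Definition~\ref{delta_0_def}, contradicting the hypothesis $\delta(B) > \delta_0(H)\cdot|B|$. So from now on I assume $c(F) < 0$.

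The main idea is to build a larger blowup $B^*$ of $(K_r,c)$ that admits a perfect $H$-factor of discrepancy exactly $0$ while still satisfying $\delta(B^*)/|B^*| > \delta_0(H)$, again contradicting the maximality in Definition~\ref{delta_0_def}. To produce a positive-discrepancy companion to $F$, let $B_0$ be the $(r-1)!|H|$-blowup of $(K_r,c)$; Lemma~\ref{lem:balanced blowup} gives a perfect $H$-factor $F_0$ of $B_0$ with $c(F_0) = c(K_r)\cdot 2(r-2)!e(H) > 0$, using the assumption $c(K_r)>0$. Set $\alpha = c(F_0) > 0$ and $\beta = -c(F) > 0$ (both positive integers), and let $B^*$ be the blowup of $(K_r,c)$ whose $i$-th part has size $\alpha a_i + \beta(r-1)!|H|$, where $a_1,\dots,a_r$ are the part sizes of $B$. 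Equivalently, $V(B^*)$ is the disjoint union of $\alpha$ copies of $V(B)$ and $\beta$ copies of $V(B_0)$, with all edges between different summands inserted in the unique color consistent with $c$; this is well-defined because every cross-edge goes between part $i$ of one summand and part $j\ne i$ of another, and thus must receive color $c(ij)$. Placing $F$ on each copy of $B$ and $F_0$ on each copy of $B_0$ yields a perfect $H$-factor of $B^*$ whose discrepancy is $\alpha c(F) + \beta c(F_0) = c(F_0)c(F) - c(F)c(F_0) = 0$.

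The remaining (essentially routine) step is to verify $\delta(B^*) > \delta_0(H)\cdot|B^*|$. Since $B^*$ is a blowup of $K_r$, its minimum degree equals $|B^*|$ minus the size of the largest part, namely $\alpha\max_i a_i + \beta(r-1)!|H|$. The hypothesis $\delta(B) > \delta_0(H)|B|$ gives the strict bound $\max_i a_i < (1-\delta_0(H))|B|$, and the inequality $\delta_0(H) \leq 1 - 1/r$ noted after Definition~\ref{delta_0_def} gives $(r-1)!|H| \leq (1-\delta_0(H))|B_0|$. Since $\alpha > 0$, adding $\alpha$ times the first (strict) inequality to $\beta$ times the second yields $\alpha\max_i a_i + \beta(r-1)!|H| < (1-\delta_0(H))(\alpha|B|+\beta|B_0|) = (1-\delta_0(H))|B^*|$, so $\delta(B^*)/|B^*| > \delta_0(H)$, contradicting the maximality of $\delta_0(H)$. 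I do not anticipate a genuine obstacle; the only point needing minor care is checking that the glued object $B^*$ is indeed a blowup of $(K_r,c)$, but this is immediate from the observation above about cross-edges.
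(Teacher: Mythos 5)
Your proposal is correct and follows essentially the same route as the paper: the same dismissal of the case $c(F)=0$, the same use of Lemma~\ref{lem:balanced blowup} to produce a positive-discrepancy factor $F_1$ of the balanced blowup, and the same ``linear combination'' blowup with parts $c(F_1)a_i - c(F)(r-1)!|H|$ carrying a zero-discrepancy perfect $H$-factor. The only (cosmetic) difference is in verifying the minimum-degree condition: the paper shows $\delta(B_2)/|B_2|\ge\delta(B)/|B|$ by reducing to $a_r\ge |B|/r$, whereas you bound the largest part of $B^*$ directly using $\delta_0(H)\le 1-1/r$; both are valid.
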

\begin{proof}
Fix $c\in\mathcal{K}(H)$ with $c(K_r)>0$.
Let us assume towards a contradiction that there exists a blowup $B$ of $(K_r,c)$ with $\delta(B)> \delta_0(H)|B|$ and a perfect $H$-factor $F$ of $B$ such that $c(F)\leq0$. Note that if $c(F) = 0$ then $\delta_0(H)\geq\delta(B)/|B| > \delta_0(H)$ which is a contradiction.
Therefore, let us assume that $c(F)<0$. 
We use an intermediate-value argument: we will take a ``union'' of $B$ with a balanced blowup of $(K_r,c)$, which has positive discrepancy, choosing the parameters in such a way that this union $B_2$ will have a perfect $H$-factor with discrepancy $0$. However, $B_2$ will also have normalized minimum degree as large as that of $B$, and this would contradict the definition of $\delta_0(H)$. 
The details follow. Let $B_1$ be an $(r-1)!|H|$-blowup of $(K_r,c)$. By Lemma~\ref{lem:balanced blowup}, there exists a perfect $H$-factor $F_1$ in $B_1$ such that 
$$c(F_1)=c(K_r)\cdot 2(r-2)!e(H)>0.$$ 
Let $a_1 \leq a_2 \leq \dots \leq a_r$ be the sizes of the parts of $B$. Let $B_2$ be a blowup of $(K_r,c)$ with parts of sizes $b_1,b_2,\dots b_r$ where
$b_i = c(F_1) \cdot a_i -c(F)(r-1)!|H|$.
Note that $|B_2| = c(F_1)|B| - r\cdot c(F)(r-1)!|H|$, $\delta(B) = |B| - a_r$ and $\delta(B_2) = |B_2| - b_r$. 
It follows that
$\frac{\delta(B_2)}{|B_2|}\geq\frac{\delta(B)}{|B|}$. Indeed, this inequality is equivalent to $a_r \geq |B|/r$, which clearly holds. 
Let $F_2$ be a perfect $H$-factor in $B_2$ consisting of $c(F_1)$ copies of $F$ and $-c(F)$ copies of $F_1$. It follows that 
$$
c(F_2) = c(F_1)c(F)-c(F)c(F_1) = 0.
$$
By the definition of $\delta_0(H)$ and since $c\in\mathcal{K}(H)$, we get that
$$\delta_0(H)\geq \frac{\delta(B_2)}{|B_2|}\geq\frac{\delta(B)}{|B|}>\delta_0(H).$$
As this is a contradiction, the statement must hold.
\end{proof}

We now move to define, for every $r$-chromatic $H$, a certain $r$-partite graph $H^*$ having several useful properties. Later on, when trying to find an $H$-factor with high discrepancy, we often do this by finding an $H^*$-factor and tiling each copy of $H^*$ with copies of $H$. For $r = 2$ we simply set $H^* = H$. The key case is $r \geq 3$, handled by the following lemma. Note that if $r \geq 3$, then $H^*$ is complete $r$-partite. 
\begin{lemma}\label{h_star}
Let $r\geq 3$ and $\eta > 0$. For convenience, put $\alpha(H) := \max\{\delta_0(H),1-1/\chi^*(H)\}$. 
There exists a graph $H^* = H^*(H,\eta)$ such that:
\begin{itemize}
    \item $H^*$ is a complete $r$-partite graph;
    \item $H^*$ has a perfect $H$-factor;
    \item $\alpha(H) \leq 1-1/\chi_{cr}(H^*) \leq \alpha(H)+\eta/4$.
    \item If $\alpha(H) +\eta/4< (r-1)/r$, then $hcf(H^*) = 1$.
    \item If $\delta_0(H)<1-1/r$ then $\delta(H^*)/|H^*|>\delta_0(H)$. 
\end{itemize}
\end{lemma}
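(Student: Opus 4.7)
The construction splits into two cases depending on the value of $\alpha(H)$.

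\emph{Case A ($\alpha(H) = 1-1/r$).} Take $H^*$ to be the balanced $(r-1)!|H|$-blowup of $K_r$. Lemma~\ref{lem:balanced blowup} supplies a perfect $H$-factor, and since all parts are equal, $\chi_{cr}(H^*) = r$, giving $1-1/\chi_{cr}(H^*) = 1-1/r = \alpha(H)$, which is property~3. Property~4 is vacuous because $\alpha(H) + \eta/4 > (r-1)/r$. Finally $\delta(H^*)/|H^*| = 1-1/r$, yielding property~5 whenever $\delta_0(H) < 1-1/r$.

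\emph{Case B ($\alpha(H) < 1-1/r$).} Then $1-1/\chi^*(H) < 1-1/r$, so $hcf(H) = 1$ and $\chi^*(H) = \chi_{cr}(H) = (r-1)|H|/(|H|-\sigma(H))$. Set $\alpha' := (r-1)\alpha(H) - (r-2)$; for a complete $r$-partite graph $G$ with smallest part of relative size $s/|G|$, one has $1-1/\chi_{cr}(G) = (r-2+s/|G|)/(r-1)$, so I aim for $s_1(H^*)/|H^*|$ slightly above $\alpha'$. A short computation gives $\alpha' \in [\sigma(H)/|H|,\, 1/r)$. Pick a rational $\beta = p/q$ strictly inside $(\alpha',\, \min\{1/r,\, \alpha' + (r-1)\eta/4\})$ with $\gcd(q,|H|)=1$ (possible since such rationals are dense in any open interval). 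Let $A := (r-1)p$ and $B := q-p$, so $A \leq B$ and $A/(A+(r-1)B) = \beta$. Define $H^*$ to be the complete $r$-partite graph with part sizes $s_1 = NA$ and $s_i = NB + (i-2)$ for $2 \leq i \leq r$, where $N$ is a large positive integer chosen so that $|H|$ divides $|H^*| = N(r-1)q + (r-1)(r-2)/2$; the linear congruence in $N$ is solvable because $\gcd((r-1)q, |H|) = \gcd(r-1, |H|)$ divides $(r-1)(r-2)/2$.

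The consecutive differences $s_2 - s_1 = N(B-A)$ and $s_{i+1} - s_i = 1$ for $i \geq 2$ have $\gcd = 1$, so $hcf_\chi(H^*) = 1$, and hence $hcf(H^*) = 1$ since $r \geq 3$ (property~4). As $N \to \infty$,
\[
\frac{s_1}{|H^*|} \to \beta, \qquad \frac{\delta(H^*)}{|H^*|} = 1 - \frac{s_r}{|H^*|} \to \frac{r-2+\beta}{r-1}.
\]
The strict inequality $\beta > \sigma(H)/|H|$ makes the latter limit exceed $1-1/\chi_{cr}(H)$ by a fixed positive gap, so Theorem~\ref{existence} yields a perfect $H$-factor of $H^*$ for $N$ large (property~2). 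The same asymptotics place $1-1/\chi_{cr}(H^*)$ inside $(\alpha(H),\, \alpha(H)+\eta/4)$ (property~3), and give $\delta(H^*)/|H^*| > \alpha(H) \geq \delta_0(H)$ via $\beta > \alpha'$ (property~5).

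\emph{Main obstacle.} The delicate step is juggling four simultaneous constraints from a single construction: a narrow window on $1-1/\chi_{cr}(H^*)$, the requirement $hcf(H^*) = 1$, the divisibility $|H| \mid |H^*|$, and a sufficient minimum-degree gap for Theorem~\ref{existence}. The small constant offsets $+(i-2)$ are the key device, since they break the $\gcd$ of the consecutive differences without disturbing the leading-order ratios as $N$ grows. The strict inequality $\beta > \sigma(H)/|H|$, which creates the minimum-degree gap above the $\chi_{cr}(H)$-threshold of Theorem~\ref{existence}, fits inside the target window precisely because $\alpha' \geq \sigma(H)/|H|$ in Case~B and $\eta > 0$.
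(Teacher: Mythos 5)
Your Case A matches the paper's easy case. In Case B you take a genuinely different route for the second bullet: instead of building $H^*$ as a disjoint union of explicit blowups each of which visibly carries an $H$-factor (the paper's $B_1,B_2,B_3,B_4$), you engineer the part sizes arithmetically and then invoke Theorem~\ref{existence} applied to $G=H^*$ to produce the perfect $H$-factor, using the strict gap $\beta>\sigma(H)/|H|$ to beat the Kühn--Osthus threshold. That shortcut is sound in principle and the asymptotic computations for properties 3--5 check out. However, it makes everything hinge on $|H|$ dividing $|H^*|$, and that is where the argument breaks.

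The gap is the solvability claim for the congruence $N(r-1)q\equiv -(r-1)(r-2)/2\pmod{|H|}$. You assert that $\gcd((r-1)q,|H|)=\gcd(r-1,|H|)$ divides $(r-1)(r-2)/2$, but this is false in general: it requires $\gcd(r-1,|H|)\mid \tfrac{(r-1)(r-2)}{2}$, which fails whenever $r$ is odd and $\gcd(r-1,|H|)$ does not divide $\tfrac{r-1}{2}(r-2)$. Concretely, take $r=3$ and any $H$ with $|H|$ even and an unbalanced $3$-coloring (e.g.\ a triangle with a pendant edge, $|H|=4$, $\alpha(H)=5/8<2/3$): then $(r-1)(r-2)/2=1$, $\gcd(2,|H|)=2\nmid 1$, and indeed $|H^*|=2qN+1$ is odd while $|H|$ is even, so no choice of $N$ makes $|H|$ divide $|H^*|$. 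Without this divisibility, $H^*$ has no perfect $H$-factor and Theorem~\ref{existence} is inapplicable, so property 2 fails. The difficulty is not cosmetic: any perturbation by generic constant offsets faces the same tension between forcing $hcf(H^*)=1$ (which wants consecutive part-differences with gcd $1$) and forcing $|H|\mid |H^*|$. The paper resolves it by realizing the unit perturbation through actual $r$-colorings of $H$ — the Bézout combination $B_4$ built from the set $D(\mathcal{C})$, using $hcf_\chi(H)=1$ — so that the perturbing gadget itself carries a perfect $H$-factor and divisibility by $|H|$ comes for free. Some device of this kind, tied to the colorings of $H$ rather than to pure arithmetic on the part sizes, is the missing ingredient in your construction.
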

\begin{definition}[The graph $H^*$]\label{def:H^*}
Let $H$ be an $r$-chromatic graph and let $\eta > 0$. If $r = 2$ then define $H^* = H$, and else define $H^*$ to be the graph given by Lemma~\ref{h_star}.
\end{definition}

\begin{proof}[Proof of Lemma~\ref{h_star}]
Let $A_1,A_2,\dots,A_r$ be an $r$-coloring of $H$ with $|A_r| = \sigma(H)$.
Let $B_1$ be a blowup of $K_r$ with parts $B_1^1,B_1^2,\dots, B_1^r$, where the first $r-1$ parts have size $|H|-\sigma(H)$, and $B_1^r$ has size $(r-1)\sigma(H)$. 
Let $F_1\subseteq B_1$ be an $H$-factor containing, for each $1\leq i\leq r-1$, a copy of $H$ in which the vertices of $A_j$ are mapped into $B_1^{(j+i) \pmod{r-1}}$ for $1\leq j\leq r-1$, and the vertices of $A_r$ are mapped into $B_1^{r}$. Note that the sizes of $B_1^1,B_1^2,\dots, B_1^r$ are precisely chosen to accommodate these (vertex-disjoint) copies of $H$. 
So $F_1$ is a perfect $H$-factor of $B_1$. 

Let $B_2$ be the $|H|$-blowup of $K_r$ and let $B_2^1,B_2^2,\dots,B_2^r$ be the parts of $B_2$.
Similarly as before, let $F_2$ be a perfect $H$-factor of $B_2$ containing, for every $1\leq i\leq r$, a copy of $H$ in which $A_j$ is mapped into $B_2^{(i+j) \pmod{r}}$ for every $1\leq j\leq r$. Again, the sizes of $B_2^1,B_2^2,\dots,B_2^r$ are precisely chosen to fit these copies of $H$, as $|H| = |A_1| + \dots + |A_r|$.

Note that if $\alpha(H)+\eta/4\geq (r-1)/r$,
then we can take $H^* = B_2$, trivially fulfilling all the necessary conditions. 
Indeed, the first two items in Lemma \ref{h_star} are immediate, the third item holds because $\chi_{cr}(B_2) = r$ and $\alpha(H) \leq 1 - 1/r$, the fourth item holds vacuously, and the fifth item holds because $\delta(H^*)/|H^*| = 1 - 1/r$. 
Let us therefore assume that $\alpha(H) + \eta/4 < (r-1)/r$. Hence, 
$$\min\{1-\delta_0(H),1/\chi^*(H)\}-\eta/4 = 1 - \alpha(H) - \eta/4 >1/r.$$ 
In particular, $\chi^*(H) < r$, which implies that $\chi^*(H) = \chi_{cr}(H)$ and $hcf(H) = 1$ by the definition of $\chi^*(H)$.
Fix a rational number $\beta$ in the range $1 - \alpha(H) - 0.2\eta \leq \beta \leq 1-\alpha(H) - 0.1\eta$,
and note that
$$
1/r < \beta < 1 - \alpha(H) \leq 1/\chi_{cr}(H).
$$
We now define a complete $r$-partite graph $B_3$ with $r-1$ equal parts and an $r$th smaller part, such that $1/\chi_{cr}(B_3) = \beta$. 
Indeed, let $B_3$ be a blowup of $K_r$ with $(r-1)$ parts $B_3^{1},\dots,B_3^{r-1}$ of size $k|H|+\ell(|H|-\sigma(H))$ each, and one (smaller) part $B_3^{r}$ of size $k|H|+\ell(r-1)\sigma(H)$, where $k,\ell\in \mathbb{N}$ are determined later. Note that $B_3$ is essentially a ``linear combination'' of $B_1$ and $B_2$, namely, $B_3$ can be partitioned into $k$ copies of $B_2$ and $\ell$ copies of $B_1$. 
Since $B_1$ and $B_2$ each have a perfect $H$-factor, so does $B_3$. 
Note that 
\begin{equation}\label{eq:chi_cr(B3)}
    1/\chi_{cr}(B_3)= \frac{|B_3| - |B_3^r|}{(r-1)|B_3|} = 
    \frac{k|H|+\ell(|H|-\sigma(H))}{kr|H|+\ell(r-1)|H|}.
\end{equation}
We now show that there exist $k,\ell$ such that $1/\chi_{cr}(B_3) = \beta$.  For this, we need the right-hand side in \eqref{eq:chi_cr(B3)} to equal $\beta$. 
Reordering this equation, we get
$$
    k = \frac{\ell(|H|-\sigma(H)) - \ell(r-1)|H|\beta}{r|H|\beta - |H|}.
$$
Note that the term above is of the form $\ell\cdot q$ for some suitable $q\in\mathbb{Q}$. Therefore, there exists $\ell\in \mathbb{N}$ such that $k\in\mathbb{Z}$. From now on, fix such $k$ and $\ell$. Using 
$1/r< \beta < 1/\chi_{cr}(H)$, we get 
$$
(r-1)|H|\beta<|H|-\sigma(H)
$$
and 
$$
|H|<r|H|\beta.
$$
Therefore, $k>0$.

Our final graph $H^*$ will be obtained by blowing up $B_3$ by a large integer, and then changing the sizes of the parts by a small amount to make sure that $hcf(H^*) = 1$. We now define this small perturbation. 
Recall the set $\mathcal{D}(\mathcal{C})$ defined in the introduction. For each $s\in \mathcal{D}(\mathcal{C})$, let $A_s^1,A_s^2,\dots,A_s^r$ be the parts of an $r$-vertex-coloring of $H$ with $A_s^1-A_s^2 = s$. 
Since $hcf(H) = 1$, it follows by Bézout's Identity (see e.g. Theorem 2.3. in \cite{dujella2021number}) that there exist integers $x_s$ for each $s\in\mathcal{D}(\mathcal{C})$ such that 
$$\sum_{s\in \mathcal{D}(\mathcal{C})}x_ss = 1.$$
Let $B_4$ be a blowup of $K_r$ with parts $B_4^1,B_4^2,\dots,B_4^r$ of size $a_1,a_2,\dots,a_r$ respectively, where
\begin{align*}
a_1 &= \sum_{\substack{s\in\mathcal{D}(\mathcal{C})\\x_s>0}}x_s\cdot A_s^1 -\sum_{\substack{s\in\mathcal{D}(\mathcal{C})\\x_s<0}}x_s\cdot A_s^2,\\
a_2 &= \sum_{\substack{s\in\mathcal{D}(\mathcal{C})\\x_s>0}}x_s\cdot A_s^2 -\sum_{\substack{s\in\mathcal{D}(\mathcal{C})\\x_s<0}}x_s\cdot A_s^1,\\
a_i &= \sum_{s\in\mathcal{D}(\mathcal{C})}|x_s|\cdot A_s^i\text{ for }3\leq i\leq r.
\end{align*}
Note that $a_1-a_2=\sum_{s\in \mathcal{D}(\mathcal{C})}x_ss = 1$. Additionally, there exists a perfect $H$-factor of $B_4$ containing for each $s\in \mathcal{D}(\mathcal{C})$ with $x_s>0$, $x_s$ copies of $H$ with $A_s^i$ on $B_4^i$ for $1\leq i\leq r$; and for each $s \in \mathcal{D}(\mathcal{C})$ with $x_s< 0$, $-x_s$ copies of $H$ with $A_s^1$ on $B_4^2$, $A_s^2$ on $B_4^1$ and $A_s^i$ on $B_4^i$ for $3\leq i\leq r$ if . 
Let $a=\sum_{1\leq i\leq r}a_i$. Fix a large integer $M = M(H,\eta)$, to be chosen later. Let $B_5$ be a blowup of $K_r$ with parts of size $b_1,b_2,\dots,b_r$, where for $1\leq i \leq r$, 
$$b_i = a_i+aM \cdot |B_3^i|.$$
This immediately implies that $hcf(B_5)=1$, since, using $r\geq 3$, $b_1-b_2 = a_1-a_2 = 1$ (recall that $|B_3^1| = \dots = |B_3^{r-1}|$). Note that the vertices of $B_5$ can be partitioned into a copy of $B_4$ and $aM$ copies of $B_3$. As both $B_3$ and $B_4$ have a perfect $H$-factor, so does $B_5$. As $\chi_{cr}(H)<r$, we have $(r-1)\sigma(H)<|H|-\sigma(H)$. This implies that $b_r \leq b_i$ for all $1 \leq i \leq r-1$, and hence
$\sigma(B_5) = b_r = a_r + aM|B_3^r|$. Also, $|B_5| = a + aM|B_3|$.
Now we get that
\begin{align*}   
    \frac{1}{\chi_{cr}(B_5)} &= \frac{a+a M|B_3| - a_r - aM|B_3^r|}{(r-1)(a+aM|B_3|)} = 
    \frac{|B_3| - |B_3^r|}{(r-1)|B_3|} - 
    \frac{a|B_3^r| - a_r|B_3|}{(r-1)|B_3|(a+aM|B_3|)},
\end{align*}
where the second equality above is a simple calculation. 
Recall that $1/\chi_{cr}(B_3)=\frac{|B_3| - |B_3^r|}{(r-1)|B_3|}$. Choose $M$ large enough so that the second term above is at most $0.05\eta$ in absolute value. Then we have
\begin{align*}   
    \left|1/\chi_{cr}(B_3)-1/\chi_{cr}(B_5)\right| \leq 
     0.05\eta. 
\end{align*}
Recalling that $1/\chi_{cr}(B_3) = \beta$ and using $1 - \alpha(H) - 0.2\eta \leq \beta \leq 1-\alpha(H) - 0.1\eta$, we get 
$$1 - \alpha(H) - \eta/4 \leq 1/\chi_{cr}(B_5) \leq 1 - \alpha(H).$$ This proves the third item in the lemma. 
It remains to prove the last item. 
Note that the largest part of $B_5$ has size 
$$\max_{1 \leq i \leq r-1}b_i \leq a + aM \cdot (|B_3| - |B_3^r|)/(r-1) = a + aM \cdot \beta |B_3|,$$ where the two equalities use that $|B_3^1| = \dots = |B_3^{r-1}|$ and that $\beta = 1/\chi_{cr}(B_3)$. The minimum degree of $B_5$ is $|B_5| - \max_{i}b_i$. Hence,
\begin{align*}
\delta(B_5)/|B_5|\geq
1 - \frac{a + aM \cdot \beta |B_3|}{a + aM|B_3|} = \frac{aM(1-\beta)|B_3|}{a+aM|B_3|} = 1-\beta - \frac{(1-\beta)a}{a+aM|B_3|} &> 1-\beta - 0.1\eta \\ &\geq \alpha(H) \geq \delta_0(H),
\end{align*}
where the strict inequality holds for large enough $M$. 
We see that $H^* = B_5$ fulfills all necessary conditions.
\end{proof}

We end this section with the following important property of $H^*$, 
allowing us to control the discrepancy of $H$-factors of $H^*$ under certain assumptions. 
\begin{lemma}\label{delta_0}
Let $c\in\mathcal{K}(H)$ with $c(K_r) > 0$. Let $J$ be a colored copy of $H^*$ and suppose that $J$ is a blowup of $(K_r,c)$. Then for every perfect $H$-factor $F$ of $J$ it holds that $c(F) > 0$.
\end{lemma}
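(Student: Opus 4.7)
The plan is to reduce to Lemma~\ref{delta_0_temp} wherever possible, and to handle the remaining edge case via a blowup trick. The key initial observation is that $c\in\mathcal{K}(H)$ means $(K_r,c)$ is not a template, so by Definition~\ref{def:template}, any two perfect $H$-factors of a fixed blowup of $(K_r,c)$ share the same discrepancy; hence it suffices to compute $c(F)$ for any single convenient perfect $H$-factor $F$ of $J$ and show that it is positive.

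First I would dispatch the trivial case $r=2$: here $H^*=H$ by Definition~\ref{def:H^*}, and $c(K_2)>0$ forces all edges of the blowup $J$ to have color $+1$, so any perfect $H$-factor $F$ of $J$ satisfies $c(F)=e(F)>0$. For $r\ge 3$, I split based on whether $\delta_0(H)<1-1/r$ or $\delta_0(H)=1-1/r$ (noting $\delta_0(H)\le 1-1/r$ always). In the first subcase, item~5 of Lemma~\ref{h_star} gives $\delta(H^*)/|H^*|>\delta_0(H)$, so $\delta(J)>\delta_0(H)|J|$, and Lemma~\ref{delta_0_temp} finishes the argument immediately.

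The main obstacle is the edge case $r\ge 3$ with $\delta_0(H)=1-1/r$, where Lemma~\ref{delta_0_temp} just barely fails for lack of strict inequality. Here $\alpha(H)=\max\{\delta_0(H),1-1/\chi^*(H)\}=1-1/r$, so the construction in the proof of Lemma~\ref{h_star} (the branch $\alpha(H)+\eta/4\ge(r-1)/r$) yields $H^*=B_2$, the balanced $|H|$-blowup of $K_r$; accordingly $J$ is the balanced $|H|$-blowup of $(K_r,c)$. To bypass the failure of strictness, my plan is to pass to $J'$, the $(r-1)!$-blowup of $J$, which is the balanced $(r-1)!|H|$-blowup of $(K_r,c)$, and apply Lemma~\ref{lem:balanced blowup} to obtain a perfect $H$-factor $F'$ of $J'$ with $c(F')=c(K_r)\cdot 2(r-2)!\,e(H)>0$. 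Since $c\in\mathcal{K}(H)$ and $J'$ is a blowup of $(K_r,c)$, the non-template property forces every perfect $H$-factor of $J'$ to share this same positive discrepancy.

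Finally, given any perfect $H$-factor $F$ of $J$, I partition $V(J')$ into $(r-1)!$ vertex-disjoint sub-blowups of $(K_r,c)$ each isomorphic to $J$ as a colored graph, by distributing the $(r-1)!|H|$ vertices of each part of $J'$ into $(r-1)!$ groups of size $|H|$. Placing a copy of $F$ inside each such sub-blowup yields a perfect $H$-factor $F^*$ of $J'$ with $c(F^*)=(r-1)!\,c(F)$. Since $c(F^*)>0$ by the previous paragraph, this forces $c(F)>0$, completing the proof.
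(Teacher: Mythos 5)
Your proof is correct and takes essentially the same approach as the paper: reduce to the case where $H^*$ is the balanced $|H|$-blowup of $K_r$ (via Lemma~\ref{delta_0_temp} or, equivalently, a case split on $\delta_0(H)$), pass to the $(r-1)!|H|$-blowup of $(K_r,c)$, apply Lemma~\ref{lem:balanced blowup} to produce a positive-discrepancy factor, and invoke $c\in\mathcal{K}(H)$ to force every perfect $H$-factor of that blowup (in particular one assembled from copies of $F$) to share that discrepancy. The only differences are cosmetic: you state the split explicitly and argue directly, while the paper assumes $c(F)\leq 0$ for contradiction and works with a slightly larger auxiliary blowup $B'$.
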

\begin{proof}
The statement holds trivially if $r=2$ as then, $K_2$ must be monochromatic. Therefore, let us assume that $r\geq 3$.
Recall that $H^*$ is a complete $r$-partite graph. By Lemma~\ref{delta_0_temp}, we may assume that $\delta(H^*)\leq \delta_0(H)|H^*|$. Then, by the definition of $H^*$ in Lemma \ref{h_star}, we have $\delta_0(H) = \delta(H^*)/|H^*| = 1-1/r$.
So $H^*$ is a balanced $r$-partite graph. 
Now, let $h\in\mathbb{N}$ and $J$ a colored copy of $H^*$ such that $J$ is an $h$-blowup of $(K_r,c)$ for some $c\in\mathcal{K}(H)$ with $c(K_r)>0$. Let $F$ be a perfect $H$-factor of $J$ and let us assume towards a contradiction that $c(F)\leq 0$. 

Let $B$ be the $(r-1)!|H|$-blowup of $(K_r,c)$.
By Lemma~\ref{lem:balanced blowup}, there exists a perfect $H$-factor $F'$ in $B$ with 
$$c(F') = c(K_r)\cdot 2(r-2)!e(H)>0.$$
Note that since $c\in\mathcal{K}(H)$, we get that every perfect $H$-factor of $J$ must have discrepancy $c(F)$ and every perfect $H$-factor of $B$ must have discrepancy $c(F')$.

Next, consider an $h\cdot (r-1)!|H|$-blowup $B'$ of $(K_r,c)$. Clearly, $B'$ has a perfect $J$-factor $F_{J}$ containing $(r-1)!|H|$ copies of $J$ and a perfect $B$-factor $F_B$ containing $h$ copies of $B$. Let $F_1$ be a perfect $H$-factor of $B'$ obtained by taking the union of a perfect $H$-factor of each copy of $J$ in $F_{J}$ and $F_2$ similarly by taking the union of a perfect $H$-factor of each copy of $B$ in $F_B$. As we showed above, we get $c(F_1) = (r-1)!|H|\cdot c(F)\leq 0$ and $c(F_2)=h\cdot c(F')>0$. It follows that $c(F_1)\neq c(F_2)$ and therefore, $(K_r,c)$ is a template for $H$. This contradicts the assumption $c\in \mathcal{K}(H)$. 
\end{proof}

\section{Regularity and its application}\label{sec:regularity}
The goal of this section is twofold. First, we recall the well-known Szemer\'edi's regularity lemma and the blowup lemma, which play a key role in our proofs. And second, we introduce the general setup in which we shall apply these tools. This setup will be used throughout the rest of the paper. 
\subsection{Background on regularity}\label{sec_regularity}
Let us recall the basic definitions and notation related to the regularity lemma. 
Given a bipartite graph $G$ with vertex-classes $A,B\subseteq V(G)$, the density of $G$ is defined as
$$
d_G(A,B) = \frac{e_G(A,B)}{|A||B|}.
$$
Given $\varepsilon,d>0$, we say that $G$ is $(\varepsilon,d)$-regular if 
$$
d_G(A,B)\geq d
$$
and for every $X\subseteq A$ with $|X|\geq \varepsilon |A|$ and $Y\subseteq B$ with $|Y|\geq \varepsilon |B|$, we have that 
$$
|d_G(A,B)-d_G(X,Y)|< \varepsilon.
$$
Additionally, we say that $G$ is $(\varepsilon,d)$-superregular if $d_G(a)> d |B|$ for every $a\in A$, $d_G(b)> d |A|$  for every $b\in B$, and for every $X\subseteq A$ and $Y\subseteq B$ with $|X|\geq \varepsilon|A|$ and $|Y|\geq \varepsilon|B|$ we have
$$
d_G(X,Y)> d.
$$ 

\noindent
We shall use the following two color version of the regularity lemma.
\begin{lemma}[\cite{komlos1996szemeredi}]\label{regularity}
For every $\varepsilon>0$ and $\ell_0\in\mathbb{N}$ there exists $L_0=L_0(\varepsilon,\ell_0)$ so that the following holds. Let $d\in[0,1]$ and let $G$ be a graph on $n\geq L_0$ vertices with $2$-edge-coloring $f$. Then there exists a partition $V_0,V_1,...V_\ell$ of $V(G)$ and a spanning subgraph $G'$ of $G$, such that the following conditions hold:
\begin{enumerate}
    \item $\ell_0\leq \ell\leq L_0$;
    \item $d_{G'}(x)\geq d_G(x)-(2d+\varepsilon)n$ for every $x\in V(G)$;
    \item the subgraph $G'[V_i]$ is empty for all $1\leq i\leq \ell$;
    \item $|V_0|\leq \varepsilon n$;
    \item $|V_1|=|V_2|=...=|V_\ell|$;
    \item for all $1\leq i <j \leq \ell$, $G'[V_i,V_j]^+$ is either an $(\varepsilon, d)$-regular pair or empty.
    \item for all $1\leq i <j \leq \ell$, $G'[V_i,V_j]^-$ is either an $(\varepsilon, d)$-regular pair or empty.
\end{enumerate}
\end{lemma}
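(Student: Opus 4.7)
The plan is to reduce the statement to Szemerédi's regularity lemma applied simultaneously to the two color classes $G^+$ and $G^-$, and then perform a standard cleaning step to produce $G'$.

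First, I would run the mean-square-density increment argument with the \emph{joint} index
\[\mathrm{ind}(V_1,\ldots,V_\ell) := \sum_{1 \le i < j \le \ell} \frac{|V_i||V_j|}{n^2}\bigl(d_{G^+}(V_i,V_j)^2 + d_{G^-}(V_i,V_j)^2\bigr),\]
which is bounded above by $2$. Whenever either color has more than $\varepsilon \ell^2$ pairs that are $\varepsilon$-irregular, Szemerédi's defect form of the Cauchy--Schwarz inequality gives a refinement that increases $\mathrm{ind}$ by $\Omega(\varepsilon^5)$, so the process halts after at most $L_0(\varepsilon,\ell_0)$ iterations. Combined with the usual equalization step that absorbs small leftover pieces into an exceptional class, this produces an equitable partition $V_0, V_1, \ldots, V_\ell$ with $\ell_0 \le \ell \le L_0$, $|V_0| \le \varepsilon n$, $|V_1| = \cdots = |V_\ell|$, and at most $\varepsilon \ell^2$ pairs failing to be $\varepsilon$-regular in each color.

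Next, I would construct $G'$ by deleting from $G$ all edges that (a) are incident to $V_0$, (b) have both endpoints in the same part $V_i$, or (c) belong to $G^\sigma[V_i, V_j]$ for some $\sigma \in \{+,-\}$ where the pair $(V_i, V_j)$ is either $\varepsilon$-irregular in that color or has $G^\sigma$-density less than $d$; in case (c) only the edges of the offending color are removed. Properties (1), (3)--(7) then follow from the construction. For the degree bound (2), the loss at each vertex $x$ from (a) is at most $|V_0| \le \varepsilon n$, from (b) at most $|V_1| - 1 \le n/\ell_0 \le \varepsilon n$ (choosing $\ell_0 \ge 1/\varepsilon$), and from (c) at most roughly $(d+\varepsilon/2)n$ per color. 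The low-density portion of (c) gives an average loss of at most $\sum_j d|V_j| \le dn$ per color, and the irregular-pair portion averages at most $\varepsilon n$ per color; vertices that are \emph{atypical} in the sense of having too much loss of either type are absorbed into $V_0$, with the parameters re-scaled so that $|V_0| \le \varepsilon n$ is preserved.

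The main delicate point is precisely this last step: converting the natural averaged bound on cleaning losses into a uniform per-vertex bound by moving atypical vertices into the exceptional set $V_0$. This is the standard manipulation underlying the ``degree form'' of the regularity lemma and requires some care with the constants but no genuinely new ideas. The rest of the proof is a routine adaptation of Szemerédi's original argument, executed for both colors in parallel via the joint index.
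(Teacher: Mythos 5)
The paper does not prove this lemma; it imports it as a black box from the cited Koml\'os--Simonovits survey, so I evaluate your sketch on its own merits rather than against an in-paper argument. Your top-level plan is sound: running the density-increment argument with the joint mean-square index summed over the two colour classes is the standard route to a multicolour regularity lemma, and the refinement/termination analysis is correct. The genuine gap is in the cleaning step. You argue that the per-vertex loss from low-density pairs ``averages at most $dn$ per colour'' and that atypical vertices can be absorbed into $V_0$ after rescaling. For the irregular-pair loss this is fine: the average loss is $O(\varepsilon'n)$, the permitted loss is $\Theta(\varepsilon n)$, and choosing $\varepsilon'\ll\varepsilon$ makes Markov give a small exceptional set. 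But for the low-density loss the average is $dn$ while the permitted per-vertex loss is only $(d+\varepsilon/2)n$, so Markov bounds the exceptional set only by $\frac{d}{d+\varepsilon/2}\cdot n$ --- nearly all of $V(G)$ when $d$ is bounded away from $0$. Since $d$ is a free input to the lemma, not a quantity you get to shrink, the ``average plus rescaling'' argument does not close.

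The missing ingredient is that you must use the $\varepsilon'$-\emph{regularity} of the low-density pairs themselves, not merely their density. For a regular pair $(V_i,V_j)$ with density $d_{ij}<d$, fewer than $\varepsilon'|V_i|$ vertices $x\in V_i$ have $|N(x)\cap V_j|>(d_{ij}+\varepsilon')|V_j|$, and double-counting these exceptional sets over $j$ shows that for all but at most about $\sqrt{\varepsilon'}\,|V_i|$ vertices of $V_i$ the total low-density loss is at most $(d+\varepsilon'+\sqrt{\varepsilon'})\,n$; with $\varepsilon'\ll\varepsilon^2$ this fits the budget, and the remaining $\le\sqrt{\varepsilon'}\,n$ vertices are the ones that actually get moved to $V_0$. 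This is standard but it is the step where the degree bound is really proved, and your sketch skips it. A secondary bug: deleting \emph{all} edges incident to $V_0$ forces $d_{G'}(x)=0$ for $x\in V_0$, violating condition (2) whenever such an $x$ has $d_G(x)>(2d+\varepsilon)n$; the lemma places no constraint on edges meeting $V_0$, so you should leave them in $G'$ --- then (2) is automatic on $V_0$, and for $x\notin V_0$ the $V_0$-contribution to the loss is just $\le|V_0|\le\varepsilon n$.
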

We call $G'$ the pure graph of $G$ (for the parameters $\varepsilon, \ell_0,d$).
Given a graph $G$ with $2$-edge-coloring $f$ and a pure graph $G'$ of $G$, the {\em reduced graph} $R$ is defined as the graph on vertices $V_1,V_2,...V_\ell$, where $V_i$ and $V_j$ are connected if at least one of $G'[V_i,V_j]_+$ or $G'[V_i,V_j]^-$ is non-empty. Additionally, we associate with $R$ a $2$-edge-coloring $f_R$, where for $V_iV_j\in R$, $f_R(V_i V_j)=1$ if $G'[V_i,V_j]^+$ is non-empty and $f_R(V_i V_j)=-1$ otherwise.
The following is a useful, well-known fact about the reduced graph.
\begin{lemma}\label{R_degree}
Given $c>0$, let $G$ be a graph on $n$ vertices with $\delta(G)\geq cn$ and let $R$ be the reduced graph obtained by applying Lemma~\ref{regularity} to it with some parameters $\varepsilon, d, \ell_0$. Then, $\delta(R)\geq (c-2d-2\varepsilon)|R|$.
\end{lemma}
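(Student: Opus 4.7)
The plan is to bound $d_R(V_i)$ from below by counting, for a single vertex $x\in V_i$, how its $G'$-neighbors can be distributed among the parts $V_0,V_1,\dots,V_\ell$ produced by the regularity lemma. The key observation is that by the definition of $R$, a vertex $x\in V_i$ can only have $G'$-neighbors in $V_0$, in $V_i$ itself, or in parts $V_j$ corresponding to $R$-neighbors of $V_i$; everything else is ruled out by the emptiness conditions in Lemma~\ref{regularity}.

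More concretely, fix $V_i\in V(R)$ and any $x\in V_i$. By item 2 of Lemma~\ref{regularity} together with $\delta(G)\geq cn$, we have
$$ d_{G'}(x)\;\geq\; d_G(x)-(2d+\varepsilon)n\;\geq\; (c-2d-\varepsilon)n. $$
On the other hand, item 3 gives $G'[V_i]=\emptyset$, and the definition of $R$ together with items 6 and 7 ensures that $G'[V_i,V_j]=\emptyset$ whenever $V_j\notin N_R(V_i)\cup\{V_i\}$. Hence every $G'$-neighbor of $x$ lies in $V_0\cup\bigcup_{V_j\in N_R(V_i)}V_j$, and so
$$ d_{G'}(x)\;\leq\; |V_0| + \sum_{V_j\in N_R(V_i)} |V_j|. $$

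Finally, items 4 and 5 give $|V_0|\leq\varepsilon n$ and $|V_j|\leq n/\ell=n/|R|$ for every $1\leq j\leq \ell$. Combining the two inequalities,
$$ (c-2d-\varepsilon)n\;\leq\; \varepsilon n + d_R(V_i)\cdot \frac{n}{|R|}, $$
which rearranges to $d_R(V_i)\geq (c-2d-2\varepsilon)|R|$. Since $V_i$ was arbitrary, this proves the claimed lower bound on $\delta(R)$. The argument is entirely routine bookkeeping; the only thing to be careful about is identifying which parts can possibly host $G'$-neighbors of $x$, which is handled by items 3, 6 and 7 together with the definition of $R$.
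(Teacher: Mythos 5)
Your proof is correct. The paper actually states Lemma~\ref{R_degree} as a well-known fact and omits its proof entirely, so there is nothing to compare against; but your argument is exactly the standard one for minimum-degree inheritance by the reduced graph. You correctly use item~2 of Lemma~\ref{regularity} for the lower bound on $d_{G'}(x)$, items~3, 6, 7 together with the definition of $R$ to confine the $G'$-neighbours of $x$ to $V_0$ and the parts indexed by $N_R(V_i)$, and items~4, 5 for the bounds $|V_0|\leq\varepsilon n$ and $|V_j|\leq n/|R|$. The bookkeeping then yields $d_R(V_i)\geq(c-2d-2\varepsilon)|R|$ as claimed, with no gaps.
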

\noindent
We will also use the well-known Blow-up lemma of Koml\'os, S\'ark\"ozy and Szemer\'edi \cite{komlos1997blow}.

\begin{lemma}[Blow-up lemma\cite{komlos1997blow}]\label{blowup_lemma}
Given a graph $K$ on vertices ${1,...,k}$ and $d,\Delta>0$, there exists $\varepsilon_0=\varepsilon(d,\Delta,k)>0$ such that the following holds. Given $L_1,...,L_k\in \mathbb{N}$ and $\varepsilon<\varepsilon_0$, let $F^*$ be the graph obtained from $K$ by replacing each vertex $i\in F$ with a set $V_i$ of $L_i$ new vertices and joining all vertices in $V_i$ to all vertices in $V_j$ whenever $ij$ is an edge in $K$. Let $G$ be a spanning subgraph of $F^*$ such that for every edge $ij\in F$ the pair $(V_i,V_j)_G$ is $(\varepsilon,d)$-superregular. Then, for every spanning subgraph $F$ of $F^*$ with $\Delta(H)\leq \Delta$, $G$ contains a copy of $F$ in which the vertices playing the role of $V_i$ are mapped to $V_i$. 
\end{lemma}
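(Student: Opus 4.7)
The plan is to combine a randomized greedy embedding of most vertices of $F$ with a Hall-type matching argument for the final few vertices, following the classical strategy of Koml\'os--S\'ark\"ozy--Szemer\'edi. First I set up parameters $\varepsilon \ll \varepsilon' \ll d, 1/\Delta, 1/k$ and, within each cluster $V_i$ of $G$, reserve a small \emph{buffer} $B_i \subseteq V_i$ of size roughly $\varepsilon' L_i$. Correspondingly, I pick in $F$ an independent set $I \subseteq V(F)$ of vertices (one per cluster in number proportional to the buffers) such that each vertex of $I$ has its neighborhood in $F \setminus I$ spread out across different clusters; the vertices of $I$ will be the last to be embedded and will use up the buffer, while $V(F) \setminus I$ will be embedded into $V(G) \setminus \bigcup_i B_i$.

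For the greedy phase I order $V(F) \setminus I$ in some sequence $v_1, v_2, \dots$ and embed the $v_t$'s one at a time. At step $t$, given the partial embedding $\phi$ on $v_1, \dots, v_{t-1}$, the candidate set for $v_t$ is the set of unused vertices $u$ in the correct cluster $V_i$ such that $u$ is adjacent to all already-embedded neighbors of $v_t$. I pick $\phi(v_t)$ uniformly at random from this candidate set. The key invariants to maintain with high probability are: (i) the candidate set is large (of size at least $(d/2)^{\Delta}|V_i|$, which follows by iteratively applying the regularity condition to the successively restricted candidate set, using $\Delta(F) \le \Delta$ so at most $\Delta$ restrictions occur); and (ii) for every still-unembedded vertex $w$ of $F$, the bipartite pairs formed by its current candidate set against the unused portion of each relevant $V_j$ remain $(\varepsilon',d/2)$-superregular. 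Invariant (ii) is the technical heart of the argument and is proved by Chernoff/Azuma concentration: a uniformly random choice of $\phi(v_t)$ among many essentially equivalent options does not ruin regularity of remaining pairs, provided one excludes a polynomially small set of "bad" vertices whose degree-profiles get skewed. Keeping a running list of such bad vertices (and ensuring each class has only $o(\varepsilon' L_i)$ bad vertices) is the main bookkeeping challenge.

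For the buffer phase I must embed $I$ into $\bigcup_i B_i$ using only unused vertices. For each $v \in I$ let $C(v) \subseteq V_{i(v)}$ be the set of vertices that are adjacent to all of $\phi(N_F(v))$ and still unused; by invariant (i) and superregularity, $|C(v)| \ge (d/2)^{\Delta} |B_i|$. I then form the bipartite graph with $I$ on one side and the remaining buffer vertices on the other, where $v$ is joined to $C(v)$, and I seek a perfect matching. Verifying Hall's condition is the main obstacle: small subsets $S \subseteq I$ satisfy $|N(S)| \ge |S|$ because each $C(v)$ is very large, while larger subsets are handled via the superregularity of $G$ restricted to buffers, which guarantees that any linear-sized subset of $I$ has neighborhood covering almost all of the corresponding buffer. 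Combining the two cases gives Hall's condition and hence a perfect matching, which completes the embedding of $F$.

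I expect the main difficulty to be the concentration argument in the greedy phase, specifically controlling how the distribution of remaining candidate sets evolves as random choices are made, and ensuring that the population of "bad" vertices (whose pair-regularity with some future candidate set degrades) stays sublinear throughout. This requires choosing the embedding order of $V(F) \setminus I$ carefully (for instance so that at each step, only a bounded number of future vertices have their candidate sets altered), and a martingale-type argument to show that the fraction of badness grows only polynomially slowly in the number of steps. The remaining parameter-chasing, while lengthy, is routine once the invariants are set up.
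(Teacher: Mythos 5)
The paper does not prove this lemma at all: it is the Blow-up Lemma, imported verbatim from Koml\'os, S\'ark\"ozy and Szemer\'edi \cite{komlos1997blow} and used as a black box. So there is no ``paper's own proof'' to compare against.

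As a sketch of the actual KSS argument, your outline is directionally faithful: buffer sets inside each cluster, a randomized greedy embedding for the bulk of the vertices while maintaining large candidate sets and approximate superregularity, and a Hall-type matching to finish off the buffer phase. However, you should be clear-eyed that what you label the ``main bookkeeping challenge'' and ``routine parameter-chasing'' is in fact the entire substance of the theorem --- the original proof runs to many pages precisely because the concentration/invariant-maintenance step is delicate, and the mechanism is somewhat different from what you describe. In particular, KSS do not simply embed $v_t$ uniformly at random from its current candidate set and hope concentration saves them; they maintain a \emph{queue} of ``endangered'' vertices whose candidate sets have shrunk too much, and those are pulled forward and embedded early, with a separate matching argument handling them. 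Your invariant (ii), that pairs between future candidate sets and unused portions of clusters remain superregular, is also not literally what is tracked --- what is controlled is that candidate sets stay within a constant factor of what a ``generic'' embedding would give, via a more intricate potential/martingale argument. The ordering of $V(F)\setminus I$ is not arbitrary either: it must be chosen so that each vertex has few already-embedded neighbors at the moment it is placed, which interacts nontrivially with the queue. The Hall-condition verification in the buffer phase is the most routine part; even there one needs a defect version and some care with the small sets. None of this invalidates your plan, but as written it is an outline of a hard proof, not a proof, and the claim that the remaining work is routine is the gap.
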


\noindent
To apply the Blow-up lemma, we will need the following simple lemma. 

\begin{lemma}\label{lem:superregular cleaning}
Let $1 \leq b_1 \leq \dots \leq b_t =: b$ be integers.
Let $G$ be a graph and let $f$ be a $2$-edge-coloring of $E(G)$. 
Let $W_1,\dots,W_t \subseteq V(G)$ be pairwise-disjoint with $|W_1| = \dots = |W_t|=: m$. Let $d,\varepsilon > 0$, and suppose that 
$m \gg 1/\varepsilon \gg b, t, 1/d$.
Then there 
are subsets $W'_i \subseteq W_i$, $i = 1,\dots,t$, such that:
\begin{enumerate}
    \item $|W'_i| = b_i s$ for each $i = 1,\dots,t$, where 
    $s := \lfloor m/b \rfloor - \lfloor 2t\varepsilon m \rfloor$.
    \item For every $1 \leq i < j \leq t$ and color $c \in \{\pm 1\}$, if $G[W_i,W_j]^c$ is $(\varepsilon,d)$-regular then $G[W'_i,W'_j]^c$ is $(2b\varepsilon/b_1,d/2)$-superregular. 
\end{enumerate}
\end{lemma}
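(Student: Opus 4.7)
The plan is to follow the standard two-step cleaning strategy for turning regular pairs into superregular ones: first remove from each $W_i$ the bounded set of atypical vertices forced by regularity, and then choose $W'_i$ inside what remains by a short random argument, which is the only non-deterministic step.

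First I would define, for each ordered pair $(i,j)$ with $i\neq j$ and each color $c\in\{-1,+1\}$ such that $G[W_i,W_j]^c$ is $(\varepsilon,d)$-regular, the ``bad set'' $B_{ij}^c := \{v\in W_i : d_{G^c}(v,W_j) < (d-\varepsilon)|W_j|\}$. The standard fact that a regular pair contains at most an $\varepsilon$-fraction of low-degree vertices (otherwise these vertices would witness a sub-pair contradicting regularity) yields $|B_{ij}^c| \le \varepsilon m$. Setting $B_i := \bigcup_{j,c} B_{ij}^c$ gives $|B_i| \le 2(t-1)\varepsilon m$, so $W''_i := W_i\setminus B_i$ has size at least $m - 2(t-1)\varepsilon m$. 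A short arithmetic check using $s = \lfloor m/b\rfloor - \lfloor 2t\varepsilon m\rfloor$ and the hypothesis $m\gg 1/\varepsilon \gg b,t,1/d$ shows $b_i s \le |W''_i|$, so subsets $W'_i\subseteq W''_i$ of exact size $b_i s$ exist.

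Next, I would select each $W'_i \subseteq W''_i$ of size $b_i s$ uniformly at random, independently across $i\in [t]$, and verify both conditions of $(2b\varepsilon/b_1, d/2)$-superregularity for each pair $G[W'_i,W'_j]^c$ where $G[W_i,W_j]^c$ was $(\varepsilon,d)$-regular. The hereditary density condition is deterministic: for any $X\subseteq W'_i$, $Y\subseteq W'_j$ with $|X|\ge (2b\varepsilon/b_1)|W'_i|$ and $|Y|\ge (2b\varepsilon/b_1)|W'_j|$, the inequalities $|X|,|Y|\ge 2b\varepsilon\, s \ge \varepsilon m$ (using $s\ge m/(2b)$ for small $\varepsilon$) put $X,Y$ within the scope of $(\varepsilon,d)$-regularity of $G[W_i,W_j]^c$, which yields $d_{G^c}(X,Y) > d-\varepsilon > d/2$.

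The main obstacle, and the reason a random choice is needed, is the vertex-degree condition of superregularity: although every $v\in W''_i$ has $d_{G^c}(v,W_j)\ge (d-\varepsilon)m$, the complement $W_j\setminus W'_j$ can be almost all of $W_j$ when $b_1\ll b$, so a deterministic choice of $W'_j$ could leave $v$ with far too few neighbours inside $W'_j$. To handle this I would fix $v\in W''_i$ and view $d_{G^c}(v,W'_j)$ as a hypergeometric random variable with mean at least $(d - (2t+1)\varepsilon)\,b_j s$, then apply Hoeffding's inequality for sampling without replacement to obtain
\[
\Pr\bigl[d_{G^c}(v,W'_j) \le (d/2)|W'_j|\bigr] \le 2\exp(-c_d\, b_j s)
\]
for a constant $c_d>0$ depending only on $d$. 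Since $b_j s\ge b_1 m/(2b)$ is linear in $m$ while the number of triples $(v,j,c)$ to control is $O(tm)$, a union bound keeps the total failure probability at $o(1)$, so any outcome outside this negligible event provides the required $W'_1,\dots,W'_t$.
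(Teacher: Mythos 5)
Your proof is correct, but it resolves the one genuine difficulty in this lemma by a different (probabilistic) route than the paper. You correctly identify the obstacle: if the atypical vertices of $W_i$ are defined by their degree into the full set $W_j$, then passing to a much smaller $W'_j$ (when $b_1\ll b$) can destroy the vertex-degree condition, and you fix this with a random choice of $W'_j$ plus a hypergeometric/Hoeffding concentration bound and a union bound over $O(t^2m)$ events; the computation goes through since $b_js\geq b_1m/(2b)=\Omega(m)$. The paper sidesteps the obstacle deterministically with a two-stage trimming: it first fixes an arbitrary $U_j\subseteq W_j$ of size $b_j\lfloor m/b\rfloor$ --- already within $b_j\lfloor 2t\varepsilon m\rfloor\leq 2tb\varepsilon m$ of the final size $b_js$ --- and defines the bad vertices of $U_i$ by their degree into $U_j$ rather than $W_j$ (which is legitimate since $|U_j|\geq \varepsilon|W_j|$, so regularity still controls the bad set). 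Then every surviving vertex has at least $(d-\varepsilon)|U_j|$ neighbours in $U_j$, and the final deletion $U_j\to W'_j$ removes only $O(tb\varepsilon m)\ll d|U_j|$ vertices, so the degree condition survives by direct subtraction. Both arguments are valid; the paper's buys a shorter, fully deterministic proof, while yours is a standard alternative that requires no clever ordering of the trimming steps but imports concentration machinery that is not needed here.
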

The second item of Lemma~\ref{lem:superregular cleaning} uses the standard fact that regular pairs can be made superregular by deleting a small number of vertices. The goal of the first item is to make it possible to tile $W'_1,\dots,W'_t$ with a graph having a $t$-coloring with color-classes of size $b_1,\dots,b_t$.

\begin{proof}[Proof of Lemma~\ref{lem:superregular cleaning}]
First, for each $i \in [t]$, take an arbitrary $U_i \subseteq W_i$ of size $b_i \lfloor m/b \rfloor \leq m$.  
Now fix any $1 \leq i \leq t$, and let $I_i$ be the set of pairs $(j,c)$ such that $G[W_i,W_j]^c$ is $(\varepsilon,d)$-regular (where $j \in [t] \setminus \{i\}$ and $c = \pm 1$). 
For $(j,c) \in I_i$, let $B_{i,j}^c$ be the set of vertices $u_i \in U_i$ which have less than 
$(d - \varepsilon) |U_j|$ neighbours in color $c$ in $U_j$. 
The $(\varepsilon,d)$-regularity of $G[W_i,W_j]^c$ and the fact that $|U_j| \geq \frac{m}{2b} \geq \varepsilon |W_j|$ imply that $|B_{i,j}^c| \leq \varepsilon m$.
Therefore, $B_i := \bigcup_{(j,c) \in I_i}B_{i,j}^c$ satisfies 
$|B_i| \leq 2(t-1)\varepsilon m$.
Hence, 
$
|U_i \setminus B_i| \geq b_i \cdot \lfloor m/b \rfloor - 2(t-1)\varepsilon m \geq b_is.
$
So take
$W'_i \subseteq U_i \setminus B_i$ of size $b_i s$ (for $i \in [t]$). Note that $|W'_i| \geq \frac{b_1 m}{2b}$, using that $m \gg 1/\varepsilon \gg b,t$. Also,
$|U_i \setminus W'_i| = b_i \cdot (\lfloor m/b \rfloor - s) \leq 2tb\varepsilon m$.

Now let $1 \leq i < j \leq t$ and $c \in \{\pm 1\}$ such that $G[W_i,W_j]^c$ is $(\varepsilon,d)$-regular. As $W'_i \subseteq U_i \setminus B_i$, all vertices in $W'_i$ have at least $(d-\varepsilon)|U_j|$ color-$c$ neighbours in $U_j$, and hence at least 
$$(d-\varepsilon)|U_j| - |U_j \setminus W'_j| \geq 
(d-\varepsilon)|U_j| - 2tb\varepsilon m > d/2 \cdot |U_j|$$ 
neighbours in $W'_j$, where the last inequality uses that 
$|U_j| \geq \frac{m}{2b}$ and $1/\varepsilon \gg b,t,1/d$.  

Also, for $X \subseteq W'_i, Y \subseteq W'_j$ with $|X| \geq \frac{2b\varepsilon}{b_1} |W'_i|$, $|Y| \geq \frac{2b\varepsilon}{b_1} |W'_j|$, we have $|X|,|Y| \geq \varepsilon m$, and therefore, by the $(\varepsilon,d)$-regularity of $G[W_i,W_j]^c$, the color-$c$ density between $X,Y$ is at least $d - \varepsilon > d/2$. This shows that $G[W'_i,W'_j]^c$ is $(2b\varepsilon/b_1,d/2)$-superregular, as required. 
\end{proof}

\subsection{Applying the regularity lemma: the general setup}\label{sec_app_regularity}
Here we explain the general setup which we use throughout the rest of the paper.
Let $\eta>0$ be fixed and let $\gamma \ll \eta$ be small enough (depending on $\eta$). 
To prove Theorems \ref{bipartite}, \ref{tripartite1} and \ref{rpartite}, we shall consider a graph $G$ with $n \geq n_0$ vertices, $n$ divisible by $|H|$, and minimum degree $\delta(G) \geq (\delta + \eta)n$, for $\delta$ corresponding to the particular case of the above theorems that we are proving, and show that in every $2$-edge-coloring of $G$, there must exist a perfect $H$-factor with discrepancy at least $\gamma n$ in absolute value. 
We say that such an $H$-factor has {\em high discrepancy}. Proving this statement (for all $\eta$) would establish that $\delta^*(H) \leq \delta$. 
Note that we may always assume that $\eta$ is small enough with respect to $H$. From now on, fix $n_0,\varepsilon,\ell_0,d_0,L_0>0$ such that $$1/n_0\ll\gamma\ll 1/L_0\leq1/\ell_0\ll\varepsilon\ll d_0\ll\eta \ll 1/|H|,$$ where $L_0$ is the constant obtained by applying Lemma~\ref{regularity} with $\varepsilon,\ell_0$. 

Recall that $\delta_0$ and $1-1/\chi^*(H)$ are (natural) lower-bounds for $\delta^*(H)$ for every graph $H$. Therefore, throughout this paper, we shall always assume that our target parameter $\delta$ satisfies 
\begin{equation}\label{ineq: lowerbound of delta}
\delta\geq\max\{\delta_0(H),1-1/\chi^*(H)\}.
\end{equation}

So let $G$ be a graph with $n \geq n_0$ vertices, where $n$ is divisible by $|H|$, and with $\delta(G) \geq (\delta + \eta)n$. Our strategy for finding a perfect $H$-factor of high discrepancy sometimes requires us to first find
a perfect $H^*$-factor $F^*$ (and then tile each $H^*$-copy with copies of $H$). To this end, we need that $|G|$ is divisible by $H^*$. Therefore, we shall put aside a small number of vertices to make the number of remaining vertices divisible by $|H^*|$. Indeed, let $F_{Rest}$ be a collection of vertex-disjoint copies of $H$ in $G$, such that $|V(F_{Rest})| < |H^*|$ and $n-|V(F_{Rest})|$ is divisible by $|H^*|$. 
Such a collection exists because $G$ even has a perfect $H$-factor, by Theorem~\ref{existence} and by \eqref{ineq: lowerbound of delta}. 
Set $G^*:=G[V(G)\backslash V(F_{Rest})]$. Recall that $H^*$ depends only on $H$ and $\eta$, so $|H^*| \ll \gamma n$. Therefore, it will suffice to find a perfect $H$-factor of $G^*$ with high discrepancy (as this will give a perfect $H$-factor of $G$ with absolute value discrepancy at least $\frac{\gamma}{2}n$, say). Hence, we concentrate on $G^*$ from now. With a slight abuse of notation, we shall use $n$ to denote $|G^*|$. Note that 
$$
\delta(G^*)\geq(\delta+3\eta/4)n.
$$ 

Fix an arbitrary $2$-edge-coloring $f$ of $G^*$. 
Let $G'$ be the pure graph obtained by applying Lemma~\ref{regularity} with 
parameters $\varepsilon,\ell_0,d_0$ to $G^*$ and $f$. Let $R$ be the corresponding reduced graph with $2$-edge-coloring $f_R$. 
Using $\eta\gg d_0,\varepsilon$ and Lemma~\ref{R_degree}, we get that
$$
\delta(G')/n, \; \delta(R)/|R| \geq\delta(G^*)/n-\eta/4\geq \delta+\eta/2,
$$
and by \eqref{ineq: lowerbound of delta}, we get that
\begin{equation}\label{ineq: mindeg R}
\delta(G')/n, \; \delta(R)/|R| \geq \max\{\delta_0(H),1-1/\chi^*(H)\}+\eta/2.
\end{equation}
We shall assume throughout the paper that \eqref{ineq: mindeg R} holds. For a vertex $u \in V(G') \setminus V_0$, we denote by $V^R_u \in V(R)$ the vertex of $R$ corresponding to the part of the regular partition containing $u$.

We now show that $G'$ contains a perfect $H^*$-factor. 
\begin{lemma}\label{lem:H* factor}
$G'$ has a perfect $H^*$-factor, and hence a perfect $H$-factor. 
\end{lemma}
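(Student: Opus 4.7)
The plan is to apply the K\"uhn--Osthus theorem (Theorem~\ref{existence}) to $G'$ with $H^*$ in place of $H$, obtaining a perfect $H^*$-factor, and then to replace each copy of $H^*$ by a perfect $H$-factor of itself (which exists by item~2 of Lemma~\ref{h_star}) to produce the desired perfect $H$-factor of $G'$. Recall that $|G'|=n$ is divisible by $|H^*|$ by the preparation in Section~\ref{sec_app_regularity}. The case $r=2$ is trivial: by Definition~\ref{def:H^*} we have $H^*=H$ and $\delta_0(H)=0$, so \eqref{ineq: mindeg R} immediately gives $\delta(G')/n\geq (1-1/\chi^*(H))+\eta/2$, which suffices for Theorem~\ref{existence} once $n$ is large.

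The heart of the argument in the case $r\geq 3$ is to verify the minimum-degree hypothesis $\delta(G')\geq (1-1/\chi^*(H^*))n+C$ of Theorem~\ref{existence}, where $C=C(H^*)$ is the additive constant appearing there. Writing $\alpha(H):=\max\{\delta_0(H),1-1/\chi^*(H)\}$, I would show in all cases that $1-1/\chi^*(H^*)\leq \alpha(H)+\eta/4$ by splitting on the value of $hcf(H^*)$. If $hcf(H^*)=1$, then $\chi^*(H^*)=\chi_{cr}(H^*)$ and the bound is exactly item~3 of Lemma~\ref{h_star}. If instead $hcf(H^*)\neq 1$, the contrapositive of item~4 of Lemma~\ref{h_star} forces $\alpha(H)+\eta/4\geq (r-1)/r$; since $H^*$ is $r$-chromatic with $hcf(H^*)\neq 1$ we have $\chi^*(H^*)=\chi(H^*)=r$, and once again $1-1/\chi^*(H^*)=(r-1)/r\leq \alpha(H)+\eta/4$. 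Combining this with \eqref{ineq: mindeg R} yields
$$\delta(G')/n\geq \alpha(H)+\eta/2\geq (1-1/\chi^*(H^*))+\eta/4,$$
and the excess of $\eta n/4$ comfortably exceeds the constant $C$ since $1/n_0\ll\eta$.

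I do not expect any genuine obstacle: the entire argument has been engineered into Lemma~\ref{h_star}, and the only subtlety is the clean case split on $hcf(H^*)$ so that both ``complete $r$-partite divisibility'' scenarios are covered by the single inequality $1-1/\chi^*(H^*)\leq\alpha(H)+\eta/4$. Once that inequality is in hand, Theorem~\ref{existence} supplies the perfect $H^*$-factor of $G'$, and the passage from a perfect $H^*$-factor to a perfect $H$-factor is immediate from item~2 of Lemma~\ref{h_star}.
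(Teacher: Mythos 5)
Your proposal is correct and follows essentially the same approach as the paper: apply Theorem~\ref{existence} to $G'$ with $H^*$ after establishing $1-1/\chi^*(H^*)\leq \alpha(H)+\eta/4$ via Lemma~\ref{h_star}, then tile each $H^*$-copy by copies of $H$. The only cosmetic difference is that you split on $hcf(H^*)$ while the paper splits on whether $\alpha(H)\geq 1-1/r-\eta/4$; these two dichotomies are interchangeable thanks to item~4 of Lemma~\ref{h_star}, so the arguments coincide.
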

\begin{proof}
    For convenience, set $\alpha(H) := \max\{\delta_0(H), 1-1/\chi^*(H)\}$. In this notation, we have 
    $$\delta(G') \geq (\alpha(H) + \eta/2)n.$$
    We claim that
    $$\delta(G') \geq (1 - 1/\chi^*(H^*) + \eta/4)n.$$ If $\alpha(H) \geq 1 - 1/r - \eta/4$ then $\delta(G') \geq (1 - 1/r + \eta/4)$, which suffices as $\chi^*(H^*) \leq r$. And if $\alpha(H) \leq 1 - 1/r - \eta/4$, then Lemma \ref{h_star} guarantees that $hcf(H^*) = 1$ and hence 
    $$1 - 1/\chi^*(H^*) = 1 - 1/\chi_{cr}(H^*) \leq \alpha(H) + \eta/4 \leq \delta(G')/n - \eta/4,$$
    as claimed. Now, Theorem~\ref{existence} guarantees that $G'$ has a perfect $H^*$-factor. This also implies that $G'$ has a perfect $H$-factor, because $H^*$ has a perfect $H$-factor by Lemma~\ref{h_star}.
\end{proof}

The next lemma allows us to assume that for each pair of clusters $V_i,V_j$ in the regular partition, all edges in $G'[V_i,V_j]$ have the same color (namely the color $f_R(V_iV_j)$).
Equivalently, for every edge $xy \in E(G')$ with $x,y \notin V_0$, it holds that 
\begin{equation}\label{eq:f_R}
f_R(V_x^RV_y^R) = f(xy).
\end{equation}
We shall assume this throughout the rest of the paper.
\begin{lemma}\label{no_double}
If there exist $1\leq i<j\leq \ell = |R|$ such that both $G'[V_i,V_j]^+$ and $G'[V_i,V_j]^-$ are $(\varepsilon, d_0)$-regular, then there exists a perfect $H$-factor in $G'$ with high discrepancy.
\end{lemma}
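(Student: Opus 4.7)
The strategy is to exploit the double regularity of the pair $(V_i, V_j)$ to construct two perfect $H$-factors of $G'$ whose discrepancies differ by $\Omega(n)$, so that at least one of them has absolute discrepancy at least $\gamma n$. The two factors will agree on most of $G'$ and will differ only on a small $r$-partite region $W$; on that region, one uses the $+$-coloured bipartite graph $G'[W_i, W_j]^+$ for the edges across $V_iV_j$ while the other uses $G'[W_i, W_j]^-$, and the rest of the structure is identical.

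\noindent\textbf{Extending $V_iV_j$ to an $r$-clique of $R$.} The main structural step is to find an $r$-clique $K = \{V_i, V_j, V_{k_1}, \dots, V_{k_{r-2}}\}$ in $R$ containing $V_iV_j$. Since $\chi(H) = r$ forces $\sigma(H) \ge 1$, we have $\chi^*(H) > r - 1$ strictly, so by \eqref{ineq: mindeg R} the ratio $\delta(R)/|R| \ge 1 - 1/\chi^*(H) + \eta/2$ exceeds $(r-2)/(r-1)$. The induced subgraph $R[N(V_i) \cap N(V_j)]$ then has at least $(1 - 2/\chi^*(H) + \eta)|R|$ vertices and minimum degree at least $(1 - 3/\chi^*(H) + 3\eta/2)|R|$. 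A short computation shows that for $\chi^*(H) > r - 1$ this ratio strictly exceeds the min-degree Tur\'an threshold $(r-4)/(r-3)$ needed to force a $K_{r-2}$ (the cases $r \le 4$ are immediate), and combined with $V_i, V_j$ this yields $K$. I expect this step to be the main technical pressure point, since the inequality $\chi^*(H) > r - 1$ is just barely enough.

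\noindent\textbf{Twin $H$-factors via the blow-up lemma.} Applying Lemma~\ref{lem:superregular cleaning} to the $r$ clusters of $K$ with $b_1 = \dots = b_r = (r-1)!\,|H|$, I obtain subsets $W_i, W_j, W_{k_1}, \dots, W_{k_{r-2}}$ of common size $b$ which is a multiple of $(r-1)!\,|H|$ and of order $\Theta(n)$, such that both $G'[W_i, W_j]^+$ and $G'[W_i, W_j]^-$, as well as $G'[W_a, W_b]^{f_R(V_aV_b)}$ for every other pair $V_aV_b \in E(K)$, are $(\varepsilon', d_0/2)$-superregular. By Lemma~\ref{lem:balanced blowup} applied blockwise, the complete $r$-partite blowup on $W_i, W_j, W_{k_1}, \dots, W_{k_{r-2}}$ admits a perfect $H$-factor $F_{\mathrm{abs}}$ with exactly $N = \Theta(n)$ edges between every two parts. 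Two applications of Lemma~\ref{blowup_lemma} embed $F_{\mathrm{abs}}$ into $W := \bigcup_{\bullet} W_{\bullet}$: the first uses $G'[W_i, W_j]^+$ on the $V_iV_j$ pair (producing $F^+$) and the second uses $G'[W_i, W_j]^-$ (producing $F^-$). Since the Blow-up Lemma preserves the part structure, $F^+$ and $F^-$ have identical numbers of edges across each bipartite pair, so non-$V_iV_j$ edges contribute equally to $f(F^+)$ and $f(F^-)$; the $N$ edges between $W_i$ and $W_j$ contribute $+N$ versus $-N$, giving $f(F^+) - f(F^-) = 2N = \Theta(n)$.

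\noindent\textbf{Completing and concluding.} Choosing $\ell_0 \gg r/\eta$ makes $|W| = rb \ll \eta n$; then $|V(G' \setminus W)|$ remains divisible by $|H|$ (as both $|V(G')|$ and $|W|$ are), and $\delta(G' \setminus W) \ge (1 - 1/\chi^*(H))\,|V(G'\setminus W)| + C$ for the constant $C$ from Theorem~\ref{existence}, so that theorem supplies a perfect $H$-factor $F^*$ of $G' \setminus W$. Setting $F_1 := F^+ \cup F^*$ and $F_2 := F^- \cup F^*$ produces two perfect $H$-factors of $G'$ with $|f(F_1) - f(F_2)| = 2N \ge 2\gamma n$, so at least one of them has $|f| \ge \gamma n$, yielding the required high-discrepancy $H$-factor.
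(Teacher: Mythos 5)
Your proposal follows essentially the same approach as the paper's proof: extend $V_iV_j$ to an $r$-clique of $R$ using the minimum-degree condition $\delta(R)/|R| > (r-2)/(r-1)$, clean the corresponding clusters to superregular pairs via Lemma~\ref{lem:superregular cleaning} (using that both colours of $G'[V_i,V_j]$ are regular), tile a balanced blowup uniformly by Lemma~\ref{lem:balanced blowup}, embed twice via the blow-up lemma differing only in the choice of colour on the $(W_i,W_j)$ pair, and complete with a perfect $H$-factor of the remainder from Theorem~\ref{existence}. One cosmetic remark: the extension to a $K_r$ through the edge is more directly seen by the standard greedy argument (after fixing $k$ vertices, the common neighbourhood still has size $> |R|(1 - k/(r-1)) > 0$ for $k \leq r-1$), which avoids the Tur\'an-threshold bookkeeping you flag as a pressure point; the strict inequality $\chi^*(H) > r-1$ you invoke does hold, since $\chi_{cr}(H) = (r-1)|H|/(|H|-\sigma(H)) > r-1$ always, so your route is also valid.
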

\begin{proof}
For convenience, put $U := V_i, V := V_j$, and assume that $G'[U,V]^+$ and $G'[U,V]^-$ are both $(\varepsilon, d_0)$-regular. Then $UV\in R$ and $f_R(UV)=1$ by the definition of $R$. By \eqref{ineq: mindeg R}, 
$$\delta(R)>(1-1/\chi^*(H))|R|\geq
\frac{r-2}{r-1}|R|$$ 
and therefore, there exists a copy $L\subseteq R$ of $K_r$ containing the edge $UV$. Let $W_1,W_2,\dots,W_r$ be the clusters of $L$ with $W_1 = U$ and $W_2 = V$ and let $m=(n-|V_0|)/|R|$ denote the size of each cluster $W_i$. 
By definition, all pairs $G'[W_i,W_j]^{f_R(W_i,W_j)}$ are $(\varepsilon,d_0)$-regular. 
By Lemma~\ref{lem:superregular cleaning} with $b_1 = \dots = b_t = (r-1)!|H|$, there are $W_i' \subseteq W_i$, $i = 1,\dots,r$, with $|W_1'|=|W_2'|=\dots=|W_r'| \geq 0.9m$ and $|W_i'|$ divisible by $(r-1)!|H|$, such that $G'[W'_i,W'_j]^{f_R(W_i,W_j)}$ is $(2\varepsilon,d_0/2)$-superregular for each $1\leq i<j\leq r$, and $G'[W_1',W_2']^{-}$ is also $(2\varepsilon,d_0/2)$-superregular. 
Let $G_1\subseteq G'$ be the graph on $\bigcup_{1\leq i\leq r}W_i'$ with edges $\bigcup_{1\leq i<j\leq r}G'[W_i',W_j']^{f_R(W_iW_j)}$. Let $G_2\subseteq G'$ be the same graph but with $G'[W_1',W_2']^{-}$ in place of $G'[W_1',W_2']^{+}$. Note that 
$|W'_i| \leq |W_i| \leq \frac{n}{\ell_0}$ and $|W'_i| \geq 0.9|W_i| \geq 0.9 \frac{n - \varepsilon n}{L_0} \geq \frac{n}{2L_0}$ (for each $1 \leq i \leq r$). 
Therefore,
$$
\frac{rn}{2L_0}\leq|G_1|=|G_2|\leq\frac{rn}{\ell_0}.
$$

Let $B$ be the $(r-1)!|H|$-blowup of $K_r$ with clusters $B_1,B_2,\dots,B_r$. By Lemma~\ref{lem:balanced blowup}, $B$ has a perfect $H$-factor $F_B$ such that $e_{F_B}(B_i,B_j) = 2(r-2)!e(H)$ for every pair $1 \leq i < j \leq r$.
As $|W'_1| = \dots = |W'_r|$ and each $|W_i'|$ is divisible by $(r-1)!|H|$, we can apply Lemma~\ref{blowup_lemma} to deduce that there exist perfect $B$-factors $F''_1,F''_2$ of $G_1$ and $G_2$ respectively. 
Taking the $H$-factor $F_B$ of every copy of $B$ in $F''_1,F''_2$ gives us perfect $H$-factors $F_1',F_2'$ of $G_1,G_2$, respectively. Moreover, for every $1\leq i,j\leq r$, we have
$$
e_{F_1'}(W_i',W_j') =e_{F_2'}(W_i',W_j') = \frac{|G_1|}{|B|}\cdot 2(r-2)!e(H).
$$
It follows that
$$
f(F_1')-f(F_2') = e_{F_1'}(W_1',W_2') - e_{F_2'}(W_1',W_2')= \frac{|G_1|}{|B|}\cdot 4(r-2)!e(H).
$$
Let $G_0 = G' - \bigcup_{1\leq i\leq r}W_i'$,
and note that
$$\delta(G_0)\geq (1 - 1/\chi^*(H) + \eta/4)n,$$ 
using \eqref{ineq: mindeg R} and that $|G_1|\leq\frac{rn}{\ell_0}$ and $1/\ell_0 \ll \eta$. 
Also, $|V(G_0)|$ is divisible by $|H|$ because $|V(G')|$ and $\sum_{i=1}^{t}|W'_i|$ are.
Thus, $G_0$ contains a perfect $H$-factor $F'$ by Theorem~\ref{existence}. 
Let $F_i := F'_i \cup F'$, $i = 1,2$. 
Then $F_1,F_2$ are perfect $H$-factors of $G'$, 
and 
$$
f(F_1)-f(F_2) = f(F_1')-f(F_2')=\frac{|G_1|}{|B|}\cdot 4(r-2)!e(H) \geq 2\gamma n,
$$
where the last inequality uses that $|B|=r!|H|$, $|G_1| \geq \frac{rn}{2L_0}$ and $\gamma\ll \frac{1}{|H|}, \frac{1}{L_0}$.
Therefore, at least one of $F_1,F_2$ has absolute discrepancy at least $\gamma n$, as required. 
\end{proof}

From now on, we shall work under the above setup and repeatedly use \eqref{ineq: mindeg R} and \eqref{eq:f_R}. 
Recall that our ultimate goal is to find a perfect $H$-factor of $G'$ with high discrepancy. Very roughly, our argument works by showing that either $R$ contains a template for $H$, or else $R$ is colored by $f_R$ in such an imbalanced way that we can directly find an $H$-factor in $G'$ with high discrepancy. 
The next lemma handles the case that $R$ contains a template for $H$, showing that in that case $G'$ indeed contains a perfect $H$-factor with high discrepancy.  
The proof of this lemma is similar to that of \cite[Claim 6.2]{balogh2021discrepancy} (and also uses some ideas from the proof of Lemma~\ref{no_double}). 
\begin{lemma}\label{template}
Let $t \in \mathbb{N}$ depending only on $H$, and let $T\subseteq R$ be a subgraph of $R$ of order $t$. 
If $(T,f_R)$ is a template for $H$, then there exists a perfect $H$-factor in $G'$ with high discrepancy.
\end{lemma}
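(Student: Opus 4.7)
The plan is to use the template structure to produce two perfect $H$-factors of $G'$ whose discrepancies differ by $\Omega(n)$; one of them then has absolute discrepancy at least $\gamma n$. Since $(T,f_R)$ is a template for $H$, by Definition~\ref{def:template} there exist a blowup $B$ of $(T,f_R)$ with parts of sizes $b_1\le b_2\le\dots\le b_t=:b$ indexed by $V(T)$ and two perfect $H$-factors $F^B_1,F^B_2$ of $B$ with $f_R(F^B_1)\neq f_R(F^B_2)$. Both $|B|$ and $b$ are bounded in terms of $H$ only, so we may assume $\gamma\ll 1/(bL_0)$.

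First I would localize this template inside the regular partition. Let $V_1,\dots,V_t$ be the clusters corresponding to the vertices of $T$, each of size $m=(n-|V_0|)/|R|\ge n/(2L_0)$. Apply Lemma~\ref{lem:superregular cleaning} with parameters $b_1,\dots,b_t$ to obtain subsets $W_i'\subseteq V_i$ of sizes $b_i s$, where $s=\lfloor m/b\rfloor-\lfloor 2t\varepsilon m\rfloor\ge m/(2b)$, such that for every edge $v_iv_j\in E(T)$ the pair $G'[W_i',W_j']^{f_R(v_iv_j)}$ is $(2b\varepsilon/b_1,d_0/2)$-superregular. Let $J$ denote the union of these superregular bipartite graphs on $\bigcup_i W_i'$; by \eqref{eq:f_R}, every edge of $J$ has colour $f_R(v_iv_j)$ (matching the coloring of $B$ after blowing up by $s$).

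Next, I would apply the Blow-up Lemma (Lemma~\ref{blowup_lemma}) inside $J$, which sits inside the blowup of $T$ in which vertex $v_i$ is replaced by a set of size $b_is$. The $s$-blowup of $B$ contains $s$ disjoint copies of $F^B_1$ and, separately, $s$ disjoint copies of $F^B_2$, each giving a spanning subgraph of this blowup of maximum degree $\Delta(H)$. The Blow-up Lemma then yields perfect $H$-factors $\tilde F_1,\tilde F_2$ of $J$ with $f(\tilde F_i)=s\cdot f_R(F^B_i)$ for $i=1,2$. Consequently
\[
|f(\tilde F_1)-f(\tilde F_2)|\ =\ s\,|f_R(F^B_1)-f_R(F^B_2)|\ \ge\ s\ \ge\ \frac{n}{4bL_0}\ \ge\ 3\gamma n.
\]

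Finally, I would extend $\tilde F_1,\tilde F_2$ to perfect $H$-factors of $G'$ on the same remainder. Set $G'':=G'-\bigcup_i W_i'$. Since $B$ admits a perfect $H$-factor, $|B|$ is divisible by $|H|$, and hence $s|B|$ is; together with $|H|\mid n$, this gives $|H|\mid|V(G'')|$. Removing at most $s|B|\le tm\le tn/\ell_0\ll\eta n$ vertices, \eqref{ineq: mindeg R} yields
\[
\delta(G'')\ \ge\ (1-1/\chi^*(H)+\eta/4)\,|V(G'')|,
\]
so Theorem~\ref{existence} supplies a perfect $H$-factor $F_0$ of $G''$. Then $F_i:=\tilde F_i\cup F_0$ ($i=1,2$) are perfect $H$-factors of $G'$ with $|f(F_1)-f(F_2)|\ge 3\gamma n$, so at least one has $|f(F_i)|\ge\gamma n$, as required. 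The main obstacle is lining up the cluster sizes with the (possibly unequal) parts of the blowup $B$ so that \emph{both} $F^B_1$ and $F^B_2$ can be embedded via the Blow-up Lemma on the \emph{same} vertex set, which is exactly the role of Lemma~\ref{lem:superregular cleaning}.
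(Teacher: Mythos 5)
Your proposal is essentially identical to the paper's own proof: both apply Lemma~\ref{lem:superregular cleaning} to carve out parts $W_i'$ of sizes $b_i s$ inside the clusters of $T$, use the Blow-up Lemma to embed $s$ disjoint copies of each of the two $H$-factors from the blowup witnessing the template, establish a discrepancy gap of at least $s = \Omega(n)$, and then complete both factors to perfect $H$-factors of $G'$ via Theorem~\ref{existence} on the common remainder. The divisibility check, the minimum-degree bound for the remainder, and the final quantitative estimate $s \ge \gamma n$ all match the paper's argument.
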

\begin{proof}
Since $(T,f_R)$ is a template for $H$, there exists a blowup $B$ of $(T,f_R)$ and two perfect $H$-factors $F_1,F_2$ of $B$ with $f_R(F_1)> f_R(F_2)$. 
Write $V(T)=\{w_1,w_2,\dots, w_t\}$, let $B_i$ be the part of $B$ corresponding to $w_i$, and put $b_i = |B_i|$. Suppose that $b_1 \leq \dots \leq b_t =: b$. 
Also, let $W_i$ be the cluster of $G'$ corresponding to $w_i \in V(R)$, and recall that $|W_i| = (n-|V_0|)/|R| =: m$ for $i = 1,\dots,t$. 
Also, recall that if for some $1\leq i<j\leq t$, $w_iw_j\in R$, then $G'[W_i,W_j]^{f_R(w_iw_j)}$ is $(\varepsilon,d_0)$-regular.
By Lemma~\ref{lem:superregular cleaning}, there is an integer $s \geq 0.9m/b$ and subsets $W_i'\subseteq W_i$ ($i = 1,\dots,t$) such that $|W'_i| = b_i s$, and such that for every pair $1 \leq i < j \leq t$, if $w_iw_j\in R$ then $G'[W_i',W_j']^{f_R(w_iw_j)}$ is $(\frac{2b_t}{b_1}\varepsilon,d_0/2)$-superregular.

Let $B'$ be a blowup of $T$ with parts $B'_1,\dots,B'_t$, where $B'_i$ is the cluster corresponding to $w_i$, and $|B'_i| = |W_i'| = b_is$. 
So $B'$ is the $s$-blowup of $B$. 
Note that we may apply Lemma~\ref{blowup_lemma} with $B'$ in the role of $F^*$ and with $\bigcup_{w_iw_j\in T}G'[W_i',W_j']^{f_R(w_iw_j)}$ in the role of $G$. 

Clearly, $B'$ has a perfect $B$-factor $F_B$ consisting of $s$ copies of $B$ (where each copy of $B$ places the part $B_i$ of $B$ inside the part $B'_i$ of $B'$).
Let $F_1',F_2'$ be the perfect $H$-factors of $B'$, obtained by taking the perfect $H$-factor $F_1$ or $F_2$ respectively of each copy of $B$ in $F_B$. By Lemma~\ref{blowup_lemma}, there exist copies $F_1'',F_2''$ of $F_1',F_2'$ (respectively) in $\bigcup_{w_iw_j\in T}G'[W_i',W_j']^{f_R(w_iw_j)}$, with all the vertices on the same corresponding parts (i.e., with $B'_i$ embedded to $W'_i$ for $i = 1,\dots,t$). 
Now, using that $f_R(F_1)-f_R(F_2) \geq 1$, we get that
$$
f(F_1'')-f(F_2'') \geq s.
$$
Now let $G_0 = G' - \bigcup_{1\leq i\leq t}W_i'$, and note that $\delta(G_0)\geq (1 - 1/\chi^*(H) + \eta/4)n,$  
using \eqref{ineq: mindeg R} and 
$$
\sum_{i=1}^{t}|W'_i| \leq \sum_{i=1}^{t}|W_i| \leq tm \leq tn/\ell_0 \ll \eta n,
$$
as $\frac{1}{\ell_0} \ll \frac{1}{|H|},\eta$, and $t$ depends only on $H$.
Also, $|V(G_0)|$ is divisible by $|H|$ because $|V(G')|$ and $\sum_{i=1}^{t}|W'_i|$ are.
Now, by Theorem~\ref{existence}, $G_0$ has a perfect $H$-factor $F'$. Thus, both $F'\cup F_1''$ and $F'\cup F_2''$ are perfect $H$-factors of $G'$, and
$$
f(F'\cup F_1'')-f(F'\cup F_2'') = f(F_1'')-f(F_2'') \geq s \geq \frac{0.9m}{b} \geq \frac{0.9(1-\varepsilon)n}{L_0b} \geq \gamma n,
$$
using that $m = (n-|V_0|)/|R| \geq (1-\varepsilon)n/L_0$, and that $\gamma \ll \varepsilon,1/L_0 \ll 1/b$, as $b$ depends only on $H$. 
Now we see that $F'\cup F_1''$ or $F'\cup F_2''$ has discrepancy at least $\gamma n$, as required.  
\end{proof}

\section{Templates}\label{sec:templates}
Lemma \ref{template} states that in order to find an $H$-factor with high discrepancy, it suffices to show that the reduced graph $R$ contains a template for $H$. The present section is thus dedicated to various constructions of such templates. These constructions form a substantial and crucial part of our proofs. In most cases, the templates consist of either a single $K_r$ or two copies of $K_r$ sharing $r-1$ or $r-2$ vertices. Our results typically state that either a certain colored graph is a template for $H$, or else $H$ has a certain ``uniformity property'', e.g. $H$ is regular, only has balanced $r$-colorings, or has the same number of edges between any two parts in any $r$-coloring. Each of the following subsections deals with one such uniformity property and gives a suitable template for graphs violating the property. 
The basic idea for proving that a certain configuration $L$ is a template for $H$ is usually the same: we define a (carefully chosen) perfect $H$-factor $F_1$ of a (carefully chosen) blowup of $L$, and then modify $F_1$ by moving some vertices to different blowup-clusters, thus obtaining a second perfect $H$-factor $F_2$. We then observe that if this modification did not change the discrepancy (i.e. if $F_1,F_2$ have the same discrepancy), then $H$ must have the relevant uniformity property. 
The implementation of this rough idea for the various uniformity properties can be quite involved.  

\subsection{Disconnected bipartite graphs}
In this section, we consider disconnected bipartite graphs $H$. 
We show that if $H$ has two connected components with different average degrees then the colored graph consisting of two disjoint edges of different color is a template for $H$.
\begin{lemma}\label{bipartite template}
Suppose that $H$ is bipartite and there exist two connected components $U,W\subseteq V(H)$ of $H$ such that $e_H(U)/|U|\neq e_H(W)/|W|$. 
Let $e_1,e_2$ be two disjoint edges and $c$ be a $2$-coloring of $e_1,e_2$ with $c(e_1)\neq c(e_2)$. 
Then $(e_1\cup e_2,c)$ is a template for $H$.
\end{lemma}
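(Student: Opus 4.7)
The plan is to construct a blowup $B$ of $(e_1 \cup e_2, c)$ and two perfect $H$-factors $F_1, F_2$ of $B$ with different discrepancies. Write $e_1 = a_1 b_1$ and $e_2 = a_2 b_2$, and without loss of generality take $c(e_1) = +1$, $c(e_2) = -1$, so $B$ has four parts $A_1, B_1, A_2, B_2$ with a complete bipartite graph of color $+1$ between $A_1$ and $B_1$ and a complete bipartite graph of color $-1$ between $A_2$ and $B_2$, and no other edges. The key flexibility I will exploit is that since $H$ is bipartite, each connected component of any $H$-copy in $B$ can be embedded into either $(A_1, B_1)$ or $(A_2, B_2)$ independently, and within whichever bipartite blowup it lands it has a choice of orientation (which side of its bipartition goes to $A$ vs.\ $B$).

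Fix a bipartition $(C_1^s, C_2^s)$ for each component $C^s$ of $H$ with $u_s := |C_1^s|$, $v_s := |C_2^s|$, and let $U, W$ be the two designated components, writing $p := |U|$, $q := |W|$. For a large integer $k \geq \max(p,q)$ I will use a blowup with $|A_1|=|B_1|=|A_2|=|B_2|=k|H|$. The base factor $F_1$ consists of $k$ copies of $H$ placed entirely inside $(A_1,B_1)$ with ``orientation~$0$'' (each $C_1^s \subseteq A_1$, $C_2^s \subseteq B_1$), $k$ copies entirely inside $(A_1,B_1)$ with orientation~$1$ (all components flipped), and the analogous $2k$ copies inside $(A_2,B_2)$. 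A direct count shows each of the four parts is filled exactly, and by symmetry $f(F_1) = 2k \cdot e(H) - 2k \cdot e(H) = 0$.

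Next I construct $F_2$ from $F_1$ by the following swap: in $q$ of the ``part~$1$, orient~$0$'' copies and in $q$ of the ``part~$1$, orient~$1$'' copies, move the $U$-component of each to part~$2$ keeping the same orientation; symmetrically, in $p$ of the ``part~$2$, orient~$0$'' copies and in $p$ of the ``part~$2$, orient~$1$'' copies, move the $W$-component to part~$1$ keeping the same orientation. Each modified copy is a valid copy of $H$ since $H$ has no edges between distinct components. The bookkeeping---summing the contributions of each move to each of $A_1, B_1, A_2, B_2$ separately---shows that all four part sizes are preserved; this uses the identity $q \cdot |U| = pq = p \cdot |W|$ to match the total moved in each direction, together with the symmetric distribution of orientations to balance $A$- vs.\ $B$-side contributions.

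The discrepancy change is then a direct computation: the $2q$ $U$-moves from part~$1$ to part~$2$ change $f$ by $-4q \cdot e(U)$ (each move turns $e(U)$ edges of color $+1$ into $e(U)$ edges of color $-1$), while the $2p$ $W$-moves contribute $+4p \cdot e(W)$, giving
$$
f(F_2) - f(F_1) = 4\bigl(p\cdot e(W) - q\cdot e(U)\bigr) = 4\bigl(|U|\cdot e_H(W) - |W|\cdot e_H(U)\bigr),
$$
which is nonzero precisely by the hypothesis $e_H(U)/|U| \neq e_H(W)/|W|$. Hence $(e_1 \cup e_2, c)$ is a template for $H$. The main delicate step is the vertex-count verification for $F_2$, but it becomes routine by design once the base factor is built with an equal split between orientations~$0$ and~$1$ on each side.
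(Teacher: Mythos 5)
Your proof is correct. The core mechanism is the same as the paper's: in a blowup of two disjoint oppositely-colored edges, each component of $H$ must lie entirely in one of the two bipartite halves, so one can trade where $U$ and $W$ go and the discrepancy difference is governed exactly by $|U|\,e_H(W) - |W|\,e_H(U)$. But your construction differs from the paper's in a meaningful way. The paper picks an unbalanced blowup with $|V_{x_1}|=|V_{y_1}|=2|U||W|$ and $|V_{x_2}|=|V_{y_2}|=2|U||H|+2|W|(|H|-|U|)$, and builds $F_1$ and $F_2$ from scratch as explicit collections of $4(|U|+|W|)$ copies whose part-size footprints match the chosen blowup sizes; the discrepancy comparison is then $e(F_1^+)=4|W|e_H(U)$ versus $e(F_2^+)=4|U|e_H(W)$. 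You instead take a balanced blowup (all four parts of size $k|H|$), build a manifestly symmetric base factor with $f(F_1)=0$, and then perform a local swap of $2q$ $U$-components and $2p$ $W$-components between the two halves; the part-size bookkeeping then drops out from $q\cdot|U|=pq=p\cdot|W|$, and the discrepancy change is the contribution of the swapped components alone. Your ``symmetric baseline plus perturbation'' structure makes the discrepancy computation more transparent and the vertex accounting automatic, at the small cost of carrying a free parameter $k\geq\max(|U|,|W|)$. One remark: both your proof and the paper's rely on the orientation-balancing trick (placing $H$ with each permutation of its two color classes across the two sides of a bipartite half), which in your write-up is slightly compressed into ``built with an equal split between orientations~$0$ and~$1$''; the underlying count (each pair of oppositely-oriented copies contributes $|H|$ vertices to each side) is what makes the balanced blowup work, and is worth spelling out.
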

\begin{proof}
Write $e_1 = x_1y_1 ,e_2 = x_2y_2$. without loss of generality, suppose that $c(e_1) = 1$, $c(e_2) = -1$.
Fix a $2$-coloring $A_1,A_2$ of $H$, and let $U_i = U \cap A_i$, $W_i = W \cap A_i$, $i = 1,2$. 
Let $B$ be a blowup of $(e_1 \cup e_2 ,c)$ with 
\begin{align*}
    |V_{x_1}|,|V_{y_1}| &= 2|U||W|,\\
    |V_{x_2}|,|V_{y_2}| &= 2|U||H|+2|W|(|H|-|U|).
\end{align*}

Our goal is to find two perfect $H$-factors $F_1,F_2$ of $B$ with different discrepancies. The idea is simple: $F_1$ will contain copies of $H$ in which $U = U_1 \cup U_2$ is mapped onto $V_{x_1},V_{y_1}$ and $H - U$ is mapped onto $V_{x_2},V_{y_2}$, while $F_2$ will contain copies of $H$ in which $W = W_1 \cup W_2$ is mapped onto $V_{x_1},V_{y_1}$ and $H - W$ is mapped onto $V_{x_2},V_{y_2}$. The fact that $H[U]$ and $H[W]$ have different average degrees will imply that $F_1$ and $F_2$ have a different number of edges of color $1$ (since all edges between $V_{x_1},V_{y_1}$ have color $1$, and all edges between $V_{x_2},V_{y_2}$ have color $-1$). This will imply that $c(F_1) \neq c(F_2)$. To make this scheme work, we need two additional ideas. First, $F_1$ will have $H$-copies mapping $U_1$ to $V_{x_1}$ and $U_2$ to $V_{y_1}$, as well as $H$-copies mapping $U_2$ to $V_{x_1}$ and $U_1$ to $V_{y_1}$; there will be the same number of copies of the two types. This ``symmetrization'' allows us to take $V_{x_1},V_{y_1}$ to be of the same size, as each $H$-copy adds $|U|/2$ vertices on average to $V_{x_1}$ and $V_{y_1}$. When describing this construction, we will say that we take copies of $H$ with each permutation of $U_1,U_2$ on $V_{x_1},V_{y_1}$. (This language is also used later on in this section.)
We will do the same for the $H$-copies in $F_2$ with respect to $W_1,W_2$. 

The above scheme tiles $V_{x_1} \cup V_{y_1}$ with 
$(|V_{x_1}| + |V_{y_1}|)/|U|$ copies of $H$ in $F_1$, and with $(|V_{x_1}| + |V_{y_1}|)/|W|$ copies of $H$ in $F_2$. 
A problem that might occur is that $F_1,F_2$ use a different number of vertices in $V_{x_2} \cup V_{y_2}$. This must be avoided because $F_1,F_2$ need to be perfect $H$-factors of $B$. To remedy this, we add additional $H$-copies to $F_1,F_2$ which only use vertices from $V_{x_2} \cup V_{y_2}$. By appropriately choosing the number of these copies, as well as the size of $V_{x_2},V_{y_2}$, we can make sure that $F_1,F_2$ tile $B$. 
The details follow.

Define two perfect $H$-factors $F_1,F_2$ of $B$, each containing $4(|U|+|W|)$ copies of $H$, as follows:
\begin{itemize}
    \item $F_1$ contains $|W|$ copies of $H$ for each permutation of $U_1,U_2$ on $V_{x_1},V_{y_1}$ and each permutation of $A_1\backslash U_1,A_2\backslash U_2$ on $V_{x_2},V_{y_2}$. Additionally, $F_1$ contains $2|U|$ copies of $H$ for each permutation of $A_1,A_2$ on $V_{x_2},V_{y_2}$.
    \item $F_2$ contains $|U|$ copies of $H$ for each permutation of $W_1,W_2$ on $V_{x_1},V_{y_1}$ and each permutation of $A_1\backslash W_1,A_2\backslash W_2$ on $V_{x_2},V_{y_2}$. Additionally, $F_2$ contains $2|W|$ copies of $H$ for each permutation of $A_1,A_2$ on $V_{x_2},V_{y_2}$.
\end{itemize}
Note that the choice of $|V_{x_1}|,|V_{y_1}|,|V_{x_2}|,|V_{y_2}|$ exactly corresponds to the definition of $F_1,F_2$. 
For example, the number of vertices of $F_1$ in $V_{x_2}$ is exactly $2 \cdot |W| \cdot (|A_1 \setminus U_1| + |A_2 \setminus U_2|) + 2|U| \cdot (|A_1| + |A_2|) = 2|W| (|H| - |U|) + 2|U||H| = |V_{x_2}|$, and similarly for $F_2$ and the other three clusters $V_{y_2},V_{x_1},V_{y_1}$.

Next, observe that 
$$
e(F_1^+) = e_{F_1}(V_{x_1},V_{y_1}) = 4|W|e_H(U)
$$
and 
$$
e(F_2^+) = e_{F_2}(V_{x_1},V_{y_1}) = 4|U|e_H(W).
$$
As $|W|e_H(U) \neq |U|e_H(W)$, we have that $e(F_1^+) \neq e(F_2^+)$. Also, $e(F_1) = e(F_2)$ because $F_1,F_2$ are both perfect $H$-factors of $B$. 
Hence,
$$
c(F_1) = 2e(F_1^+) - e(F_1) \neq 2e(F_2^+) - e(F_2) = c(F_2).
$$
This implies that $(e_1 \cup e_2,c)$ is a template for $H$, as required. 
\end{proof}

\subsection{Non-regular graphs}

In this section we consider non-regular graphs $H$. As always, $r$ denotes the chromatic number of $H$. 
\begin{lemma}\label{sharing_much_template}
Let $L_1,L_2$ be two copies of $K_r$ sharing $r-1$ vertices with $2$-edge-coloring $c$ such that $c(L_1)\neq c(L_2)$. If $H$ is non-regular then $(L_1\cup L_2,c)$ is a template for $H$.
\end{lemma}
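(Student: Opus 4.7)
Write $W = L_1 \cap L_2 = \{w_1,\dots,w_{r-1}\}$ and let $u_i$ denote the unique vertex of $L_i \setminus W$, so that $L_1 \cup L_2$ is $K_{r+1}$ minus the non-edge $u_1u_2$. Put $\alpha_j := c(u_2 w_j) - c(u_1 w_j) \in \{-2,0,2\}$; the hypothesis $c(L_1) \neq c(L_2)$ is exactly $\alpha := \sum_{j=1}^{r-1} \alpha_j \neq 0$. The whole plan hinges on the fact that, since $u_1 u_2$ is not an edge of $L_1 \cup L_2$, in any blowup $B$ the parts $V_{u_1}$ and $V_{u_2}$ are non-adjacent, so we may freely move a vertex $v$ of any $H$-copy between $V_{u_1}$ and $V_{u_2}$ without breaking the embedding. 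If $v$ sits in a copy whose other color classes are placed into the $V_{w_m}$'s via a bijection $\sigma$, then the discrepancy change of a single $V_{u_1} \to V_{u_2}$ move is exactly $\sum_j d_j(v)\,\alpha_{\sigma(j)}$, where $d_j(v)$ counts the $H$-neighbours of $v$ in color class $A_j$.

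Fix a proper $r$-coloring $A_1,\dots,A_r$ of $H$. For each triple $(i,\sigma,k)$ with $i\in[r]$, $\sigma:[r]\setminus\{i\}\to[r-1]$ a bijection, and $k\in\{1,2\}$, let $\phi_{i,\sigma,k}$ be the embedding of $H$ placing $A_i$ entirely inside $V_{u_k}$ and $A_j$ inside $V_{w_{\sigma(j)}}$ for $j \neq i$. Take $B$ with $|V_{w_m}| = 2(r-1)!|H|$ for each $m$ and $|V_{u_1}| = |V_{u_2}| = (r-1)!|H|$, and let $F_1$ contain exactly one copy of each $\phi_{i,\sigma,k}$; a direct count verifies that $F_1$ is a perfect $H$-factor of $B$. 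The task is to modify $F_1$ into a perfect $H$-factor $F_2$ via a small coordinated collection of vertex-movements preserving every $|V_{u_k}|, |V_{w_m}|$ but changing the total discrepancy.

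If some color class $A_p$ contains two vertices $x,y$ with $d_H(x) \neq d_H(y)$, then for every bijection $\sigma$ I move $x$ from $V_{u_1}$ to $V_{u_2}$ inside the copy $\phi_{p,\sigma,1}$ and $y$ in the reverse direction inside $\phi_{p,\sigma,2}$; cluster sizes are preserved trivially, and using the averaging identity $\sum_\sigma \alpha_{\sigma(j)} = (r-2)!\,\alpha$ for every $j$, the net discrepancy change collapses to $(r-2)!\,\alpha\,(d_H(x)-d_H(y)) \neq 0$. Otherwise every proper $r$-coloring of $H$ has all of its color classes regular in $H$, so the color classes have well-defined common degrees $d_1,\dots,d_r$ that are not all equal; pick $p,q$ with $d_p \neq d_q$. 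Single-vertex swaps now all vanish, so I execute a larger swap: replace the pair $\phi_{q,\sigma,1},\,\phi_{p,\sigma',2}$ of $F_1$ by the pair consisting of $\phi_{p,\sigma',1}$ together with one copy of $H$ that places $A_j$ into $V_{w_{\sigma(j)}}$ for $j\neq q$ while splitting $A_q$ as $|A_q|-|A_p|$ vertices in $V_{u_1}$ and $|A_p|$ vertices in $V_{u_2}$. Interchanging the roles of $A_p$ and $A_q$ in this way preserves every cluster size, and using the profile-uniformity forced by the case hypothesis the discrepancy change averaged over $(\sigma,\sigma')$ reduces to a nonzero multiple of $(d_p - d_q)\,\alpha$, giving $c(F_1) \neq c(F_2)$.

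The principal difficulty lies in the second case: designing the multi-vertex swap so that every one of the $r+1$ cluster sizes stays fixed while still producing a nonzero discrepancy shift, and verifying that the profile-uniformity forced by the case hypothesis is exactly what makes the resulting expression collapse into a clean $(d_p-d_q)\,\alpha$ factor. Residual subtleties, such as guaranteeing that the averaging identity still yields a nonzero answer in the degenerate regime where $(\alpha_j)$ is a constant vector, are absorbed into this same swap.
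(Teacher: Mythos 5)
Your overall strategy is the same as the paper's: move a single vertex of an $H$-copy between $V_{u_1}$ and $V_{u_2}$ (the two non-shared clusters), then average over the placements $\sigma$ of the remaining color classes so that only the total degree of the moved vertex survives and the $c(L_1)-c(L_2)$ factor emerges. Your Case~1 is correct. But the case split itself is unnecessary: the paper simply fixes \emph{any} two vertices $u,v$ with $d_H(u)\neq d_H(v)$ (``possibly $i_u=i_v$'') and moves them in different $H$-copies, and the identical averaging identity yields $(c(L_2)-c(L_1))(r-2)!\,(d_H(v)-d_H(u))\neq 0$ in one stroke.

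The assertion in your Case~2 that ``single-vertex swaps now all vanish'' is false. The case hypothesis only kills single-vertex swaps \emph{within} a fixed color class. Moving a vertex $x\in A_p$ in $\phi_{p,\sigma,1}$ and a vertex $y\in A_q$ in $\phi_{q,\sigma',2}$ with $p\neq q$ preserves all cluster sizes (the $-1/+1$ adjustments to $V_{u_1},V_{u_2}$ cancel, and the $V_{w_m}$ are untouched), and the averaged discrepancy change is again $(r-2)!\,\alpha\,(d_H(x)-d_H(y))=(r-2)!\,\alpha\,(d_p-d_q)\neq 0$. So your Case~1 argument, with the trivial modification of allowing $x$ and $y$ to lie in different classes, already handles everything, matching the paper.

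There is also a second, more local inaccuracy in Case~2: you invoke a ``profile-uniformity forced by the case hypothesis,'' but uniform degree within each $A_i$ does \emph{not} force the vectors $(d_j(v))_{j}$ to agree across $v\in A_i$. Your multi-vertex swap can in fact be salvaged without that property, because after summing over $\sigma$ the quantity $\sum_\sigma\sum_j\alpha_{\sigma(j)}\,d_j(v)=(r-2)!\,\alpha\,d_H(v)$ depends only on $d_H(v)$ — the profile dependence disappears under averaging, not because of profile-uniformity. So the multi-vertex swap is correct \emph{despite}, not because of, the claimed profile-uniformity, and you should replace that justification. But the cleaner fix is simply to drop Case~2 entirely and let Case~1 run with $x,y$ in possibly different color classes.
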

\begin{proof}
Let $u,v\in V(H)$ be two vertices with $d_H(u)\neq d_H(v)$. Fix an $r$-vertex-coloring of $H$ with parts $A_1,A_2,\dots,A_r$. 
Let $A_{i_u}$ be the part containing $u$ and $A_{i_v}$ the one containing $v$ (possibly $i_u = i_v$). 
Write $L_1 \cap L_2 = \{q_2,q_3,\dots,q_{r}\}$, $L_1\backslash L_2 = \{s\}$ and $L_2\backslash L_1 = \{t\}$. Let $B$ be a blowup of $(L_1\cup L_2,c)$ with
\begin{itemize}
    \item $|V_s| = (r-1)!$,
    \item $|V_t| = (r-1)!(|A_{i_u}|+|A_{i_v}|-1)$ and
    \item $|V_{q_i}|=(r-2)!(2|H|-|A_{i_u}|-|A_{i_v}|)$ for $2\leq i\leq r$.
\end{itemize}
We now define certain $H$-copies in $B$. For each permutation $\sigma :  \{2,\dots,r\} \rightarrow [r] \setminus \{i_v\}$, let $X_{\sigma}$ be a copy of $H$ in which $A_{i_v}$ is embedded into $V_t$ and $A_{\sigma(i)}$ is embedded into $V_{q_i}$ for all $2 \leq i \leq r$. Let $X'_{\sigma}$ be the copy of $H$ obtained from $X_{\sigma}$ by moving $v$ from $V_t$ to $V_s$. Similarly, for each permutation $\tau : \{2,\dots,r\} \rightarrow [r] \setminus \{i_u\}$, let $Y_{\tau}$ be a copy of $H$ in which $A_{i_u}$ is embedded into $V_t$ and $A_{\tau(i)}$ is embedded into $V_{q_i}$ for all $2 \leq i \leq r$. Let $Y'_{\tau}$ be the copy of $H$ obtained from $Y_{\tau}$ by moving $u$ from $V_t$ to $V_s$. We define all these $H$-copies such that the copies in each of the sets $F_1 := \{ X_{\sigma}, Y'_{\tau}\}_{\sigma,\tau}$ and $F_2 := \{ Y_{\tau}, X'_{\sigma}\}_{\sigma,\tau}$ are pairwise-disjoint and partition $V(B)$; note that the sizes of $V_s,V_t,V_{q_2},\dots,V_{q_r}$ are precisely chosen to allow this. In other words, $F_1,F_2$ are perfect $H$-factors of $B$. 
We now calculate $c(F_1)-c(F_2)$. 
By definition,
$$
c(X_{\sigma}) - c(X'_{\sigma}) = 
\sum_{i = 2}^r(c(tq_i) - c(sq_i)) \cdot e_H(v,A_{\sigma(i)}).
$$
For each $j \in [r] \setminus \{i_v\}$ and $2 \leq i \leq r$, there are exactly $(r-2)!$ permutations $\sigma$ with $\sigma(i) = j$. Hence, summing over all $\sigma$, we get
\begin{align*}
\sum_{\sigma}(c(X_{\sigma}) - c(X'_{\sigma})) &= 
\sum_{i = 2}^r(c(tq_i) - c(sq_i)) \sum_{j \in [r] \setminus \{i_v\}}(r-2)! \cdot e_H(v,A_j) \\ &= \sum_{i = 2}^r(c(tq_i) - c(sq_i)) \cdot (r-2)! \cdot d_H(v) = 
(c(L_2) - c(L_1)) \cdot (r-2)! \cdot d_H(v).
\end{align*}
Similarly,
$$
\sum_{\tau}(c(Y_{\tau}) - c(Y'_{\tau})) = (c(L_2) - c(L_1)) \cdot (r-2)! \cdot d_H(v).
$$
We get that
$$
c(F_1) - c(F_2) = 
\sum_{\sigma}(c(X_{\sigma}) - c(X'_{\sigma})) - 
\sum_{\tau}(c(Y_{\tau}) - c(Y'_{\tau})) = 
(c(L_2) - c(L_1)) \cdot (r-2)! \cdot (d_H(v) - d_H(u)) \neq 0,
$$
using that $c(L_1) \neq c(L_2)$ by assumption and $d_H(u) \neq d_H(v)$. 
The fact that $c(F_1) \neq c(F_2)$ means that $(L_1\cup L_2,c)$ is a template with respect to $H$.
\end{proof}
\noindent
We now use Lemma \ref{sharing_much_template} to show that certain colorings of $K_{r+1}$ are templates for $H$. 
\begin{corollary}\label{non_reg_clique}
Let $L$ be a copy of $K_{r+1}$ with $2$-edge-coloring $c$ such that $L^+$ is non-regular. If $H$ is non-regular then $(L,c)$ is a template for $H$.
\end{corollary}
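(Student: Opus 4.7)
My plan is to reduce the corollary to Lemma~\ref{sharing_much_template} by producing two $K_r$-subcliques of $L$ sharing $r-1$ vertices and having distinct $c$-discrepancies.

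Since $L^+$ is not regular, I will pick two vertices $u,v \in V(L)$ with $d_{L^+}(u) \neq d_{L^+}(v)$. Set $L_1 := L - v$ and $L_2 := L - u$; these are two copies of $K_r$ sharing the $r-1$ vertices $V(L) \setminus \{u,v\}$, which is the shape required by Lemma~\ref{sharing_much_template}. The computation I will carry out is straightforward: removing a vertex $w$ from $L$ deletes precisely the $r$ edges incident to $w$, so
\[
c(L_1) - c(L_2) \;=\; \sum_{e \ni u} c(e) \;-\; \sum_{e \ni v} c(e) \;=\; (2d_{L^+}(u) - r) - (2d_{L^+}(v) - r) \;=\; 2\bigl(d_{L^+}(u) - d_{L^+}(v)\bigr),
\]
which is nonzero by the choice of $u,v$. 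Thus the hypotheses of Lemma~\ref{sharing_much_template} are met, so $(L_1 \cup L_2, c)$ is a template for $H$.

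The remaining subtlety is that $L_1 \cup L_2$ is not all of $L$: it is $L$ minus the single edge $uv$. To close the gap, I will invoke the obvious monotonicity of the template property, namely that if $(F,c)$ is a template for $H$ then so is any $(F',c')$ obtained by adding edges (with any colors) on the same vertex set. Indeed, any blowup $B$ of $(F,c)$ certifying the template property embeds into the corresponding blowup $B'$ of $(F',c')$ (same cluster sizes, additional bipartite graphs between clusters), and the two perfect $H$-factors of $B$ witnessing different discrepancies remain perfect $H$-factors of $B'$ with the same discrepancies. Applied to $F = L_1 \cup L_2 \subseteq L = F'$, this gives that $(L,c)$ is a template for $H$, completing the proof.

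There is no real obstacle here; the only point requiring a sentence of justification is the monotonicity step, and even that could be avoided by observing directly from the proof of Lemma~\ref{sharing_much_template} that the two $H$-factors constructed there never use edges between $V_s$ and $V_t$ (since the only vertex placed in $V_s$ lies in the same color class as the vertices it joins in $V_t$), so the argument goes through verbatim in any blowup of $L$ regardless of the color assigned to $uv$.
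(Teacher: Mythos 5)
Your proof is correct and takes essentially the same route as the paper: both pick $u,v$ with $d_{L^+}(u)\neq d_{L^+}(v)$, set $L_1=L-v$, $L_2=L-u$, compute $c(L_1)-c(L_2)=c(u,L)-c(v,L)\neq 0$, and invoke Lemma~\ref{sharing_much_template}. Your additional monotonicity remark, accounting for the fact that $L_1\cup L_2$ omits the edge $uv$, is a valid and slightly more careful justification of a step the paper leaves implicit.
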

\begin{proof}
Since $L^+$ is non-regular, there exist $u,v\in V(L)$ such that $c(u,L)\neq c(v,L)$. Let $U = V(L)\backslash\{v\}$ and $V = V(L)\backslash\{u\}$. It is not hard to see that $|U\cap V| = r-1$ and 
$$c(L[U])- c(L[V]) = c(u,L)-c(uv) - (c(v,L)-c(uv))\neq 0.$$
The statement follows by Lemma~\ref{sharing_much_template}.
\end{proof}
\subsection{Different degrees to different parts}
In this section we consider another ``uniformity property'' defined in terms of vertex-degrees. Here, we assume that there is an $r$-coloring of $H$ such that some vertex has different degrees to two color-classes. We show that in this case, a certain configuration is a template for $H$. 

\begin{lemma}\label{like blowup2}
Let $L_1,L_2$ be two copies of $K_r$ sharing $r-1$ vertices with $\{x\} = L_1\backslash L_2$ and $\{y\}=L_2\backslash L_1$. Let $c$ be a $2$-edge-coloring of $L_1\cup L_2$ with $c(L_1)=c(L_2)$ such that there exists a vertex $z\in L_1\cap L_2$ with $c(xz)=-c(yz)=1$. If $H$ has an $r$-coloring $A_1,A_2,\dots, A_r$ and a vertex $a_1\in A_1$ such that $d_H(a_1,A_2)\neq d_H(a_1,A_3)$, then $(L_1\cup L_2,c)$ is a template for $H$.
\end{lemma}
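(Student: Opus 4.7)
The plan is to adapt the construction of Lemma~\ref{sharing_much_template}, now exploiting the asymmetry $d_H(a_1,A_2)\neq d_H(a_1,A_3)$ in place of the non-regularity of $H$. Write $L_1\cap L_2=\{z,q_3,\dots,q_r\}$. Since $c(L_1)=c(L_2)$ and the induced subgraph on $L_1\cap L_2$ is common to both cliques, we obtain the key identity
$$\sum_{j=3}^{r}\bigl(c(xq_j)-c(yq_j)\bigr)\;=\;c(yz)-c(xz)\;=\;-2,$$
which will drive the computation below.

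For every bijection $\sigma:\{3,\dots,r\}\to\{q_3,\dots,q_r\}$ I introduce four $H$-copies in a blowup $B$ of $(L_1\cup L_2,c)$. Let $X_\sigma$ embed $A_1\to V_y$, $A_2\to V_z$, and $A_i\to V_{\sigma(i)}$ for every $i\geq 3$. Let $X'_\sigma$ be obtained from $X_\sigma$ by relocating only the vertex $a_1$ from $V_y$ to $V_x$ (this is legitimate because $V_x$ is joined to $V_z$ and to every $V_{q_j}$). Let $Y_\sigma$ be the analogue of $X_\sigma$ with the roles of $A_2$ and $A_3$ swapped (so $A_3\to V_z$ and $A_2\to V_{\sigma(3)}$), and let $Y'_\sigma$ be $Y_\sigma$ with $a_1$ moved from $V_y$ to $V_x$. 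Define $F_1:=\bigcup_\sigma\{X_\sigma,Y'_\sigma\}$ and $F_2:=\bigcup_\sigma\{X'_\sigma,Y_\sigma\}$; this asymmetric pairing is the crux of the proof.

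A direct vertex count shows that $F_1$ and $F_2$ occupy exactly the same number of vertices in every cluster, so both are perfect $H$-factors of the blowup $B$ whose cluster sizes are $|V_x|=(r-2)!$, $|V_y|=(2|A_1|-1)(r-2)!$, $|V_z|=(|A_2|+|A_3|)(r-2)!$, and $|V_{q_j}|=\bigl(|A_2|+|A_3|+2\sum_{i\geq 4}|A_i|\bigr)(r-3)!$ for $3\leq j\leq r$. Only the edges incident to the relocated vertex $a_1$ change between $X_\sigma$ and $X'_\sigma$, whence
$$c(X'_\sigma)-c(X_\sigma)\;=\;2d_H(a_1,A_2)\;+\;\sum_{i=3}^{r} d_H(a_1,A_i)\bigl(c(x\sigma(i))-c(y\sigma(i))\bigr),$$
together with an entirely analogous expression for $c(Y'_\sigma)-c(Y_\sigma)$. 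Summing over all $(r-2)!$ bijections $\sigma$, the second terms collapse via the identity displayed above, and a short calculation yields
$$c(F_2)-c(F_1)\;=\;2\bigl[(r-2)!+(r-3)!\bigr]\bigl(d_H(a_1,A_2)-d_H(a_1,A_3)\bigr)\;=\;2(r-1)(r-3)!\bigl(d_H(a_1,A_2)-d_H(a_1,A_3)\bigr)\;\neq\;0,$$
so $(L_1\cup L_2,c)$ is a template for $H$.

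The main obstacle is the combinatorial design of the pairing. A naive symmetric choice such as $F_1=\bigcup_\sigma\{X_\sigma,Y_\sigma\}$ and $F_2=\bigcup_\sigma\{X'_\sigma,Y'_\sigma\}$ also balances the cluster sizes, but the effects of the two $a_1$-swaps then cancel, because moving $a_1$ from $V_y$ to $V_x$ contributes with opposite signs under the swap of $A_2$ and $A_3$. Pairing $X$ with $Y'$ (and $X'$ with $Y$) flips one sign, producing an additive rather than telescoping contribution and allowing the single inequality $d_H(a_1,A_2)\neq d_H(a_1,A_3)$ to propagate to $c(F_2)\neq c(F_1)$.
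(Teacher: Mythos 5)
Your proof is correct and uses essentially the same mechanism as the paper's: move $a_1$ from $V_y$ to $V_x$, swap $A_2$ and $A_3$ at the $V_z$ position, and take the asymmetric pairing $F_1=\{X_\sigma,Y'_\sigma\}$, $F_2=\{X'_\sigma,Y_\sigma\}$. The only notable (and mild) deviation is that the paper first isolates a second vertex $w\in L_1\cap L_2$ with $c(xw)-c(yw)=-(c(xz)-c(yz))$ and anchors $A_3$ there, permuting only over $\{4,\dots,r\}$, whereas you let the position of the swapped part range over all of $L_1\cap L_2\setminus\{z\}$ and exploit the identity $\sum_j(c(xq_j)-c(yq_j))=-2$; this is a clean variant that reaches the same conclusion with a slightly different final constant.
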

\begin{proof}
We have $$0 = c(L_1)-c(L_2) = 
\sum_{w \in L_1 \cap L_2}(c(xw) - c(yw)).$$ Since $z \in L_1 \cap L_2$ satisfies $c(yz)=-c(xz)=1$, there must exist $w \in (L_1 \cap L_2) \setminus \{z\}$ such that $c(yw)=-c(xw)=-1$. Note that this implies that $r\geq 3$.
Write $L_1 \cap L_2 = \{z,w,v_4,\dots, v_r\}$. 
Let $A_1,A_2,\dots,A_r$ be an $r$-vertex-coloring of $H$ such that there exists a vertex $a_1\in A_1$ with $d_H(a_1,A_2)\neq d_H(a_1,A_3)$. Let $B$ be a blowup of $(L_1\cup L_2,c)$ with 
\begin{align*}
&|V_{x}| = 1,\\ 
&|V_y| = 2|A_1|-1,\\ 
&|V_{z}|=|V_{w}| =|A_2|+|A_3|,\\
&|V_{v_i}| = 2|A_i|\ \text{for }4\leq i\leq r.
\end{align*}
We now define certain $H$-copies in $B$. For a permutation $\sigma : \{4,\dots,r\} \rightarrow \{4,\dots,r\}$, let $X_{\sigma}$ be a copy of $H$ in which $A_1$ is embedded into $V_y$, $A_2$ is embedded into $V_z$, $A_3$ is embedded into $V_w$ and $A_{\sigma(i)}$ is embedded into $V_{v_i}$ for every $4 \leq i \leq r$. Let $Y_{\sigma}$ be the $H$-copy obtained from $X_{\sigma}$ by swapping $A_2$ and $A_3$, i.e. embedding $A_2$ into $A_w$ and $A_3$ into $A_z$. Let $X'_{\sigma}$ (resp. $Y'_{\sigma}$) be the $H$-copy obtained from $X_{\sigma}$ (resp. $Y_{\sigma}$) by moving $a_1$ from $V_y$ to $V_x$.  
We define all these $H$-copies such that the copies in each of the sets $F_1 := \{ X_{\sigma}, Y'_{\sigma}\}_{\sigma}$ and $F_2 := \{ Y_{\sigma}, X'_{\sigma}\}_{\sigma}$ are pairwise-disjoint and partition $V(B)$; note that the sizes of $V_x,V_y,V_z,V_w,V_{v_4},\dots,V_{v_r}$ are precisely chosen to allow this. In other words, $F_1,F_2$ are perfect $H$-factors of $B$. 
We now calculate $c(F_1)-c(F_2)$. By definition,
$$
c(X_{\sigma}) - c(X'_{\sigma}) = (c(yz) - c(xz)) \cdot d_H(a_1,A_2) + 
(c(yw) - c(xw)) \cdot d_H(a_1,A_3) + 
\sum_{i = 4}^r (c(yv_i) - c(xv_i)) \cdot d_H(a_1,A_{\sigma(i)}).
$$
For every two indices $4 \leq i,j \leq r$, there are exactly $(r-4)!$ permutations $\sigma$ with $\sigma(i) = j$. 
Hence, $\sum_{\sigma}d_H(a_1,A_{\sigma(i)}) = (r-4)! \cdot d_H(a_1,A_4\cup \dots \cup A_r)$.
We get that
\begin{align*}
\sum_{\sigma}(c(X_{\sigma}) - c(X'_{\sigma})) = \; &(r-3)! \cdot (c(yz) - c(xz)) \cdot d_H(a_1,A_2) + 
(r-3)! \cdot (c(yw) - c(xw)) \cdot d_H(a_1,A_3) \\&+ (r-4)! \cdot \sum_{i = 4}^r (c(yv_i) - c(xv_i)) \cdot d_H(a_1,A_4 \cup \dots \cup A_r).
\end{align*}
By the same argument,
\begin{align*}
\sum_{\sigma}(c(Y_{\sigma}) - c(Y'_{\sigma})) = \; &(r-3)! \cdot (c(yz) - c(xz)) \cdot d_H(a_1,A_3) + 
(r-3)! \cdot (c(yw) - c(xw)) \cdot d_H(a_1,A_2) \\&+ (r-4)! \cdot \sum_{i = 4}^r (c(yv_i) - c(xv_i)) \cdot d_H(a_1,A_4 \cup \dots \cup A_r).
\end{align*}
Hence,
\begin{align*}
c(F_1) - c(F_2) &= 
\sum_{\sigma}(c(X_{\sigma}) - c(X'_{\sigma})) - 
\sum_{\sigma}(c(Y_{\sigma}) - c(Y'_{\sigma})) \\ &= 
(r-3)! \cdot \big(d_H(a_1,A_2) - d_H(a_1,A_3) \big) \cdot \big( c(yz) - c(xz) - c(yw) + c(xw) \big) \neq 0,
\end{align*}
using that $d_H(a_1,A_2) \neq d_H(a_1,A_3)$ by assumption and that $c(yz) - c(xz) = 2$ and $c(yw) - c(xw) = -2$. As $c(F_1) \neq c(F_2)$, it follows that $(L_1\cup L_2, c)$ is a template for $H$. 
\end{proof}
\subsection{Unbalanced $r$-colorings}
In this section we consider graphs $H$ having an unbalanced $r$-coloring (recall that an $r$-coloring $A_1,\dots,A_r$ is called balanced if $|A_1| = \dots = |A_r|$).
We shall prove the following.
\begin{lemma}\label{non-balanced-uniform template2}
Suppose that $r \geq 4$. Let $L_1,L_2$ be two copies of $K_r$ sharing $r-2$ vertices, and let $c$ be a $2$-edge-coloring of $L_1\cup L_2$ such that $L_1^+$ is $d$-regular and  $L_2^+$ is $d'$-regular for some $d\neq d'$. If $H$ fulfills the $r$-wise $C_4$-condition and has an unbalanced $r$-coloring, then $(L_1\cup L_2,c)$ is a template for $H$.
\end{lemma}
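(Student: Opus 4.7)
The plan. Begin by observing that, because $L_1$ and $L_2$ are the only two $r$-cliques in $L_1 \cup L_2$, every copy of $H$ inside any blowup $B$ of $(L_1 \cup L_2, c)$ must lie inside the blowup of $L_1$ or the blowup of $L_2$. For a perfect $H$-factor $F$ of $B$, let $F^{(1)}$ and $F^{(2)}$ denote its subfactors inside the blowup of $L_1$ and of $L_2$, respectively. The $r$-wise $C_4$-condition is inherited from $H$ by any $H$-factor, so applying Lemma~\ref{regularC4} to $F^{(i)}$ inside the blowup of $L_i$ (whose $+$-subgraph is $d_i$-regular on $K_r$, with $d_1=d$ and $d_2=d'$) yields
\[
c(F^{(i)}) \;=\; \frac{2 d_i - r + 1}{r-1}\, e(F^{(i)}).
\]
Summing over $i=1,2$ and using that $e(F^{(1)}) + e(F^{(2)}) = \tfrac{|B|}{|H|}\, e(H)$ is the same for every perfect $H$-factor of $B$, I get
\[
c(F_1) - c(F_2) \;=\; \frac{2(d-d')}{r-1}\,\bigl( e(F_1^{(1)}) - e(F_2^{(1)}) \bigr)
\]
for any two perfect $H$-factors $F_1, F_2$ of $B$. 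Since $d \neq d'$, this is nonzero precisely when $F_1$ and $F_2$ contain different numbers of $H$-copies in the $L_1$-blowup. So the task reduces to constructing a single blowup $B$ of $(L_1 \cup L_2, c)$ that admits two perfect $H$-factors with different $L_1$-copy counts.

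For the construction, the hypothesis that $H$ has an unbalanced $r$-coloring $A_1, \ldots, A_r$ with $a_i := |A_i|$ not all equal is essential: pick indices $p \neq q$ with $a_p \neq a_q$. Intuitively, $V_{x_1}$ (a cluster touched only by $L_1$-copies) can then be filled by $L_1$-copies in two fundamentally different ways, namely by many copies each placing the small class $A_p$ on $V_{x_1}$, or by fewer copies each placing the larger class $A_q$ there. I would choose $|V_{x_1}|$ as a common multiple of $a_p, a_q$ (e.g.\ $|V_{x_1}| = a_p a_q \cdot s$ for suitable $s$) so that both options are available, and then size the remaining clusters $V_{x_2}$, $V_{v_3}, \ldots, V_{v_r}$, $V_{y_1}, V_{y_2}$ so that each option can be completed to a perfect $H$-factor of the full $B$. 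This is done by mixing several permutations of $A_1, \ldots, A_r$ across the different clusters and by ``absorbing'' the resulting vertex imbalance in the $L_2$-side, where $V_{v_3}, \ldots, V_{v_r}$ are shared between the two sides and so offer room to redistribute vertices. This will produce $F_1, F_2$ with $L_1$-copy counts differing by $|a_q - a_p|$ times a positive factor, which by the previous paragraph gives $c(F_1) \neq c(F_2)$, so $(L_1 \cup L_2, c)$ is a template for $H$.

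The main obstacle is the bookkeeping in this last construction: changing the number of $L_1$-copies forces simultaneous changes in $V_{x_2}$ and in every shared cluster $V_{v_j}$, so the $L_2$-side of each factor must be chosen compatibly in order that all cluster-size equations are satisfied simultaneously for both $F_1$ and $F_2$. The hypothesis $r \geq 4$ plays a role here, guaranteeing at least two shared central clusters $V_{v_3}, V_{v_4}$ into which the extra vertices can be redistributed, and giving enough distinct permutations of the color classes to make the equations solvable over the nonnegative integers. Once this combinatorial fitting problem is solved by an explicit (but bespoke) choice of cluster sizes as $\mathbb{Z}$-linear combinations of the $a_i$'s, the discrepancy identity from the first paragraph closes the proof.
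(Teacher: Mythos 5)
Your overall strategy is the paper's: apply Lemma~\ref{regularC4} separately to the portions of an $H$-factor lying in the blowups of $L_1$ and of $L_2$, and use the unbalanced colouring to produce two perfect $H$-factors whose edge counts on the $L_1$-side differ; your identity $c(F_1)-c(F_2)=\frac{2(d-d')}{r-1}\bigl(e(F_1^{(1)})-e(F_2^{(1)})\bigr)$ is the same computation the paper performs. However, your opening claim --- that \emph{every} $H$-copy in \emph{any} blowup of $L_1\cup L_2$ lies inside the blowup of $L_1$ or of $L_2$ --- is false. A copy of $H$ in $B$ is a homomorphism of $H$ into $L_1\cup L_2$, and since $x_1x_2,\,x_1y_2,\,y_1x_2,\,y_1y_2$ are non-edges, a single copy may place part of a colour class in $V_{x_1}$ and part in $V_{x_2}$ (for instance when $H$ is disconnected, or when a colour class can be split without creating a forbidden pair). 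So the decomposition $F=F^{(1)}\cup F^{(2)}$, and hence the displayed formula, is not available for arbitrary perfect $H$-factors. This does not sink the strategy, because you only need the identity for two factors you construct yourself and can force to decompose, but the reduction as you state it is too general.

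The more substantive gap is that the construction of $B$, $F_1$, $F_2$ is never carried out: you defer it as a ``combinatorial fitting problem'' to be solved by a ``bespoke choice of cluster sizes'', and that fitting problem is essentially the content of the lemma. The paper's solution is short: order the classes so that $|A_1|\le\cdots\le|A_r|$ with $|A_1|<|A_r|$, take $|V_{x_1}|=|V_{y_1}|=|V_{x_2}|=|V_{y_2}|=(|A_1|+|A_2|)(|A_{r-1}|+|A_r|)$ with the shared clusters sized as $2(|A_1|+|A_2|)|A_{i+2}|+2(|A_{r-1}|+|A_r|)|A_i|$, let $F_1$ consist of $|A_{r-1}|+|A_r|$ copies of $H$ for each ordering of $A_1,A_2$ on $V_{x_1},V_{y_1}$ together with $|A_1|+|A_2|$ copies for each ordering of $A_{r-1},A_r$ on $V_{x_2},V_{y_2}$, and let $F_2$ swap the roles of the two sides. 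Then every copy lies in one clique-blowup by construction, the $L_1$-side edge counts are $2(|A_{r-1}|+|A_r|)e(H)$ versus $2(|A_1|+|A_2|)e(H)$, which differ by unbalancedness, and your identity finishes the proof. Until some such explicit choice is exhibited and verified to tile every cluster exactly, the argument is incomplete.
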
 
Let us give an overview of the proof of Lemma~\ref{non-balanced-uniform template2}. 
Assuming that $H$ satisfies the $r$-wise $C_4$-condition allows us to control the discrepancy of $H$-factors via Lemma~\ref{regularC4}. Indeed, this lemma implies that the discrepancy of an $H$-factor in any blowup of $K_r$ with a $2$-edge-coloring for which $K_r^+$ is $d$-regular, is simply determined by $d$ and the number of copies of $H$ in the factor. We will make use of this observation by considering two different $H$-factors of a carefully chosen blowup of $L_1 \cup L_2$. Each $H$-copy in both $H$-factors will be contained in the blowup of $L_1$ or $L_2$, and the two $H$-factors will differ on the number of $H$-copies of each type (here we will use that $H$ has an unbalanced $r$-coloring). This will guarantee that the two $H$-factors have different discrepancies. The detailed proof follows.  
\begin{proof}[Proof of Lemma~\ref{non-balanced-uniform template2}]
Let $A_1,A_2,\dots,A_r$ be the parts of an unbalanced $r$-coloring of $H$ with $|A_1|\leq|A_2|\leq\dots\leq|A_r|$ and $|A_1|<|A_r|$. 
Write $L_1 \cap L_2 = \{v_3,v_4,\dots,v_r\}$ and 
$\{x_i,y_i\} = L_i \setminus L_{3-i}$ for $i = 1,2$. 
Let $B$ be the blowup of $(L_1\cup L_2,c)$ with 
\begin{itemize}
    \item $|V_{x_1}|=|V_{x_2}|=|V_{y_1}|=|V_{y_2}|=(|A_1|+|A_2|)(|A_{r-1}|+|A_r|)$ and 
\item for each $1\leq i\leq r-2$, $|V_{v_i}|=2\big(|A_1|+|A_2|\big)|A_{i+2}|+2\big(|A_{r-1}|+|A_r|\big)|A_{i}|.
$
\end{itemize}

Let $F_1,F_2$ be the following two perfect $H$-factors of $B$.
\begin{itemize}
    \item $F_1$ contains $(|A_{r-1}|+|A_r|)$ copies of $H$ with each permutation of $A_1,A_2$ on $V_{x_1},V_{y_1}$ and for each $3\leq i\leq r$, $A_i$ on $V_{v_i}$. Additionally, $F_1$ contains $(|A_{1}|+|A_2|)$ copies of $H$ with each permutation of $A_{r-1},A_r$ on $V_{x_2},V_{y_2}$ and for each $1\leq i\leq r-2$, $A_i$ on $V_{v_{i+2}}$.
    \item $F_2$ contains $(|A_{1}|+|A_2|)$ copies of $H$ with each permutation of $A_{r-1},A_r$ on $V_{x_1},V_{y_1}$ and for each $1\leq i\leq r-2$, $A_i$ on $V_{v_{i+2}}$. Additionally, $F_2$ contains $(|A_{r-1}|+|A_r|)$ copies of $H$ with each permutation of $A_{1},A_2$ on $V_{x_2},V_{y_2}$ and for each $3\leq i\leq r$, $A_i$ on $V_{v_i}$.
\end{itemize}
By definition, each copy of $H$ in $F$ is contained either in the blowup of $L_1$ or in the blowup of $L_2$.
Let $F_{1|L_1}, F_{1|L_2}$ denote the copies of $H$ in $F_1$ which are contained in the blowup of $L_1,L_2$, respectively, and define $F_{2|L_1}, F_{2|L_2}$ similarly.
Since $F_{1|L_1}$ satisfies the $r$-wise $C_4$-condition, we can apply Lemma~\ref{regularC4} to the blowup $B[V(F_{1|L_1})]$ of the $r$-clique $(L_1,c)$. As $L_1^+$ is $d$-regular, Lemma~\ref{regularC4} gives
$$
c(F_{1|L_1})=\frac{2d-r+1}{r-1}e(F_{1|L_1}).
$$
Similarly, we get
\begin{align*}
c(F_{1|L_2})&=\frac{2d'-r+1}{r-1}e(F_{1|L_2}),\\
c(F_{2|L_1})&=\frac{2d-r+1}{r-1}e(F_{2|L_1}),\\
c(F_{2|L_2})&=\frac{2d'-r+1}{r-1}e(F_{2|L_2}).
\end{align*}
Additionally, by the definition of $F_1,F_2$ we have that $e(F_{1|L_1})=e(F_{2|L_2})=2(|A_{r-1}|+|A_r|)e(H)$ and $e(F_{1|L_2})=e(F_{2|L_1})=2(|A_1|+|A_2|)e(H)$. We get that
$$
c(F_1)-c(F_2) = 
c(F_{1|L_1}) - c(F_{2|L_2}) + c(F_{1|L_2}) - c(F_{2|L_1}) = 
\frac{4(d-d')}{r-1}(|A_{r-1}|+|A_r|-|A_1|-|A_2|)e(H) \neq 0,
$$
using that $d\neq d'$ and that $|A_{r-1}|+|A_r|-|A_1|-|A_2|>0$. 
so we see that $c(F_1)-c(F_2)\neq 0$, implying that $(L_1\cup L_2,c)$ is a template for $H$.
\end{proof}
\subsection{Non-uniform $r$-colorings}
An $r$-coloring of $H$ is called {\em uniform} if the number of edges between any two color-classes is the same. 
We will say that $H$ is uniform if it only has uniform colorings:
\begin{definition}\label{def:uniform}
A graph $H$ is called {\em uniform} if for every $r$-coloring of $H$ with parts $A_1,A_2,...,A_r$, it holds that $e_H(A_i,A_j)= e(H)/\binom{r}{2}$ for all $1\leq i<j\leq r$.
\end{definition}
\noindent
The following lemma gives a template for non-uniform graphs $H$ under some additional conditions. 

\begin{lemma}\label{sharing_little_template}
Let $L_1, L_2$ be two copies of $K_r$ sharing $r-2$ or $r-1$ vertices, let $V\subseteq L_1 \cap L_2$ of size $r-2$ and let $e_i := L_i \setminus V$.
Let $c$ be a $2$-edge-coloring of $L_1\cup L_2$ such that
$$
c(L_1) - c(e_1) - c(L_2) + c(e_2)\notin\{-4(r-2), -2(r-2), 0, 2(r-2), 4(r-2)\}.
$$
If $H$ fulfills the $r$-wise $C_4$-condition and is non-uniform, then $(L_1\cup L_2,c)$ is a template for $H$.
\end{lemma}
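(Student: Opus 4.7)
The plan is to construct a blowup $B$ of $L_1\cup L_2$ together with two perfect $H$-factors $F_1,F_2$ of $B$ that differ only by swapping two color classes $A_{i^*}, A_{j^*}$ at two vertices $u^*, w^*\in V$ (applied one way on the $L_1$-copy and the opposite way on the $L_2$-copy). The main job is to choose suitable indices $i^*, j^*$ and vertices $u^*, w^*$ from the two hypotheses on $H$ and on $c$; the discrepancy difference will then come out to $(\beta_{i^*}-\beta_{j^*})(\delta_{u^*}-\delta_{w^*})$, where the $\beta_i$'s decompose the edge counts of $H$ and $\delta_v:=c(v,e_1)-c(v,e_2)$.

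For the first step, applying the $r$-wise $C_4$-condition with all permutations of the parts of a fixed $r$-coloring $A_1,\dots,A_r$ of $H$ yields $\alpha_{ij}+\alpha_{kl}=\alpha_{ik}+\alpha_{jl}=\alpha_{il}+\alpha_{jk}$ for all distinct $i,j,k,l\in[r]$, where $\alpha_{ij}:=e_H(A_i,A_j)$. For $r\geq 4$ a standard linear-algebra argument then produces constants $\beta_1,\dots,\beta_r,c_0$ (depending on the coloring) with $\alpha_{ij}=\beta_i+\beta_j+c_0$. Since $H$ is non-uniform, some $r$-coloring has non-constant $\alpha_{ij}$'s, hence non-constant $\beta_i$'s; we fix such a coloring and pick $i^*\neq j^*$ with $\beta_{i^*}\neq\beta_{j^*}$.

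For the second step, observe that $c(L_1)-c(e_1)-c(L_2)+c(e_2)=c(V,e_1)-c(V,e_2)=\sum_{v\in V}\delta_v$. Each $\delta_v$ is an even integer with $|\delta_v|\leq 4$, so if all $\delta_v$ coincided with a single value $\delta$, the sum would lie in $\{0,\pm 2(r-2),\pm 4(r-2)\}$, contradicting the hypothesis. Thus we may pick $u^*, w^*\in V$ with $\delta_{u^*}\neq\delta_{w^*}$. (The argument needs $|V|\geq 2$, i.e.\ $r\geq 4$; for $r\leq 3$ every attainable value of $c(V,e_1)-c(V,e_2)$ lies in the excluded set, so the hypothesis is unsatisfiable and the lemma holds vacuously.)

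For the third step, pick distinct classes $A_{k'},A_{k''}$ in $\{A_1,\dots,A_r\}\setminus\{A_{i^*},A_{j^*}\}$ and any bijection $v'\mapsto k_{v'}$ from $V\setminus\{u^*,w^*\}$ onto the remaining $r-4$ indices. In each of $L_1, L_2$ place $A_{i^*}$ at $u^*$, $A_{j^*}$ at $w^*$, $A_{k_{v'}}$ at $v'$ on the shared vertices, and $A_{k'}, A_{k''}$ on the two non-shared vertices of $L_i$ (ensuring, when $|L_1\cap L_2|=r-1$, that the shared extra vertex $z$ receives the same class from both copies). Define $B$ by letting $|V_v|$ equal the total size of the classes placed at $v$ across both copies, so in particular $|V_{u^*}|=|V_{w^*}|=|A_{i^*}|+|A_{j^*}|$. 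Let $F_1$ be the two-copy $H$-factor with the $i^*\leftrightarrow j^*$ swap performed only on the $L_2$-copy at $u^*, w^*$, and let $F_2$ be the one with the swap performed on the $L_1$-copy instead. Both are perfect $H$-factors of $B$ because the swap preserves $|A_{i^*}|+|A_{j^*}|$ at $u^*$ and at $w^*$. Substituting $\alpha_{ij}=\beta_i+\beta_j+c_0$ into $c(X)=\sum_{ab\in E(L)}c(ab)\alpha_{\pi(a)\pi(b)}$ for an $H$-copy $X$ placed in a clique $L$ via a bijection $\pi$ yields $c(X)=c_0\cdot c(L)+\sum_{v\in L}\beta_{\pi(v)}\,c(v,L)$, and subtracting the four such expressions leaves only the contributions at $u^*, w^*$, giving $c(F_1)-c(F_2)=(\beta_{i^*}-\beta_{j^*})(\delta_{u^*}-\delta_{w^*})\neq 0$. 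This exhibits $(L_1\cup L_2,c)$ as a template for $H$. The main obstacle I anticipate is the bookkeeping in this step --- producing a consistent size assignment for $B$ that makes $F_1$ and $F_2$ both perfect $H$-factors, uniformly across the two cases $|L_1\cap L_2|\in\{r-2,r-1\}$; once the decomposition of $\alpha_{ij}$ from the first step is available, the final cancellation is essentially automatic.
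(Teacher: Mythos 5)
Your proposal is correct and follows the same broad template as the paper's proof: take a blowup of $L_1\cup L_2$, place $H$-copies into the blowups of $L_1$ and $L_2$ separately, and compare the discrepancy of a factor $F_1$ to the discrepancy of $F_2$ obtained by swapping the roles of two color classes at two shared vertices. Both arguments also extract $u^*,w^*$ (the paper's $u,v$) from the hypothesis on $c$ by noting that $c(L_1)-c(e_1)-c(L_2)+c(e_2)=\sum_{v\in V}\delta_v$ with each $\delta_v$ even and in $[-4,4]$, so they cannot all be equal.

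Where you diverge from the paper, and in a way that is worth comparing: you upgrade the $r$-wise $C_4$-condition to the structural statement that for each $r$-coloring there exist $\beta_1,\dots,\beta_r$ with $e_H(A_i,A_j)=\beta_i+\beta_j$ (the constant $c_0$ you carry is unnecessary, since it can be absorbed into the $\beta_i$'s). I checked this decomposition: it is indeed forced by $\alpha_{ij}+\alpha_{kl}=\alpha_{ik}+\alpha_{jl}$ for all $4$-subsets, which in turn follows from applying the $k$-wise $C_4$-condition with all permutations of the labels; the non-uniformity of $H$ is then exactly the statement that some coloring has non-constant $\beta_i$'s. With the identity $c(X)=c_0\,c(L)+\sum_{v}\beta_{\pi(v)}c(v,L)$ in hand, the final cancellation $c(F_1)-c(F_2)=(\beta_{i^*}-\beta_{j^*})(\delta_{u^*}-\delta_{w^*})$ drops out cleanly, and the arbitrariness of the classes placed at the non-shared vertices is apparent. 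The paper instead uses Claim~\ref{claim:non-uniform} to choose a coloring with $e_H(A_1,A_2)\neq e_H(A_1,A_3)$, hard-codes which classes go where, and invokes the $C_4$-condition once more inside the final discrepancy computation to collapse two terms into one. Your $\beta$-decomposition is a genuinely cleaner abstraction, at the modest cost of proving the linear-algebra lemma up front. One small point you should make explicit when writing this up: in the case $|L_1\cap L_2|=r-1$, the two $H$-copies in each factor $F_i$ share $r-1$ clusters, so you must partition those clusters between the two copies; this is consistent because (as you observe) the swap preserves the total size placed at $u^*$ and $w^*$, but the bookkeeping deserves a sentence.
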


Before proving Lemma \ref{sharing_little_template}, let us note some simple facts. 
Clearly, a non-uniform coloring must have at least three parts, so $r \geq 3$. We also need the following easy claim.
\begin{claim}\label{claim:non-uniform}
if $H$ is non-uniform, then there exists an $r$-coloring of $H$ with parts $A_1,A_2\dots,A_r$ such that $e_H(A_1,A_2)\neq e_H(A_1,A_3)$. 
\end{claim}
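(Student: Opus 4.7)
The plan is as follows. Since $H$ is non-uniform, there exists an $r$-coloring with parts $B_1,\dots,B_r$ such that the values $e_H(B_i,B_j)$ are not all equal (indeed, since $\sum_{i<j} e_H(B_i,B_j)=e(H)$, if one value differs from $e(H)/\binom{r}{2}$, then two of the values must differ from each other). I want to locate, inside this fixed coloring, a vertex of the index set $[r]$ that witnesses two pairs with different cross-edge counts; then I just rename the parts.

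To do this I would consider the edge-weighting $\ell\colon \binom{[r]}{2}\to \mathbb{Z}$ defined by $\ell(\{i,j\}) = e_H(B_i,B_j)$, viewed as a weighting of the edges of $K_r$. Non-uniformity means $\ell$ is non-constant, and (as already noted right before the claim) this forces $r\geq 3$. Suppose toward a contradiction that for every $i\in[r]$ all edges of $K_r$ incident to $i$ carry the same $\ell$-value; call this common value $d_i$. Then for any edge $\{i,j\}$ of $K_r$,
$$
d_i = \ell(\{i,j\}) = d_j,
$$
and since $K_r$ is connected all $d_i$ coincide, forcing $\ell$ to be constant. This contradicts non-uniformity.

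Therefore there exist indices $i^*\in[r]$ and two distinct $j,k\in [r]\setminus\{i^*\}$ with $\ell(\{i^*,j\})\neq \ell(\{i^*,k\})$, i.e.\ $e_H(B_{i^*},B_j) \neq e_H(B_{i^*},B_k)$. Setting $A_1 := B_{i^*}$, $A_2 := B_j$, $A_3 := B_k$ and letting $A_4,\dots,A_r$ be the remaining parts in any order yields a proper $r$-vertex-coloring of $H$ (since it is just a permutation of the color classes of $B_1,\dots,B_r$) satisfying $e_H(A_1,A_2)\neq e_H(A_1,A_3)$, as required.

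I do not anticipate any real obstacle here: the whole argument reduces to the observation that a non-constant edge-weighting of the connected graph $K_r$ cannot have all edges incident to each vertex share a common weight, which is a one-line pigeonhole/connectivity argument.
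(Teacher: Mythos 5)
Your proof is correct, and it arrives at the same key observation as the paper --- namely that a non-uniform $r$-coloring forces some vertex of $K_r$ to have two incident edges with different $e_H$-weights --- but you organize the argument differently. The paper fixes two pairs $\{i,j\},\{k,\ell\}$ with $e_H(B_i,B_j)\neq e_H(B_k,B_\ell)$ and runs a direct case analysis through the ``bridge'' pair $\{i,k\}$; this works, but it implicitly relies on a few boundary checks (e.g.\ whether $j=k$ or $i=\ell$) that the paper does not spell out. Your contrapositive version --- if every vertex's incident edges carry a common weight $d_i$ then $d_i=d_j$ whenever $ij$ is an edge, and connectivity of $K_r$ forces $\ell$ to be globally constant --- sidesteps those edge cases entirely and is arguably easier to verify at a glance. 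Both proofs are elementary and equally short; yours is just the cleaner packaging of the same idea.
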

\begin{proof}
Fix any non-uniform $r$-coloring $B_1,\dots,B_r$ of $H$, and let $1\leq i<j\leq r$ and $1\leq k<\ell\leq r$ such that $e_H(B_i,B_j)\neq e_H(B_k,B_\ell)$.
Then $i=k$ or $e_H(B_i,B_j)\neq e_H(B_i,B_k)$ or $e_H(B_i,B_k)\neq e_H(B_k,B_\ell)$, and by renaming the parts we get the desired $r$-coloring $A_1,\dots,A_r$ with $e_H(A_1,A_2)\neq e_H(A_1,A_3)$.    
\end{proof}

\begin{proof}[Proof of Lemma~\ref{sharing_little_template}]
Write $e_i = x_iy_i$, $i = 1,2$. Note that $\{x_1,y_1\}$ may intersect $\{x_2,y_2\}$ (namely, if $|L_1 \cap \nolinebreak L_2| = r-1$). Without loss of generality, let us assume that if these sets intersect then $x_1=x_2$, so that $y_1\neq y_2$ always holds.
Let $S$ be the bipartite graph between $\{x_1,y_1\}$ and $V$ (so $S \subseteq L_1$), and let $T$ be the bipartite graph between $\{x_2,y_2\}$ and $V$ (so $T \subseteq L_2$). 
Observe that $$c(S) - c(T) = c(L_1) - c(e_1) - c(L_2) + c(e_2),$$ so 
\begin{equation}\label{eq:c(S)-c(T)}
c(S) - c(T) \notin\{-4(r-2), -2(r-2), 0, 2(r-2), 4(r-2)\}.
\end{equation}
by assumption. 
Also, as $S$ and $T$ contain $2(r-2)$ edges each, we have $|c(S) - c(T)| \leq 4(r-2)$. Thus, \eqref{eq:c(S)-c(T)} implies that 
$c(S)-c(T)$ is not a multiple of $2(r-2)$.
For $v \in V$, define $g(v) := c(vx_1) + c(vy_1) - c(vx_2) - c(vy_2)$. We claim that there are $u,v \in V$ with $g(u) \neq g(v)$. Indeed, suppose otherwise. 
Then there exists $g\in\mathbb{N}$ such that each vertex $v\in V$ has $g(v) = g$. Then
\begin{align*}
    c(S) - c(T) &= \sum_{v\in V}g(v)
                    = g(r-2).
\end{align*}
Note also that $g$ is even since $g(v)$ is an even number for every $v\in V$. But now we see that $c(S) - c(T)$ is a multiple of $2(r-2)$, a contradiction. 
We conclude that there exist vertices $u,v\in V$ with $g(u) \neq g(v)$. Note that this also implies that $r\geq 4$. Write $V = \{u,v,v_5,v_6,\dots,v_r\}$. By Claim~\ref{claim:non-uniform}, there is an $r$-coloring $A_1,A_2,\dots,A_r$ of $H$ with $e_H(A_1,A_2)\neq e_H(A_1,A_3)$. Let $B$ be a blowup of $(L_1\cup L_2,c)$ with
\begin{itemize}
    \item $|V_{x_1}| = |V_{x_2}| = |A_1|$,
    \item $|V_{y_1}| = |V_{y_2}| = |A_4|$,
    \item $|V_{u}| = |V_{v}| = |A_2|+|A_3|$,
    \item $|V_{v_i}| = 2|A_i|$ for every $5\leq i\leq r$. 
\end{itemize}
Here, $V_{x_1},V_{x_2}$ are distinct parts even if $x_1 = x_2$. Note that $B$ is a blowup of $H$ also in the case $x_1 = x_2$, as then $V_{x_1} \cup V_{x_2}$ is the part corresponding to the vertex $x_1 = x_2$.

We now define four $H$-copies in $B$, denoted $F_{1,1},F_{1,2},F_{2,1},F_{2,2}$. All four copies embed $A_i$ into $V_{v_i}$ for every $5 \leq i \leq r$. Also, $F_{1,1}$ and $F_{2,1}$ embed $A_1$ into $V_{x_1}$ and $A_4$ into $V_{y_1}$, while $F_{1,2}$ and $F_{2,2}$ embed $A_1$ into $V_{x_2}$ and $A_4$ into $V_{y_2}$. Finally, $F_{1,1}$ and $F_{2,2}$ embed $A_2$ into $V_u$ and $A_3$ into $V_v$, while $F_{1,2}$ and $F_{2,1}$ embed $A_2$ into $V_v$ and $A_3$ into $V_u$. We define these copies such that $F_i := \{F_{i,1},F_{i,2}\}$ forms a perfect $H$-factor of $B$ for every $i=1,2$; this is possible due to our choice of the cluster sizes of $B$. 

We now show that $c(F_1)-c(F_2) \neq 0$. 
First observe that for an edge $zw \in E(L_1 \cup L_2)$, if $zw \notin \{x_1,y_1,x_2,y_2\} \times \{u,v\}$ then $e_{F_1}(V_z,V_w) = e_{F_2}(V_z,V_w)$. Indeed, if $z,w \in \{v_5,\dots,v_r\}$ then this is immediate. If $z \in \{x_1,y_1,x_2,y_2\}$ and $w \in \{v_5,\dots,v_r\}$ then $e_{F_{1,1}}(V_z,V_w) = e_{F_{2,1}}(V_z,V_w)$ and 
$e_{F_{1,2}}(V_z,V_w) = e_{F_{2,2}}(V_z,V_w)$, so the assertion holds. If $z \in \{u,v\}$ and $w \in \{v_5,\dots,v_r\}$ then $e_{F_{1,1}}(V_z,V_w) = e_{F_{2,2}}(V_z,V_w)$ and 
$e_{F_{1,2}}(V_z,V_w) = e_{F_{2,1}}(V_z,V_w)$, so again the assertion holds. The case $\{z,w\} = \{u,v\}$ is also immediate. We see that the bipartite graph $(V_z,V_w)$ does not contribute to $c(F_1)-c(F_2)$ unless $zw \in \{x_1,y_1,x_2,y_2\} \times \{u,v\}$.
By definition, we have
\begin{align*}
c(F_1[V_{x_1}\cup V_{x_2} \cup V_{y_1}\cup V_{y_2},V_u\cup V_v]) =& (c(ux_1)+c(vx_2)) \cdot e_H(A_1,A_2)+(c(uy_1)+c(vy_2)) \cdot e_H(A_4,A_2)\\
&+(c(vx_1)+c(ux_2)) \cdot e_H(A_1,A_3)+(c(vy_1)+c(uy_2)) \cdot e_H(A_4,A_3),
\end{align*}
and similarly,
\begin{align*}
c(F_2[V_{x_1}\cup V_{x_2} \cup V_{y_1}\cup V_{y_2},V_u\cup V_v]) =& (c(vx_1)+c(ux_2)) \cdot e_H(A_1,A_2)+(c(vy_1)+c(uy_2)) \cdot e_H(A_4,A_2)\\
&+(c(ux_1)+c(vx_2)) \cdot e_H(A_1,A_3)+(c(uy_1)+c(vy_2)) \cdot e_H(A_4,A_3).
\end{align*}
It follows that
\begin{align*}
    c(F_1)-c(F_2) =& (c(ux_1)+c(vx_2)-c(vx_1)-c(ux_2)) \cdot (e_H(A_1,A_2)-e_H(A_1,A_3))\\
    &+(c(uy_1)+c(vy_2)-c(vy_1)-c(uy_2)) \cdot (e_H(A_4,A_2)-e_H(A_4,A_3)).
\end{align*}
By the $C_4$-condition, we also have that
$$
e_H(A_1,A_2)+e_H(A_4,A_3)-e_H(A_1,A_3)-e_H(A_4,A_2) = 0.
$$
Adding this multiplied by $(c(uy_1)+c(vy_2)-c(vy_1)-c(uy_2))$ to the previous equation, we get
\begin{align*}
   c(F_1)-&c(F_2) = \\&(c(ux_1)+c(vx_2)-c(vx_1)-c(ux_2)+c(uy_1)+c(vy_2)-c(vy_1)-c(uy_2))\cdot (e_H(A_1,A_2)-e_H(A_1,A_3))\\
   =& \; (g(u) - g(v)) \cdot (e_H(A_1,A_2)-e_H(A_1,A_3)) \neq 0,
\end{align*}
using that $g(u) \neq g(v)$ and that $e_H(A_1,A_2)\neq e_H(A_1,A_3)$ by assumption. So $c(F_1) \neq c(F_2)$ and hence $(L_1\cup L_2,c)$ is a template for $H$.
\end{proof}

\subsection{Graphs violating the $C_4$-condition}

Here we show that if $H$ violates the $k$-wise $C_4$-condition and $c$ is a coloring of $K_k$, then $(K_k,c)$ is a template for $H$ unless $c$ has some very specific structure. This is given by the following definition and lemma.
\begin{definition}\label{def:star}
For some $k\geq1$, $K_{k,+}$ is a copy of $(K_k,c)$ where $c$ is a $2$-edge-coloring of $K_k$ with all edges of color $1$. The \emph{$(K_k,+)$-star} is the copy of $(K_k,c)$ where $c$ is a $2$-edge-coloring of $K_k$ such that the edges of color $1$ induce a star with $k-1$ leaves. We call the root of this star the \emph{head} of the $(K_k,+)$-star. Define $K_{k,-}$ and the $(K_k,-)$-star analogously.
\end{definition}
\begin{lemma}\label{no_c4_template}
Let $k\geq 4$ and let $c$ be a $2$-edge-coloring of $K_k$ such that $(K_k,c)$ is neither monochromatic nor a star. If $H$ does not fulfill the $k$-wise $C_4$-condition, then $(K_k,c)$ is a template for $H$.
\end{lemma}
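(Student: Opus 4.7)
The plan is to exhibit a template by constructing a blowup $B$ of $(K_k,c)$ together with two perfect $H$-factors $F_1,F_2$ of $B$ whose discrepancies differ. Each $F_j$ will consist of just two $H$-copies that differ by permuting some color classes, and $F_1,F_2$ will be chosen so that $c(F_1)-c(F_2)$ factors as the product of a quantity measuring the $C_4$-violation in $H$ and a quantity measuring an ``unbalanced matching'' in $(K_k,c)$.

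First I would use the hypothesis on $H$ to fix a convenient $k$-coloring. Since the $k$-wise $C_4$-condition, when applied to all permutations of a given proper $k$-coloring, is equivalent to demanding that for any four parts the three matching sums $e_H(A_i,A_j)+e_H(A_k,A_\ell)$ agree, its failure lets us select (after relabeling) a $k$-coloring $A_1,\dots,A_k$ of $H$ satisfying
$$\gamma:=e_H(A_1,A_3)+e_H(A_2,A_4)-e_H(A_1,A_4)-e_H(A_2,A_3)\neq 0.$$

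Next I would prove the following structural claim about $(K_k,c)$: there exist four vertices $w_1,w_2,w_3,w_4\in V(K_k)$ with $c(w_1w_3)+c(w_2w_4)\neq c(w_1w_4)+c(w_2w_3)$. Suppose not; then for all distinct $u,v,w,x\in V(K_k)$ we have $c(uv)+c(wx)=c(uw)+c(vx)=c(ux)+c(vw)$. A standard cocycle computation (e.g.\ fix $v_0,w_0\in V(K_k)$ and set $\phi(u):=(c(uv_0)+c(uw_0)-c(v_0w_0))/2$) then gives $c(uv)=\phi(u)+\phi(v)$ for some $\phi:V(K_k)\to\tfrac12\mathbb Z$. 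Since $\phi(u)+\phi(v)\in\{\pm1\}$ whenever $u\neq v$, checking all $\pm 1$-valued sum patterns on three vertices forces $\phi$ to be either constant equal to $\pm1/2$ (so $c$ is monochromatic) or to take value $\pm3/2$ on a single vertex with $\mp1/2$ on the rest (so $c$ is a $(K_k,\pm)$-star), contradicting the hypothesis. Label the remaining vertices of $K_k$ as $w_5,\dots,w_k$ arbitrarily.

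Finally I would construct the template. Let $B$ be the blowup of $(K_k,c)$ with $|V_{w_1}|=|V_{w_2}|=|A_1|+|A_2|$, $|V_{w_3}|=|V_{w_4}|=|A_3|+|A_4|$, and $|V_{w_i}|=2|A_i|$ for $i\geq5$. Let $F_1$ be the union of the $H$-copy embedding $A_i$ into $V_{w_i}$ for every $i$ and the $H$-copy obtained from it by swapping both $A_1\leftrightarrow A_2$ and $A_3\leftrightarrow A_4$; let $F_2$ be the union of the two $H$-copies obtained by swapping only $A_1\leftrightarrow A_2$ or only $A_3\leftrightarrow A_4$, respectively. The cluster sizes are chosen precisely so that $F_1,F_2$ are both perfect $H$-factors of $B$. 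A direct pair-by-pair computation shows that only pairs $(V_{w_i},V_{w_j})$ with $\{i,j\}$ meeting both $\{1,2\}$ and $\{3,4\}$ contribute to $c(F_1)-c(F_2)$, yielding
$$c(F_1)-c(F_2)=\gamma\cdot\bigl(c(w_1w_3)+c(w_2w_4)-c(w_1w_4)-c(w_2w_3)\bigr)\neq 0,$$
so $(K_k,c)$ is a template for $H$. The main obstacle is the structural claim above: while the cocycle argument is standard, one must carefully leverage the $\pm1$ restriction on $c$ to rule out all colorings besides the monochromatic ones and the stars.
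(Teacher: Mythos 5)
Your proposal is correct and follows essentially the same route as the paper: the same auxiliary claim that some quadruple of vertices of $(K_k,c)$ has two perfect-matching pairings with different color sums, the same choice of a $k$-coloring of $H$ witnessing the $C_4$-violation, and the same blowup with paired cluster sizes together with the ``double swap versus two single swaps'' pair of two-copy $H$-factors, whose discrepancy difference factors as the product of the two imbalances. The only real divergence is in proving the auxiliary claim: the paper runs a case analysis on the monochromatic neighborhoods $N_{K_k^{\pm}}(a)$, while you use a potential function $\phi$ with $c(uv)=\phi(u)+\phi(v)$ and classify the possible $\phi$; your argument is valid (modulo the routine care needed to define $\phi$ at the two base vertices) and arguably a bit cleaner, but it does not change the structure of the proof.
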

\begin{proof}
We need the following claim, which appears implicitly in \cite[proof of Claim 6.4]{balogh2021discrepancy}. For completeness, we give a proof.
\begin{claim}
There exist vertices $a_1,a_2,a_3,a_4\in V(K_k)$ such that
$$
c(a_1a_2)+c(a_3a_4) \neq c(a_1a_3)+ c(a_2a_4).
$$
\end{claim}
\begin{proof}
Let us assume towards a contradiction that for every $a_1,a_2,a_3,a_4\in V(K_k)$ it holds that 
\begin{equation}\label{eq_c4}
  c(a_1a_2)+c(a_3a_4) = c(a_1a_3)+ c(a_2a_4).  
\end{equation}
Fix an arbitrary vertex $a\in V(K_k)$.
If $d_{K_k^+}(a)\geq 3$, let $b,c,d\in N_{K_k^+}(a)$ be three arbitrary vertices. By (\ref{eq_c4}), it follows that $c(bc)=c(cd)$. As this holds for arbitrary $b,c,d\in N_{K_k^+}(a)$, we get that either all the edges in $K_k[N_{K_k^+}(a)]$ have color $1$ or they all have color $-1$. Therefore, $N_{K_k^-}(a)$ is not empty as otherwise, $K_k$ colored by $c$ is monochromatic or a star. By symmetry, if $d_{K_k^-}(a)\geq 3$ then $N_{K_k^+}(a)$ is non-empty. 

We have $d_{K_k^+}(a) + d_{K_k^-}(a) = k-1 \geq 3$.
Without loss of generality, let us assume that $d_{K_k^+}(a)\geq 2$. Let $b,c\in N_{K_k^+}(a)$ and $d\in N_{K_k^-}(a)$. By (\ref{eq_c4}), we get that $c(bc)=1$ and $c(bd)=c(cd)-1$. Since this holds for arbitrary $b,c\in N_{K_k^+}(a)$ and $d\in N_{K_k^-}(a)$, we get that all the edges in $K_k[N_{K_k^+}(a)]$ have color $1$ and all the edges in $K_k[N_{K_k^+}(a),N_{K_k^-}(a)]$ have color $-1$. Thus, if $d_{K_k^-}(a)=1$ then $K_k$ colored by $c$ is a copy of a $(K_k,-)$-star (whose head is the unique vertex in $N_{K_k^-}(a)$). 
So $d_{K_k^-}(a)\geq 2$. Now, by a symmetrical argument to the above, we get that all the edges in $K_k[N_{K_k^+}(a),N_{K_k^-}(a)]$ have color $1$, which is a contradiction.
\end{proof}
Let $V(K_k) = \{a_1,a_2,\dots, a_{k}\}$ with $a_1,a_2,a_3,a_4$ as in the statement of the above claim. As $H$ violates the $k$-wise $C_4$-condition, there exists a $k$-coloring $A_1,A_2,\dots, A_{k}$ of $H$ with
$$
e_H(A_1A_2)+e_H(A_3A_4)\neq e_H(A_1A_3)+e_H(A_2A_4).
$$
Let $B$ be a blowup of $(K_k,c)$ with 
\begin{itemize}
    \item $|V_{a_1}|=|V_{a_4}| = |A_1|+|A_4|$,
    \item $|V_{a_2}|=|V_{a_3}| = |A_2|+|A_3|$, and
    \item $|V_{a_i}| = 2|A_i|$ for $5\leq i\leq k$.
\end{itemize}

We now define two perfect $H$-factors $F_1$ and $F_2$ of $H$, each consisting of two copies of $H$. Each $H$-copy in $F_1$ and $F_2$ embeds $A_i$ into $V_{a_i}$ for every $5 \leq i \leq k$. 
\begin{itemize}
    \item $F_1$ contains a copy of $H$ with $A_1$ on $V_{a_1}$, $A_2$ on $V_{a_2}$, $A_3$ on $V_{a_3}$ and $A_4$ on $V_{a_4}$ and a second copy of $H$ with $A_1$ on $V_{a_4}$, $A_2$ on $V_{a_3}$, $A_3$ on $V_{a_2}$ and $A_4$ on $V_{a_1}$.
    \item $F_2$ contains a copy of $H$ with $A_1$ on $V_{a_1}$, $A_2$ on $V_{a_3}$, $A_3$ on $V_{a_2}$ and $A_4$ on $V_{a_4}$ and a second copy of $H$ with $A_1$ on $V_{a_4}$, $A_2$ on $V_{a_2}$, $A_3$ on $V_{a_3}$ and $A_4$ on $V_{a_1}$.
\end{itemize}
We now calculate $c(F_1) - c(F_2)$. Note that for each $5\leq i<j\leq k$, we have that 
$$
e_{F_1}(V_{a_i},V_{a_j}) =2e_H(A_i,A_j) = e_{F_2}(V_{a_i},V_{a_j}).
$$
For $5\leq i\leq k$ and $j\in \{1,4\}$, we get
$$
e_{F_1}(V_{a_i},V_{a_j}) =e_H(A_i,A_1)+e_H(A_i,A_4) = e_{F_2}(V_{a_i},V_{a_j}),
$$
and for $j\in \{2,3\}$
$$
e_{F_1}(V_{a_i},V_{a_j}) =e_H(A_i,A_2)+e_H(A_i,A_3) = e_{F_2}(V_{a_i},V_{a_j}).
$$
Additionally, we have
$$
e_{F_1}(V_{a_1},V_{a_4}) =2e_H(A_1,A_4) = e_{F_2}(V_{a_1},V_{a_4}),
$$
$$
e_{F_1}(V_{a_2},V_{a_3}) =2e_H(A_2,A_3) = e_{F_2}(V_{a_2},V_{a_3}),
$$
$$
e_{F_1}(V_{a_1},V_{a_2}) = e_{F_1}(V_{a_3},V_{a_4}) =e_{F_2}(V_{a_1},V_{a_3}) = e_{F_2}(V_{a_2},V_{a_4})= e_H(A_1,A_2) + e_H(A_3,A_4),
$$
and
$$
e_{F_2}(V_{a_1},V_{a_2}) = e_{F_2}(V_{a_3},V_{a_4}) =e_{F_1}(V_{a_1},V_{a_3}) = e_{F_1}(V_{a_2},V_{a_4}) = e_H(A_1,A_3) + e_H(A_2,A_4).
$$
Combining all of the above, we get that
\begin{align*}
&c(F_1)-c(F_2) =
\sum_{i \in \{1,4\}}\sum_{j \in \{2,3\}}c(a_ia_j) \cdot (e_{F_1}(V_{a_i},V_{a_j})-e_{F_2}(V_{a_i},V_{a_j})) = \\
&(c(a_1a_2)+c(a_3a_4)-c(a_1a_3)-c(a_2a_4)) \cdot (e_H(A_1,A_2)+e_H(A_3,A_4)-e_H(A_1,A_3)-e_H(A_2,A_4))
\neq 0.
\end{align*}
Therefore, $(K_k,c)$ is a template for $H$.
\end{proof}
\subsection{Balanced $H$-tilings with non-uniform edge distribution}
In the previous section, we saw that if $H$ violates the $r$-wise $C_4$-condition, then $(K_r,c)$ is a template for $H$ for every coloring $c$ of $K_r$ except for some very special cases. In this section we continue investigating when a given coloring of $K_r$ is a template for $H$, this time assuming that $H$ does satisfy the $r$-wise $C_4$-condition.  
We shall see that if $c$ is a coloring of $K_r$ such that $K_r^+$ is not regular, then $(K_r,c)$ is a template for $H$ unless every $H$-factor, i.e. disjoint union of copies of $H$, has a certain uniformity property. The precise statement is given by the following definition and lemma. Recall that by ``$H$-factor'' we simply mean a graph consisting of vertex-disjoint copies of $H$. 

\begin{definition}\label{def:balanced-uniform}
We say that an $r$-chromatic graph $F$ is {\em balanced-uniform} if every balanced $r$-coloring of $F$ with parts $A_1,A_2,...,A_r$ satisfies that $e_F(A_i,A_j)= e(F)/\binom{r}{2}$ for all $1\leq i<j\leq r$.
\end{definition}
\begin{lemma}\label{non-balanced-uniform template}
Suppose that $r\geq 4$ and $r \neq 5$, and assume that $H$ satisfies the $r$-wise $C_4$-condition. Let $c$ be a $2$-edge-coloring of $K_r$ such that $K_r^+$ is non-regular. If there exists a non-balanced-uniform $H$-factor, then $(K_r,c)$ is a template for $H$.
\end{lemma}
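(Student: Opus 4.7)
The plan is to exhibit two perfect $H$-factors of the $a$-blowup $B$ of $(K_r,c)$ having different discrepancies, where $a$ is the common part size of a balanced $r$-coloring $A_1,\dots,A_r$ of $F$ witnessing that $F$ is not balanced-uniform. Write $M_{ij}:=e_F(A_i,A_j)$ for $i\neq j$, and let the clusters of $B$ be $(V_p)_{p\in V(K_r)}$ with $|V_p|=a$. For every permutation $\sigma:[r]\to V(K_r)$, placing $A_i$ into $V_{\sigma(i)}$ yields a perfect $H$-factor $F_\sigma$ of $B$ of discrepancy $\phi(\sigma):=\sum_{i<j}c(\sigma(i)\sigma(j))\,M_{ij}$, so it is enough to produce $\sigma_1,\sigma_2$ with $\phi(\sigma_1)\neq\phi(\sigma_2)$.

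The main structural input is that $F$ inherits the $r$-wise $C_4$-condition from $H$ (restrict any $r$-coloring of $F$ to each copy of $H$ and sum), so $M_{ab}+M_{cd}=M_{ac}+M_{bd}$ for any four distinct indices $a,b,c,d\in[r]$. I would then carry out the elementary linear-algebra step showing that this forces an ``additive'' form $M_{ij}=\mu_i+\mu_j$ for suitable reals $\mu_1,\dots,\mu_r$: set $\mu_1:=(M_{12}+M_{13}-M_{23})/2$ and $\mu_i:=M_{1i}-\mu_1$ for $i\ge 2$, and verify that each remaining identity $M_{ij}=\mu_i+\mu_j$ is exactly one of the $C_4$-relations applied to $\{1,2,3\}$ together with the pair $\{i,j\}$. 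If all $\mu_i$ were equal, then $M_{ij}$ would be constant and $F$ would be balanced-uniform, contradicting the hypothesis; hence the $\mu_i$ are not all equal. This additive-form step is the main obstacle; everything else is a short rearrangement.

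With $M_{ij}=\mu_i+\mu_j$ in hand, interchanging summations gives
\[
\phi(\sigma)=\sum_i\mu_i\sum_{j\neq i}c(\sigma(i)\sigma(j))=\sum_i\mu_i\,D_{\sigma(i)},\qquad D_p:=2d_{K_r^+}(p)-(r-1),
\]
and therefore $\phi(\sigma)-\phi(\sigma\circ(i,j))=(\mu_i-\mu_j)(D_{\sigma(i)}-D_{\sigma(j)})$ for every transposition $(i,j)$. Since the $\mu_i$'s are not all equal (by non-balanced-uniformity of $F$) and the $D_p$'s are not all equal (by non-regularity of $K_r^+$), I pick $i,j$ with $\mu_i\neq\mu_j$, vertices $p,q\in V(K_r)$ with $D_p\neq D_q$, and a permutation $\sigma_1$ with $\sigma_1(i)=p$, $\sigma_1(j)=q$; then $\sigma_2:=\sigma_1\circ(i,j)$ satisfies $\phi(\sigma_1)\neq\phi(\sigma_2)$, and $F_{\sigma_1}$, $F_{\sigma_2}$ are two perfect $H$-factors of $B$ with distinct discrepancies, witnessing that $(K_r,c)$ is a template for $H$.
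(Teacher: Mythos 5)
Your argument is correct, and it is a genuinely different and cleaner route than the paper's. The paper's proof fixes the two highest-degree and two lowest-degree vertices $a,b,d,e$ of $K_r^+$, compares two large $F$-factors that swap the placement of $A_1,A_2$ and $A_3,A_4$ between $\{V_a,V_b\}$ and $\{V_d,V_e\}$, and invokes the $C_4$-condition on the blown-up factor; this forces a case split between $r=4$ (where $X:=V(K_r)\setminus\{a,b,d,e\}$ is empty) and $r\geq 6$ (where the $C_4$-step needs two distinct vertices of $X$), leaving $r=5$ unhandled. Your observation that the $r$-wise $C_4$-condition on the balanced coloring of $F$ forces the additive form $M_{ij}=\mu_i+\mu_j$ collapses the discrepancy to the bilinear expression $\phi(\sigma)=\sum_i\mu_i D_{\sigma(i)}$, after which a single transposition separates $\phi$ whenever both $(\mu_i)_i$ and $(D_p)_p$ are non-constant; the former is exactly non-balanced-uniformity of $F$ (if $\mu$ were constant then $M_{ij}=2\mu$ would be constant), and the latter is exactly non-regularity of $K_r^+$. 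This gives a uniform proof for all $r\geq 4$ and in particular removes the hypothesis $r\neq 5$, which the paper imposes only because its construction has no room for the $C_4$-argument when $|X|=1$ and which is harmless for the paper's application (Lemma~\ref{rest_4} invokes this only for $r\equiv_4 0$). One small wording issue: deriving $M_{ij}=\mu_i+\mu_j$ for $i,j\geq 4$ in fact uses two $C_4$-identities (e.g.\ on $(1,i,2,j)$ and on $(1,j,3,2)$) rather than ``exactly one,'' but the verification is routine in any case.
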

The condition that $K_r^+$ is non-regular is necessary; indeed, it follows from Lemma~\ref{regularC4} that $(K_r,c)$ is not a template for $H$ if $K_r^+$ is regular and $H$ fulfills the $r$-wise $C_4$-condition. 
Also, the assumption that there exists a non-balanced-uniform $H$-factor is necessary for the proof method. 
Indeed, in the proof of Lemma~\ref{non-balanced-uniform template}
we show that the {\em balanced} blowup $B$ of $(K_r,c)$ has two $H$-factors with different discrepancies, (this showing that $(K_r,c)$ is a template). However, if $H$-factor were balanced-regular, then every perfect $H$-factor $F$ of $B$ would have exactly $e(F)/\binom{r}{2}$ edges between any two parts of $B$ (as $B$ is balanced), and so $c(F) = c(K_r) \cdot e(F)/\binom{r}{2}$, meaning that every perfect $H$-factor of $B$ would have the same discrepancy. 

For the proof of Lemma~\ref{non-balanced-uniform template} we need the following simple claim. 
\begin{claim}\label{claim:non-balanced-uniform}
Suppose that $r\geq 4$ and $H$ satisfies the $r$-wise $C_4$-condition. Let $F$ be a non-balanced-uniform $H$-factor. 
Then there exists a balanced $r$-coloring $A_1,\dots,A_r$ of $F$ such that $e_F(A_1,A_2)\neq e_F(A_3,A_4)$.
\end{claim}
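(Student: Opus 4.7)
The plan is to argue by contradiction. Suppose no balanced $r$-coloring of $F$ admits a labeling with $e_F(A_1,A_2)\neq e_F(A_3,A_4)$; equivalently, in every balanced $r$-coloring $B_1,\dots,B_r$ of $F$, any two disjoint pairs $\{i,j\},\{k,\ell\}\subseteq[r]$ satisfy $e_F(B_i,B_j)=e_F(B_k,B_\ell)$. Since $F$ is a disjoint union of copies of $H$ and $H$ satisfies the $r$-wise $C_4$-condition, so does $F$ (as noted right after Definition~\ref{def:C4}). By the hypothesis that $F$ is non-balanced-uniform, we may fix a balanced $r$-coloring $B_1,\dots,B_r$ of $F$ witnessing this failure. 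I will show that the assumption above, combined with the $C_4$-condition, forces all $\binom{r}{2}$ pair-counts in this coloring to coincide, contradicting non-balanced-uniformity.

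For $r\ge 5$ the disjoint-pair assumption alone suffices. Given any two pairs $\{i,j\},\{k,\ell\}\subseteq[r]$ (possibly sharing a vertex), the set $\{i,j\}\cup\{k,\ell\}$ has size at most $3$, so since $r\ge 5$ one can pick $\{m,n\}\subseteq[r]\setminus(\{i,j\}\cup\{k,\ell\})$. The hypothesis then yields $e_F(B_i,B_j)=e_F(B_m,B_n)=e_F(B_k,B_\ell)$, so all pair-counts equal $e(F)/\binom{r}{2}$, which is the contradiction.

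The subtler case is $r=4$, where the disjoint-pair assumption supplies only the three identities $e_F(B_1,B_2)=e_F(B_3,B_4)$, $e_F(B_1,B_3)=e_F(B_2,B_4)$, $e_F(B_1,B_4)=e_F(B_2,B_3)$, one per perfect matching on $[4]$. Here the $C_4$-condition is essential. Applied to $B_1,B_2,B_3,B_4$ it gives $e_F(B_1,B_2)+e_F(B_3,B_4)=e_F(B_1,B_3)+e_F(B_2,B_4)$, which combined with the first two identities forces $e_F(B_1,B_2)=e_F(B_1,B_3)$. Applying the same condition to the relabeling $B_1,B_2,B_4,B_3$ yields $e_F(B_1,B_2)=e_F(B_1,B_4)$, and all six pair-counts coincide, again a contradiction. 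The main obstacle is this $r=4$ step, where raw counting is insufficient and the $C_4$-condition must bridge the disjoint-pair equalities to full uniformity; happily, the different labelings of four parts give enough variants of the $C_4$-equation to close the gap.
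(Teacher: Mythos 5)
Your proof is correct. It is essentially the contrapositive of the paper's argument: the paper starts from a balanced coloring having an above-average pair $e_F(A_1,A_2)$ and a below-average pair $e_F(A_1,A_j)$ and, when these pairs intersect, invokes the $r$-wise $C_4$-condition once on the four parts $A_1,A_2,A_j,A_k$ to force two disjoint pairs to differ; you instead assume all disjoint-pair counts agree in every balanced coloring and show this propagates to all pairs, contradicting non-balanced-uniformity. Two points worth noting. First, your observation that for $r\ge 5$ the $C_4$-condition is not needed at all (chaining two intersecting pairs through a third pair disjoint from both) is a genuine, if minor, refinement over the paper, which applies the $C_4$-identity uniformly for all $r\ge 4$; your $r=4$ case, using the identity for the two labelings $B_1,B_2,B_3,B_4$ and $B_1,B_2,B_4,B_3$, is also correct. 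Second, a small imprecision: when the two pairs $\{i,j\}$ and $\{k,\ell\}$ are disjoint, their union has size $4$, not at most $3$, so for $r=5$ you cannot pick $\{m,n\}$ outside the union — but in that case the desired equality is exactly your standing assumption, so the argument is unaffected. Both proofs rely on the fact, noted after Definition~\ref{def:C4}, that the $C_4$-condition passes from $H$ to the $H$-factor $F$, which you correctly cite.
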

\begin{proof}
By definition, there is a balanced $r$-coloring $A_1,\dots,A_r$ of $F$ such that $e_F(A_i,A_j)$ are not all equal. By renaming the parts, we may assume that $e_F(A_1,A_2)>e(F)/\binom{r}{2}$. Then there are $1\leq i<j\leq r$ such that $e_F(A_i,A_j)<e(F)/\binom{r}{2}$. Now, if $i,j \notin \{1,2\}$ then we are done. Else, suppose without loss of generality that $i=1$. Let $k \in [r] \setminus \{1,2,j\}$. We may assume that $e_F(A_1,A_2) = e_F(A_j,A_k)$ and $e_F(A_1,A_j) = e_F(A_2,A_k)$, as otherwise we are done. Now we get that $$e_F(A_1,A_2) + e_F(A_j,A_k)-e_F(A_1,A_j) -e_F(A_2,A_k) = 2e_F(A_1,A_2) - 2e_F(A_i,A_j) > 0,$$ contradicting the assumption that $H$ fulfills the $r$-wise $C_4$-condition.
\end{proof}
\begin{proof}[Proof of Lemma~\ref{non-balanced-uniform template}]
Let $c$ be as in the statement. 
For a vertex $u \in V(K_r)$, denote
$$c(u,K_r) := \sum_{v \in V(K_r) \setminus \{u\}}c(uv).$$
Let $a,b\in V(K_r)$ be the two vertices maximizing $c(a,K_r)+c(b,K_r)$ and $d,e\in V(K_r)$ the vertices minimizing $c(d,K_r)+c(e,K_r)$. In other words, $a,b$ are the two vertices with highest degree in $K_r^+$, and $d,e$ are the two vertices with lowest degree. 
As $r\geq 4$, we may assume that $a,b,d,e$ are distinct. Since $K_r^+$ is non-regular, 
\begin{equation}\label{eq:abde}
    c(a,K_r)+c(b,K_r)>c(d,K_r)+c(e,K_r).
\end{equation}
Let $F$ be a non-balanced-uniform $H$-factor. By Claim~\ref{claim:non-balanced-uniform}, there is a balanced $r$-coloring $A_1,A_2,\dots,A_r$ of $F$ such that $e_F(A_1,A_2)\neq e_F(A_3,A_4)$.
Let $B$ be a $4(r-4)!\frac{|F|}{r}$-blowup of $(K_r,c)$. Let $F_1$ and $F_2$ be the following two perfect $F$-factors of $B$. 
\begin{itemize}
    \item $F_1$ contains each copy of $F$ with each permutation of $A_1,A_2$ on $V_a,V_b$, each permutation of $A_3, A_4$ on $V_d, V_e$, and each permutation of $A_5,A_6,...,A_r$ on the remaining clusters of $B$.
    \item $F_2$ contains each copy of $F$ with each permutation of $A_1,A_2$ on $V_d,V_e$, each permutation of $A_3,A_4$ on $V_a,V_b$, and each permutation of $A_5,A_6,...,A_r$ on the remaining clusters of $B$.
\end{itemize}
Note that $F_1,F_2$ each have $4(r-4)!$ copies of $F$. As $|A_1| = \dots = |A_r| = \frac{|F|}{r}$, the size of $B$ exactly matches these $F$-factors. 
Clearly, every $F$-factor is also an $H$-factor, as $F$ is a union of disjoint copies of $H$. Hence $F_1,F_2$ are perfect $H$-factors of $B$. 
We now show that $c(F_1)-c(F_2) \neq 0$. Put $X = V(K_r)\backslash\{a,b,d,e\}.$
By symmetry, we have for all $v \in X$:
\begin{itemize}
    \item $e_{F_1}(V_a,V_v)=e_{F_1}(V_b,V_v)=e_{F_2}(V_d,V_v)=e_{F_2}(V_e,V_v) \eqqcolon y_1,$
    \item $e_{F_2}(V_a,V_v)=e_{F_2}(V_b,V_v)=e_{F_1}(V_d,V_v)=e_{F_1}(V_e,V_v) \eqqcolon y_2.$
\end{itemize}
If $r=4$ then $X$ is empty, and we set $y_1=y_2=0$. 
We also have
$$e_{F_1}(V_u,V_v) = e_{F_2}(V_u,V_v)$$
for all $u,v \in X$ 
and all $u \in \{a,b\}, v \in \{d,e\}$.
Finally, observe that 
\begin{equation*}
e_{F_1}(V_a,V_b) =  e_{F_2}(V_d,V_e) = 4(r-4)! \cdot e_F(A_1,A_2), \; \; \; 
e_{F_1}(V_d,V_e) = e_{F_2}(V_a,V_b) = 4(r-4)! \cdot e_F(A_3,A_4).
\end{equation*}
Therefore, $e_{F_1}(V_a,V_b) \neq
e_{F_1}(V_d,V_e)$ holds by our choice of $A_1,A_2,A_3,A_4$.
For convenience, let us denote 
$$h(u,v) := c(uv) \cdot ( e_{F_1}(V_u,V_v) - e_{F_2}(V_u,V_v))$$ for $uv \in E(K_r)$. From the above, we see that $h(u,v)$ is non-zero only if $uv \in \{ab,cd\}$ or $v \in X, u \in \{a,b,c,d\}$. Also, in the latter case we have 
$h(u,v) = y_1-y_2$ if $u \in \{a,b\}$ and $h(u,v) = y_2 - y_1$ if $u \in \{c,d\}$. 
Finally, $h(a,b) = -h(d,e) = 
(c(ab) - c(de)) \cdot (e_{F_1}(V_a,V_b) - e_{F_1}(V_d,V_e)).$
Using these equations, we get 
\begin{equation}\label{eq: uniform f1-f2 prelim}
    \begin{split}
    &c(F_1)-c(F_2)= \sum_{uv \in E(K_r)}h(u,v) =  
   \\ &(c(ab) - c(de)) \cdot (e_{F_1}(V_a,V_b) - e_{F_1}(V_d,V_e)) + (y_1-y_2)\sum_{v \in X}(c(av) + c(bv) - c(dv) - c(ev)) 
    \end{split}
\end{equation}
Observe that 
\begin{align*}
    \sum_{v \in X}(c(av) + c(bv) - c(dv) - c(ev)) = c(a,K_r)+c(b,K_r)-2c(ab)-c(d,K_r)-c(e,K_r)+2c(de). 
\end{align*}
Plugging this into \eqref{eq: uniform f1-f2 prelim} and rearranging, we get
\begin{equation}\label{eq: uniform f1-f2}
   \begin{split}
        c(F_1)-c(F_2) = (c(ab)-c(de)) &\cdot (e_{F_1}(V_a,V_b)-e_{F_1}(V_d,V_e)+2y_2-2y_1) \\ &+ (y_1-y_2) \cdot (c(a,K_r)+c(b,K_r)-c(d,K_r)-c(e,K_r)).
   \end{split}
\end{equation}
Suppose first that $r = 4$. Then $y_1 = y_2 = 0$. Also, $c(ab)\neq c(de)$, because otherwise we would have $c(a,K_r)+c(b,K_r)=c(K_r)=c(d,K_r)+c(e,K_r)$, contradicting \eqref{eq:abde}. Now, using \eqref{eq: uniform f1-f2}, we have
$$c(F_1)-c(F_2) = (c(ab)-c(de)) \cdot (e_{F_1}(V_a,V_b)-e_{F_1}(V_d,V_e)) \neq 0,$$ 
as required. 

As $r \neq 5$, we may assume from now on that $r\geq 6$. 
Since $H$ satisfies the $r$-wise $C_4$-condition, so do $F_1,F_2$ (as $F_1,F_2$ are $H$-factors).
Fix arbitrary $u,v\in X$.
By the $C_4$-condition, we have
$$
e_{F_1}(V_a,V_b)+e_{F_1}(V_u,V_v)-e_{F_1}(V_a,V_u)-e_{F_1}(V_b,V_v)=0
$$
and
$$
-e_{F_1}(V_d,V_e)-e_{F_1}(V_u,V_v)+e_{F_1}(V_d,V_u)+e_{F_1}(V_e,V_v)=0.
$$
Recall that $e_{F_1}(V_a,V_u)=e_{F_1}(V_b,V_v)=y_1$ and $e_{F_1}(V_d,V_u)=e_{F_1}(V_e,V_v)=y_2$. Adding the two above equations, we get
\begin{align*}
    0 =e_{F_1}(V_a,V_b)+e_{F_1}(V_u,V_v)-2y_1-e_{F_1}(V_d,V_e)-e_{F_1}(V_u,V_v)+2y_2
    = e_{F_1}(V_a,V_b)-2y_1-e_{F_1}(V_d,V_e)+2y_2.
\end{align*}
As $e_{F_1}(V_a,V_b) \neq e_{F_1}(V_d,V_e)$, we have $y_1 \neq y_2$. Finally, plugging this into \eqref{eq: uniform f1-f2}, we get
\begin{align*}
   c(F_1)-c(F_2) = (y_1-y_2) \cdot (c(a,K_r)+c(b,K_r)-c(d,K_r)-c(e,K_r)) \neq 0,
\end{align*}
using \eqref{eq:abde}. Therefore, $(K_r,c)$ is a template for $H$.
\end{proof}

We end this section with the following lemma, showing that if all $H$-factors are balanced-uniform, then no coloring of $K_r$ is a template for $H$.  
\begin{lemma}\label{lem:if balanced-uniform then not a template}
    If every $H$-factor is balanced-uniform, then for every $2$-edge-coloring $c$ of $K_r$ it holds that $c\in\mathcal{K}(H)$.
\end{lemma}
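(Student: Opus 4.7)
My plan is to reduce the claim to the identity $e_{F_1}(V_i,V_j)=e_{F_2}(V_i,V_j)$ for all $i<j$, where $F_1,F_2$ are any two perfect $H$-factors of an arbitrary blowup $B$ of $K_r$ with parts $V_1,\dots,V_r$ of sizes $a_1,\dots,a_r$. Once this identity is in hand, for any $2$-edge-colouring $c$ of $K_r$,
$$c(F_1)=\sum_{i<j}c(ij)\,e_{F_1}(V_i,V_j)=\sum_{i<j}c(ij)\,e_{F_2}(V_i,V_j)=c(F_2),$$
so $(K_r,c)$ cannot be a template for $H$ and hence $c\in\mathcal{K}(H)$.

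To establish the identity I would symmetrise $B$ into a balanced blowup on which the hypothesis applies directly. For each permutation $\sigma$ of $[r]$, take a vertex-disjoint copy $B^{\sigma}$ of $B$ whose $i$-th part $V_i^{\sigma}$ is a copy of $V_{\sigma^{-1}(i)}$, of size $a_{\sigma^{-1}(i)}$. Let $B'$ be the blowup of $K_r$ with parts $W_i:=\bigsqcup_{\sigma}V_i^{\sigma}$. Then $|W_i|=\sum_\sigma a_{\sigma^{-1}(i)}=(r-1)!\sum_j a_j$ does not depend on $i$, so $B'$ is balanced. Inside $B'$, form two perfect $H$-factors: the factor $G$ that places a copy of $F_1$ on $B^{\mathrm{id}}$ and a copy of $F_2$ on every other $B^{\sigma}$, and the factor $G'$ that places a copy of $F_2$ on every $B^{\sigma}$.

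Both $G$ and $G'$ are $r$-chromatic $H$-factors, $(W_1,\dots,W_r)$ is a balanced proper $r$-colouring of each, and $e(G)=e(G')=r!\,|B|e(H)/|H|$ since $e(F_1)=e(F_2)$. The balanced-uniform hypothesis applied to $G$ and $G'$ therefore forces
$$e_G(W_i,W_j)=\frac{e(G)}{\binom{r}{2}}=\frac{e(G')}{\binom{r}{2}}=e_{G'}(W_i,W_j)$$
for all $i<j$. On the other hand, expanding the definitions cluster by cluster all but the identity permutation cancels, leaving
$$e_G(W_i,W_j)-e_{G'}(W_i,W_j)=e_{F_1}(V_i,V_j)-e_{F_2}(V_i,V_j),$$
and the required equality follows. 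The only real subtlety is choosing the right balancing construction: the naive attempt of padding $B$ with a complement blowup of parts $M-a_i$ requires the complement to admit a perfect $H$-factor, which Theorem~\ref{existence} need not give when $\chi^*(H)=r$ and the $a_i$ are badly unbalanced. The permutation construction sidesteps this entirely because each $B^{\sigma}$ is an isomorphic copy of $B$ and therefore automatically inherits perfect $H$-factors from $F_1$ and $F_2$.
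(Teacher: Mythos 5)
Your proof is correct and follows essentially the same route as the paper: symmetrize the given blowup into a balanced one by taking copies under a group of permutations (the paper uses the $r$ cyclic shifts rather than all $r!$ permutations), build two perfect $H$-factors differing in exactly one copy, and invoke balanced-uniformity to force equal edge counts between parts. The only cosmetic difference is that you extract the identity $e_{F_1}(V_i,V_j)=e_{F_2}(V_i,V_j)$ explicitly before concluding, whereas the paper compares the discrepancies directly in a proof by contradiction.
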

\begin{proof}
    Assume towards a contradiction that there exists a $2$-edge-coloring $c$ of $K_r$ such that $(K_r,c)$ is a template for $H$. Then, by definition, there exists a blowup $B$ of $(K_r,c)$ and two perfect $H$-factors $F_1,F_2$ of $B$ such that $c(F_1)\neq c(F_2)$. Let 
    $v_1,v_2,\dots, v_r$ be the vertices of $K_r$ and let $B_i$ be the part of $B$ corresponding to $v_i$. Take a balanced blowup $B'$ of $(K_r,c)$ with parts $B'_1,\dots,B'_r$ of size $|B| = |B_1| + \dots + |B_r|$ each. For $0 \leq j \leq r-1$, let $J_j$ be a copy of $B$ in $B'$ in which $B_i$ is mapped to $B'_{i+j \pmod{r}}$ for each $i = 1,\dots,r$. Choose $J_1,\dots,J_r$ to be vertex-disjoint and to partition $V(B')$; this is possible as each part of $B'$ has size $|B|$. 
    
    For each $1 \leq j \leq r$, let $F^{j}_1,F^{j}_2$ be the $H$-factors playing the roles of $F_1,F_2$, respectively, in the copy $J_j$ of $B$. 
    Note that in $J_1$, $B_i$ is mapped to $B'_i$ for every $1 \leq i \leq r$. This means that $J \subseteq B'$ is colored the same way as $B$ (i.e., $J$ is a blowup of $(K_r,c)$). Therefore, $c(F^{1}_i) = c(F_i)$ for $i = 1,2$ (with a slight abuse of notation, we denote by $c$ both the coloring of $B$ and that of $B'$). 
    Let $F'_1 := \bigcup_{j=1}^r F^{j}_1$, and let $F'_2$ be obtained from $F'_1$ by replacing $F^{1}_1$ with $F^{1}_2$, i.e. 
    $F'_2 := F^{1}_2 \cup \bigcup_{j=2}^r F^{j}_1$. Both $F'_1,F'_2$ are perfect $H$-factors of $B'$. 
    Also,
    $$c(F_1')-c(F_2')=c(F_1)-c(F_2)\neq 0.$$
    Note, however, that $F'_1,F'_2$ are both $H$-factors with a balanced $r$-coloring $B'_1,\dots,B'_r$. As $F'_1,F'_2$ are balanced-uniform by assumption, it follows that $e_{F'_1}(B'_i,B'_j) = e_{F'_2}(B'_i,B'_j) = e(F'_1)/\binom{r}{2}$ for all $1 \leq i < j \leq r$. However, this implies that 
    $$
    c(F'_1) = c(F'_2) = c(K_r) \cdot \frac{e(F'_1)}{\binom{r}{2}},
    $$
    a contradiction. 
\end{proof}

\begin{remark}
    Given $H,$ it is possible to computationally check whether there exists a non-balanced-uniform $H$-factor. Like in the proof of Proposition~\ref{prop:compute delta0}, for every $r$-coloring $f$ of $H,$ one defines vectors $x_f$ and $y_f$ corresponding to the sizes of the color classes and the number of edges between color classes, respectively. Then, the task reduces to     a certain linear-algebraic statement regarding these vectors. 
\end{remark}
    
\subsection{$(s,t)$-structured graphs}
The templates considered in this section consist of two $r$-cliques $L_1,L_2$ sharing $r-2$ vertices. 
We will describe the structure of graphs $H$ for which a particular coloring of $L_1 \cup L_2$ is not a template. This structure is more involved than in previous sections. The precise definition is as follows.
\begin{definition}\label{structured}
    For $s, t, \rho \in \mathbb{R}$, we say that $H$ is $(s,t)$-\emph{structured} with parameter $\rho$ if for every $r$-vertex-coloring of $H$ with parts $A_1, \dots, A_r$ and for all $1 \leq i < j \leq r$, it holds that
    \begin{equation}\label{eq:structured}
        \rho(|A_i| + |A_j|) = s \cdot e_H(A_i \cup A_j, V(H) \setminus (A_i \cup A_j)) + t \cdot e_H(A_i, A_j).
    \end{equation}
\end{definition}
We say that $(s,t)$-structured to mean that it is $(s,t)$-structured with some parameter $\rho$.
Note that if $H$ is $(s,t)$-structured then it is also $(\alpha\cdot s,\alpha\cdot t)$-structured for every $\alpha\in \mathbb{R}$. 
Another important fact is that if $H$ is $(s,t)$-structured with parameter $\rho$, then so is every $H$-factor.

The following lemma is the main result of this subsection. 

\begin{lemma}\label{template_example}
Let $L_1, L_2$ be two copies of $K_r$ sharing $r-2$ vertices, and let $e_1 = x_1y_1 = L_1 \setminus L_2$ and $e_2 = x_2y_2 = L_2 \setminus L_1$. Let $c$ be a $2$-edge-coloring of $L_1\cup L_2$. Then, either $(L_1\cup L_2,c)$ is a template for $H$ or $H$ is $(s,t)$-structued for $s = \frac{c(L_1)-c(e_1)- c(L_2)+c(e_2)}{2(r-2)}$ and $t = c(e_1)-c(e_2)$.
\end{lemma}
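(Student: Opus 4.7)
The plan is to use linear-algebra duality to extract the structured identity from the non-template hypothesis. Let $\mathcal{P}$ denote the set of ``placements'' $P=(f,L,\pi)$, where $f=(A_1,\dots,A_r)$ is an $r$-coloring of $H$, $L\in\{L_1,L_2\}$, and $\pi:V(L)\to[r]$ is a bijection (so the corresponding $H$-copy in a blowup of $L$ embeds $A_{\pi(u)}$ into the cluster $V_u$). Each $P\in\mathcal{P}$ has a discrepancy $\mathrm{disc}(P)=\sum_{uw\in E(L)}c(uw)\,e_H(A_{\pi(u)},A_{\pi(w)})$ and a vertex-usage vector $u(P,\cdot)$ with $u(P,V_v)=|A_{\pi(v)}|$ if $v\in V(L)$ and $0$ otherwise. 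The crux of the argument is that, assuming $(L_1\cup L_2,c)$ is not a template for $H$, there exist constants $\lambda_v$ (one per $v\in V(L_1\cup L_2)$) with $\mathrm{disc}(P)=\sum_v\lambda_v\,u(P,V_v)$ for every $P\in\mathcal{P}$.

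Granted these $\lambda_v$, I would derive the $(s,t)$-structured identity as follows. Fix an $r$-coloring $(A_1,\dots,A_r)$ of $H$ and indices $1\leq i<j\leq r$, and enumerate $V(L_1)=(x_1,y_1,v_3,\dots,v_r)$ and $V(L_2)=(x_2,y_2,v_3,\dots,v_r)$. For each permutation $\tau$ of $[r]$ with $\tau(1)=i$ and $\tau(2)=j$, let $P_\tau^{(\ell)}$ denote the placement in $L_\ell$ mapping $A_{\tau(k)}$ to the $k$-th vertex. Subtracting the $\lambda$-identities for $P_\tau^{(1)}$ and $P_\tau^{(2)}$ cancels the $\lambda_{v_k}$ contributions, leaving $(\lambda_{x_1}-\lambda_{x_2})|A_i|+(\lambda_{y_1}-\lambda_{y_2})|A_j|$ on the right, and on the left a sum with coefficient $c(e_1)-c(e_2)=t$ on $e_H(A_i,A_j)$ together with coefficients $c(x_1v_k)-c(x_2v_k)$ and $c(y_1v_k)-c(y_2v_k)$ on $e_H(A_i,A_{\tau(k)})$ and $e_H(A_j,A_{\tau(k)})$ respectively. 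Summing over all $(r-2)!$ such $\tau$ collapses the $A_{\tau(k)}$-sums into $e_H(A_i,V')$ and $e_H(A_j,V')$ with $V':=V(H)\setminus(A_i\cup A_j)$, each weighted by $(r-3)!$. Adding the analogous equation with the roles of $i,j$ swapped and invoking the identity $\sum_{k\geq 3}[(c(x_1v_k)+c(y_1v_k))-(c(x_2v_k)+c(y_2v_k))]=c(L_1)-c(e_1)-c(L_2)+c(e_2)=2(r-2)s$, then dividing by $2(r-2)!$, yields
\[
\rho\,(|A_i|+|A_j|)=s\cdot e_H(A_i\cup A_j,V')+t\cdot e_H(A_i,A_j),
\]
with $\rho:=\tfrac12\bigl((\lambda_{x_1}-\lambda_{x_2})+(\lambda_{y_1}-\lambda_{y_2})\bigr)$. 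Crucially this $\rho$ depends only on $c$, $L_1$, $L_2$, and not on the coloring or the pair $(i,j)$, so $H$ is $(s,t)$-structured with the single parameter $\rho$.

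The main obstacle will be justifying the existence of the $\lambda_v$. The non-template hypothesis is stated combinatorially in terms of integer blowups and integer perfect $H$-factors, whereas the required conclusion is a linear-algebraic identity over $\mathbb{Q}$. The standard bridge is the following: for any integer signed vector $z\in\mathbb{Z}^{\mathcal{P}}$ with $\sum_Pz_P\,u(P,\cdot)=0$, write $z=z^+-z^-$ as the difference of its positive and negative parts; then the multisets of placements given by $z^+$ and $z^-$ are both perfect $H$-factors of the same blowup $B$ whose cluster sizes are $|V_v|=\sum_P z_P^+u(P,V_v)=\sum_P z_P^- u(P,V_v)$, so by the non-template hypothesis their discrepancies agree, giving $\sum_Pz_P\,\mathrm{disc}(P)=0$. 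Hence $\mathrm{disc}$ annihilates $\ker u$ over $\mathbb{Z}$, and therefore over $\mathbb{Q}$, so $\mathrm{disc}$ lies in the row span of $u$ and the coefficients $\lambda_v$ exist by elementary linear algebra.
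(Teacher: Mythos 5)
Your proof is correct, but it takes a genuinely different route from the paper. The paper argues directly: for two arbitrary ordered $r$-colorings $A_1,\dots,A_r$ and $B_1,\dots,B_r$ of $H$ it builds one explicit blowup of $(L_1\cup L_2,c)$ (with cluster sizes obtained by cross-multiplying $|A_1|+|A_2|$ and $|B_1|+|B_2|$ so that everything tiles), exhibits two perfect $H$-factors $F_1,F_2$ related by swapping the roles of the $e_1$-clusters and the $e_2$-clusters, computes $c(F_1)-c(F_2)$ by hand, and reads off that non-templateness forces a certain ratio $g(A_1,\dots,A_r)$ to be independent of the coloring; the common value is $\rho$. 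You instead pass through duality: non-templateness implies the discrepancy functional on placements annihilates the integer (hence rational) kernel of the vertex-usage matrix, so it lies in its row span, and the resulting dual variables $\lambda_v$ yield the structured identity after symmetrizing over permutations, with $\rho=\tfrac12(\lambda_{x_1}+\lambda_{y_1}-\lambda_{x_2}-\lambda_{y_2})$ manifestly independent of the coloring and of the pair $(i,j)$. I checked your symmetrization computation and it does reproduce the claimed $s$ and $t$. Your bridge from the combinatorial hypothesis to the linear-algebraic one is sound (each placement uses at least one vertex per cluster of its clique, so $z^+=0$ forces $z^-=0$, and realizability of the two factors in a common blowup is immediate); the only loose end is that the common blowup may have empty clusters when $z$ is supported on one clique only, which is harmless under the paper's definition of blowup and in any case fixable by padding both $z^+$ and $z^-$ with a fixed covering family. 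What your approach buys is uniformity: it is the same LP-duality mechanism the paper already uses in Proposition~\ref{prop:compute delta0}, it avoids designing a bespoke blowup for each pair of colorings, and it would transfer to other configurations $F$; what the paper's approach buys is that it is entirely elementary and self-contained, producing the two witnessing $H$-factors explicitly.
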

Let us give an overview of the proof of Lemma~\ref{template_example}.
We shall take a blowup of $(L_1\cup L_2,c)$ with $|V_{x_1}|=|V_{y_1}|=|V_{x_2}|=|V_{y_2}|$ and consider a (carefully chosen) perfect $H$-factor $F$ of $B$ such that each copy of $H$ in $F_1$ is either on the clusters of $L_1$ or the clusters of $L_2$. Since we have chosen the clusters of $x_1,y_1,x_2,y_2$ to have the same size, we can then find a second $H$-factor $F_2$ by swapping the vertices in $V_{x_1} \cup V_{y_1}$ with the vertices in $V_{x_2} \cup V_{y_2}$ in each of the copies of $H$ in $F_1$.
We will show that the only case in which the discrepancy does not change under this operation is that $H$ is $(s,t)$-structured (with $s,t$ as in the statement of the lemma). We now proceed with the detailed proof. 
\begin{proof}[Proof of Lemma~\ref{template_example}]
For convenience, set 
$$q := c(L_1) - c(e_1) - c(L_2) + c(e_2) = 
\sum_{i=3}^r[c(x_1v_i) + c(y_1v_i) - c(x_2v_i) - c(y_2v_i)].
$$
Let $A_1,\dots,A_r$ be an $r$-coloring of $H$. To ease the notation, put 
$f_H(A_1,A_2) := e_H(A_1 \cup A_2, V(H) \setminus (A_1 \cup A_2))$ and 
$$g(A_1,\dots,A_r) := 
\frac{\frac{q}{2(r-2)} \cdot f_H(A_1,A_2) + (c(e_1) - c(e_2)) \cdot e_H(A_1, A_2)}{|A_1| + |A_2|} \; .
$$
Our goal is to show that if $(L_1 \cup L_2,c)$ is not a template for $H$, then $g(A_1,\dots,A_r)$ is the same for every $r$-coloring $A_1,\dots,A_r$ of $H$; we then take this common value to be $\rho$ (see Definition \ref{structured}). 
So let $A_1,A_2,\dots, A_r$ and $B_1,B_2,\dots, B_r$ be two arbitrary $r$-vertex-colorings of $H$. We will show that $g(A_1,\dots,A_r) = g(B_1,\dots,B_r)$.
Write $V := L_1 \cap L_2 = \{v_3,\dots,v_r\}$. 
Consider the following blowup $B$ of $(L_1\cup L_2,c)$:
\begin{itemize}
    \item $|V_{x_1}|,|V_{y_1}|,|V_{x_2}|,|V_{y_2}| = (r-2)!(|A_1|+|A_2|)(|B_1|+|B_2|)$,
    \item for $3\leq i\leq r$, $|V_{v_i}| = 2(r-3)!\sum_{3\leq i\leq r}[ (|B_1|+|B_2|)|A_i|+(|A_1|+|A_2|)|B_i| ]$. 
\end{itemize}
To calculate $c(F_1)-c(F_2)$, 
first observe that for every pair $3\leq i<j\leq r$, 
$$
e_{F_1}(V_{v_i},V_{v_j}) = e_{F_2}(V_{v_i},V_{v_j}) = 
4(r-4)! \cdot \left( (|B_1|+|B_2|)\sum_{3\leq i<j\leq r}e_H(A_i,A_j)+(|A_1|+|A_2|)\sum_{3\leq i<j\leq r}e_H(B_i,B_j)\right).
$$
Therefore, $e_{F_1}(V_{v_i},V_{v_j})-e_{F_2}(V_{v_i},V_{v_j})=0$. 
Next, we claim that for each $3 \leq i \leq r$, it holds that 
\begin{equation}\label{eq:structured eq1}
e_{F_1}(V_{x_1},V_{v_i}) = e_{F_1}(V_{y_1},V_{v_i}) = 
e_{F_2}(V_{x_2},V_{v_i}) = e_{F_2}(V_{y_2},V_{v_i}) = 
(r-3)!\cdot (|B_1|+|B_2|) \cdot f_H(A_1,A_2)
\end{equation}
and
\begin{equation}\label{eq:structured eq2}
e_{F_1}(V_{x_2},V_{v_i}) = e_{F_1}(V_{y_2},V_{v_i}) = 
e_{F_2}(V_{x_1},V_{v_i}) = e_{F_2}(V_{y_1},V_{v_i}) = 
(r-3)! \cdot (|A_1|+|A_2|) \cdot f_H(B_1,B_2).
\end{equation}
Let us prove this for $e_{F_1}(V_{x_1},V_{v_i})$; all other cases are similar. For every $3 \leq j \leq r$, there are $(r-3)!$ permutations that embed $A_j$ into $V_{v_i}$ and $A_1$ (resp. $A_2$) into $V_{x_1}$, and each such permutation contributes $e_H(A_1,A_j)$ (resp. $e_H(A_2,A_j)$) to 
$e_{F_1}(V_{x_1},V_{v_i})$. Also, each such permutation gives rise to $(|B_1| + |B_2|)$ copies of $H$ in $F_1$. Summing over all $3 \leq j \leq r$ and all permutations, we get 
$$
e_{F_1}(V_{x_1},V_{v_i}) = (r-3)! \cdot (|B_1| + |B_2|) \cdot \sum_{k = 1}^2\sum_{j = 3}^r e_H(A_k,A_j) = (r-3)!\cdot (|B_1|+|B_2|) \cdot f_H(A_1,A_2),
$$
as required. 

Lastly, from the definition of $F_1,F_2$ it follows that
\begin{equation}\label{eq:structured eq3}
    \begin{split}
        e_{F_1}(V_{x_1},V_{y_1})-e_{F_2}(V_{x_1},V_{y_1}) &= 
- (e_{F_1}(V_{x_2},V_{y_2})-e_{F_2}(V_{x_2},V_{y_2})) \\ &= 
2(r-2)! \cdot [ (|B_1|+|B_2|) \cdot e_H(A_1,A_2)-(|A_1|+|A_2|) \cdot e_H(B_1,B_2) ].
    \end{split}
\end{equation}
We now combine all of the above to calculate $c(F_1) - c(F_2)$. First, we can write
\begin{align}\label{eq:structured eq4}
    & \; \; \; \; \; \; \; \; c(F_1)-c(F_2) \notag =\\&
\sum_{i=3}^r \sum_{z \in \{x_1,y_1,x_2,y_2\}}   c(zv_i) \cdot (e_{F_1}(V_{z},V_{v_i})-e_{F_2}(V_{z},V_{v_i})) +\sum_{i=1}^2 c(x_iy_i) \cdot (e_{F_1}(V_{x_i},V_{y_i})-e_{F_2}(V_{x_i},V_{y_i}))
\end{align}
By \eqref{eq:structured eq3}, the second term in \eqref{eq:structured eq4} equals 
\begin{equation}\label{eq:structured eq5}
    2(r-2)! \cdot (c(e_1)-c(e_2)) \cdot 
[|B_1|+|B_2|) \cdot e_H(A_1,A_2)-(|A_1|+|A_2|) \cdot e_H(B_1,B_2) ].
\end{equation}
By \eqref{eq:structured eq1} and \eqref{eq:structured eq2}, the first term in \eqref{eq:structured eq4} equals 
\begin{align}\label{eq:structured eq6}
    \notag &\sum_{i=3}^r [c(x_1v_i) + c(y_1v_i) - c(x_2v_i) - c(y_2v_i) ] \cdot (r-3)! \cdot [(|B_1| + |B_2|) \cdot f_H(A_1,A_2) - (|A_1| + |A_2|) \cdot f_H(B_1,B_2)] \\ &=
    (r-3)! \cdot q \cdot 
     [(|B_1| + |B_2|) \cdot f_H(A_1,A_2) - (|A_1| + |A_2|) \cdot f_H(B_1,B_2)].
\end{align}
If $c(F_1) - c(F_2) \neq 0$ then $(L_1 \cup L_2,c)$ is a template for $H$ and we are done. so suppose that $c(F_1) - c(F_2) = 0$. Then, by plugging \eqref{eq:structured eq5} and \eqref{eq:structured eq6} into \eqref{eq:structured eq4}, dividing by $(r-3)!$ and rearranging, we get 
$g(A_1,\dots,A_r) = g(B_1,\dots,B_r)$, as required. This completes the proof. 
\end{proof}

We end this subsection with several properties of $(s,t)$-structured graphs. The following simple lemma expresses $e(H)$ in terms of $s,t,\rho,|H|$.
\begin{lemma}\label{lem:(s,t)-structured edge count}
    If $H$ is $(s,t)$-structured with parameter $\rho$, then 
    $$
    e(H) = \rho\frac{r-1}{(2r-4)s+t}|H|.
    $$
\end{lemma}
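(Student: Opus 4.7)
The plan is to derive the formula for $e(H)$ by fixing an arbitrary $r$-coloring $A_1,\dots,A_r$ of $H$ and summing the defining identity of $(s,t)$-structured graphs over all unordered pairs $\{i,j\} \subseteq [r]$. This reduces the problem to a double-counting exercise, where both sides of the summed identity can be expressed cleanly in terms of $e(H)$ and $|H|$.

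First I would handle the left-hand side: summing $\rho(|A_i|+|A_j|)$ over $1 \le i<j \le r$ yields $\rho(r-1)\sum_{i=1}^{r}|A_i| = \rho(r-1)|H|$, since each part $A_i$ appears in exactly $r-1$ pairs.

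Next I would handle the right-hand side, which has two pieces. The $t$-term summed over all pairs gives $t \sum_{i<j} e_H(A_i,A_j) = t\cdot e(H)$, because $\{A_1,\dots,A_r\}$ is a proper coloring so the bipartite graphs $H[A_i,A_j]$ partition $E(H)$. For the $s$-term, I would observe that an edge $uv$ with $u \in A_k$, $v \in A_\ell$ ($k \neq \ell$) contributes to $e_H(A_i \cup A_j, V(H)\setminus (A_i\cup A_j))$ exactly when $|\{i,j\}\cap\{k,\ell\}|=1$; the number of such pairs $\{i,j\}$ is $2(r-2)$ (choose one endpoint's color class, then choose any of the remaining $r-2$ color classes). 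Hence $\sum_{i<j} e_H(A_i\cup A_j, V(H)\setminus (A_i\cup A_j)) = 2(r-2)\,e(H)$.

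Combining the two sides gives $\rho(r-1)|H| = \bigl((2r-4)s + t\bigr)\,e(H)$, and solving for $e(H)$ yields the claimed identity. There is no real obstacle here; the only thing to double-check is the counting factor $2(r-2)$ for the $s$-term, which is the heart of the argument.
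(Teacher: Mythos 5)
Your proposal is correct and follows exactly the paper's argument: sum the defining identity of $(s,t)$-structuredness over all pairs $1\le i<j\le r$, compute the left-hand side as $\rho(r-1)|H|$, and compute the right-hand side as $((2r-4)s+t)e(H)$ via the counting factor $2(r-2)$ for the crossing-edge term. The paper states the summation result without spelling out the $2(r-2)$ count, but the computation is identical.
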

\begin{proof}
    Fix an arbitrary $r$-coloring $A_1,\dots,A_r$ of $H$. By summing \eqref{eq:structured} over all pairs $1 \leq i < j \leq r$, we get
    \begin{align*}
    ((2r-4)s + t) \cdot e(H) &= 
    \sum_{1\leq i<j\leq r}\Big[s\cdot e_H(A_i\cup A_j, V(H)\backslash(A_i\cup A_j)) + t \cdot e_H(A_i,A_j) \Big] \\ &= \rho\sum_{1\leq i<j\leq r}(|A_i| + |A_j|) = \rho(r-1)\sum_{i=1}^r|A_i| = \rho(r-1)|H|.
    \end{align*}
\end{proof}
\noindent
In what follows, we will need the following trivial claim.
\begin{claim}\label{claim:all equal}
    Let $r \geq 3$ and let $a_1,\dots,a_r \in \mathbb{R}$. If there is $c$ such that $a_i + a_j = c$ for all $1 \leq i < j \leq r$, then $a_1 = \dots = a_r$.
\end{claim}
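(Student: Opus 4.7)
The plan is to derive the equality $a_j = a_k$ for any two indices $j \neq k$ directly from the hypothesis, using the assumption $r \geq 3$ to produce a third index playing the role of a ``pivot''.

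More concretely, fix any pair $j \neq k$ with $1 \leq j, k \leq r$. Since $r \geq 3$, there exists an index $i \in \{1,\dots,r\} \setminus \{j,k\}$. By hypothesis applied to the two pairs $\{i,j\}$ and $\{i,k\}$, we have
$$
a_i + a_j = c = a_i + a_k,
$$
and subtracting $a_i$ from both sides yields $a_j = a_k$. As this holds for every pair $j \neq k$, all the $a_i$ are equal.

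There is essentially no obstacle here; the only subtlety is that the argument genuinely needs $r \geq 3$, since for $r = 2$ the single equation $a_1 + a_2 = c$ does not force $a_1 = a_2$. The hypothesis $r \geq 3$ is exactly what lets us introduce a pivot index distinct from the two we wish to compare.
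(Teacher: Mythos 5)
Your proof is correct, and it is the natural argument: use $r \geq 3$ to find a pivot index $i$ distinct from any two indices $j,k$ you wish to compare, then cancel $a_i$ from $a_i+a_j = c = a_i+a_k$. The paper itself states this claim as ``trivial'' and omits a proof, so there is nothing to compare against; your write-up simply supplies the obvious justification, including the correct observation that $r\geq 3$ is exactly what makes the pivot available.
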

\noindent

In the next lemma, we show that if $H$ is $(s,t)$-structured for two different choices of $(s,t)$, in the sense that the ratio $s/t$ is different, then every $r$-vertex-coloring $A_1,\dots,A_r$ of $H$ is balanced, i.e. satisfies $|A_1| = |A_2|= \dots = |A_r|$. By normalizing, we may and will assume that one of the $t$'s equals $1$. 
\begin{lemma}\label{no_two}
Let $r\geq 3$ and $s,s',t\in \mathbb{R}$ with $s'-t\cdot s\neq 0$.
If $H$ is $(s,1)$- and $(s',t)$-structured, then every $r$-coloring of $H$ is balanced.
\end{lemma}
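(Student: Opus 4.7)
The strategy is to eliminate variables from the two structural equations to extract linear constraints on $|A_i|$ and $e_H(A_i,A_j)$ that force the coloring to be balanced. Fix any $r$-coloring $A_1,\dots,A_r$ of $H$, and let $\rho_1,\rho_2$ be the parameters witnessing that $H$ is $(s,1)$-structured and $(s',t)$-structured, respectively. For every $1 \leq i < j \leq r$, subtracting $t$ times the first defining equation of Definition~\ref{structured} from the second kills the term $e_H(A_i,A_j)$ and yields
\[
(s' - t s)\cdot e_H\bigl(A_i\cup A_j,\, V(H)\setminus(A_i\cup A_j)\bigr) \;=\; (\rho_2 - t\rho_1)(|A_i|+|A_j|).
\]
Since $s' - ts \neq 0$ by hypothesis, there is a constant $\alpha$ (independent of the coloring and of $i,j$) with $e_H(A_i\cup A_j, V(H)\setminus(A_i\cup A_j)) = \alpha(|A_i|+|A_j|)$. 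Plugging this back into the $(s,1)$-structured equation gives a constant $\beta$ with $e_H(A_i,A_j) = \beta(|A_i|+|A_j|)$ for every pair.

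Next, I will exploit these two proportionalities to pin down the sizes of the parts. Write $d_i := e_H(A_i, V(H)\setminus A_i)$. The identity $d_i + d_j = 2\,e_H(A_i,A_j) + e_H(A_i\cup A_j, V(H)\setminus(A_i\cup A_j))$ together with the two relations above gives
\[
d_i + d_j \;=\; (2\beta + \alpha)(|A_i|+|A_j|) \qquad \text{for all } 1 \leq i < j \leq r.
\]
Setting $\gamma := 2\beta+\alpha$ and $a_i := d_i - \gamma|A_i|$, we obtain $a_i + a_j = 0$ for all $i \neq j$, so by Claim~\ref{claim:all equal} (applicable because $r \geq 3$) all $a_i$ are equal, hence each $a_i = 0$ and $d_i = \gamma|A_i|$. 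On the other hand, summing $e_H(A_i,A_j) = \beta(|A_i|+|A_j|)$ over $j \neq i$ gives $d_i = \beta\bigl((r-2)|A_i| + |H|\bigr)$.

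Equating the two expressions for $d_i$ yields $\bigl(\gamma - \beta(r-2)\bigr)|A_i| = \beta|H|$ for every $i \in [r]$. The final step is to rule out the degenerate cases. If $\beta = 0$, then $e_H(A_i,A_j) = 0$ for every pair $i < j$ in any $r$-coloring, which forces $e(H) = 0$ and contradicts $\chi(H) = r \geq 3$. Hence $\beta \neq 0$, and then we must also have $\gamma - \beta(r-2) \neq 0$ (otherwise the right-hand side $\beta|H|$ would vanish). Therefore $|A_i| = \beta|H|/(\gamma - \beta(r-2))$ is the same constant for every $i$, so the coloring is balanced. The main (minor) obstacle is simply the careful bookkeeping of the two degenerate subcases at the end; all the rest is linear algebra on the two structural identities.
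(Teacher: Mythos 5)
Your proof is correct and takes essentially the same approach as the paper: both eliminate $e_H(A_i,A_j)$ (or $e_H(A_i\cup A_j,\cdot)$) from the two structural equations to get each quantity proportional to $|A_i|+|A_j|$ with constants $\beta$ (the paper's $c$) and $\alpha$ (the paper's $c'$), both argue $\beta\neq 0$ from $\chi(H)=r$, and both finish via Claim~\ref{claim:all equal}. The only cosmetic difference is in the middle: you introduce $d_i=e_H(A_i,V(H)\setminus A_i)$, apply Claim~\ref{claim:all equal} to $a_i=d_i-\gamma|A_i|$ to get $d_i=\gamma|A_i|$, and then equate with $d_i=\beta((r-2)|A_i|+|H|)$, whereas the paper expands $e_H(A_1\cup A_2,\cdot)=\sum_{i\geq 3}[e_H(A_1,A_i)+e_H(A_2,A_i)]$ directly and applies the claim once at the very end; the two routes are algebraically equivalent.
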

\begin{proof}
Fix any $r$-coloring of $H$ with parts $A_1,\dots,A_r$. 
By definition (see Definition \ref{structured}), there exist $\rho,\rho'$ such that
$$
\rho(|A_1|+|A_2|) = s\cdot e_H(A_1\cup A_2, V(H)\backslash(A_1\cup A_2))+e_H(A_1,A_2),
$$
and 
$$
\rho'(|A_1|+|A_2|) = s'\cdot e_H(A_1\cup A_2, V(H)\backslash(A_1\cup A_2))+t\cdot e_H(A_1,A_2).
$$
Combining these two equations, we get
\begin{align*}
    (s'\cdot\rho-s\cdot\rho')(|A_1|+|A_2|) &= (s'-t \cdot s)\cdot e_H(A_1,A_2),
\end{align*}
and 
$$
(\rho'-t\cdot\rho)(|A_1|+|A_2|) = (s'-t\cdot s)\cdot e_H(A_1\cup A_2, V(H)\backslash(A_1\cup A_2)).
$$
Note that $s'-t s\neq 0$ by assumption.
Setting $c = (s'\rho - s\rho')/(s'-ts)$ and $c' = (\rho'-t\rho)/(s'-ts)$, we have
\begin{align}
    c(|A_1|+|A_2|) &= e_H(A_1,A_2) \notag,\\
    c'(|A_1|+|A_2|) &= e_H(A_1\cup A_2, V(H)\backslash(A_1\cup A_2))\label{eq: c'|A_1|+|A_2|}.
\end{align}
Note that $c\neq 0$ because $e_H(A_1,A_2)\neq 0$, as $\chi(H) = r$.
By permuting the parts $A_1,\dots,A_r$, we obtain the analogous equations for every pair of parts $A_i,A_j$. In particular, for all $1 \leq i < j \leq r$,
\begin{align*}
    c(|A_i|+|A_j|) &= e_H(A_i,A_j).
\end{align*}
We now get
\begin{align*}
e_H(A_1\cup A_2, V(H)\backslash(A_1\cup A_2)) &= \sum_{3\leq i\leq r}\left( e_H(A_1,A_i)+e_H(A_2,A_i) \right) = 
c\sum_{3\leq i\leq r}(|A_1| + |A_2| + 2|A_i|)
\\
&=c(r-4)(|A_1|+|A_2|)+2c|H|.
\end{align*}
Combining this with \eqref{eq: c'|A_1|+|A_2|}, we get
$$
(c'-c(r-4))(|A_1|+|A_2|) = 2c|H|\neq 0.
$$
Applying this argument for $A_i,A_j$ in place of $A_1,A_2$, we see that $|A_i| + |A_j| = \frac{2c|H|}{c'-c(r-4)}$ for every $1 \leq i < j \leq r$. Using that $r\geq 3$, we get $|A_1| = \dots = |A_r|$ by Claim~\ref{claim:all equal}.
\end{proof}

Next, we show that if $H$ is $(s,t)$-structured with parameter $\rho$, then for every $r$-coloring $A_1,\dots,A_r$ of $H$, one can express $e_H(A_1,A_2)$ as a function of $|A_1|+|A_2|$. For $s = 0$ this is trivial (recall Definition \ref{structured}), so we assume $s \neq 0$. As it turns out, the cases $s/t \neq\frac{1}{2}$ and $s/t = \frac{1}{2}$ need to be handled separately, and in the latter case we need to additionally assume that $H$ satisfies the $r$-wise $C_4$-condition. To avoid repetitions, we handle both cases together. For convenience, we assume that $s=1$.
\begin{lemma}\label{get_c}
Let $t,\rho \in \mathbb{R}$ and suppose that $H$ is $(1,t)$-structured with parameter $\rho$.
Assume that $t \neq 2$, or $t=2$ and $H$ satisfies the $r$-wise $C_4$-condition. 
Then for every $r$-coloring $A_1,\dots,A_r$ of $H$ it holds that
$$
(r-4+t)(2r-4+t) \cdot e_H(A_1,A_2) = \rho(2r-4+t) \cdot (|A_1|+|A_2|)-2\rho|H|.
$$
\end{lemma}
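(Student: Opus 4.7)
The plan is to unpack the structured hypothesis combinatorially. Fix any $r$-coloring $A_1,\dots,A_r$ of $H$ and set $a_i := |A_i|$, $e_{ij} := e_H(A_i,A_j)$, and $d_i := \sum_{k\neq i} e_{ik}$ (which, since the coloring is proper, equals $\sum_{v\in A_i} d_H(v)$). Observe that $e_H(A_i\cup A_j, V(H)\setminus(A_i\cup A_j)) = d_i+d_j-2e_{ij}$, so the $(1,t)$-structured hypothesis, when expanded, reads $\rho(a_i+a_j) = d_i + d_j + (t-2)e_{ij}$ for all $i\neq j$; I will call this the \emph{basic identity}. Summing the basic identity over $j\neq i$ and using $\sum_{j\neq i}d_j = 2e(H)-d_i$ and $\sum_{j\neq i}e_{ij}=d_i$ yields the \emph{auxiliary identity} $(r+t-4)\,d_i = \rho[(r-2)a_i + |H|] - 2e(H)$.

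When $t\neq 2$ and $r+t-4\neq 0$, I would solve the auxiliary identity for $d_i$, substitute the resulting expression for $d_i+d_j$ back into the basic identity, then multiply through by $(2r-4+t)$ and invoke Lemma~\ref{lem:(s,t)-structured edge count} to replace $4e(H)(2r-4+t)$ by $4\rho(r-1)|H|$; the common factor $(t-2)$ then cancels cleanly from both sides and the target identity drops out. The degenerate subcases within $t\neq 2$ are handled separately: if $r+t-4=0$, the auxiliary identity forces $\rho(r-2)a_i$ to be independent of $i$, which (together with $\rho\neq 0$ inherited from Lemma~\ref{lem:(s,t)-structured edge count}) gives $a_1=\cdots=a_r=|H|/r$, making both sides of the target equation vanish; if $2r-4+t=0$, summing the basic identity over all unordered pairs similarly forces $\rho=0$, and again both sides vanish.

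For $t=2$, the basic identity degenerates to $\rho(a_i+a_j)=d_i+d_j$, which alone does not determine $e_{ij}$. For $r\geq 3$, combining three such equalities on distinct indices $i,j,k$ yields $d_i = \rho a_i$, and hence $2e(H)=\sum_i d_i = \rho|H|$. Here the $r$-wise $C_4$-condition enters: for each pair $3\leq k<l\leq r$, adding the two $C_4$-identities $e_{12}+e_{kl}=e_{1k}+e_{2l}$ and $e_{12}+e_{kl}=e_{1l}+e_{2k}$ gives $2e_{12}+2e_{kl}=e_{1k}+e_{1l}+e_{2k}+e_{2l}$. Summing over all such pairs and substituting the identities $\sum_{k=3}^r e_{1k}=\rho a_1 - e_{12}$, $\sum_{k=3}^r e_{2k}=\rho a_2 - e_{12}$, and $\sum_{3\leq k<l\leq r}e_{kl}= e(H)+e_{12}-\rho(a_1+a_2)$ causes all mixed terms to telescope, leaving $(r-1)(r-2)e_{12} = (r-1)\rho(a_1+a_2) - \rho|H|$; multiplying by $2$ produces exactly the target identity, since $r-4+t=r-2$ and $2r-4+t=2(r-1)$ when $t=2$.

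The main obstacle will be the $t=2$ case: the structured condition underdetermines $e_{ij}$, so the $C_4$-condition must be combined with the pairwise sum above in precisely the right way for the off-diagonal edge counts to telescope against the identity $d_i=\rho a_i$. The small-$r$ edge cases (namely $r=2$, which follows immediately from the basic identity together with $a_1+a_2=|H|$, and $r=3$ with $t=2$, where the $C_4$-sum is empty but $d_i=\rho a_i$ and $2e(H)=\rho|H|$ still suffice to derive the target equation) are routine verifications that do not affect the main calculation.
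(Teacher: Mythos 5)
Your proof is correct and I verified all the algebra: the auxiliary identity $(r+t-4)d_i = \rho[(r-2)a_i + |H|] - 2e(H)$, the substitution back into the basic identity, the cancellation of $(t-2)$, the degenerate subcases, and the $t=2$ telescoping all check out. The overall strategy is the same as the paper's — reduce the structured relations to a linear system in the $a_i$'s and $e_{ij}$'s, sum over various index sets, and for $t=2$ supplement with the $C_4$-condition — but the organization is genuinely different. You reparametrize through degree sums $d_i=\sum_{k\neq i}e_{ik}$ and \emph{solve} the auxiliary identity for $d_i$, which forces you to divide by $r+t-4$ and (via Lemma~\ref{lem:(s,t)-structured edge count}) by $2r-4+t$; this necessitates the separate treatment of $r+t-4=0$ and $2r-4+t=0$, which you handle correctly but which adds case analysis. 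The paper avoids these splits entirely by never introducing $d_i$: it sums the basic identity over all pairs and over pairs in $\{3,\dots,r\}^2$, then takes a specific integer linear combination that lands directly on a multiple of the target by $t-2$, so the only division is by $t-2$. Your $t=2$ argument, on the other hand, is arguably cleaner than the paper's: deriving $d_i=\rho a_i$ from three basic identities and then telescoping the unordered-pair $C_4$-sum is more transparent than the paper's ordered-pair sum followed by two more rounds of linear elimination. What your route buys is conceptual clarity about what the structured hypothesis actually says about vertex degrees; what it costs is the extra degenerate cases that the paper's more opaque linear combination sidesteps.
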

\begin{proof}
Let $A_1,A_2,\dots A_r$ be the parts of a $r$-coloring of $H$. 
By definition, for all $1 \leq i < j \leq r$,
\begin{align}\label{eq: base property}
\rho(|A_i|+|A_j|) = 
e_H(A_i\cup A_j, V(H)\backslash(A_i\cup A_j))+t\cdot e_H(A_i,A_j).
\end{align}
Summing over all pairs $1 \leq i < j \leq r$, we get
\begin{align}\label{eq_H_edges}
(r-1)\rho|H| =\sum_{1\leq i<j\leq r} \rho(|A_i|+|A_j|)\notag &= 
\sum_{1\leq i<j\leq r} e_H(A_i \cup A_j,V(H)\backslash (A_i\cup A_j))+t\cdot e_H(A_i,A_j)
\\ &=(2r-4+t)e(H).
\end{align}
Let $A = \bigcup_{3\leq i\leq r}A_i$. 
Summing \eqref{eq: base property} over $3 \leq i < j \leq r$, we get
\begin{align}\label{eq_middlestep}
\rho(r-3)(|H|-|A_1|-|A_2|) &= \sum_{3 \leq i < j \leq r}{\rho(|A_i| + |A_j|)} \notag \\ &=
\sum_{3\leq i<j\leq r}e_H(A_i \cup A_j,V(H)\backslash (A_i\cup A_j))+t\cdot e_H(A_i,A_j)\notag \\
&= 
(2r-8+t) \cdot e_H(A)+(r-3) \cdot e_H(A_1\cup A_2, V(H)\backslash(A_1\cup A_2)).
\end{align}
Now multiply \eqref{eq: base property} (for $i = 1,j=2$) by $r-5+t$ and add to \eqref{eq_middlestep}, obtaining:
\begin{align}\label{eq:one-to-last-step}
    & \; \; \rho(r-3)|H| + \rho(t-2)(|A_1| + |A_2|) \notag
    \\ &=(2r-8+t) \cdot e_H(A)+(2r-8+t) \cdot e_H(A_1\cup A_2, V(H)\backslash(A_1\cup A_2)) + t(r-5+t) \cdot e_H(A_1,A_2) \notag \\&=
    (2r-8+t) \cdot e(H) + [t(r-5+t) - (2r-8+t)] \cdot e_H(A_1,A_2) \notag \\&=
    (2r-8+t) \cdot e(H) + (t-2)(r-4+t) \cdot e_H(A_1,A_2),
\end{align}
where the second equality uses $e(H) = e_H(A_1,A_2) + e_H(A) + e_H(A_1\cup A_2, V(H)\backslash(A_1\cup A_2))$. 
Next, we cancel the term $e(H)$ in \eqref{eq:one-to-last-step}. To this end, multiply \eqref{eq_H_edges} by $2r-8+t$ and subtract this from \eqref{eq:one-to-last-step} multiplied by $2r-4+t$, to get:
\begin{align*}
    & \; \; \rho((r-3)(2r-4+t) - (r-1)(2r-8+t)) \cdot |H| + \rho(t-2)(2r-4+t)(|A_1| + |A_2|) \\ &= (t-2)(r-4+t)(2r-4+t) \cdot e_H(A_1,A_2).
\end{align*}
Note that $(r-3)(2r-4+t) - (r-1)(2r-8+t) = -2(t-2)$. If $t \neq 2$, then dividing through by $t-2 \neq 0$ completes the proof. Suppose from now on that $t=2$. We then need another relation coming from the $C_4$-condition. For  every pair $3 \leq i \neq j \leq r$, 
$e_H(A_1,A_2)+e_H(A_i,A_j)-e_H(A_1,A_i)-e_H(A_2,A_j) = 0$. 
Summing this over all (ordered) pairs $i,j$, we get
\begin{align*}
0&= \sum_{3\leq i \neq j\leq r}e_H(A_1,A_2)+e_H(A_i,A_j)-e_H(A_1,A_i)-e_H(A_2,A_j)\\
&=(r-2)(r-3) \cdot e_H(A_1,A_2) + 2e_H(A) - (r-3) \cdot e_H(A_1\cup A_2, V(H)\backslash(A_1\cup A_2)).
\end{align*}
Adding the above equation to (\ref{eq_middlestep}), we get 
\begin{equation}\label{eq:structured t=2}
\rho(r-3)(|H|-|A_1|-|A_2|) = 
(r-2)(r-3) \cdot e_H(A_1,A_2)+ (2r-4) \cdot e_H(A).
\end{equation}
We now continue as before: multiply \eqref{eq: base property} by $2r-4$ and add this to \eqref{eq:structured t=2} to get
\begin{align}\label{eq:one-to-last-step t=2}
    & \; \; 
    \rho(r-3)|H| + \rho (r-1)(|A_1| + |A_2|) =
    \notag \\ &= (2r-4) \cdot e_H(A) + (2r-4) \cdot e_H(A_1\cup A_2, V(H)\backslash(A_1\cup A_2)) + ((r-2)(r-3) + 2(2r-4)) \cdot e_H(A_1,A_2) \notag
    \\ &= (2r-4) \cdot e(H) + (r-1)(r-2) \cdot e_H(A_1,A_2).
\end{align}
In the second equality we used $e(H) = e_H(A_1,A_2) + e_H(A) + e_H(A_1\cup A_2, V(H)\backslash(A_1\cup A_2))$. Finally, multiply \eqref{eq_H_edges} by $2r-4$ and subtract this from \eqref{eq:one-to-last-step t=2} multiplied by $2r-2$, to get
$$
-2\rho(r-1)|H| + \rho(r-1)(2r-2) \cdot (|A_1| + |A_2|) = (2r-2)(r-1)(r-2) \cdot e_H(A_1,A_2).
$$
Dividing through by $r-1$ completes the proof. 
\end{proof}

\section{Lower bounds}\label{sec:lower_bounds}
In this section, we describe some constructions that are used to prove the lower bounds in Theorems \ref{bipartite}, \ref{tripartite1} and \ref{rpartite}. We start with an observation about regular graphs.

\begin{lemma}\label{H_regular}
If $H$ is regular then $\delta^*(H)\geq3/4$.
\end{lemma}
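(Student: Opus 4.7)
The plan is to exhibit a single family of graphs and $2$-edge-colorings, valid for any regular $H$, witnessing the lower bound $\delta^*(H) \geq 3/4$. For $n$ a large multiple of $4|H|$, I would take $G$ on $n$ vertices with $V(G) = A \cup S$, $|A| = 3n/4$ and $|S| = n/4$, where $G[A]$ is complete, $G[A,S]$ is complete bipartite, and $G[S]$ is edgeless; the resulting graph has $\delta(G) = |A| = 3n/4$, attained at vertices in $S$. The $2$-edge-coloring assigns $+1$ to every edge of $G[A]$ and $-1$ to every edge of $G[A,S]$.

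The key step is to verify that every perfect $H$-factor $F$ of $G$ has discrepancy exactly $0$. This uses the regularity of $H$: writing $d$ for the common degree of $H$, note that $F$ is also $d$-regular. Since $G[S]$ has no edges, each vertex in $S$ sends all $d$ of its $F$-edges into $A$, forcing $e(F[A,S]) = d|S| = dn/4$. Summing $d_F(v) = d$ over $v \in A$ then gives $2e(F[A]) + e(F[A,S]) = d|A| = 3dn/4$, so $e(F[A]) = dn/4$. The precise ratio $|A| : |S| = 3 : 1$ is exactly what makes these two quantities coincide, yielding
$$
f(F) = e(F[A]) - e(F[A,S]) = 0.
$$

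To conclude, for any $\gamma > 0$ and any $\delta < 3/4$, I would choose $\eta > 0$ with $\delta + \eta < 3/4$; then $\delta(G) = 3n/4 \geq (\delta + \eta)n$, and no perfect $H$-factor of $G$ has $|f(F)| \geq \gamma n$ (either because every such factor has discrepancy $0$, or vacuously if $G$ contains no perfect $H$-factor at all, as can happen when $\chi^*(H) > 4$). Hence the property defining $\delta^*(H)$ fails at every $\delta < 3/4$, giving $\delta^*(H) \geq 3/4$. The construction and the calculation are both elementary; the main thing to get right is the degree-counting identity, which relies essentially on $H$ being regular—without regularity the edge-count $e(F[A,S])$ would depend on how $F$ distributes vertices of varying degrees into $S$, and the cancellation would fail.
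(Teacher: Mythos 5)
Your proposal is correct and uses essentially the same idea as the paper: a distinguished independent set comprising a quarter of the vertices, all of whose incident edges get one color, so that $d$-regularity of any perfect $H$-factor forces exactly half its edges into each color class. The only cosmetic difference is your host graph (a clique joined to an independent set) versus the paper's balanced blowup of $K_4$ colored as a star; the degree-counting argument and the handling of the no-factor case are identical.
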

\begin{proof}
Let $c$ be a $2$-edge-coloring of $K_4$ such that $K_4^+$ is isomorphic to $K_{1,3}$. Let $v_0$ be the vertex whose incident edges all have color $+1$. Let $m\in\mathbb{N}$ be divisible by $4$ and $|H|$. Let $B$ be an $m/4$-blowup of $(K_4,c)$. If $B$ has no perfect $H$-factor then $\delta^*(H)\geq \delta(B)/|B|=3/4$.
Let us assume that this is not the case and let $F$ be a perfect $H$-factor of $B$. Let $d\in\mathbb{N}$ be such that $H$ is $d$-regular and note that every $H$-factor is also $d$-regular. Therefore, $F$ has $\frac{d}{2}m$ edges. Also, the number of edges of color $1$ is exactly $d|V_{v_0}| = dm/4$.
Thus, exactly half of the edges have color $1$, meaning that $F$ has zero discrepancy. Hence, $\delta^*(H)\geq \frac{\delta(B)}{|B|}=3/4$.
\end{proof}

\begin{lemma}\label{claim:bipartite_rho}
If there exists $\rho$ such that for every connected component $U$ of $H$ it holds that 
$$
e_H(U)=\rho|U|,
$$
then $\delta^*(H) \geq 1/2$.
\end{lemma}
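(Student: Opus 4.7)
The plan is to establish $\delta^*(H) \geq 1/2$ by exhibiting, for each sufficiently large $n$ divisible by $2|H|$, an $n$-vertex graph $G$ of minimum degree $n/2 - 1$ together with a $2$-edge-coloring under which every perfect $H$-factor has discrepancy exactly $0$. Since $\delta(G)/n \to 1/2$, this suffices: for any $\delta < 1/2$ and all sufficiently large $n$ we have $\delta(G) \geq (\delta + \eta)n$ for some fixed $\eta > 0$, so the defining property of $\delta^*(H)$ fails at every $\delta < 1/2$.

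The construction I would use is $G = K^{(1)} \sqcup K^{(2)}$, the disjoint union of two cliques of order $n/2$, with every edge of $K^{(1)}$ colored $+1$ and every edge of $K^{(2)}$ colored $-1$. Because there are no edges between the two cliques, every connected component of any perfect $H$-factor $F$---being a copy of a connected component of $H$---is contained entirely in one of the two cliques. Writing $\mathcal{F}_j$ for the set of components of $F$ that lie inside $K^{(j)}$, the discrepancy factors as
\[
f(F) = \sum_{C \in \mathcal{F}_1} e(C) - \sum_{C \in \mathcal{F}_2} e(C).
\]
The hypothesis $e_H(U) = \rho|U|$ for every connected component $U$ of $H$ lets me replace each term by $\rho|C|$. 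Since $F$ is spanning, the components in $\mathcal{F}_j$ together use all $n/2$ vertices of $K^{(j)}$, so each of the two sums evaluates to $\rho \cdot n/2$ and they cancel, yielding $f(F) = 0$.

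The remaining routine check is that $G$ does contain a perfect $H$-factor, so that the conclusion is not vacuous. This is immediate when $|H|$ divides $n/2$: partition the vertex set of each clique into blocks of size $|H|$ and embed one copy of $H$ in each block, using $K_{|H|} \supseteq H$. I do not anticipate any genuine obstacle here; the whole argument reduces to the observation that a constant edge-to-vertex ratio across the components of $H$ forces, in each clique, the number of $F$-edges to equal $\rho$ times the number of $F$-vertices, and the latter is $n/2$ on both sides.
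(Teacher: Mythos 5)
Your proof is correct and takes essentially the same approach as the paper: a disjoint union of two cliques of size $n/2$, one colored $+1$ and the other $-1$, so that the hypothesis $e_H(U) = \rho|U|$ forces every perfect $H$-factor to have discrepancy zero. The only minor difference is that you restrict to $n$ divisible by $2|H|$ and explicitly exhibit a perfect $H$-factor, whereas the paper allows $n$ divisible by $\mathrm{lcm}(2,|H|)$ and observes that if no perfect $H$-factor exists the bound follows immediately; both handlings are valid.
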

\begin{proof}
If $\chi^*(H) \geq 2$, then this holds trivially, because 
$\delta^*(H) \geq 1 - 1/\chi^*(H)$, as $1-1/\chi^*(H)$ is the threshold for the existence of a perfect $H$-factor by Theorem~\ref{existence}.
So let us assume that $\chi^*(H)<2$.
Let $m\in\mathbb{N}$ be divisible by $2$ and $|H|$, and let $J$ be an $m$-vertex graph which is the disjoint union of two-cliques $A,B$ of size $m/2$ each (there are no edges between $A$ and $B$). Note that $\delta(J) = m/2-1$. Let $c$ be a $2$-edge-coloring of $J$ with $c(e)=1$ for $e\in J[A]$ and $c(e) = -1$ for $e\in J[B]$. Let $F$ be a perfect $H$-factor of $J$; if no such $F$ exists then $\delta^*(H)\geq 1/2$ immediately holds. 
For each copy $H_0 \in F$ of $H$ and for each connected component $U$ of $H_0$, we have by assumption
$$
e_{F}(U)=\rho|U|.
$$
Also, $U \subseteq A$ or $U \subseteq B$, because there are no edges between $A$ and $B$. 
So $c(F[U]) = \rho|U|$ if $U \subseteq A$ and $c(F[U]) = -\rho|U|$ if $U \subseteq B$. Summing over all $H_0 \in F$ and all components $U$ of $H$, we get
$c(F)=\rho(|A|-|B|)=0$. As this holds for any perfect $H$-factor of $J$, we get $\delta^*(H)\geq 1/2$.
\end{proof}
\noindent
Recall the definition of butterfly graphs in the paragraph above Theorem \ref{tripartite1}. In the next Lemma we use the symmetry of butterflies with respect to the coloring to show that if a certain butterfly is not a template for a given graph $H$, then all $H$-factors of an appropriate blowup of this butterfly have discrepancy $0$, giving a lower bound on $\delta^*(H)$. The construction is depicted on Figure~\ref{fig:butterfly-lower-bound}.

\begin{figure}
    \centering
    \includegraphics{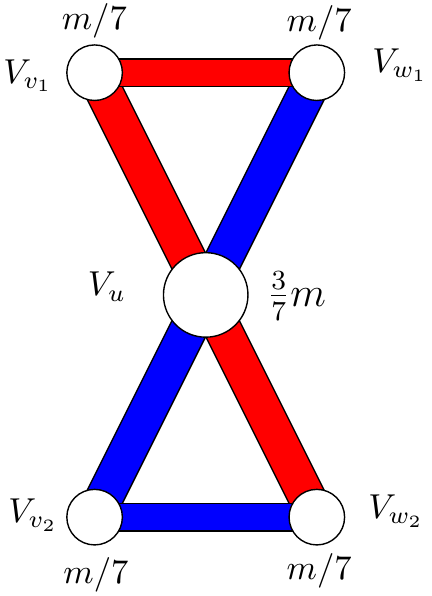}
    \caption{A blowup of a butterfly showing that $\delta^*(H) \ge 4/7$ if the butterfly is not a template for $H$.} 
    \label{fig:butterfly-lower-bound}
\end{figure}
\begin{lemma}\label{butterfly}
If there exists a butterfly which is not a template for $H$, then $\delta^*(H)\geq 4/7$.
\end{lemma}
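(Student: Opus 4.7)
The plan is to exhibit, for arbitrarily large $n$ divisible by $|H|$, a graph $G$ of order $n$ together with a $2$-edge-coloring such that $\delta(G) = (4/7)n$ and yet every perfect $H$-factor of $G$ has discrepancy zero; this suffices for $\delta^*(H) \geq 4/7$. Fix a butterfly $(L,c)$ that is not a template for $H$, with $V(L) = \{u, v_1, w_1, v_2, w_2\}$ and the standard antisymmetric coloring. For any integer $b$ divisible by $|H|$, I will take $G = B$, the blowup of $(L,c)$ with $|V_u|=3b$ and $|V_{v_1}|=|V_{w_1}|=|V_{v_2}|=|V_{w_2}|=b$. A routine check shows that $|B|=7b$, each vertex of $V_u$ has degree $4b$, and each vertex of one of the four outer clusters has degree $3b+b=4b$, so $\delta(B)/|B|=4/7$ exactly.

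The crucial step is a color-reversing symmetry. Consider the permutation $\phi$ of $V(L)$ that fixes $u$ and swaps $v_1 \leftrightarrow v_2$ and $w_1 \leftrightarrow w_2$. This is a graph automorphism of $L$, and the antisymmetry in the definition of a butterfly says precisely that $c(\phi(e)) = -c(e)$ for every edge $e$ of $L$. Because the swapped pairs of clusters in $B$ have equal size, $\phi$ lifts, after choosing arbitrary bijections on the clusters, to a graph automorphism of $B$ that still satisfies $c(\phi(e))=-c(e)$ for every $e \in E(B)$. Consequently, for any perfect $H$-factor $F$ of $B$, the image $\phi(F)$ is another perfect $H$-factor of $B$ and $c(\phi(F)) = -c(F)$.

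Since $(L,c)$ is not a template for $H$, Definition~\ref{def:template} forces all perfect $H$-factors of any blowup of $(L,c)$ to have the same discrepancy. Hence $c(F) = c(\phi(F)) = -c(F)$, so $c(F) = 0$. Letting $b$ range over all positive multiples of $|H|$ gives arbitrarily large such graphs, witnessing $\delta^*(H) \geq 4/7$. There is no real obstacle: once the color-reversing automorphism of the butterfly is spotted, the rest is a direct application of the non-template hypothesis together with the arithmetic check that the proportions $3:1:1:1:1$ on the five clusters produce minimum degree exactly $\tfrac{4}{7}|B|$.
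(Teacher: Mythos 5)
Your proof is correct and matches the paper's argument essentially line for line: the same $3:1:1:1:1$ blowup of the butterfly, the same color-reversing automorphism fixing $u$ and swapping $v_1\leftrightarrow v_2$, $w_1\leftrightarrow w_2$, and the same conclusion that the non-template hypothesis forces $c(F)=c(\phi(F))=-c(F)=0$. The only differences are notational (you parametrize by the small cluster size $b$ rather than by the total order $m$).
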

\begin{proof}
Let $(L,c)$ be a butterfly with triangles $\{u,v_1,w_1\}$ and $\{u,v_2,w_2\}$, and suppose that $(L,c)$ is not a template for $H$. By definition, we have 
\begin{equation}\label{eq:butterfly}
    c(uv_1) = -c(uv_2), \; c(uw_1) = -c(uw_2) \; c(v_1w_1) = -c(v_2w_2).
\end{equation}
Let $B$ be a blowup of $(L,c)$ of size $m$, divisible by $7$ and $|H|$, with 
\begin{itemize}
    \item $|V_u|=3m/7$ and 
    \item$|V_v|=m/7$ for $v\in \{v_1,w_1,v_2,w_2\}$.
\end{itemize} 
We claim that every perfect $H$-factor of $B$ satisfies $c(F) = 0$. Indeed, consider the automorphism of $L$ which swaps between $v_1,v_2$ and between $w_1,w_2$, and let $F'$ be the image of $F$ under this automorphism. Then by \eqref{eq:butterfly}, we have $c(F') = -c(F)$. On the other hand, as $(L,c)$ is not a template for $H$, we must have $c(F) = c(F')$. It follows that $c(F) = 0$. This proves that $\delta^*(H) \geq \delta(B)/|B| = 4/7$.
\end{proof}

Next, we prove the lower bound on $\delta^*(H)$ in the first two cases of Theorem~\ref{rpartite}. The constructions use Lemma~\ref{regularC4} and are depicted in Figure~\ref{fig:lower-bounds}.

\begin{figure}[!ht]
    \centering
    \includegraphics[scale=0.6]{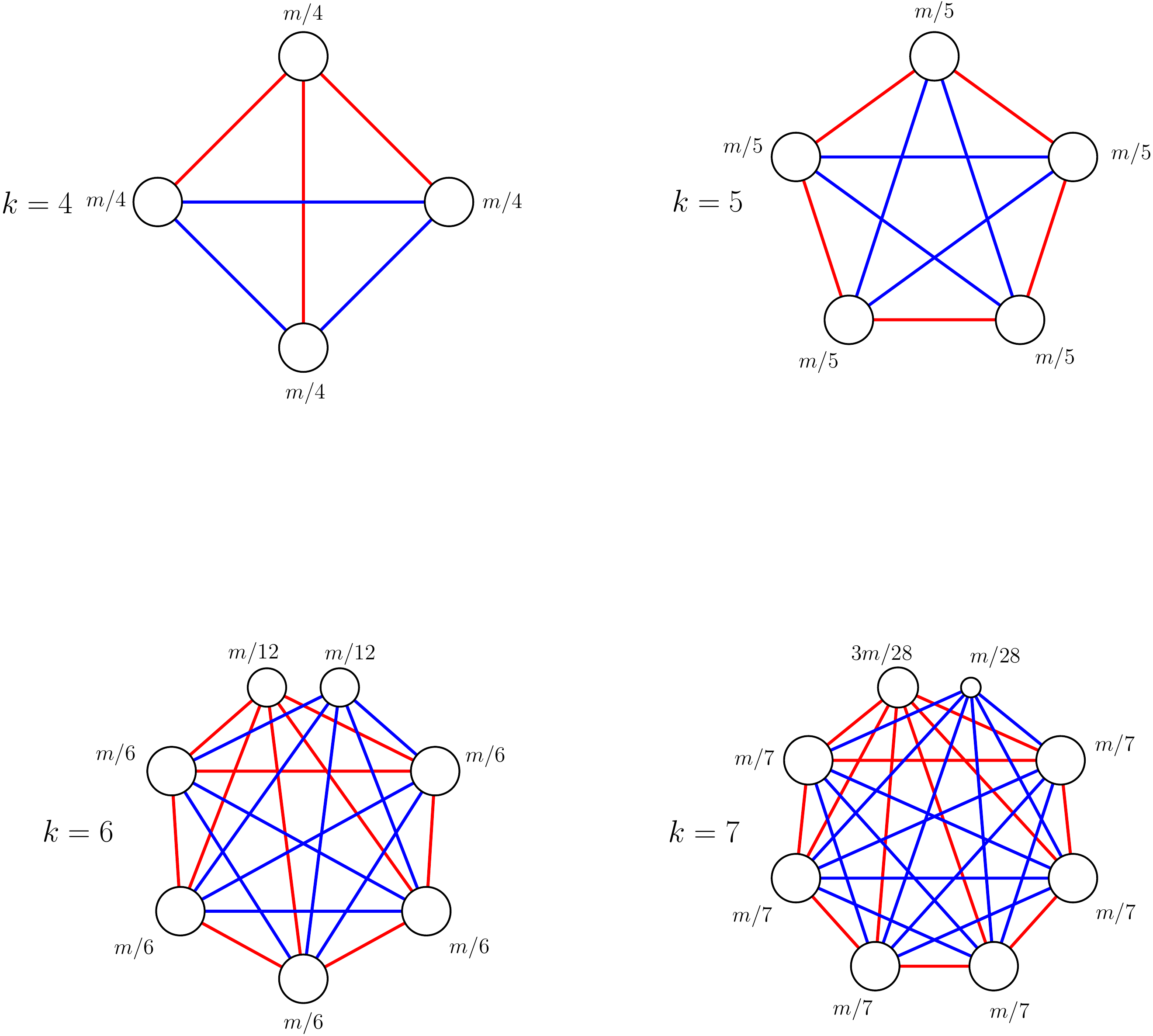}
    \caption{The constructions given in Lemmas~\ref{construction}~and~\ref{construction2} for $4 \le k \le 7.$}
    \label{fig:lower-bounds}
\end{figure}

\begin{lemma}\label{construction}
If $H$ fulfills the $k$-wise $C_4$-condition for some $k\equiv_4 1$, then $\delta^*(H)\geq \frac{k-1}{k}$. Additionally, if $k=\chi(H)$ then $\delta_0(H) = \frac{k-1}{k}$.
\end{lemma}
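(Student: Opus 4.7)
My plan is to construct, for every sufficiently large $n$ divisible by $|H|$, a $2$-edge-colored graph $G$ of order $n$ with $\delta(G)=(1-1/k)n$ in which every perfect $H$-factor has discrepancy $0$. The construction will be a balanced blowup of $K_k$ equipped with a coloring whose positive part is $(k-1)/2$-regular; Lemma~\ref{regularC4} will then force the discrepancy of every $H$-factor to vanish, crucially using that an $H$-factor inherits the $k$-wise $C_4$-condition from $H$, as noted in the excerpt.

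First I would observe that the hypothesis $k\equiv_4 1$ is exactly what allows such a regular coloring to exist. Writing $k=4\ell+1$, the integer $d:=(k-1)/2=2\ell$ is even, so $kd$ is even and there is a $d$-regular graph $D$ on $k$ vertices (for instance, the circulant graph on $\mathbb{Z}/k\mathbb{Z}$ with connection set $\{\pm 1,\pm 2,\dots,\pm\ell\}$). Let $c$ be the $2$-edge-coloring of $K_k$ assigning color $+1$ to the edges of $D$ and color $-1$ to the remaining edges, so that $K_k^+=D$ is $d$-regular and $2d-(k-1)=0$.

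Next, for arbitrarily large $m$ divisible by $|H|$, let $B_m$ be the $m$-blowup of $(K_k,c)$, so that $|B_m|=km$, $|H|\mid|B_m|$, and $\delta(B_m)/|B_m|=(k-1)/k$. Since every perfect $H$-factor $F$ of $B_m$ satisfies the $k$-wise $C_4$-condition, Lemma~\ref{regularC4} yields
$$c(F)=\frac{2d-(k-1)}{k-1}\cdot e(F)=0.$$
Given $\delta<(k-1)/k$, setting $\eta:=((k-1)/k-\delta)/2$ and taking $m$ sufficiently large, the pair $(B_m,c)$ witnesses the failure of the defining property of $\delta^*(H)$ at the value $\delta$, yielding $\delta^*(H)\geq (k-1)/k$.

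For the second statement, assume $k=r=\chi(H)$; the bound $\delta_0(H)\leq 1-1/r=(k-1)/k$ is recorded just after Definition~\ref{delta_0_def}. For the matching lower bound, the calculation above shows that every perfect $H$-factor of every blowup of $(K_r,c)$ has discrepancy $0$, so no two such factors can ever have different discrepancies; thus $(K_r,c)$ is not a template for $H$, i.e.\ $c\in\mathcal{K}(H)$. Taking $m=(r-1)!\,|H|$, Lemma~\ref{lem:balanced blowup} supplies a perfect $H$-factor of $B_m$, which has discrepancy $0$ by the computation above, and $\delta(B_m)/|B_m|=(k-1)/k$. Plugging into Definition~\ref{delta_0_def} gives $\delta_0(H)\geq(k-1)/k$. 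The only subtle point of the argument is the parity check producing the $d$-regular graph $D$, which is precisely why the hypothesis $k\equiv_4 1$ (rather than merely $k$ odd) is required.
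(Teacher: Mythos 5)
Your proof is correct and follows essentially the same route as the paper's: a coloring of $K_k$ with $K_k^+$ being $(k-1)/2$-regular (possible exactly because $k\equiv_4 1$), Lemma~\ref{regularC4} applied to blowups to force discrepancy $0$ for every perfect $H$-factor, and Lemma~\ref{lem:balanced blowup} to exhibit a perfect $H$-factor when $k=\chi(H)$. Your explicit circulant construction and the unwinding of the definition of $\delta^*$ are just slightly more detailed versions of steps the paper leaves implicit.
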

\begin{proof}
Let $c$ be a $2$-edge-coloring of $K_k$ such that $K_k^+$ is $(k-1)/2$-regular; such a coloring exists because $k\equiv_4 1$. Since $H$ satisfies the $k$-wise $C_4$-condition, so does every $H$-factor. Hence, 
Lemma~\ref{regularC4} with $d = (k-1)/2$ implies that for every blowup $B$ of $(K_k,c)$ and for every perfect $H$-factor $J$ of $B$, it holds that 
$$
c(J) = \frac{2\cdot(k-1)/2-k+1}{k-1} \cdot e(J)=0.
$$
This implies that $c \in \mathcal{K}(H)$. Also, taking $B$ to be the
$m/k$-blowup of $(K_k,c)$, we get that $\delta^*(H)\geq \delta(B)/|B|=\frac{k-1}{k}$. Finally, if $k = \chi(H)$ and we take $m$ to be divisible by $(k-1)!|H|$, then $B$ has a perfect $H$-factor by Lemma~\ref{lem:balanced blowup}, and we get that $\delta_0(H) = \frac{k-1}{k}$ by the definition of $\delta_0(H)$.
\end{proof}
\begin{lemma}\label{construction2}
If $H$ is regular and fulfills the $k$-wise $C_4$-condition for some $k\not\equiv_4 1$, then
$\delta^*(H)\geq \frac{k-1}{k}$.
\end{lemma}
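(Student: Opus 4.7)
The plan is to mirror the proof of Lemma~\ref{construction}: find a $2$-edge-coloring $c$ of $K_k$ such that the balanced $m/k$-blowup $B$ of $(K_k,c)$ has every perfect $H$-factor of discrepancy $0$, which then yields $\delta^*(H)\geq \delta(B)/|B|=(k-1)/k$.

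The key step is a \emph{unique edge-profile} property for $H$-factors of $B$. Let $\rho$ denote the common degree of $H$. For any balanced blowup $B$ (with $|V_i|=m$ for every $i$) and any perfect $H$-factor $F$ of $B$, the $k$-wise $C_4$-condition, inherited from $H$, forces the function $(i,j)\mapsto e_F(V_i,V_j)$ to be additively separable: $e_F(V_i,V_j)=b_i+b_j$ for some scalars $b_1,\dots,b_k$ (this is a standard consequence of $C_4$). The regularity of $F$, which follows from $H$ being $\rho$-regular, gives the further constraint $\sum_{j\ne i} e_F(V_i,V_j)=\rho m$ for every $i$, and combining these forces all $b_i$ to coincide; solving yields $b_i=\rho m/[2(k-1)]$ and hence
\[
e_F(V_i,V_j) \;=\; \frac{\rho m}{k-1} \qquad \text{for every pair } i<j.
\]
Summing with weights $c(V_iV_j)$ gives $c(F)=c(K_k)\cdot \rho m/(k-1)$ for \emph{every} perfect $H$-factor $F$, so $c(F)=0$ iff $c(K_k)=0$.

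The condition $c(K_k)=0$ requires $K_k$ to admit a $2$-edge-coloring with equally many $+1$ and $-1$ edges, which is possible iff $\binom{k}{2}$ is even, i.e.\ $k\equiv 0$ or $1\pmod 4$. The hypothesis $k\not\equiv 1\pmod 4$ thus leaves the easy subcase $k\equiv 0\pmod 4$: for such $k$, take any coloring $c$ with $c(K_k)=0$ together with the balanced $m/k$-blowup, and by the above $c(F)=0$ for every $H$-factor, so $\delta^*(H)\geq (k-1)/k$.

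The hardest case, and the main obstacle, is $k\equiv 2$ or $3\pmod 4$: here $\binom{k}{2}$ is odd, no $K_k$-coloring has $c(K_k)=0$, and the balanced blowup with any $K_k$-uniform coloring forces $|c(F)|=|c(K_k)|\cdot \rho m/(k-1)=\Theta(n)$, which is much too large. To handle this case one cannot use a ``$K_k$-uniform'' coloring on each bipartite pair; instead, we use an arbitrary $2$-edge-coloring $f$ of the balanced blowup in which edges within $(V_i,V_j)$ may take both colors. Such an $f$ will be chosen to be locally balanced on each bipartite pair (equal numbers of $+1$ and $-1$ edges in $B[V_i,V_j]$) and sufficiently pseudo-random that the contribution $|f(F\cap E(V_i,V_j))|$ is $o(m)$ for every perfect $H$-factor $F$. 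Summing over the $\binom{k}{2}$ pairs then yields $|f(F)|=o(n)$, which is all that is required to conclude $\delta^*(H)\geq (k-1)/k$. Producing such a coloring uniformly over all perfect $H$-factors (either via an explicit algebraic construction or a probabilistic argument controlling the large family of $H$-factors simultaneously) is the delicate technical part of the argument.
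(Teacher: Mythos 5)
Your handling of the subcase $k\equiv_4 0$ is correct and essentially matches the paper (the paper invokes Lemma~\ref{regularC4} with a coloring $c$ of $K_k$ chosen so that $c(K_k)=0$; your derivation of $e_F(V_i,V_j)=\rho m/(k-1)$ via additive separability plus the regularity of $F$ gives the same conclusion). But your treatment of the genuinely hard subcase $k\equiv_4 2,3$ has a real gap. You propose a locally balanced pseudo-random coloring $f$ of the blowup in which each bipartite pair $(V_i,V_j)$ receives both colors, and you want $|f(F\cap E(V_i,V_j))|=o(m)$ simultaneously for \emph{every} perfect $H$-factor $F$. This cannot be achieved in general: taking $H=K_k$ (which is regular and satisfies the $k$-wise $C_4$-condition), $F\cap E(V_i,V_j)$ ranges over all perfect matchings of $K_{m,m}$ as $F$ varies, and for any balanced $\pm 1$-coloring of $K_{m,m}$ the subgraph of $+1$ edges has $m^2/2$ edges and therefore a matching of size $\geq m/2$, so one can find matchings with discrepancy $\Theta(m)$. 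A random balanced coloring fares no better, since the $+1$ subgraph is essentially $G(m,m,1/2)$ and has a perfect matching with high probability. So there is no hope of a pseudorandom $f$ killing the discrepancy over all $H$-factors simultaneously, and this step, which you yourself flag as ``the delicate technical part,'' is not just unfinished --- it is the wrong tool.

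The paper resolves $k\equiv_4 2,3$ with a deterministic split-cluster trick that sidesteps the need for any pseudorandomness. Fix one vertex $v\in V(K_k)$ and let $U=V(K_k)\setminus\{v\}$. Choose a coloring in which $K_k[U]^+$ is $\ell$-regular for an appropriate even $\ell$, then color the blowup $B$ so that the clusters $V_U$ are colored as a blowup of $(K_{k-1},c)$, while the cluster $V_v$ is split into two sets $V_+,V_-$ and every edge from $V_+$ (resp.~$V_-$) to $V_U$ is given color $+1$ (resp.~$-1$). This coloring is \emph{not} a blowup of a $2$-colored $K_k$, which is why the obstruction you identified (no coloring of $K_k$ has $c(K_k)=0$ when $\binom{k}{2}$ is odd) does not apply. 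Because $F$ is $d$-regular, the contribution to the discrepancy from edges meeting $V_v$ is exactly $d(|V_+|-|V_-|)$, deterministically and for every $F$. The contribution from $F[V_U]$ is pinned down by Lemma~\ref{regularC4} applied to the blowup of $K_{k-1}$, using that $F[V_U]$ inherits the $(k-1)$-wise $C_4$-condition. Picking $\ell$ and $|V_+|,|V_-|$ to cancel the two contributions gives a coloring under which \emph{every} perfect $H$-factor has discrepancy exactly $0$, yielding the lower bound.
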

\begin{proof}
We give three different constructions depending on the residue of $k$ modulo $4$, but the general idea is the same for all three.
Let $U\subseteq V(K_k)$ with $|U| = k-1$ and $\{v\} = V(K_k)\backslash U$. Let $c$ be any 2-edge-coloring of $K_k$ such that $K_k[U]^+$ is $\ell$-regular, where $\ell$ is an even integer to be determined later; such a coloring exists because $\ell$ is even. Let $B$ be an $m/k$-blowup of $(K_k,c)$ for some $m$ with $m$ divisible by $4k$. Let $d\in \mathbb{N}$ be such that $H$ is $d$-regular and let $F$ be an arbitrary perfect $H$-factor of $B$. Note that $F$ is also $d$-regular and satisfies the $k$-wise $C_4$-condition. So $e(F) = dm/2$ and $V_v$ is incident to $d|V_v|=\frac{dm}{k}$ edges in $F$. Hence, $e_F(V_U) = \frac{k-2}{2k}dm$. Additionally, $F[V_U]$ satisfies the $(k-1)$-wise $C_4$-condition. To see this, observe that every $(k-1)$-coloring of $F[V_U]$ can be extended to 
 a $k$-coloring of $F$ by adding $V(F) \cap V_v$ as an additional color-class. Thus, by applying Lemma~\ref{regularC4} with $J = F[V_U]$ (and with $k-1$ in place of $k$), we get
$$
c(F[V_U]) = \frac{2\ell-k+2}{k-2}\cdot e_F(V_U) = \frac{2\ell-k+2}{k-2}\cdot\frac{k-2}{2k}dm = \frac{2\ell-k+2}{2k}dm.
$$
We now define a $2$-edge-coloring $c'$ of $B$ as follows. First, if $e$ is contained in $V_U$, then set $c'(e) = c(e)$. Second, to color the edges incident to $V_v$, split $V_v$ into two sets $V_+$ and $V_-$ (whose sizes will be determined later), and set $c'(e) = 1$ for all edges incident to $V_+$ and $c'(e) = -1$ for all edges incident to $V_-$. 
Then, 
\begin{equation}\label{eq3}
c'(F) = c(F[V_U])+e_F(V_+,V_U)-e_F(V_-,V_U) = \frac{2\ell-k+2}{2k}dm + d(|V_+|-|V_-|),
\end{equation}
where the last equality uses that $F$ is $d$-regular. 
We now consider the different cases of $k$ modulo $4$ and choose $\ell$, $|V_+|$ and $|V_-|$ so that $c'(F)=0$.
\begin{itemize}
    \item If $k\equiv_4 0$, take $\ell = k/2$, $|V_+| = 0$ and $|V_-| = m/k$.
    \item If $k\equiv_4 2$, take $\ell = (k-2)/2$ and $|V_+|=|V_-|=\frac{m}{2k}$.
    \item If $k\equiv_4 3$, take $\ell = (k-3)/2$, $|V_+| = \frac{3m}{4k}$ and $|V_-| = \frac{m}{4k}$.
\end{itemize}
It is easy to check that $\ell$ is even and we have $c'(F) = 0$ by (\ref{eq3}) in all three cases.  
\end{proof}
The final two lemmas of this section provides us with a lower bound on $\max\{\delta_0(H),1-1/\chi^*(H)\}$ (and therefore also on $\delta^*(H)$) for $(s,t)$-structured graphs (recall Definition~\ref{structured}). 

\begin{lemma}\label{lower_delta_r=3}
Suppose that $r=3$ and $H$ is $(1,2)$-structured. Then
$
\max\{\delta_0(H),1-1/\chi^*(H)\} \geq 5/8.
$
\end{lemma}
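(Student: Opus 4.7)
The plan is to split into two cases based on $\chi^*(H).$ If $\chi^*(H) \geq 8/3,$ then $1-1/\chi^*(H) \geq 5/8$ and we are immediately done, so the interesting case is when $\chi^*(H) < 8/3,$ in which we will show $\delta_0(H) \geq 5/8$ by exhibiting an explicit coloring $c \in \mathcal{K}(H)$ together with a blowup realizing the bound.

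First I will extract a clean consequence of the $(1,2)$-structure. Writing Definition~\ref{structured} for each of the three pairs $\{i,j\}$ in a $3$-coloring $A_1,A_2,A_3$ of $H,$ one obtains three linear equations
$$\rho(|A_i|+|A_j|) = e_H(A_i,A_k)+e_H(A_j,A_k)+2e_H(A_i,A_j) = \sum_{v \in A_i \cup A_j} d_H(v).$$
Solving this $3 \times 3$ system yields the single-part identity
$$
\rho |A_i| \;=\; \sum_{v \in A_i} d_H(v), \qquad i=1,2,3,
$$
and Lemma~\ref{lem:(s,t)-structured edge count} (with $r=3,\, s=1,\, t=2$) gives $e(H) = \rho |H|/2.$

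Next, let $c$ be the $2$-coloring of $K_3$ on vertices $v_1,v_2,v_3$ with $c(v_1v_2)=c(v_1v_3)=1$ and $c(v_2v_3)=-1.$ For any blowup $B$ of $(K_3,c)$ with parts $V_1,V_2,V_3$ of sizes $b_1,b_2,b_3$ and any perfect $H$-factor $F$ of $B,$ I will compute $c(F)$ as follows. Each copy of $H$ in $F$ embeds its color classes $X_1,X_2,X_3$ into $V_1,V_2,V_3$; its discrepancy is
$$e_H(X_1,X_2)+e_H(X_1,X_3)-e_H(X_2,X_3) = 2\!\!\sum_{v \in X_1}\!\! d_H(v) - e(H) = 2\rho|X_1| - e(H),$$
using the identity above. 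Summing over the $|B|/|H|$ copies and using $\sum |X_1^{(k)}| = b_1$ together with $e(H)\cdot |B|/|H| = \rho|B|/2$ gives
$$
c(F) \;=\; \rho\!\left(2b_1 - |B|/2\right).
$$
Since the right-hand side depends only on $B$ and not on $F,$ all perfect $H$-factors of any blowup of $(K_3,c)$ share the same discrepancy, so $(K_3,c)$ is not a template for $H,$ i.e.\ $c \in \mathcal{K}(H).$

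Finally, to realize $\delta_0(H) \geq 5/8,$ I take $B$ to be a blowup with $b_1:b_2:b_3 = 2:3:3,$ choosing $|B|$ divisible by $|H|$ and large. Then $b_1 = |B|/4,$ so the formula gives $c(F) = 0$ for every perfect $H$-factor $F$ of $B,$ and $\delta(B) = 5|B|/8.$ It remains only to verify that $B$ actually has a perfect $H$-factor: since we are in the case $\chi^*(H) < 8/3,$ we have $1-1/\chi^*(H) < 5/8 = \delta(B)/|B|,$ so for $|B|$ large enough Theorem~\ref{existence} furnishes such a factor. The only non-routine ingredient is the derivation of $\rho|A_i| = \sum_{v\in A_i}d_H(v)$ from the three $(1,2)$-structure equations — once this is in hand, the discrepancy calculation collapses to a function of $b_1$ and $|B|$ alone, which is what makes the ratio $2:3:3$ work so cleanly.
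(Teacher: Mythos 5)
Your proposal is correct and follows essentially the same route as the paper: the same identity $\rho|A_i|=\sum_{v\in A_i}d_H(v)$, the same coloring of $K_3$ with discrepancy formula $c(F)=\rho(2b_1-|B|/2)$, and the same $2:3:3$ blowup. The only difference is cosmetic — you split on $\chi^*(H)\gtrless 8/3$ up front, while the paper splits at the end on whether the blowup admits a perfect $H$-factor; both invoke Theorem~\ref{existence} for the same purpose.
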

\begin{proof}
By the definition of structuredness (see Definition~\ref{structured}), there is $\rho \in \mathbb{R}$ such that 
\begin{equation}\label{eq:rho 5/8 general}
\rho(|A_i| + |A_j|) = 
e_H(A_i\cup A_j, V(H)\backslash(A_i\cup A_j))+2e_H(A_i,A_j)= 
e(H) + e_H(A_i,A_j).
\end{equation}
for every $3$-coloring $A_1,A_2,A_3$ of $H$ and all $1 \leq i < j \leq 3$. 
By summing the above equation over all $1 \leq i < j \leq 3$, we get
$$
2\rho|H| = \sum_{1 \leq i < j \leq r}\rho(|A_i| + |A_j|) = 
3e(H) + \sum_{1 \leq i < j \leq r}e_H(A_i,A_j) = 4e(H).
$$
So $\rho |H| = 2e(H)$.
Subtracting from this the equation $\rho(|A_2|+|A_3|) = e(H) + e_H(A_2,A_3) $, we get
\begin{equation}\label{eq:rho 5/8}
\rho|A_1| = e_H(A_1,A_2) + e_H(A_1,A_3) .
\end{equation}

Now consider a triangle $T$ with vertices $u_1,u_2,u_3$ and let $c$ be the $2$-edge-coloring of $T$ where $c(u_1u_2) = c(u_1u_3) = 1$, $c(u_2u_3) = -1$. We first show that $c\in\mathcal{K}(H)$.
So let $B$ be an arbitrary blowup of $(T,c)$, and let $U_1,U_2,U_3$ be the clusters of $B$ (where $U_i$ corresponds to $u_i$). We need to show that all perfect $H$-factors of $B$ have the same discrepancy. So let $F$ be a perfect $H$-factor of $B$. 
First note that 
$$e(F) = e(H) \cdot |B|/|H| = \rho|B|/2.$$  
For each $H$-copy $H_0 \in F$, consider the $3$-coloring $A_1,A_2,A_3$ of $H_0$ given by $A_i = V(H_0) \cap U_i$. The number of edges of color 1 in $H_0$ is precisely 
$e_{H_0}(A_1,A_2)+e_{H_0}(A_1,A_3)$. By \eqref{eq:rho 5/8}, this number is $\rho|A_1|$. Summing over all $H$-copies $H_0 \in F$, we see that the number of edges of color $1$ in $F$ is precisely $\rho |U_1|.$
Therefore, 
$$
c(F) = 2e(F^+) - e(F) = \rho(2|U_1| - |B|/2),
$$
which is independent of $F$. This shows that indeed $c\in\mathcal{K}(H)$.

Let us now consider the specific case where $|U_1|=2m$ and $|U_2|=|U_3|=3m$, for an arbitrary integer $m$.
Then $|B| = 8m$ and $\delta(B)/|B| = (|U_1| + |U_2|)/|B| = 5/8$.
If $B$ has no perfect $H$-factor for any choice of $m$, then $1-1/\chi^*(H) \geq \delta(B)/|B| = 5/8$. 
And otherwise, fix an $m$ for which $B$ has perfect $H$-factors. Every perfect $H$-factor $F$ of $B$ satisfies
$$c(F) = \rho(2|U_1|-|B|/2) = 0.$$ 
By the definition of $\delta_0(H)$, this implies that $\delta_0(H)\geq \delta(B)/|B|=5/8$, as required.  
\end{proof}

\begin{lemma}\label{lower_delta}
Suppose that $r \geq 6$ and $r \equiv_4 2,3$. Assume that $H$ is $(0,1)$-structured, or that $H$ is $(1,t)$-structured for some $t\in\{-2,-1,0,1,2\}$ and satisfies the $r$-wise $C_4$-condition. 
Then
$$\max\{\delta_0(H),1-1/\chi^*(H)\}\geq \frac{3r-5}{3r-2}.$$
\end{lemma}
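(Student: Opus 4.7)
The plan is to exhibit, for each admissible pair $(r,t)$, a $2$-edge-coloring $c$ of $K_r$ and a blowup $B$ of $(K_r, c)$ with $\delta(B)/|B| = (3r-5)/(3r-2)$ such that every perfect $H$-factor of $B$ (if any exists) has zero discrepancy. Given such a construction, either $B$ has a perfect $H$-factor---in which case the resulting $c \in \mathcal{K}(H)$ (see the next paragraph) and the zero-discrepancy factor yield $\delta_0(H) \geq \delta(B)/|B| = (3r-5)/(3r-2)$ by the definition of $\delta_0$---or $B$ has no perfect $H$-factor, in which case (passing to large enough blowups of $B$ and applying Theorem~\ref{existence}) the existence threshold $1 - 1/\chi^*(H)$ must be at least $\delta(B)/|B|$. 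Either way, $\max\{\delta_0(H), 1-1/\chi^*(H)\} \geq (3r-5)/(3r-2)$.

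The key structural input is that under either hypothesis on $H$, the quantity $e_H(A_i, A_j)$ is an \emph{affine} function of $|A_i| + |A_j|$: in the $(0,1)$-case this is the definition of structuredness ($e_H(A_i,A_j) = \rho(|A_i| + |A_j|)$), and in the $(1, t)$-case with the $r$-wise $C_4$-condition this is Lemma~\ref{get_c}, which gives $e_H(A_i, A_j) = \alpha(|A_i|+|A_j|) - \beta$ for constants $\alpha, \beta$ depending only on $H$ (the edge case $r-4+t = 0$, occurring only at $(r,t) = (6,-2)$, can be handled by a direct substitution since the equation of Lemma~\ref{get_c} then forces all $r$-colorings of $H$ to be balanced). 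Since every perfect $H$-factor inherits the same structural property and $C_4$-condition, the same affine formula applies to $e_F(V_i, V_j)$ in any blowup $B$ of $(K_r, c)$ with cluster sizes $(a_i)$. Consequently $c(F) = \sum_{i<j}c(v_iv_j) e_F(V_i, V_j)$ depends only on $c$ and $(a_i)$, not on the particular $H$-factor $F$. This has two consequences: first, $(K_r, c) \in \mathcal{K}(H)$ for \emph{every} $c$; second, the zero-discrepancy condition reduces to the explicit linear equation
\[
(2r-4+t) \sum_{i=1}^r a_i\, c(v_i, K_r) \;=\; 2\, c(K_r)\, |B|,
\]
with the right-hand side replaced by $0$ in the $(0,1)$-structured case.

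To arrange $\delta(B)/|B| = (3r-5)/(3r-2)$, I parametrize the cluster sizes as $a_i = 3m - e_i$ with $e_i \geq 0$ and $\sum_i e_i = 2m$; then $|B| = (3r-2)m$ and $\max_i a_i \leq 3m$, giving the required ratio. Substituting into the equation above, the zero-discrepancy requirement becomes $\sum_i e_i\, c(v_i, K_r) = K$ for an explicit integer $K = K(r, t, m, c(K_r))$. To realize this, I choose $c$ with one or two ``special'' vertices carrying prescribed signed degrees, concentrate the $e_i$'s on those vertices, and let the remaining vertices carry signed degrees close to zero; the coloring on the induced $K_{r-1}$ (respectively $K_{r-2}$) on the non-special vertices then has enough freedom to attain any desired value of $c(K_r)$. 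The hardest part is the parity/magnitude bookkeeping: since $\binom{r}{2}$ is odd for $r \equiv 2, 3 \pmod 4$, the value $c(K_r)$ is forced to be odd, while each $c(v, K_r)$ has the parity of $r-1$; these parities, together with the divisibility of $K$ by $\gcd(2r-4+t, 3r-10+3t)$, constrain the admissible choices of $m$. Taking $m$ to be a suitable multiple of $2r-4+t$ (and handling the five values $t \in \{-2,-1,0,1,2\}$ by direct case analysis on $r \bmod 4$) provides enough flexibility to realize the required coloring in every case, completing the proof.
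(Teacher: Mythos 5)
Your overall strategy is the same as the paper's: use the affine relation $e_H(A_i,A_j)=\tau(|A_i|+|A_j|)+\tau C|H|$ (from the definition of structuredness in the $(0,1)$-case and from Lemma~\ref{get_c} in the $(1,t)$-case, with the $(r,t)=(6,-2)$ degeneracy handled by showing all $r$-colorings are balanced), deduce that every coloring of $K_r$ lies in $\mathcal{K}(H)$ because the discrepancy of any perfect $H$-factor of a blowup is determined by the part sizes, and then produce one explicit colored blowup with minimum-degree ratio $\tfrac{3r-5}{3r-2}$ and forced zero discrepancy, closing with the same dichotomy (either a perfect $H$-factor exists, giving the $\delta_0$ bound, or none does, giving the $1-1/\chi^*$ bound via Theorem~\ref{existence}).

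The gap is in the last step, which is the actual content of the lemma: you reduce the zero-discrepancy requirement to the linear equation $\sum_i e_i\,c(v_i,K_r)=K$ with $e_i\ge 0$, $\sum_i e_i=2m$, and then assert that a case analysis over $t$ and $r\bmod 4$ ``provides enough flexibility to realize the required coloring in every case'' without exhibiting any coloring or verifying solvability. This is not routine: the target $K$ is coupled to $c(K_r)$ (which is forced odd), the signed degrees $c(v_i,K_r)$ all have parity $r-1$ and sum to $2c(K_r)$, and $K$ must additionally be an integer combination compatible with the denominator $2r-4+t$; you have checked the magnitude bound but none of the realizability. The paper sidesteps the entire case analysis with a single uniform construction: take $c$ with one vertex $v_1$ all of whose edges have color $1$ and with $c(K_r)=1$ (possible since $r-1<\binom{r}{2}/2$ and $\binom{r}{2}$ is odd), put one small part of size $x=\frac{(r-3)-(r-1)C}{(r-2)(r+1)}m$ at $v_1$ and equal parts elsewhere, and verify $\frac{m}{3r-2}\le x\le\frac{m}{r}$ using the bound $-\frac{1}{r-3}\le C\le 0$ — a bound on the constant term that your write-up never establishes but that is exactly what makes the degree condition compatible with zero discrepancy. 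To complete your argument you would need either to prove such a bound on your constant $\beta$ and carry out the promised case analysis, or to adopt a uniform construction of this kind.
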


The bound of $\frac{3r-5}{3r-2}$ in Lemma~\ref{lower_delta} is particularly interesting because a blowup on $m$ vertices of two copies of $K_r$ sharing $r-2$ vertices has minimum degree at most $\frac{3r-5}{3r-2}m$. The lower bound given by Lemma~\ref{lower_delta} will allow us to assume later on that $\delta(R)/|R| > \frac{3r-5}{3r-2}$ (when proving upper bounds on $\delta^*(H)$). We will then (implicitly) use the fact that $R$ is not a blowup of two $r$-cliques sharing $r-2$ vertices. See Lemma~\ref{high_min_deg}.

\begin{proof}[Proof of Lemma~\ref{lower_delta}]
We assume that $H$ is $(s,t)$-structured, where $(s,t) = (0,1)$ or $s = 1$ and $t \in \{-2,-1,0,1,2\}$. In particular, $s\in \{0,1\}$ and $(s,t) \neq (0,0)$.
First we show that there exist $\tau,C$ such that for every $r$-coloring $A_1,\dots A_r$ of $H$,
\begin{align}\label{eq: tau,C}
e_H(A_i,A_j) = \tau(|A_i|+|A_j|)+\tau\cdot C|H|.
\end{align}
It is enough to show this for $i,j = 1,2$.
Let $\rho$ be such that $H$ is $(s,t)$-structured with parameter $\rho$ (recall Definition~\ref{structured}).
If $(s,t) = (0,1)$ then \eqref{eq: tau,C} is trivially satisfied for $\tau=\rho$ and $C = 0$. Otherwise, we have that $s=1$. Then, by Lemma~\ref{get_c}, we get that
\begin{align}\label{eq: one edge equal to sum minus total}
(r-4+t)(2r-4+t)\cdot e_H(A_1,A_2) = \rho(2r-4+t)\cdot(|A_1|+|A_2|)-2\rho|H|.
\end{align}
Observe that in all possible choices of $(s,t)$, we have $r + t \geq 4$, and equality holds only if $r = 6$, $s = 1$ and $t = -2$. We consider this case first. Then \eqref{eq: one edge equal to sum minus total} becomes
\begin{align}\label{eq: t=-2, r=6}
2\rho|H| = 6\rho (|A_1|+|A_2|).
\end{align}
We claim that $\rho \neq 0$. To see this, we use that $H$ is $(1,-2)$-structured, summing \eqref{eq:structured} over all pairs $1 \leq i < j \leq r$. This gives:
\begin{align*}
(r-1)\rho|H| = \sum_{1\leq i<j\leq r}\rho(|A_i|+|A_j|)&=\sum_{1\leq i<j\leq r}[e_H(A_i\cup A_j, V(H)\setminus (A_i\cup A_j)) - 2e_H(A_i,A_j)]\\
&= (2r-6) \cdot e(H) = 6e(H) >0,
\end{align*}
Now divide both sides of \eqref{eq: t=-2, r=6} by $\rho$ to get 
$|H|/3=|A_1|+|A_2|.$ 
Since this holds for every $r$-coloring of $H$, we get that every $r$-coloring of $H$ must be balanced by Claim~\ref{claim:all equal}. This implies that $\chi^*(H) = r$ by the definition of $\chi^*$. Now, $1 - 1/\chi^*(H) = \frac{r-1}{r} \geq \frac{3r-5}{3r-2}$.

Suppose from now on that $r+t>4$.
Then $(r-4+t)(2r-4+t)\neq 0$. Thus, dividing both sides of \eqref{eq: one edge equal to sum minus total} by $(r-4+t)(2r-4+t)$, we get that \eqref{eq: tau,C} is satisfied for $\tau = \frac{\rho}{r-4+t}$ and $C = \frac{-2}{2r-4+t}.$ 
This proves \eqref{eq: tau,C}. 
Note also that either $s = 0$ and $C=0$, or $s = 1$ and $C = \frac{-2}{2r-4+t}.$
Since $t \ge -2$, we get in either case that 
\begin{align}\label{eq: bounds on C}\frac{-1}{r-3}\leq C \leq 0.\end{align}

Next, we use \eqref{eq: tau,C} to complete the proof of the lemma. 
First, we claim that for every $2$-coloring $c$ of $K_r$ it holds that $c \in \mathcal{K}(H)$. Indeed, let $B$ be an arbitrary blowup of $(K_r,c)$, and let $F$ be any perfect $H$-factor of $B$. 
Let $B_1,\dots,B_r$ be the parts of $B$. 
By summing \eqref{eq: tau,C} over all copies of $H$ in $F$, we get that  
\begin{equation*}\label{eq: edges between parts in factor}
    e_F(B_i,B_j) = \tau (|B_i|+|B_j|)+\frac{|F|}{|H|}\cdot \tau C|H|=\tau (|B_i|+|B_j|)+\tau C|F|.
\end{equation*}
for all $1 \leq i \leq j \leq r$. 
This implies that 
\begin{equation}\label{eq:c(F) proof of Lemma 8.7}
c(F) = \sum_{1 \leq i < j \leq r}c(ij) \cdot e_F(B_i,B_j) = 
\tau \sum_{1 \leq i < j \leq r}c(ij) \cdot (|B_i|+|B_j|+C|F|),
\end{equation}
which is independent of $F$, since $|F|=|B|$. This means that all perfect $H$-factors of $B$ have the same discrepancy. 
It now follows, by the definition of $\mathcal{K}(H)$, that $c\in\mathcal{K}(H)$.

Next, we define a certain $2$-edge-coloring $c$ of $K_r$, as follows. Write $V(K_r) = \{v_1,\dots,v_r\}$. We claim that there exists a coloring $c$ such that $c(e) = 1$ for every $e \in E(K_r)$ incident to $v_1$, and $c(K_r) = 1$. Indeed, there are $r-1$ edges incident to $v$, and $r-1 < \binom{r}{2}/2$ for $r \geq 6$. Hence, there is a coloring $c$ such that $c(v_1v_j) = 1$ for all $2 \leq j \leq r$, and $c$ colors exactly $\lceil\binom{r}{2}/2\rceil = (\binom{r}{2} + 1)/2$ edges with color $1$. Here we use that $\binom{r}{2}$ is odd because $r \equiv_4 2,3$. 
So the number of edges of color $-1$ is $(\binom{r}{2} - 1)/2$, and hence $c(K_r) = 1$, as required. We fix such a coloring $c$ from now on. 

Fix $C' \in \mathbb{N}$ such that $C\cdot C'\in \mathbb{Z}$, where $C$ is the constant from \eqref{eq: tau,C}. 
Fix an integer $m$ divisible by $(r-2)(r-1)(r+1)C'|H|$, and let $B$ 
be the blowup of $(K_r,c)$ with $|B| = m$ and with parts $B_1,\dots,B_r$ (where $B_i$ corresponds to $v_i$), such that $|B_1| = x$ and $|B_i| = \frac{m-x}{r-1} \eqqcolon y$ for all $2 \leq i \leq r$, where
$$
x = \frac{(r-3)-(r-1)C}{(r-2)(r+1)}m.
$$
By our choice of $m,$ both $x$ and $y$ are integers. 
Next, we show that 
\begin{align}\label{eq: upper and lower bound for x}
\frac{m}{3r-2} \leq x \le \frac{m}{r}.
\end{align}
As $C \leq 0$, to prove the lower bound in \eqref{eq: upper and lower bound for x} it suffices to check that 
$
(r-2)(r+1) \leq (3r-2)(r-3),
$
which holds for $r \geq 4$.
For the upper bound,
we use that $C \geq \frac{-1}{r-3}$ and therefore $x \leq \frac{(r-3)^2 + (r-1)}{(r-2)(r+1)(r-3)}m$. Hence, it suffices to verify that 
$
r((r-3)^2+(r-1))\leq (r-2)(r+1)(r-3),
$
which holds for $r\geq 6$. 

The bounds in \eqref{eq: upper and lower bound for x} 
imply that 
\begin{equation}\label{eq: upper and lower bound for y} 
    \frac{m}{r} \leq y \leq \frac{3m}{3r-2}.
\end{equation}
In particular, $B_1$ is the smallest part in the blowup $B$, by \eqref{eq: upper and lower bound for x} and \eqref{eq: upper and lower bound for y}. 
Therefore, $\delta(B) = m-y \geq \frac{3r-5}{3r-2}m$. If for all $m$, the blowup $B$ has no perfect $H$-factor, then, using Theorem~\ref{existence}, we see that $1 - 1/\chi^*(H) \ge 
\frac{3r-5}{3r-2},$ as needed. Else, fix $m$ such that $B$ has perfect $H$-factors, and let $F$ be an arbitrary $H$-factor of $B$. 
For convenience, we use the notation $c(v_i,K_r) := \sum_{j \neq i}c(v_iv_j)$. 
By \eqref{eq:c(F) proof of Lemma 8.7}, we have 
$$
c(F) = \tau \sum_{1 \leq i < j \leq r}c(v_iv_j) \cdot (|B_i|+|B_j|+C|F|) = 
\tau \cdot \left( c(K_r) \cdot Cm +\sum_{i = 1}^r c(v_i,K_r) \cdot |B_i| \right),
$$
using that $|F| = |B| = m$.
Recall that $c(K_r) = 1$ and $c(v_1,K_r) = r-1$. Hence,
$\sum_{i=1}^{r}c(v_i,{K_r}) = 2c(K_r) = 2$ and 
$\sum_{i=2}^{r}(v_i,K_r) = 3-r$. It follows that 
\begin{align*}
c(F)
&= \tau\left((r-1)x - (r-3) \cdot \frac{m-x}{r-1} + mC\right) = 0,
\end{align*}
by our choice of $x$. 
As $c \in \mathcal{K}(H)$, this implies $\delta_0(H) \ge \delta(B)/m \geq \frac{3r-5}{3r-2},$ as needed.
\end{proof}
\section{When all $r$-cliques have the same discrepancy}\label{sec: K_r same discrepancy}
In this section we consider the situation when all $r$-cliques in the reduced graph $R$ have the same discrepancy sign (i.e. all have positive discrepancy or all have negative discrepancy). 
By symmetry, we may assume that all have positive discrepancy (else swap the colors). 
As usual, we work under the setup described in Section~\ref{sec_app_regularity}. 
The main result of this section is the following:

\begin{lemma}\label{all k positive}
Assume the setup of Section~\ref{sec_app_regularity}.
If $H$ is non-regular and all copies of $K_r$ in $R$ have positive discrepancy, then there exists a perfect $H$-factor in $G'$ with high discrepancy. 
\end{lemma}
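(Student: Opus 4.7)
By Lemma~\ref{template}, I may assume that $R$ contains no subgraph of size bounded in terms of $H$ that is a template for $H$, since otherwise the lemma follows at once. Under this assumption, the plan is to first show that every $r$-clique of $R$ carries the same positive discrepancy $d_+\geq 1$, and then aggregate many such $r$-cliques into a perfect $H$-factor of $G'$ in which each contribution to the discrepancy is positive and proportional to $d_+$.

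For the first step I exploit non-regularity of $H$: Lemma~\ref{sharing_much_template} forces that any two $r$-cliques $L_1, L_2 \subseteq R$ with $|L_1 \cap L_2|=r-1$ satisfy $f_R(L_1) = f_R(L_2)$, as otherwise $(L_1 \cup L_2, f_R)$ would be a template for $H$. When $\delta(R)/|R| > (r-1)/r$---which holds whenever $\chi^*(H)=r$ via~\eqref{ineq: mindeg R}---Lemma~\ref{connectivity}(1) connects any two $r$-cliques of $R$ through a chain with consecutive intersections of size $r-1$, so all $r$-cliques of $R$ share a common discrepancy, which by hypothesis satisfies $d_+\geq 1$.

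For the second step I apply the Hajnal--Szemer\'edi theorem to $R$ (valid when $\delta(R)/|R|\geq 1-1/r+\eta/2$) to obtain a near-perfect $K_r$-factor $L_1, \dots, L_s$ of $R$. For each $L_i$, Lemma~\ref{lem:superregular cleaning} (applied with $b_1=\dots=b_r=(r-1)!|H|$) extracts equal-sized superregular subsets of the corresponding clusters in $G'$ whose size is divisible by $(r-1)!|H|$. Combining the Blow-up Lemma with Lemma~\ref{lem:balanced blowup} then yields a perfect $H$-factor on each such blow-up in which the number of edges between any two cluster-parts is exactly $2(r-2)!e(H)$ times the blow-up factor, and whose discrepancy is therefore $d_+$ times a positive constant times the cluster size. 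Summing over the $\Theta(|R|)$ cliques produces linear positive discrepancy, and the residual $O(n/\ell_0)\ll \gamma n$ vertices (from $V_0$, from cliques not covered, and from the pieces removed during superregularization) are absorbed into an $H$-factor via Theorem~\ref{existence}, which still applies since the leftover graph has sufficient minimum degree.

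The main obstacle is the case $\chi^*(H)<r$, where $\delta(R)/|R|$ may fall below $(r-1)/r$; then Lemma~\ref{connectivity}(1) no longer applies and a $K_r$-factor of $R$ is not immediate from Hajnal--Szemer\'edi. In that regime I plan to use Lemma~\ref{connectivity}(2) (available since $\chi_{cr}(H)\geq r-1$ gives $\delta(R)/|R|\geq (r-2)/(r-1)+\eta/2$), which yields chains of $r$-cliques sharing only $r-2$ vertices; for a consecutive pair in such a chain, Lemma~\ref{template_example} supplies either a forbidden template or that $H$ is $(s,t)$-structured for explicit $s,t$. In the latter subcase I would invoke Lemmas~\ref{lower_delta_r=3} and~\ref{lower_delta} to boost $\max\{\delta_0(H),1-1/\chi^*(H)\}$ enough either to restore the hypothesis $\delta(R)/|R| > (r-1)/r$ (so that Step 2 proceeds as above), or else to carry out the tiling through a perfect $H^*$-factor of $R$ (obtained from Theorem~\ref{existence} together with the guarantees of Lemma~\ref{h_star}) in such a way that each $H^*$-copy is aligned inside the blow-up of a single $r$-clique, so that Lemma~\ref{lem:balanced blowup} still delivers the positive contribution. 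This alignment step---ensuring that the color classes of each $H^*$-copy lie in a single cluster each and that those clusters form an $r$-clique in $R$---is the most delicate technical point I anticipate.
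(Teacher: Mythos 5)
Your high-minimum-degree branch (when $\delta(R)/|R|>1-1/r$) is essentially sound and is a simpler route than the paper's: combine Hajnal--Szemer\'edi on $R$ with Lemma~\ref{lem:superregular cleaning}, the Blow-up Lemma, Lemma~\ref{lem:balanced blowup}, and Theorem~\ref{existence} exactly as you sketch. This branch does not actually need your Step~1 (uniformity of clique discrepancies)---the hypothesis already gives positivity for every $K_r$, which is all the tiling argument uses. The paper, by contrast, does not take this route; it works throughout with a perfect $H^*$-factor of $G'$ (Lemma~\ref{lem:H* factor}) rather than a $K_r$-factor of $R$, and then analyzes the discrepancy of $H$-factors \emph{inside each $H^*$-copy}.

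The genuine gap is your fallback for the regime $\max\{\delta_0(H),1-1/\chi^*(H)\}<1-1/r$, and I do not think it can be repaired along the lines you propose. Three concrete problems. First, Lemma~\ref{template_example} applied to two cliques $L_i,L_{i+1}$ sharing $r-2$ vertices gives that $H$ is $(s,t)$-structured with $s=\frac{f_R(L_i)-f_R(e_i)-f_R(L_{i+1})+f_R(e_{i+1})}{2(r-2)}$ and $t=f_R(e_i)-f_R(e_{i+1})$. Since every $r$-clique in $R$ has positive discrepancy, and since $K_r$ has a fixed parity of edges, all $f_R(L)$ take values in a small set of the same sign; in particular when $f_R(L_i)=f_R(L_{i+1})$ and $f_R(e_i)=f_R(e_{i+1})$ you get $(s,t)=(0,0)$, which is vacuous (any $\rho=0$ works). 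So this route yields no structural information at all on $H$ in general. Second, even when a nontrivial $(s,t)$ is produced, Lemma~\ref{lower_delta_r=3} requires $r=3$ and $(1,2)$-structure, and Lemma~\ref{lower_delta} requires $r\geq 6$, $r\equiv_4 2,3$, a specific short list of $(s,t)$, \emph{and} the $r$-wise $C_4$-condition for $H$; none of these hypotheses is available to you here, and $r\equiv_4 0,1$ is entirely uncovered. Third, the idea of ``carrying out the tiling through a perfect $H^*$-factor of $R$'' is not well-posed: $H^*$ has size depending only on $H$ and $\eta$, and factoring $R$ by $H^*$ is neither what is needed nor achievable; what is used in the paper is an $H^*$-factor of $G'$.

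The key tool you are missing is the tailored construction of $H^*$ in Lemma~\ref{h_star} together with Lemma~\ref{delta_0}. The construction guarantees (i) $1-1/\chi^*(H^*)\leq\max\{\delta_0(H),1-1/\chi^*(H)\}+\eta/4$, so $G'$ has a perfect $H^*$-factor at the \emph{actual} (possibly low) minimum degree threshold, and (ii) when $\delta_0(H)<1-1/r$, $\delta(H^*)/|H^*|>\delta_0(H)$. Property~(ii) is exactly what makes Lemma~\ref{delta_0_temp}, and hence Lemma~\ref{delta_0}, applicable to each $H^*$-copy $J$: if $J$ (disjoint from $V_0$) is monochromatic between each pair of its $r$-partition classes, it is a blowup of some $(K_r,c)$ with $c\in\mathcal K(H)$ and $c(K_r)>0$ (this uses both that $R$ contains no small template and the hypothesis that all $r$-cliques are positive), and then Lemma~\ref{delta_0} forces every perfect $H$-factor of $J$ to have strictly positive discrepancy. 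If $J$ is \emph{not} monochromatic between two parts, the paper extracts a template in $R$ via Lemma~\ref{sharing_much_template} or Lemma~\ref{like blowup2} (using non-uniformity of $H$). The case where $H$ is uniform is handled separately (Lemma~\ref{SameDisc}), requiring a finer ``split pair'' analysis (Claims~\ref{Split}, \ref{same_split}) that your proposal does not anticipate. Without this $H^*$-machinery, the low-degree case of your argument does not close.
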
 

The proof of Lemma \ref{all k positive} is broken into two cases depending on whether or not $H$ is uniform (recall Definition \ref{def:uniform}). These two cases are handled in the following two subsections. Recall the definition of the $r$-partite graph $H^*$ (see Definition \ref{def:H^*}). Namely, recall that if $r=2$ then $H^* = H$, and otherwise $H^*$ is a complete $r$-partite graph satisfying the properties in Lemma \ref{h_star}. Recall also that $V_0$ is the exceptional class in the regular partition of $G'$, and that for a vertex $u \in V(G') \setminus V_0$, we use $V_u \in V(R)$ to denote the part of the partition containing $u$ (so $V_u$ is a vertex of the reduced graph $R$).

\subsection{Proof of Lemma~\ref{all k positive}: $H$ is non-uniform}
Here we prove Lemma~\ref{all k positive} in the case that $H$ is non-uniform.

By Lemma \ref{lem:H* factor}, $G'$ has a perfect $H^*$-factor $F^*$.
Suppose first that there exists a copy $J\in F^*$ of $H^*$ disjoint from $V_0$ with parts $B_1,B_2,\dots B_r\subseteq V(J)$, and vertices $b_1,b'_1 \in B_1$, $b_2 \in B_2$, such that $f(b_1b_2)\neq f(b_1'b_2)$.
Fix arbitrary $b_i \in B_i$ for $3 \leq i \leq r$. 
Since $H$ is non-uniform, there exists an $r$-coloring $A_1,A_2,\dots,A_r$ of $H$ such that $e_H(A_1,A_2)\neq e_H(A_1,A_3)$, by Claim~\ref{claim:non-uniform}. This implies that there exists a vertex $a\in A_1$ with $d_H(a,A_2)\neq d_H(a,A_3)$. 
Consider the $r$-cliques $L_1 = \{V_{b_1}^R,V_{b_2}^R,\dots V_{b_r}^R\}$ and $L_2 = \{V_{b_1'}^R,V_{b_2}^R,\dots V_{b_r}^R\}$ in $R$. If $L_1,L_2$ have different discrepancies (with respect to $f_R$), then $(L_1 \cup L_2,f_R)$ is a template for $H$ by Lemma~\ref{sharing_much_template} (as $H$ is non-regular), and then $G'$ has a perfect $H$-factor with high discrepancy by Lemma~\ref{template}, completing the proof. We may therefore assume that $f_R(L_1) = f_R(L_2)$. Then we can apply Lemma~\ref{like blowup2} with $x = V_{b_1}^R, y = V_{b'_1}^R, z = V_{b_2}^R$, to conclude that $(L_1 \cup L_2,f_R)$ is a template for $H$. Now we are again done by Lemma~\ref{template}.

So from now on, let us assume that $J \in F^*$ as above does not exists. This means that for every copy $J\in F^*$ of $H^*$ disjoint from $V_0$, if $B_1,\dots,B_r$ denote the parts of $J$, then all bipartite graphs $(B_i,B_j)$ are monochromatic with respect to $f$. In other words, $J$ is a blowup of $(K_r,c)$ for some $2$-edge-coloring $c$ of $K_r$. 
Fix arbitrary $b_1\in B_1, b_2\in B_2,\dots, b_r\in B_r$, and consider the $r$-clique $L = \{V_{b_1}^R,V_{b_2}^R,\dots V_{b_r}^R\}$ in $R$. 
By \eqref{eq:f_R}, $L$ is colored by $f_R$ in the same way as $K_r$ by $c$. Hence, $c(K_r) = f_R(L) > 0$, using our assumption that every $r$-clique in $R$ has positive discrepancy. If $(L,f_R)$ is a template for $H$ then we are done by Lemma~\ref{template} as before, and else we have $c \in \mathcal{K}(H)$ by definition. Now, by Lemma~\ref{delta_0}, we see that every perfect $H$-factor of $J$ has positive discrepancy. 

For each $J \in F^*$, let $F_J$ be a perfect $H$-factor in $J$. 
Then $F := \bigcup_{J \in F^*}F_J$ is a perfect $H$-factor of $G'$. We saw that if $J \cap V_0 = \emptyset$ then $f(F_J) > 0$, and so $f(F_J) \geq 1$.
Using that $|V_0|\leq \varepsilon n\ll \frac{n}{|H^*|e(H^*)}$, we obtain 
$$
f(F)\geq \frac{n}{|H^*|}-|V_0|-|V_0| \cdot e(H^*)\geq \gamma n.
$$
This completes the proof in the case that $H$ is non-uniform. 

\subsection{Proof of Lemma~\ref{all k positive}: $H$ is uniform}
Here we prove Lemma~\ref{all k positive} in the case that $H$ is uniform. 
First, by Lemma \ref{lem:H* factor}, $G'$ has a perfect $H^*$-factor $F^*$. The key part of the proof is the following lemma:
\begin{lemma}\label{SameDisc}
Let $J\in F^*$ be a copy of $H^*$ disjoint from $V_0$. Then every perfect $H$-factor $F_J$ of $J$ satisfies
$$
f(F_J) \geq 1,
$$
or there exists a template for $H$ in $(R,f_R)$ of size $r+1$.
\end{lemma}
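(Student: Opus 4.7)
I would approach this by contradiction: assume that $(R,f_R)$ contains no template for $H$ of size $r+1$, and aim to show that every perfect $H$-factor $F_J$ of $J$ satisfies $f(F_J)\geq 1$. Let $B_1,\dots,B_r$ denote the parts of $J$ as a copy of the complete $r$-partite graph $H^*$. By~\eqref{eq:f_R}, the color $f(uv)$ of any edge $uv$ of $J$ is determined by the pair of $R$-clusters $V_u^R,V_v^R$ containing its endpoints, so the coloring of $J$ is entirely controlled by how the vertices of $J$ distribute among the clusters of $R$.

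The key reduction is to show that $J$ must be a blowup of some $(K_r,c)$, meaning each bipartite graph $(B_i,B_j)$ in $J$ is monochromatic. Assume for contradiction that $(B_1,B_2)$ fails to be monochromatic, witnessed by $b_1,b_1'\in B_1$ and $b_2\in B_2$ with $f(b_1b_2)\neq f(b_1'b_2)$; this forces $V_{b_1}^R\neq V_{b_1'}^R$. Fixing arbitrary $b_i\in B_i$ for $3\leq i\leq r$ produces two $r$-cliques $L_1=\{V_{b_1}^R,V_{b_2}^R,\dots,V_{b_r}^R\}$ and $L_2=\{V_{b_1'}^R,V_{b_2}^R,\dots,V_{b_r}^R\}$ in $R$ sharing $r-1$ vertices, with $|L_1\cup L_2|=r+1$. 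If $f_R(L_1)\neq f_R(L_2)$, then Lemma~\ref{sharing_much_template} (applicable since $H$ is non-regular) shows that $(L_1\cup L_2,f_R)$ is a template for $H$, contradicting the assumption. Otherwise $f_R(L_1)=f_R(L_2)$, and assuming $H$ admits an $r$-coloring containing a vertex with differing degrees to two color classes, Lemma~\ref{like blowup2} applied with $x=V_{b_1}^R$, $y=V_{b_1'}^R$, $z=V_{b_2}^R$ again produces a template of size $r+1$.

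Once $J$ is a blowup of some $(K_r,c)$, any $r$-clique $L=\{V_{b_1}^R,\dots,V_{b_r}^R\}$ spanned by representatives of the parts is colored by $f_R$ exactly as $K_r$ is colored by $c$, so $c(K_r)=f_R(L)\geq 1$ by the standing hypothesis that every $r$-clique in $R$ has positive discrepancy. Since $H$ is uniform, each $H$-copy in $F_J$ contributes exactly $e(H)/\binom{r}{2}$ edges between $B_i$ and $B_j$, and all such edges have color $c(ij)$; therefore
\[
f(F_J)=\sum_{i<j}c(ij)\cdot\frac{e(F_J)}{\binom{r}{2}}=c(K_r)\cdot\frac{e(F_J)}{\binom{r}{2}}\geq 1,
\]
using $c(K_r)\geq 1$ and $e(F_J)=e(H)\cdot|H^*|/|H|\geq\binom{r}{2}$.

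The principal obstacle is the degenerate situation in which every $r$-coloring of $H$ is ``vertex-uniform'', i.e.\ every vertex has the same degree to each non-own color class; then Lemma~\ref{like blowup2} cannot be invoked in the case $f_R(L_1)=f_R(L_2)$, and one has to compute $f(F_J)$ for a non-blowup $J$ directly. Using vertex-uniformity, each partial discrepancy contributed by a split part rewrites in terms of sums of the form $\sum_j(f_R(V^1 W_j)-f_R(V^2 W_j))$; the equality $f_R(L_1)=f_R(L_2)$ makes these sums vanish, leaving $f(F_J)$ equal to a positive integer multiple of $f_R(L_1)\geq 1$.
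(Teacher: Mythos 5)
Your proposal follows the paper's strategy closely up to a point, but the final paragraph hides a genuine gap where the real work of the lemma happens. Your reduction to the blowup case is correct \emph{provided} $H$ has some $r$-coloring in which some vertex has unequal degrees to two other color classes; in that situation Lemma~\ref{like blowup2} does give a template of size $r+1$ and you may assume every pair $(B_i,B_j)$ is monochromatic. But when every $r$-coloring of $H$ is ``vertex-uniform'' (i.e.\ $d_H(b,A_j)=d_H(b,A_k)$ for every vertex $b$ and every two other color classes), Lemma~\ref{like blowup2} yields nothing, and there is no reason for $J$ to be a blowup of $(K_r,c)$. This is precisely the case the paper's proof is structured to handle: it first shows (Claim~\ref{Split}) that for each pair $(A_i,A_j)$ at most one side can be ``split'', then derives vertex-uniformity from Lemma~\ref{like blowup2} as a constraint on $H$ rather than a contradiction, then proves Claim~\ref{same_split} that $\sum_{j\in S_i}f(ua_j)$ is independent of $u\in A_i$ (otherwise one builds a template via Lemma~\ref{sharing_much_template}), and only then evaluates $f(H_J)$ by splitting the edge set according to which pairs are split. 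Your sentence that the ``equality $f_R(L_1)=f_R(L_2)$ makes these sums vanish'' is not a proof of any of these facts; it is exactly the content of Claims~\ref{Split} and~\ref{same_split}, and without them the identity $f(H_J)=x\cdot e(H)/\binom{r}{2}$ (where $x=f_R(L)>0$) does not follow. So the first two paragraphs are sound and mirror the paper, but the last paragraph needs to be replaced by the split-pair analysis and its accompanying claims.
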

\begin{proof}
For $r=2$, the statement follows trivially as each edge in $R$ has positive discrepancy and therefore, $R$ (and hence also $J$) are monochromatic. Therefore, let us assume that $r\geq 3$.
By Lemma~\ref{sharing_much_template}, 
we may assume that there do not exist two copies of $K_r$ in $R$ sharing $r-1$ vertices with different discrepancies, as otherwise there is a template for $H$ (since $H$ is non-regular). 
Let $J\in F^*$ be a copy of $H^*$ disjoint from $V_0$. Let $A_1,A_2,...,A_r$ be the clusters of $J$ and fix arbitrary vertices $a_1\in A_1, a_2\in A_2,...,a_r\in A_r$. Let $F_J$ be an arbitrary $H$-factor in $J$.
\begin{claim}\label{Split}
For each $1\leq i<j\leq r$, it holds that either for all vertices $u\in A_i$ and $v,v'\in A_j$, $f(uv)=f(uv')$, or for all vertices $u,u'\in A_i$ and $v\in A_j$, $f(uv)=f(u'v)$.
\end{claim}
\begin{proof}
Without loss of generality, $i = 1,j = 2$. Observe that if the assertion of the claim does not hold, then there exist $u,u'\in A_1$ and $v,v'\in A_2$ such that $f(uv) \neq f(uv'),f(u'v)$. Without loss of generality, $f(uv) = 1$ and $f(uv') = f(u'v) = -1$. 
Since $J$ is disjoint from $V_0$, we get that 
$L_1 := \{V_u^R,V_v^R,V_{a_3}^R,V_{a_4}^R,\dots,V_{a_r}^R\}$ and 
$L_2 = \{V_u^R,V_{v'}^R,V_{a_3}^R,V_{a_4}^R,\dots,V_{a_r}^R\}$ are $r$-cliques in $R$. 
We have $f_R(L_1) = f_R(L_2)$ because, by assumption, every two $r$-cliques in $R$ sharing $r-1$ vertices have the same discrepancy. It follows that 
$$
f(uv)+\sum_{3\leq i\leq r}f(va_i) = f(uv')+\sum_{3\leq i\leq r}f(v'a_i).
$$
Note that $u'v'\in J$, as $J$ is a complete $r$-partite graph. 
By the same argument with $u'$ in place of $u$, we get 
$$
f(u'v)+\sum_{3\leq i\leq r}f(va_i) = f(u'v')+ \sum_{3\leq i\leq r}f(v'a_i).
$$
By subtracting the second from the first equation, we get
$$
f(uv)-f(u'v) = f(uv')- f(u'v').
$$
But this is a contradiction since $f(uv)-f(u'v)=2$ and $f(uv')- f(u'v')\leq 0$.
\end{proof}
We continue with the proof of Lemma~\ref{SameDisc}.
Let us say that $(A_i,A_j)$ is {\em split} for $A_i$ if there exist vertices $u,u'\in A_i$ and a vertex $v\in A_j$ such that $f(uv)\neq f(u'v)$. Note that if for some $i$ and $j$, $(A_i,A_j)$ is split for $A_i$, then all the vertices in $A_i$ have only monochromatic edges to $A_j$ by the above claim. Therefore, each pair $(A_i,A_j)$ can be split at most for one of the two. 
Recall that by assumption, all $r$-cliques in $R$ have positive discrepancy.
Let $$x=\sum_{1\leq i<j\leq r}f_R(V_{a_i}^RV_{a_j}^R)>0.$$

First, let us assume that $J$ does not have a split pair. It follows that for all $1\leq i<j\leq r$, $G'[A_i,A_j]$ is monochromatic with color $f(a_ia_j)=f_R(V_{a_i}^RV_{a_j}^R)$.
As $H$ is uniform, we get that for every $H$-copy $H_J \in F_J$ 
$$
f(H_J) = \sum_{1\leq i<j\leq r}\frac{e(H)}{\binom{r}{2}}f(a_ia_j)=x\frac{e(H)}{\binom{r}{2}}.
$$
This implies that
$$
f(F_J)= \sum_{H_J \in F_J}f(H_J) = \frac{|H^*|}{|H|}\cdot x\frac{e(H)}{\binom{r}{2}}>0.
$$

Next, let us assume without loss of generality that $A_1A_2$ is split for $A_1$. Then, there exist $u,u'\in A_1$ and $v\in A_2$ such that $f(u,v) = 1$ and $f(u',v) = -1$. As the $r$-cliques $L_1 := 
\{V_u,V_v,V_{a_3},\dots,V_{a_r}\}$ and 
$L_2 := \{V_{u'},V_v,V_{a_3},\dots,V_{a_r}\}$ have the same discrepancy and $f_R(V_uV_v) \neq f_R(V_{u'}V_v)$, we may assume by Lemma~\ref{like blowup2} that for every $r$-vertex-coloring of $H$ with parts $B_1,B_2,\dots, B_r$ it holds for all $b\in B_1$ that
\begin{align}\label{eq same degree}
d_H(b,B_2) = d_H(b,B_3),
\end{align}
as otherwise $(L_1 \cup L_2, f_R)$ is a template for $H$ and we are done. 
For every $1\leq i\leq r$, let $S_i\subseteq\{1,2,\dots,r\}$ be the set of indices $j$ such that $A_iA_j$ is split for $A_i$.
\begin{claim}\label{same_split}
For every $1\leq i\leq r$ and $u,v\in A_i$ it holds that 
$$
\sum_{j\in S_i}f(ua_j) = \sum_{j\in S_i}f(va_j),
$$
or there exists a template for $H$ in $(R,f_R)$ of size $r+1$.
\end{claim}
\begin{proof}
Without loss of generality, suppose that there are $u,v\in A_1$ such that 
$$
\sum_{j\in S_1}f(ua_j) \neq \sum_{j\in S_1}f(va_j).
$$
Consider the $r$-cliques $L_1 = \{V_u,V_{a_2},V_{a_3}...V_{a_r}\}$ and $L_2 = \{V_v,V_{a_2},V_{a_3}...V_{a_r}\}$ in $R$. Observe that
$$
f_R(L_1)-f_R(L_2) = \sum_{j=2}^r( f(ua_j) - f(va_j)) = 
\sum_{j \in S_1}^r( f(ua_j) - f(va_j)) \neq 0.
$$
Here we used that $f(ua_j) = f(va_j)$ for all $j \notin S_1$.
By Lemma~\ref{sharing_much_template}, $(L_1 \cup L_2,f_R)$ is a template for $H$.
\end{proof}
We now conclude the proof of Lemma~\ref{SameDisc}. 
Fix an $H$-copy $H_J \in F_J$, and let $B_i = A_i \cap V(H_J)$, $i = 1,\dots,r$.
By \eqref{eq same degree}, each vertex $a \in B_i$ has the same number of neighbours in $B_j$ for each $j \in [r] \setminus \{i\}$. So this number is $\frac{d_{H_J}(a)}{r-1}$. Furthermore, if $j \in S_i$ then for each $a \in B_i, a' \in B_j$ we have $f(aa') = f(aa_j)$, as all edges between $a$ and $A_j$ have the same color. 
Hence, for each $1 \leq i \leq r$,
$$
f\left( H_J \cap (B_i\times \bigcup_{j\in S_i}B_j)
\right) = \sum_{a\in B_i}\frac{d_{H_J}(a)}{r-1}\sum_{j\in S_i}f(aa_j)= 
\sum_{a\in B_i}\frac{d_{H_J}(a)}{r-1}\sum_{j\in S_i}f(a_ia_j) =
\frac{e(H)}{\binom{r}{2}}\sum_{j\in S_i}f(a_ia_j),
$$
where the second equality uses Claim~\ref{same_split} with $u = a, v = a_i$, and the last equality uses $\sum_{a\in B_i}d_{H_J}(a) = (r-1)e(H)/\binom{r}{2}$, which holds by the assumption that $e_{H_J}(B_i,B_j) = e(H)/\binom{r}{2}$ for all $i < j$ ($H$ is uniform). Now, we get
$$
f(H_J) = \frac{e(H)}{\binom{r}{2}}\left(\sum_{\substack{1\leq i< j\leq r,\\ i\notin S_j, j\notin S_i}}f(a_ia_j)
+\sum_{1\leq i\leq r}\sum_{j\in S_i}f(a_ia_j)\right).
$$
Indeed, using that $(A_i,A_j)$ can not be split for both $i$ and $j$, we see that each pair $1 \leq i < j \leq r$ appears exactly once in the above two sums. 
Hence,
$$
f(H_J) = \frac{e(H)}{\binom{r}{2}}\sum_{1\leq i< j\leq r}f(a_ia_j) = x\frac{e(H)}{\binom{r}{2}}.
$$
As this holds for every $H$-copy $H_J$ in $F_J$, we get that
$$
f(F_J) = \frac{|H^*|}{|H|}\cdot x\frac{e(H)|H^*|}{\binom{r}{2}|H|}>0.
$$
So $f(F_J) \geq 1$, as required. This proves Lemma~\ref{SameDisc}.
\end{proof}
Using Lemma~\ref{SameDisc}, we can now conclude the proof of Lemma~\ref{all k positive} (for uniform $H$). If $R$ has a template for $H$ of size $r+1$ then we are done by Lemma~\ref{template}. Else, by Lemma~\ref{SameDisc}, for every $H^*$-copy $J \in F^*$ with $J \cap V_0 = \emptyset$, every $H$-factor of $J$ has (strictly) positive discrepancy. 
Let $F$ be a perfect $H$-factor in $G'$, obtained by taking a perfect $H$-factor of each $J \in F^*$. Note that at most $|V_0|$ many $H^*$-copies $J$ contain a vertex of $V_0$, and each $H$-factor in $H^*$ contains at most $e(H^*)$ edges. 
Hence,
$$
f(F)\geq \frac{n}{|H^*|}-|V_0|-|V_0| \cdot e(H^*) \geq \gamma n,
$$
as required.

\section{Violating the $C_4$-condition}\label{sec:no_C4}
In this section we handle graphs $H$ that violate the $k$-wise $C_4$-condition for a certain $k$. This forms an important part in the proofs of our main results. As always, $r$ denotes the chromatic number of $H$. The main result is as follows.
\begin{lemma}\label{no_c4}
Suppose that $H$ violates the $k$-wise $C_4$-condition, where $k \geq \max\{r,5\}$ or $k = r = 4$. Then 
$$
\delta^*(H)\leq \max\{\delta_0(H),1-1/\chi^*(H),1-1/(k-1)\}.
$$
\end{lemma}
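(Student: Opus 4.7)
The plan is to apply the standard framework of Section~\ref{sec_app_regularity}: with $\delta := \max\{\delta_0(H),1-1/\chi^*(H),1-1/(k-1)\}$, a $2$-edge-colored graph $G$ with $\delta(G)\ge(\delta+\eta)n$ yields a pure graph $G'$ and a reduced graph $R$ with $\delta(R)/|R|\ge\delta+\eta/2>1-1/(k-1)$. Thus every $(k-1)$-clique in $R$ extends to a $k$-clique, and by Lemma~\ref{connectivity}(1) any two $k$-cliques of $R$ are joined by a chain of $k$-cliques whose consecutive members share $k-1$ vertices. The first step is to look for a template: if some $k$-clique $L\subseteq R$ has $(L,f_R)$ neither monochromatic nor a $(K_k,\pm)$-star (Definition~\ref{def:star}), then by Lemma~\ref{no_c4_template} it is a template for $H$, and Lemma~\ref{template} produces a perfect $H$-factor of $G'$ with high discrepancy. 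Hence we may assume every $k$-clique of $R$ is monochromatic or a star in one of the two colors.

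The core step is a short case analysis on the possible types of two $k$-cliques sharing $k-1$ vertices, showing that most type-combinations contradict the coloring on the shared $(k-1)$-clique as soon as $k\ge 4$. Propagating along the chains from Lemma~\ref{connectivity}(1), this forces (after possibly swapping the two colors globally) every $k$-clique of $R$ to be of type $K_k^+$ or $(K_k,-)$-star. Consequently, every edge of $R$ of color $-1$ is a head-leaf edge of some $(K_k,-)$-star, so there is a ``head'' set $X\subseteq V(R)$ such that every $-1$ edge has an endpoint in $X$. Moreover, every $r$-clique of $R$ is a monochromatic $K_r^+$ or a $(K_r,-)$-star containing its head, and an arithmetic check gives that in either case its discrepancy is non-negative, with zero arising only for $r=4$ stars.

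For $H$ non-regular and $r\ge 5$, every $r$-clique of $R$ then has strictly positive discrepancy, and Lemma~\ref{all k positive} produces a perfect $H$-factor of $G'$ with discrepancy $\ge\gamma n$. The remaining subcases ($r\le 4$ or $H$ regular) are handled via the head set $X$: if $|X|$ is linear in $|R|$ then I expect to find two $(K_k,-)$-stars with distinct heads inside a common $k$-clique, whose interaction yields an explicit template of bounded size (via Lemma~\ref{sharing_much_template} or a related template lemma), again concluding by Lemma~\ref{template}; otherwise $|X|$ is sublinear, $R-X$ is essentially monochromatic $+1$, and choosing a perfect $H^*$-factor of $G'$ (Lemma~\ref{lem:H* factor}) that mostly avoids the clusters corresponding to $X$ produces an $H$-factor with $\Omega(n)$ more $+1$ edges than $-1$ edges. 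The main obstacle will be executing this last dichotomy cleanly in the boundary regimes $r\le 4$ and $H$ regular; the hypothesis $k\ge\max\{r,5\}$ or $k=r=4$ is precisely what prevents extra symmetric configurations of $k$-cliques from forcing all $H$-factors to have zero discrepancy.
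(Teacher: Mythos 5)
Your opening is sound and matches the paper's route for the main case: apply the regularity setup, use Lemma~\ref{no_c4_template} to reduce to the situation where every $k$-clique of $R$ is monochromatic or a star, propagate along clique chains to show (after a global color swap) that every $k$-clique is $K_{k,+}$ or a $(K_k,-)$-star, and conclude via Lemma~\ref{all k positive} when all $r$-cliques have positive discrepancy. But there are three genuine gaps. First, you never treat $k=r=4$, and this case cannot be absorbed into your framework: the $(K_4,\pm)$-star has \emph{zero} discrepancy, so a balanced blowup of the $(K_4,+)$-star has minimum degree $3n/4$ yet may admit only zero-discrepancy $H$-factors. No template and no color imbalance is available there. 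The paper resolves this by a dichotomy on whether the $(K_4,+)$-star is a template for $H$: if it is not, then $\delta_0(H)=3/4$ (via Lemma~\ref{lem:balanced blowup}), so the claimed bound is $\ge 3/4$ and follows from the $k=5$ case (violating the $4$-wise condition implies violating the $5$-wise one); if it is a template, then $R$ contains no stars, so all $K_4$'s, hence all triangles, hence all of $R$ is monochromatic. Your proposal, which tries to find a high-discrepancy factor at minimum degree just above $1-1/(k-1)=2/3$, would be attempting something false in the first subcase.

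Second, for non-regular $H$ with $k\ge r+1$ your plan (``two $(K_k,-)$-stars with distinct heads inside a common $k$-clique'') is both vague and slightly incoherent — a single $k$-clique is monochromatic or a star with one head, so it cannot contain two stars with distinct heads. The paper's argument is much more direct: any $(K_k,-)$-star in $R$ contains a $(K_{r+1},-)$-star, which is already a template for non-regular $H$ by Corollary~\ref{non_reg_clique} (its positive color class is non-regular); if no such star exists, $R$ is monochromatic and Lemma~\ref{all k positive} applies. Third, for regular $H$ your ``linear vs.\ sublinear $|X|$'' dichotomy does not work: there is no reason for the head set to be sublinear, and no template arises merely from it being linear. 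The correct structural fact (which the paper proves via Claim~\ref{always_head}) is that the set of heads is an \emph{independent set}, hence has size at most $(1-\delta(R)/|R|)|R|\le(1/4-\eta/2)|R|$. One then cannot ``mostly avoid'' these clusters — the factor must be spanning — but the $d$-regularity of $H$ bounds the number of color-$(-1)$ edges of any perfect $H$-factor by $d\cdot(|V_X|+|V_0|)$, which is strictly less than half of $e(F)=dn/2$, yielding the required discrepancy. As written, your proposal establishes only the case $k=r\ge 5$ with $H$ non-regular.
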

\noindent
Before proving Lemma~\ref{no_c4}, let us prove the following important corollary. 
\begin{corollary}\label{cor: no_c4 for r<4}
Let $H$ be an $r$-chromatic graph. If $r \geq 3$ then 
$\delta^*(H)\leq 1 - 1/(r+1)$, and if $r=2$ then $\delta^*(H)\leq 3/4$.
\end{corollary}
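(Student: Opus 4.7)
The plan is to derive the corollary directly from Lemma~\ref{no_c4} by exhibiting, for every $r$-chromatic graph $H$ with $r\geq 2$, a particular $k$-coloring that fails the $k$-wise $C_4$-condition. The central observation is that every such $H$ violates the $(r+2)$-wise $C_4$-condition. To see this, I would fix any proper $r$-coloring $B_1,\dots,B_r$ of $H$; since $\chi(H)=r\geq 2$ the graph $H$ has at least one edge, and by relabelling the classes we may assume this edge lies between $B_1$ and $B_2$. I would then form the proper $(r+2)$-coloring $A_1,\dots,A_{r+2}$ defined by $A_1=B_1$, $A_2=B_2$, $A_3=A_4=\emptyset$, and $A_{i+2}=B_i$ for $3\leq i\leq r$. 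With this choice,
\[ e_H(A_1,A_2)+e_H(A_3,A_4) = e_H(B_1,B_2)\geq 1, \qquad e_H(A_1,A_3)+e_H(A_2,A_4)=0, \]
so the $C_4$-equation fails.

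For $r\geq 3$ I would set $k=r+2$, which satisfies $k\geq\max\{r,5\}$, and apply Lemma~\ref{no_c4} to obtain
\[ \delta^*(H)\leq \max\{\delta_0(H),\,1-1/\chi^*(H),\,1-1/(r+1)\}. \]
The remaining step is to absorb the first two arguments into $1-1/(r+1)$, using the inequality $\delta_0(H)\leq 1-1/r$ recorded right after Definition~\ref{delta_0_def} and the trivial bound $\chi^*(H)\leq r$; both quantities are then strictly smaller than $1-1/(r+1)$, yielding $\delta^*(H)\leq 1-1/(r+1)$.

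For $r=2$ the same construction (with the tail $A_5,\dots,A_{r+2}$ vacuous) shows that $H$ violates the $4$-wise $C_4$-condition, and the monotonicity recorded after Definition~\ref{def:C4} (a violating $4$-coloring is also a violating $5$-coloring after appending an empty class) upgrades this to a violation of the $5$-wise $C_4$-condition. I would then apply Lemma~\ref{no_c4} with $k=5$ to obtain $\delta^*(H)\leq\max\{\delta_0(H),\,1-1/\chi^*(H),\,3/4\}$. Since $\delta_0(H)=0$ for every bipartite $H$ (as noted following Definition~\ref{delta_0_def}) and $1-1/\chi^*(H)\leq 1/2$, this collapses to $\delta^*(H)\leq 3/4$.

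I do not expect any real obstacle: the corollary is essentially immediate from Lemma~\ref{no_c4} once one has the right violating coloring, and the padding-with-empty-classes trick is forced on us by the shape of the bound $1-1/(k-1)$. The only minor subtlety is hypothesis matching for Lemma~\ref{no_c4}: we cannot just take $k=r+2$ uniformly, because for $r=2$ this gives $k=4$, which is excluded from the lemma unless $r=4$; this is precisely why the bipartite case must pass through $k=5$ via the monotonicity remark.
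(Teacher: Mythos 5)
Your proposal is correct and follows essentially the same route as the paper: both establish that any $r$-chromatic $H$ violates the $(r+2)$-wise $C_4$-condition by padding a proper $r$-coloring with two empty classes, then invoke Lemma~\ref{no_c4} with $k=r+2$ for $r\geq 3$ and with $k=5$ (via the monotonicity of the $C_4$-condition) for $r=2$. The hypothesis-matching subtlety you flag for $r=2$ is exactly the reason the paper also routes that case through $k=5$.
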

\begin{proof}
The key is to observe that an $r$-chromatic graph $H$ fails the $(r+2)$-wise $C_4$-condition. Indeed, take any $r$-coloring $A_1,\dots,A_r$ of $H$. Then, considering the $(r+2)$-coloring $A_1,\dots,A_r,A_{r+1},A_{r+2}$ with $A_{r+1} = A_{r+2} = \emptyset$, we see that $e_H(A_1,A_2) + e_H(A_{r+1},A_{r+2}) - e_H(A_1,A_{r+1}) - e_H(A_2,A_{r+2}) = e_H(A_1,A_2) > 0$ (as $H$ is $r$-chromatic). So indeed $H$ violates the $(r+2)$-wise $C_4$-condition. For $r \geq 3$ (resp. $r = 2$), the corollary now follows by Lemma~\ref{no_c4} applied with $k=r+2 \geq 5$ (resp. $k = 5$). 
\end{proof}

We now proceed with the proof of Lemma~\ref{no_c4}. As always, we work under the setup introduced in Section~\ref{sec_app_regularity}. In particular, we always assume that 
\begin{equation}\label{eq:delta(R) no_c4}
\delta(R)/|R| \geq \max\{\delta_0(H),1-1/\chi^*(H),1-1/(k-1)\}+\eta/2.
\end{equation}
Recall the definition of a $(K_k,+)$- and $(K_k,-)$-star, and the head of such a star (see Definition~\ref{def:star}). Evidently, every 2-edge-colored triangle is either monochromatic or a star. 
The proof of Lemma~\ref{no_c4} is split into two cases: $k = r = 4$ and $k \geq 5$. 
The difference between these cases stems from the fact that the $(K_4,+)$-star has zero discrepancy (while the $(K_k,+)$-star has non-zero discrepancy for $k \geq 5$). 

\subsection{Proof of Lemma~\ref{no_c4}: $k \geq 5$}
Here we prove Lemma~\ref{no_c4} in the case $k \geq 5$. 
If there exists a copy of $K_{k}$ in $R$ which is neither monochromatic nor a star with respect to $f_R$, then, by Lemma~\ref{no_c4_template}, this copy of $K_k$ is a template for $H$, and then by Lemma~\ref{template}, there exists a perfect $H$-factor in $G'$ with high discrepancy. Therefore, let us assume that all the copies of $K_{k}$ in $R$ are either monochromatic or a star.
In the following argument, we make repeated use of the following three facts:
\begin{enumerate}[label=\textbf{F.\arabic*}]
    \item\label{fact1} Every four vertices in $R$ have at least one common neighbor.
    \item\label{fact2} For all $k'<k$, each copy of $K_{k'}\subseteq R$ is contained in some copy of $K_k\subseteq R$.
    \item\label{fact3} For all $3\leq k'\leq k$, every copy of $K_{k'}$ which contains a non-monochromatic triangle must be a star with the head of the triangle being the head of the star. 
\end{enumerate}
\ref{fact1} and \ref{fact2} follow from $\delta(R)>1-1/(k-1)\geq 3/4$, by \eqref{eq:delta(R) no_c4}. And \ref{fact3} follows from \ref{fact2}, since otherwise there is a copy of $K_k$ in $R$ which is neither a star nor monochromatic.
The following claim is an important step in this proof. 
\begin{claim}\label{always_head}
If $v\in V(R)$ is the head of some non-monochromatic triangle $T\subseteq R$. Then, for every triangle $T'\subseteq R$ with $v\in V(T')$, it holds that $f_R(T') = f_R(T)$ (i.e. $T'$ is colored the same way as $T$)
and $v$ is the head of $T'$.
\end{claim}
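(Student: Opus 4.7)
My approach is to establish a ``local move'' statement---that the conclusion is preserved when a triangle through $v$ is altered by swapping out a non-$v$ vertex---and then to chain arbitrary triangles through $v$ together using the connectivity of the link of $v$ in $R$.

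Write $T = \{v, a, b\}$ with $f_R(va) = f_R(vb) = \varepsilon$ and $f_R(ab) = -\varepsilon$ for some $\varepsilon \in \{\pm 1\}$. I will establish the following local move: for every triangle $T_2 = \{v, a, c\} \subseteq R$ sharing the edge $va$ with $T$, we have $f_R(vc) = \varepsilon$ and $f_R(ac) = -\varepsilon$ (the analogous statement with $vb$ in place of $va$ follows by symmetry). If $\{v,a,b,c\}$ is a $K_4$, then by \ref{fact2} it extends to some $K_k \subseteq R$; this $K_k$ contains the non-monochromatic triangle $T$, so by \ref{fact3} it is a $(K_k, \varepsilon)$-star with head $v$, immediately yielding the desired colors. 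Otherwise $bc \notin E(R)$; by \ref{fact1} I pick a common neighbor $z$ of $\{v,a,b,c\}$, and then $\{v,a,b,z\}$ is a $K_4$ containing $T$, so the previous case gives $f_R(vz) = \varepsilon$ and $f_R(az) = -\varepsilon$, making $\{v,a,z\}$ a non-monochromatic triangle with head $v$. Applying the first case once more to the $K_4$ $\{v,a,z,c\}$ (which contains the triangle $\{v,a,z\}$) then yields $f_R(vc) = \varepsilon$ and $f_R(ac) = -\varepsilon$.

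To chain triangles through $v$ together, let $L(v) := R[N_R(v)]$: triangles through $v$ in $R$ correspond bijectively to edges of $L(v)$, and two such triangles share an edge through $v$ iff the corresponding edges of $L(v)$ share an endpoint. For any $u \in N_R(v)$,
\[
d_{L(v)}(u) \ge d_R(u) + d_R(v) - |R| \ge \bigl(1 - \tfrac{2}{k-1} + \eta \bigr)|R|
\]
by \eqref{eq:delta(R) no_c4}; for $k \ge 5$ this is at least $(1/2 + \eta)|R| > |L(v)|/2$, so $L(v)$ is connected. Hence the edge $ab$ of $L(v)$ can be joined to the edge corresponding to any given triangle $T'$ through $v$ by a walk of edges whose consecutive members share an endpoint. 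Applying the local move inductively along this walk shows that every triangle on the way, including $T'$, satisfies $f_R(T') = f_R(T)$ and has $v$ as its head, completing the proof.
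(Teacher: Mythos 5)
Your proof is correct. The core mechanism is the same as the paper's: both arguments propagate the colouring by repeatedly placing a non-monochromatic triangle headed at $v$ inside a $K_4$ and invoking \ref{fact3} to force that $K_4$ to be a star with head $v$, and both reach $T'$ from $T$ along a chain of triangles through $v$ in which consecutive triangles share an edge at $v$. Where you differ is in how the chain is produced. The paper builds an explicit chain of bounded length (three auxiliary vertices $x,y,z$), each obtained by applying \ref{fact1} to a carefully chosen $4$-set that already contains vertices of the target triangle $T'$; your ``local move'' plus the observation that the link graph $L(v)=R[N_R(v)]$ has minimum degree above $|L(v)|/2$ (hence is connected, hence its line graph is connected) yields a chain of a priori unbounded length but with no ad hoc vertex choices. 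Your route is more modular and makes transparent exactly what is needed (connectivity of the link), at the cost of an extra Dirac-type degree computation; the paper's is shorter and leans only on \ref{fact1}. Both are valid under the standing assumption $k\ge 5$, which you correctly use both for \eqref{eq:delta(R) no_c4} and for the degree bound in $L(v)$.
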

\begin{proof}
Let $T = \{u,v,w\} \subseteq R$ be as in the statement and let us assume without loss of generality that $f_R(T) = 1$, meaning that $f_R(vu) = f_R(vw) = 1$ and $f_R(uw) = -1$.
Let $T'\subseteq R$ be an arbitrary triangle with $v \in T'$, and write $T' = \{u',v,w'\}$. By \ref{fact1}, the vertices $u,v,w,u'$ have a common neighbor $x \in R$. Note that $R[\{u,v,w,x\}]$ is a copy of $K_4$ in $R$ containing $T$ and thus, by \ref{fact3}, we have $f_R(vx) = -f_R(wx) = 1$. Therefore, $R[\{v,w,x\}]$ is a non-monochromatic triangle with $v$ as its head. By \ref{fact1}, there exists $y\in R$ such that $y$ is a common neighbor of $v,w,x,u'$. By applying \ref{fact3} to the $4$-clique $\{v,w,x,y\}$ we get that $f_R(vy) = -f_R(xy) = 1$, and by applying \ref{fact3} to the $4$-clique $\{v,u',x,y\}$ we get that $f_R(vu')= -f_R(u'y) = 1$. Finally, let $z\in R$ be a common neighbor of $v,y,u',w'$. Using \ref{fact3} as before, we find that $f_R(vz)=-f_R(u'z) = 1$ by considering the $4$-clique $\{v,u',y,z\}$, and that $f(vw') = 1 = -f(u'w') = 1$ by considering the $4$-clique $\{v,u',w',z\}$. 
So $T'$ is indeed a non-monochromatic triangle with $v$ as its head and $f_R(T') = 1$.
\end{proof}
Claim~\ref{always_head} implies that if $v$ is the head of a non-monochromatic triangle, then $v$ is not contained in any monochromatic triangle and must be the head of any (non-monochromatic) triangle containing it. Also, all edges inside $N_R(v)$ (the neighborhood of $v$ in $R$) have the same color.

Now, we can make a further statement about the coloring of the copies of $K_k$ in $R$. Recall that $K_{k,+}$ (resp. $K_{k,-}$) denotes the monochromatic $k$-clique where all edges have color $1$ (resp. $-1$). 
\begin{claim}\label{one_star}
 Either every copy of $K_k$ in $R$ is a copy of $K_{k,+}$ or the $(K_k,-)$-star, or every copy of $K_k$ in $R$ is a copy of $K_{k,-}$ or the $(K_k,+)$-star.
\end{claim}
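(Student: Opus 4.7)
My plan is to prove Claim~\ref{one_star} in two stages: (i) rule out the simultaneous existence of a $(K_k,+)$-star and a $(K_k,-)$-star in $R$; and (ii) having fixed one allowed type of star, use a chain argument based on Lemma~\ref{connectivity} to propagate the ``type'' of each $K_k$-copy, forcing the monochromatic $K_k$-copies to have the matching color.

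For (i), suppose $L$ is a $(K_k,+)$-star with head $v$ and $L'$ is a $(K_k,-)$-star with head $u$. Each head lies in a non-monochromatic triangle inside its own star (head plus two leaves), so Claim~\ref{always_head} dictates the color pattern of every triangle containing $v$ or $u$: every triangle through $v$ has $v$ as head with colors $(+,+,-)$, and every triangle through $u$ has $u$ as head with colors $(-,-,+)$. The case $u=v$ is immediately contradictory. If $u\neq v$, then by \ref{fact1} there is a common neighbor $w$ of $u$ and $v$, and applying the two patterns to the triangle $\{u,v,w\}$ yields $f_R(uv)=+1$ and $f_R(uv)=-1$ simultaneously, a contradiction.

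For (ii), assume without loss of generality that no $(K_k,+)$-star appears in $R$. Because $\delta(R)/|R|\geq (k-2)/(k-1)+\eta/2$ by \eqref{eq:delta(R) no_c4}, Lemma~\ref{connectivity}(2) (with parameter $k$) implies that any two $K_k$-copies in $R$ are linked by a chain of $K_k$-copies with consecutive intersections of size at least $k-2$. It therefore suffices to establish the local step: if $L_1$ is $K_{k,+}$ or a $(K_k,-)$-star and $L_2$ is a $K_k$-copy with $|L_1\cap L_2|\in\{k-1,k-2\}$, then $L_2$ is also $K_{k,+}$ or a $(K_k,-)$-star. By Lemma~\ref{no_c4_template} and the assumption of stage (ii), the only alternative to rule out is $L_2=K_{k,-}$; this is done by exhibiting an edge of color $+1$ inside $L_1\cap L_2$. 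Indeed, if $L_1=K_{k,+}$ then $L_1\cap L_2$ is a monochromatic $+1$-clique on at least $k-2\geq 3$ vertices; and if $L_1$ is a $(K_k,-)$-star with head $h$, then $L_1\cap L_2$ contains at least $k-3\geq 2$ leaves of $L_1$, which by definition are joined by edges of color $+1$. In either case, such an edge is incompatible with $L_2=K_{k,-}$.

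To finish, pick any $K_k$-copy in $R$; if some $(K_k,-)$-star exists in $R$, start the chain from it and propagate via the local step to conclude that every $K_k$-copy of $R$ lies in $\{K_{k,+},(K_k,-)\text{-star}\}$. Otherwise no star appears at all, whence every $K_k$-copy of $R$ is monochromatic, and the same edge-compatibility step applied along chains forces them all to share a single color, so they lie in one of the two classes of the claim. The main obstacle is the case of the local step in which $|L_1\cap L_2|=k-2$ and $L_1$ is a $(K_k,-)$-star with the head in the overlap: here the hypothesis $k\geq 5$ is essential in order for the overlap still to contain at least two leaves of $L_1$'s star and hence an edge of color $+1$.
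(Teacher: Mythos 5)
Your argument follows a genuinely different route from the paper's. The paper observes that if the claim fails, $R$ must contain monochromatic triangles of both colors (because $K_{k,+}$ and a $(K_k,-)$-star each contain a $+1$-triangle, and $K_{k,-}$ and a $(K_k,+)$-star each contain a $-1$-triangle, using $k\ge 5$), picks a common neighbor of two vertices from each, and derives a contradiction directly via Claim~\ref{always_head}. You instead split into a ``no two types of stars coexist'' stage and a chain-propagation stage via Lemma~\ref{connectivity}. Your stage~(ii) is correct and well-handled: the case split on whether the head lies in $L_1\cap L_2$, and the observation that $k\ge 5$ guarantees two leaves (hence a $+1$-edge) in the overlap, are exactly what is needed. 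But the chain layer is an extra mechanism the paper's argument avoids.

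There is, however, a concrete gap in stage~(i). You take a common neighbor $w$ of the two heads $u,v$ and then ``apply the two patterns to the triangle $\{u,v,w\}$''; but $\{u,v,w\}$ is a triangle only if $uv\in E(R)$, which the minimum-degree hypothesis does not guarantee. Even with $\delta(R)>\frac{k-2}{k-1}|R|$, two specific vertices can be nonadjacent, so this step would fail. The repair is short: by~\ref{fact1}, take a further common neighbor $w'$ of $u,v,w$. Then $\{v,w,w'\}$ and $\{u,w,w'\}$ are genuine triangles, and by Claim~\ref{always_head} the first is a $(K_3,+)$-star with head $v$, giving $f_R(ww')=-1$, while the second is a $(K_3,-)$-star with head $u$, giving $f_R(ww')=+1$ --- a contradiction. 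With this patch, stage~(i), and hence your whole argument, is sound. (A minor point: the reduction that all $K_k$-copies in $R$ are monochromatic or stars is an assumption made earlier in the proof of Lemma~\ref{no_c4}, not a direct consequence of Lemma~\ref{no_c4_template} alone; your citation should reference that standing assumption.)
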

\begin{proof}
Observe that $K_{k,+}$ and a $(K_k,-)$-star both contain a monochromatic triangle in color $1$, and similarly, $K_{k,-}$ and a $(K_k,+)$-star both contain a monochromatic triangle in color $-1$. Therefore, if the claim does not hold, then $R$ contains monochromatic triangles $L_+$ in color $1$ and $L_-$ in color $-1$. 
By Claim~\ref{always_head}, none of the vertices in $L_+$ and $L_-$ are the heads of any stars of size $3$, because they belong to a monochromatic triangle. Let $v_1,v_2\in L_+$ and $v_3,v_4\in L_-$ and by \ref{fact1}, let $u$ be a common neighbor of $v_1,v_2,v_3,v_4$.
Note that $u$ cannot be the head of any non-monochromatic triangle, since this would imply (by Claim~\ref{always_head}) that all edges in $N_R(u)$ have the same color, while $f_R(v_1v_2) \neq f_R(v_3v_4).$ 
It follows that the triangles $\{u,v_1,v_2\}$,$\{u,v_3,v_4\}$ are monochromatic, because none of the vertices $u,v_1,\dots,v_4$ can be the head of a non-monochromatic triangle. 
So we have $f_R(uv_1) = f_R(uv_2) = -f_R(uv_3) = -f_R(uv_4) = 1$. 
Let $w$ be a common neighbor of $u,v_1,v_3$ (using \ref{fact1}). Without loss of generality, suppose that $f_R(uw) = 1$. Then the triangle $w,u,v_3$ is not monochromatic, and its head must be $w$. Now, by Claim~\ref{always_head}, the triangle $w,u,v_1$ must also be non-monochromatic with head $w$. This implies that $f_R(uv_1) = f_R(uv_3)$, a contradiction.
\end{proof}
By Claim~\ref{one_star} and without loss of generality, let us assume that every copy of $K_{k}$ in $R$ is either a copy of $K_{k,+}$ or a $(K_{k},-)$-star. 
Note that both $K_{k,+}$ and a $(K_{k},-)$-star have positive discrepancy, because $k \geq 5$. 
We now consider two sub-cases based on whether $H$ is regular.
\paragraph{Case 1:} $H$ is non-regular. If $k=r$, then every copy of $K_r$ in $r$ has positive discrepancy. We then get a perfect $H$-factor in $G'$ with high discrepancy by Lemma~\ref{all k positive}. So let us assume that $k\geq r+1$.
Suppose first that there is a $(K_k,-)$-star $K$ in $R$. Clearly, $K$ contains a $(K_{r+1},-)$-star $K' \subseteq K$. 
By Corollary~\ref{non_reg_clique}, $(K',f_R)$ is a template for $H$ (as $H$ is non-regular). Now, by Lemma~\ref{template}, $G$ has a perfect $H$-factor with high discrepancy, completing the proof in this case. Therefore, we may assume that every copy of $K_{k}$ in $R$ is monochromatic in color 1. Then, by \ref{fact2} with $k'=2$, all edges in $R$ have color $1$. 
By Lemma~\ref{all k positive} again, $G'$ has a perfect $H$-factor with high discrepancy.
\paragraph{Case 2:}
$H$ is $d$-regular for some $d\in \mathbb{N}$.
Let $U\subseteq V(R)$ be the set of vertices which are the heads of a $(K_3,-)$-star in $R$. Observe that if $uv \in E(R)$ has color $-1$ then $u \in U$ or $v \in U$. Indeed, by \ref{fact2}, $uv$ is contained in some triangle in $R$, and this triangle must be a $(K_3,-)$-star (as every triangle in $R$ is either a $K_{3,+}$ or a $(K_3,-)$-star). The head of this star must be $u$ or $v$, so one of them is in $U$. 
We see that $R[V(R)\backslash U]$ only has edges colored $1$.
Additionally, $U$ is an independent set in $R$. To see this, let $u,v\in U$ and assume towards a contradiction that $uv\in E(R)$. By \ref{fact2}, $uv$ is contained in some triangle in $R$. By Claim~\ref{always_head} and the definition of $U$, both $u$ and $v$ must be the head of this triangle, a contradiction.

By \eqref{eq:delta(R) no_c4} and as $k \geq 5$, we have 
$\delta(R)\geq (3/4+\eta/2)|R|$.
Since $U$ is an independent set in $R$,
we must have
$
|U|\leq (1/4-\eta/2)|R|.
$
Therefore, $V_U := \bigcup_{u \in U}V_u$ satisfies
$$|V_U| \leq (1/4-\eta/2)n.$$
Note that all the edges of color $-1$ in $G'$ are incident to either $V_U$ or $V_0$, because all edges in $R$ outside $U$ have color $1$.
By Lemma~\ref{lem:H* factor}, $G'$ has a perfect $H$-factor $F$. 
Since $H$ is $d$-regular, so is $F$. 
Hence, the number of edges of color $-1$ in $F$ is at most $(|V_U| + |V_0|) \cdot d \leq nd/4$.
It follows that 
$$
f(F)\geq \frac{d}{2}n-nd/4 \geq \gamma n.
$$
This concludes the proof.

\subsection{Proof of Lemma~\ref{no_c4}: $k = r = 4$}
Here we prove Lemma~\ref{no_c4} in the case $k = r = 4$. 
If $R$ contains a template for $H$ of size $4$, then by Lemma~\ref{template}, there exists a perfect $H$-factor in $G'$ with high discrepancy, as required. So let us assume that $R$ contains no such template. 

We consider two cases.
Suppose first that the $(K_4,+)$-star is not a template for $H.$ 
We claim that in this case, $\delta^*(H) = \delta_0(H) = 3/4$. 
For the upper bound, recall that if $H$ violates the $k$-wise $C_4$-condition, then it also violates the $(k+1)$-wise $C_4$-condition. Hence, $H$ violates the $5$-wise $C_4$-condition, and by the case $k \geq 5$ of Lemma~\ref{no_c4}, we have $\delta^*(H)\leq 3/4$. For the lower bound, 
let $c$ be the $2$-edge-coloring of $K_4$ corresponding to the $(K_4,+)$-star. Note that $c(K_4) = 0$. Let $B$ be the $3!|H|$-blowup of $(K_4,c)$. By Lemma~\ref{lem:balanced blowup}, there is a perfect $H$-factor of $B$ with discrepancy $0$, as $c(K_4)=0$. As we assumed that the $(K_4,+)$-star is not a template for $H$, we get that $c \in \mathcal{K}(H)$, and hence
$$
\delta_0(H)\geq \delta(B)/|B|=3/4,
$$
as required. 

From now on, let us assume that the $(K_4,+)$-star is a template for $H$, and by symmetry so is the $(K_4,-)$-star. 
As we assumed that $R$ has no template for $H$, we get that $R$ contains no $(K_4,+)$-star and no $(K_4,-)$-star. It now follows, by Lemma~\ref{no_c4_template}, that all copies of $K_4$ in $R$ are monochromatic. 
By \eqref{eq:delta(R) no_c4}, we have $\delta(R)/|R| > 2/3$. This implies that each triangle in $R$ is contained in a $K_4$, and hence all triangles in $R$ are monochromatic. 
We claim that all edges of $R$ have the same color. Suppose not. Then, as $R$ is connected (by $\delta(R)/|R| > 2/3$), there exist vertices $u,v,w\in V(R)$ such that $f_R(uv) = 1$ and $f_R(vw) = -1$. Again using $\delta(R)/|R|>2/3$, there exists a common neighbor $x$ of $u,v,w$. It follows that either $x,u,v$ or $x,v,w$ form a non-monochromatic triangle in $R$, depending on the color of $xv$ with respect to $f_R$. 
This gives a contradiction. So we see that $R$ is monochromatic, which means that all edges of $G'$ not touching $V_0$ have the same color. By Lemma~\ref{lem:H* factor}, $G'$ has an $H$-factor $F$.
Now we get
$$
|f(F)|\geq \left(\frac{n}{|H|}-2|V_0|\right)e(H)\geq \gamma n.
$$
\section{Non-regular $H$}\label{sec: nonregular}
In this section we deal with the case that $H$ is non-regular. This comprises the main part of the proofs of Theorems~\ref{tripartite1} and~\ref{rpartite}. We shall prove three key lemmas (Lemmas~\ref{c4_r_1},~\ref{rest_4} and~\ref{high_min_deg}) that are used in the proofs of these theorems. 
The basic idea in the proof of these lemmas is as follows. First, in all cases, the minimum degree assumption implies that the reduced graph $R$ contains $r$-cliques. Then, by Lemma~\ref{all k positive}, we may assume that there exists an $r$-clique $L_1$ with positive discrepancy, as well as an $r$-clique $L_2$ with negative discrepancy. Using Lemma~\ref{connectivity}, we can then connect $L_1,L_2$ with a sequence of $r$-cliques $L_1 = L'_1,\dots,L'_{\ell} = L_2$ with each pair of consecutive cliques $L'_i,L'_{i+1}$ intersecting in at least $r-1$ or at least $r-2$ vertices, depending on the assumed minimum degree of $R$. 
We therefore have two $r$-cliques sharing $r-1$ or $r-2$ vertices, one having positive discrepancy and the other negative. With a slight abuse of notation, we assume that $L_1,L_2$ are such $r$-cliques. Then, either $L_1 \cup L_2$ is a template for $H$ (in which case we are done by Lemma~\ref{template}), or the coloring of $L_1,L_2$ has some specific structure, by the lemmas from Section~\ref{sec:templates}. 
In more involved cases (mainly Lemma~\ref{high_min_deg}), we determine properties of the coloring on a large portion of $R$, under the assumption that $R$ has no small template for $H$. 

In each of the three lemmas we shall make certain assumptions on the residue of $r$ modulo $4$, which correspond to different cases in the proof of Theorem~\ref{rpartite}. We also often assume that $H$ satisfies the $r$-wise $C_4$-condition. (If $H$ violates the $r$-wise $C_4$-condition, then  Lemma~\ref{no_c4} immediately gives the required bounds for Theorem~\ref{rpartite}, as we shall see in Section~\ref{subsec:proof_rpartite}.)
The first lemma is as follows. 

\begin{lemma}\label{c4_r_1}
If $r\not \equiv_4 0$ and $H$ is non-regular, then
$
\delta^*(H)\leq 1-1/r.
$
\end{lemma}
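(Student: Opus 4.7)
The approach is the standard one via the setup of Section~\ref{sec_app_regularity} with threshold $\delta = 1-1/r$: apply the two-colored Szemer\'edi regularity lemma to $G^*$, obtaining the pure graph $G'$ and the reduced graph $R$ with $\delta(R)/|R| \geq 1 - 1/r + \eta/2 > (r-1)/r$. By Lemma~\ref{connectivity}(1) applied with $k = r$, any two $r$-cliques in $R$ are linked by a chain of $r$-cliques whose consecutive members share $r-1$ vertices.

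The first reduction is Lemma~\ref{all k positive} together with its color-swap counterpart: if every $r$-clique in $R$ has positive $f_R$-discrepancy, or every $r$-clique has negative discrepancy, we directly obtain a perfect $H$-factor in $G'$ of high discrepancy. We may therefore assume that either there exist two $r$-cliques with distinct $f_R$-discrepancies, or every $r$-clique has discrepancy exactly zero. In the former case, walking along the chain between two cliques of different discrepancies yields two consecutive $r$-cliques $L, L'$ sharing $r-1$ vertices with $f_R(L) \neq f_R(L')$; since $H$ is non-regular, Lemma~\ref{sharing_much_template} then shows that $(L\cup L', f_R)$ is a template for $H$, and Lemma~\ref{template} produces the desired high-discrepancy $H$-factor.

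The remaining case is that every $r$-clique of $R$ has zero discrepancy. The key observation is that the discrepancy of an $r$-clique has the same parity as $\binom{r}{2} = r(r-1)/2$, which is odd precisely when $r \equiv_4 2$ or $r \equiv_4 3$; hence the all-zero case is vacuous in these residues. Under the standing hypothesis $r \not\equiv_4 0$ this leaves only $r \equiv_4 1$ (plus trivially small $r$). Here the plan is to apply Lemma~\ref{like blowup2}: for two zero-discrepancy $r$-cliques $L_1, L_2$ sharing $r-1$ vertices, with $\{x\} = L_1 \setminus L_2$ and $\{y\} = L_2 \setminus L_1$, the condition $f_R(L_1) = f_R(L_2) = 0$ forces $\sum_{w \in L_1 \cap L_2} f_R(xw) = \sum_{w \in L_1 \cap L_2} f_R(yw)$. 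Thus if some $z \in L_1 \cap L_2$ satisfies $f_R(xz) \neq f_R(yz)$, then (after swapping color roles if needed) Lemma~\ref{like blowup2} yields a template, provided $H$ admits an $r$-coloring in which some vertex has unequal degrees to two of the other color classes.

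The hard part will be to rule out the two residual scenarios: (i) $H$ has only ``balanced-profile'' $r$-colorings, so that Lemma~\ref{like blowup2} does not directly apply, and (ii) every pair of zero-discrepancy $r$-cliques sharing $r-1$ vertices has its private vertices $x,y$ behaving identically on $L_1 \cap L_2$, encoding a rigid blowup-like structure on $R$. Both should be incompatible with the strict slack $\eta > 0$ in the minimum-degree condition on $R$, via a counting argument on the distribution of edges from $(r-1)$-cliques to their common neighborhoods that exploits the fact that a single rigid blowup of $K_r$ only achieves minimum degree exactly $(r-1)/r\cdot|R|$.
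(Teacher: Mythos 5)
Your reduction is sound up to and including the case $r\equiv_4 2,3$: the chain argument via Lemma~\ref{connectivity}(1) plus Lemma~\ref{sharing_much_template} correctly handles two $r$-cliques of different discrepancy, Lemma~\ref{all k positive} (and its color-swapped version) handles the all-positive and all-negative cases, and your parity observation that $f_R(L)\equiv\binom{r}{2}\pmod 2$ correctly kills the all-zero case when $\binom{r}{2}$ is odd. However, for $r\equiv_4 1$ your argument has a genuine gap: the entire treatment of the all-zero case is a plan, not a proof. You yourself flag the two ``residual scenarios'' --- (i) every $r$-coloring of $H$ has every vertex with equal degrees to all other color classes, so Lemma~\ref{like blowup2} never applies, and (ii) the coloring of $R$ is blowup-like --- and assert they ``should be incompatible with the strict slack $\eta$'' via an unspecified counting argument. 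Scenario (i) is not dismissible: non-regular $H$ with this property exist (e.g.\ a disjoint union of $K_r$ and the $2$-blowup of $K_r$), they satisfy the $r$-wise $C_4$-condition, and for such $H$ with $r\equiv_4 1$ Lemma~\ref{construction} shows $\delta_0(H)=1-1/r$, so the bound is tight and there is no room for a soft ``slack'' argument; one must actually exhibit a high-discrepancy factor or a template.

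The missing step is short and is exactly what the paper's proof supplies: since $\delta(R)/|R|>1-1/r$, every $r$-clique of $R$ extends to an $(r+1)$-clique $M$. If $R$ contains no template of size $r+1$, then Corollary~\ref{non_reg_clique} forces $M^+$ to be $d$-regular (here non-regularity of $H$ is used again), and then every $r$-subclique $L\subseteq M$ satisfies $f_R(L)=(r-1)(d-r/2)$. In the all-zero case this forces $d=r/2$, which is impossible for odd $r$ (and, using $2\mid d(r+1)$, impossible whenever $r\not\equiv_4 0$). So the all-zero case is vacuous, with no need for Lemma~\ref{like blowup2} at all. Note that the paper runs this $(r+1)$-clique computation from the start and thereby obtains that all $r$-cliques have the same nonzero-sign discrepancy in one stroke; your route through $r$-clique parity is a valid alternative for $r\equiv_4 2,3$, but for $r\equiv_4 1$ you must pass to $(r+1)$-cliques (or find some other complete argument), and as written your proposal does not.
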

In the proof of Lemma~\ref{c4_r_1}, we may assume that $\delta(R)/|R| > 1 - 1/r$.
This assumption has two important consequences: First, it guarantees that $|L_1 \cap L_2| = r-1$, and second, it implies that every $r$-clique is contained in an $(r+1)$-clique. This allows us to use Lemma~\ref{sharing_much_template} and Corollary~\ref{non_reg_clique} to conclude the proof. The details \nolinebreak follow. 
\begin{proof}[Proof of Lemma~\ref{c4_r_1}]
As always, we work under the setup described in Section~\ref{sec_app_regularity}. In particular, as we are aiming for the bound $\delta^*(H) \leq 1-1/r$, we assume that 
\begin{equation*}\label{eq:degree > 1-1/r}
    \delta(G')/n, \; \delta(R)/|R|\geq (1-1/r+\eta/2).
\end{equation*}
Our goal is to show that $G'$ contains a perfect $H$-factor with high discrepancy.
If $R$ has a template for $H$ of size $r+1$, then we are done by Lemma~\ref{template}. We therefore assume that $R$ has no such template. This implies that for every $(r+1)$-clique $M$ in $R$, $M^+$ is regular (with respect to $f_R$), because otherwise $(M,f_R)$ is a template for $H$ by Corollary~\ref{non_reg_clique} (as $H$ is non-regular).   
Next, we need the following very simple claim.
\begin{claim}\label{claim:d-regular (r+1)-clique}
Let $M$ be an $(r+1)$-clique with an edge-coloring $c$, let $d$ be such that $M^+$ is $d$-regular, and let $L \subseteq M$, $|L| = r$. Then $c(L) = (r-1)(d - r/2)$.
\end{claim}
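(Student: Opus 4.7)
The claim is a direct double-counting computation. The plan is to express $c(L)$ as $2e(L^+) - e(L)$, compute $e(L) = \binom{r}{2}$ trivially, and then determine $e(L^+)$ by summing positive degrees.

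Concretely, let $v$ be the unique vertex of $M \setminus L$. I will sum the number of positive neighbors in $L$ over all vertices $u \in L$. Since $M^+$ is $d$-regular, each $u \in L$ has exactly $d$ positive neighbors in $M$, one of which is $v$ precisely when $uv \in M^+$. Hence
\[
\sum_{u \in L} d_{L^+}(u) \;=\; \sum_{u \in L} \bigl(d - \mathbb{1}[uv \in M^+]\bigr) \;=\; rd - d_{M^+}(v) \;=\; rd - d \;=\; (r-1)d,
\]
using once more that $M^+$ is $d$-regular (so $d_{M^+}(v) = d$, and all positive neighbors of $v$ lie in $L$). This sum counts each edge of $L^+$ twice, giving $e(L^+) = (r-1)d/2$.

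Therefore
\[
c(L) \;=\; e(L^+) - e(L^-) \;=\; 2e(L^+) - e(L) \;=\; (r-1)d - \binom{r}{2} \;=\; (r-1)\!\left(d - \tfrac{r}{2}\right),
\]
which is the claimed identity. There is no real obstacle here: the only subtlety is remembering that $v$'s positive degree in $M$ equals $d$, which accounts for exactly the edges from $L$ to $v$ that one subtracts when restricting the positive degrees of $u \in L$ from $M$ to $L$.
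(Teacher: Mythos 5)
Your proof is correct and takes essentially the same approach as the paper: the paper computes $e(L^+) = e(M^+) - d = d(r+1)/2 - d = d(r-1)/2$ by subtracting the $d$ positive edges at the removed vertex $v$, while you arrive at the same $e(L^+) = (r-1)d/2$ by summing positive degrees over $L$ — a cosmetically different but equivalent double-counting.
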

\begin{proof}
$e(L^+) = e(M^+) - d = d(r+1)/2 - d = d(r-1)/2$. Hence, 
$$
c(L)= 2e(L^+) - \binom{r}{2} = 
2 \cdot d(r-1)/2-\binom{r}{2} = d(r-1) - \binom{r}{2} = (r-1)(d - r/2).
$$
\end{proof}

We now continue with the proof of the lemma. 
Suppose first that there exist two copies $M_1,M_2\subseteq R$ of $K_{r+1}$ such that $M_1^+$ is $d$-regular and $M_2^+$ is $d'$-regular for some $d\neq d'$. Let $L_1\subseteq M_1$ and $L_2\subseteq M_2$ of size $r$ each. Since $d\neq d'$, it follows by Claim~\ref{claim:d-regular (r+1)-clique} that $f_R(L_1)\neq f_R(L_2)$. By Lemma~\ref{connectivity} there exists a sequence $L'_1,L'_2,...L'_\ell\subseteq R$ of copies of $K_r$ with $L'_1 = L_1$ and $L'_\ell = L_2$ and such that $L'_i$ and $L'_{i+1}$ share $r-1$ vertices for each $1\leq i\leq\ell-1$. But then, there must exist some $1\leq i\leq\ell-1$ such that $f_R(L'_i)\neq f_R(L'_{i+1})$.
Now $(L'_i \cup L'_{i+1}, f_R)$ is a template for $H$ by Lemma~\ref{sharing_much_template}, in contradiction to our assumption. 

So from now on, we assume that there exists $d\in\mathbb{N}$ such that for every $(r+1)$-clique $M \subseteq R$, $M^+$ is $d$-regular. Trivially, $2 \mid d(r+1).$ 
Note that $d \neq r/2$, because if $r+1$ is even then $r$ is odd and so $d \neq r/2$, and if $r+1$ is odd then $d$ must be even, so $d \neq r/2$ as $r\not\equiv_4 0.$  
Without loss of generality, let us assume that $d> r/2$ (otherwise consider $M^-$ in place of $M^+$, replacing $d$ with $r-d$). We claim that every copy $L$ of $K_r$ in $R$ has positive discrepancy. Indeed, as $\delta(R)/|R| > 1-1/r$, there exists an $(r+1)$-clique $M$ containing $L$. Now, Claim~\ref{claim:d-regular (r+1)-clique}, $f_R(L) = (r-1)(d - r/2) > 0$. Finally, by Lemma~\ref{all k positive}, $G'$ has a perfect $H$-factor with high discrepancy, as required.
\end{proof}

\noindent
The following is the second of the three lemmas. 

\begin{lemma}\label{rest_4}
Suppose that $r\equiv_4 0$. Assume that $H$ is non-regular, fulfills the $r$-wise $C_4$-condition, and violates the $(r+1)$-wise $C_4$-condition. Then
$$
\delta^*(H)\leq \max\{\delta_0(H),1-1/\chi^*(H)\}.
$$
\end{lemma}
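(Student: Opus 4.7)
I would apply the general setup of Section~\ref{sec_app_regularity} with $\delta = \max\{\delta_0(H),1-1/\chi^*(H)\}$, so that $\delta(R)/|R| \geq \delta + \eta/2$, and aim to find a perfect $H$-factor of $G'$ with high discrepancy. Throughout, I assume $R$ has no bounded-size template for $H$ (otherwise Lemma~\ref{template} finishes). Since $H$ violates the $(r+1)$-wise $C_4$-condition, Lemma~\ref{no_c4_template} forces every $(r+1)$-clique $M\subseteq R$ to be either monochromatic or a star under $f_R$; since $H$ is non-regular, Corollary~\ref{non_reg_clique} forces $M^+$ to be regular; a $(K_{r+1},\pm)$-star is not regular, so every $(r+1)$-clique in $R$ must be monochromatic.

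Next, by Lemma~\ref{all k positive} applied to each color, I may assume $R$ contains $r$-cliques $L_+, L_-$ with $f_R(L_+) > 0 > f_R(L_-)$. Since $\chi^*(H) \geq \chi_{cr}(H) \geq r-1$, we have $\delta(R)/|R| > 1 - 1/(r-1)$, so Lemma~\ref{connectivity}, item $2$, produces a chain $L_+ = L_1',\ldots,L_\ell' = L_-$ of $r$-cliques with $|L_i' \cap L_{i+1}'| \geq r-2$. Along the chain I locate consecutive $L_a, L_b$ with $f_R(L_a) \neq f_R(L_b)$. If $|L_a \cap L_b| = r-1$, Lemma~\ref{sharing_much_template} makes $(L_a \cup L_b, f_R)$ a template for $H$ (using non-regularity of $H$), a contradiction; so I may assume $|L_a \cap L_b| = r-2$.

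If $\max\{\delta_0(H),1-1/\chi^*(H)\} \geq 1 - 1/r$, then $\delta(R)/|R| > 1 - 1/r$, so every $r$-clique in $R$ extends to an $(r+1)$-clique and is therefore monochromatic; but then $L_a, L_b$ would be monochromatic of opposite colors, and their shared $(r-2)$-clique (which contains an edge since $r \geq 4$) would be simultaneously all-$+$ and all-$-$, a contradiction. Hence I may also assume $\chi^*(H) = \chi_{cr}(H) < r$ and $\delta_0(H) < 1-1/r$. In this regime Lemma~\ref{template_example} applied to $(L_a, L_b)$ forces $H$ to be $(s,t)$-structured with $t = f_R(e_1) - f_R(e_2) \in \{-2,0,2\}$ and an explicit $s$ read off from $f_R$ on $L_a\cup L_b$.

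\textbf{Main obstacle.} The hardest step will be extracting a contradiction from this $(s,t)$-structuredness. The plan is to combine the relation from $(L_a, L_b)$ with analogous relations obtained from other pairs of $r$-cliques in $R$ that share $r-2$ vertices (e.g., by varying the position on the chain or by repeating the argument for distinct pairs $L_+, L_-$): if two such pairs give linearly independent relations in the sense of Lemma~\ref{no_two}, then every $r$-coloring of $H$ is balanced, forcing $\chi^*(H) = r$ and contradicting $\chi^*(H) < r$. Otherwise Lemma~\ref{get_c} pins $e_H(A_i, A_j)$ down as an explicit affine function of $|A_i| + |A_j|$ on every $r$-coloring; using this rigidity, the parity $r \equiv_4 0$, and the $r$-wise $C_4$-condition, I would construct a coloring $c \in \mathcal{K}(H)$ of $K_r$ together with a blowup $B$ of $(K_r, c)$ admitting a discrepancy-$0$ perfect $H$-factor and satisfying $\delta(B)/|B| > \delta_0(H)$, contradicting the definition of $\delta_0(H)$. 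The principal difficulty is designing this blowup: the construction of Lemma~\ref{lower_delta} exploits $\binom{r}{2}$ being odd and so is stated only for $r \equiv_4 2,3$; for $r \equiv_4 0$ one must choose the coloring parities and part sizes differently, guided by the explicit $(s,t)$ obtained above.
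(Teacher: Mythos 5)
Your proof takes a genuinely different route from the paper's, but contains a gap that the paper's structure is specifically designed to avoid.

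The main issue is your claim that Lemma~\ref{all k positive} (applied to both colors) yields $r$-cliques $L_+, L_-$ with $f_R(L_+) > 0 > f_R(L_-)$. What the lemma actually gives is: if not all $K_r$'s in $R$ have positive discrepancy, then some clique $L_1$ has $f_R(L_1) \leq 0$; symmetrically, some $L_2$ has $f_R(L_2) \geq 0$. Since $r \equiv_4 0$, $\binom{r}{2}$ is \emph{even}, so $f_R(L) = 0$ is possible, and you cannot rule out the scenario where every $r$-clique in $R$ has discrepancy exactly zero. In that case no pair $L_a, L_b$ with $f_R(L_a) \neq f_R(L_b)$ exists, the chain argument produces nothing, and your entire structural analysis via Lemma~\ref{template_example} is unavailable. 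This is precisely the case that parity guards against in the $r \equiv_4 2,3$ situation (Lemma~\ref{high_min_deg}), which is why the paper needed a separate argument for $r \equiv_4 0$.

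The paper's proof sidesteps this by splitting on whether every $H$-factor is balanced-uniform (Definition~\ref{def:balanced-uniform}). If yes, Lemma~\ref{lem:if balanced-uniform then not a template} shows that \emph{every} $2$-edge-coloring of $K_r$ lies in $\mathcal{K}(H)$; picking one with $c(K_r) = 0$ (possible exactly because $r \equiv_4 0$) and a balanced blowup gives $\delta_0(H) = 1-1/r$, contradicting the working assumption $\max\{\delta_0(H),1-1/\chi^*(H)\} < 1-1/r$. If not, Lemma~\ref{non-balanced-uniform template} forces $L^+$ to be $d$-regular for every $r$-clique $L \subseteq R$; Lemma~\ref{non-balanced-uniform template2} then forces $d$ to be the same for all of them; and since $r$ is even, $d \neq (r-1)/2$, so every $K_r$ has strictly nonzero discrepancy of a uniform sign — at which point Lemma~\ref{all k positive} applies cleanly. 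Observe that this same reasoning disposes of your problematic all-zero-discrepancy case: if $f_R(L) = 0$ for all $r$-cliques and $r$ is even, $L^+$ cannot be regular, so Lemma~\ref{non-balanced-uniform template} yields a template whenever a non-balanced-uniform $H$-factor exists, while the balanced-uniform case is handled as above.

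Your ``Main obstacle'' paragraph is an honest acknowledgment that the construction is not done. The missing ingredient is precisely the balanced-uniform dichotomy above: it both closes the zero-discrepancy gap and replaces the search for a new blowup construction (extending Lemma~\ref{lower_delta} to $r \equiv_4 0$) with the much simpler observation that $c(K_r) = 0$ is achievable when $\binom{r}{2}$ is even. Your observation that $(r+1)$-cliques in $R$ must be monochromatic is correct but is not needed in the regime $\max\{\delta_0(H),1-1/\chi^*(H)\} < 1-1/r$, where $R$ need not contain any $(r+1)$-clique; the paper does not use it.
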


The proof of Lemma~\ref{rest_4} proceeds by distinguishing between two cases. The first case is that there exists an $H$-factor which is not balanced-uniform (recall Definition~\ref{def:balanced-uniform}). In this case we will show that $\delta_0(H) = 1-1/r$ and hence $\delta^*(H) \geq 1-1/r$, and this will match the upper bound on $\delta^*(H)$ we get from Lemma~\ref{no_c4}. Here the assumption $r \equiv_4 0$ will play a crucial role. 
The second case is that there exists a non-balanced-uniform $H$-factor. Here we will proceed by finding two $r$-cliques $L_1,L_2$ with $|L_1 \cap L_2| \geq r-2$ and such that $L_1$ has positive discrepancy and $L_2$ has negative discrepancy, as explained above.  We will eventually conclude that $L_1 \cup L_2$ is a template for $H$ by Lemma~\ref{non-balanced-uniform template2}, finishing the proof. The details follow. 

\begin{proof}[Proof of Lemma~\ref{rest_4}]
As always, we work under the setup of Section~\ref{sec_app_regularity}. In particular, we assume that 
$$
\delta(G')/n, \; \delta(R)/|R|\geq \max\{\delta_0(H),1-1/\chi^*(H)\}+\eta/2.
$$
As $\chi^*(H) \geq r-1$ for every $r$-chromatic graph, we have
\begin{equation}\label{eq:(r-2)/(r-1)}
    \delta(R)/|R| > \frac{r-2}{r-1}. 
\end{equation}
Our goal is to show that $G'$ contains a perfect $H$-factor with high discrepancy. 
Since $H$ violates the $(r+1)$-wise $C_4$-condition, we may apply
Lemma~\ref{no_c4} with $k=r+1$ to get
$$
\delta^*(H)\leq 1-1/r.
$$
Hence, if $\max\{\delta_0(H),1-1/\chi^*(H)\} = 1-1/r$ then we are done. 
So from now on we assume that
\begin{equation}\label{eq:second main lemma, <1-1/r}
\max\{\delta_0(H),1-1/\chi^*(H)\}<1-1/r.
\end{equation}
In particular, $\chi^*(H) < r$, which implies that $H$ has an unbalanced $r$-coloring. We now consider two cases. For what follows, recall Definition~\ref{def:balanced-uniform}.
\paragraph{Case 1:} Every $H$-factor is balanced-uniform. We will show that then $\delta_0(H) = 1-1/r$, which would contradict our assumption \eqref{eq:second main lemma, <1-1/r} and hence conclude the proof in this case. Fix a $2$-edge-coloring $c$ of $K_r$ such that $c(K_r)=0$; such a coloring exists because $K_r$ has an even number of edges, as $r\equiv_4 0$. By Lemma~\ref{lem:if balanced-uniform then not a template}, we have $c\in\mathcal{K}(H)$. 
Let $B$ be an $m$-blowup of $(K_r,c)$, where $m$ is divisible by $(r-1)!|H|$. By Lemma~\ref{lem:balanced blowup}, $B$ has a perfect $H$-factor. 
We claim that for every perfect $H$-factor $F_B$ of $B$ it holds that $c(F_B) = 0$. 
Indeed, let $B_1,\dots,B_r$ be the parts of $B$. 
By assumption, $F_B$ is balanced-uniform. Hence, $e_{F_B}(B_i,B_j) = e(F_B)/\binom{r}{2}$ for all $1 \leq i < j \leq r$. Therefore, $c(F_B) = c(K_r) \cdot e(F_B)/\binom{r}{2} = 0$, as required. 
It follows that $\delta_0(H) \geq \delta(B)/|B|=1-1/r$, as claimed. 

\paragraph{Case 2:} There exists a non-balanced-uniform union $F$ of disjoint copies of $H$.\footnote{It is worth noting that the argument in Case 2 only requires that $r \not\equiv_4 1$ (instead of the stronger assumption $r \equiv_4 0$).} By Claim~\ref{claim:non-balanced-uniform}, there exists a balanced $r$-coloring $A_1,\dots,A_r$ of $F$ such that $e_F(A_1,A_2)\neq e_F(A_3,A_4)$.
We may assume that $R$ contains no template for $H$ on at most $r+2$ vertices, as otherwise, by Lemma~\ref{template}, $G'$ contains a perfect $H$-factor of high discrepancy and we are done. 

If $R$ contains an $r$-clique $L$ such that $L^+$ is non-regular (with respect to $f_R$), then by Lemma~\ref{non-balanced-uniform template}, $(L,f_R)$ is a template for $H$, contradicting our assumption. So suppose from now on that every $r$-clique $L$ in $R$ is such that $L^+$ is regular.

Assume first that there exist two $r$-cliques $L_1,L_2\subseteq R$ such that $L_1^+$ is $d$-regular and $L_2^+$ is $d'$-regular for some $d\neq d'$.
Using \eqref{eq:(r-2)/(r-1)} and Item 2 of Lemma~\ref{connectivity}, we obtain a sequence of $r$-cliques $L_1',L_2',\dots,L_\ell'\subseteq R$ with $L_1'=L_1$ and $L_\ell'=L_2$, such that each pair of subsequent $r$-cliques share at least $r-2$ vertices.
So there exist two $r$-cliques $L,L'$  in $R$ sharing at least $r-2$ vertices, such that $L^+$ is $d$-regular and $L'^+$ is $d'$-regular for some $d\neq d'$. Without loss of generality, let us assume that $L_1,L_2$ were such $r$-cliques to begin with. 
If $|L_1 \cap L_2| = r-1$ then $(L_1\cup L_2,f_R)$ is a template for $H$ by Lemma~\ref{sharing_much_template}, and if $|L_1 \cap L_2| = r-2$ then $(L_1\cup L_2,f_R)$ is a template for $H$ by Lemma~\ref{non-balanced-uniform template2}. In either case, we get a contradiction to our assumption. 

Now assume that $r$-cliques $L$ in $R$ are $d$-regular with the same $d$ (in the sense that $L^+$ is $d$-regular). 
Without loss of generality, let us assume that $d\geq (r-1)/2$, as otherwise we may swap the colors, replacing $d$ with $r-1-d$. 
Note that $d \neq \frac{r-1}{2}$ because $r \not \equiv_4 1$.
As $d > \frac{r-1}{2}$, all copies of $K_r$ in $R$ have positive discrepancy. Now, by Lemma~\ref{all k positive}, $G'$ has a perfect $H$-factor with high discrepancy, completing the proof.
\end{proof}

Finally, we arrive at the last of the three main lemmas, Lemma~\ref{high_min_deg}. This lemma deals with the case $r \equiv_4 2,3$. Its proof is by far the most involved part of this section. 
Recall the definition of a butterfly from the introduction. 
\begin{lemma}\label{high_min_deg}
Suppose that $r \geq 3$ and $r\equiv_4 2,3$. Assume that $H$ satisfies the $r$-wise $C_4$-condition and is non-regular. Then 
$$\delta^*(H) \leq 
\begin{cases}
\max\{\delta_0(H), 1-1/\chi^*(H),4/7\} & \text{ $r = 3$ and some butterfly is not a template for $H$,}
\\
\max\{\delta_0(H), 1-1/\chi^*(H)\} & \text{otherwise.}
\end{cases}
$$
\end{lemma}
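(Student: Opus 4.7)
The plan is to work under the regularity setup of Section~\ref{sec_app_regularity}, so that $\delta(R)/|R|$ exceeds the claimed bound plus $\eta/2$, and we may assume $R$ contains no template for $H$ of bounded size (else Lemma~\ref{template} completes the proof). Since $H$ is non-regular, if every $r$-clique of $R$ had the same discrepancy sign, Lemma~\ref{all k positive} (applied to both colors) would already produce a perfect $H$-factor of high discrepancy. Hence we may assume there exist $r$-cliques $L_+, L_- \subseteq R$ of positive and negative discrepancy respectively.

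Since $\delta(R)/|R| > 1 - 1/\chi^*(H) \geq (r-2)/(r-1)$, Item~2 of Lemma~\ref{connectivity} links $L_+$ to $L_-$ by a chain of $r$-cliques in which each consecutive pair shares at least $r-2$ vertices. Somewhere along this chain we find consecutive cliques $L_1, L_2$ with $f_R(L_1) \neq f_R(L_2)$. If $|L_1 \cap L_2| = r-1$, Lemma~\ref{sharing_much_template} (using non-regularity of $H$) gives a template for $H$ in $R$, a contradiction; so $|L_1 \cap L_2| = r-2$. Then Lemma~\ref{template_example} forces $H$ to be $(s,t)$-structured with $t = c(e_1) - c(e_2) \in \{-2, 0, 2\}$ and $s$ determined by the coloring of $L_1 \cup L_2$. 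Moreover, if different choices of chain produced two non-proportional $(s,t)$, Lemma~\ref{no_two} would force every $r$-coloring of $H$ to be balanced, giving $\chi^*(H) = r$ and so $\delta(R)/|R| > 1-1/r$; then Item~1 of Lemma~\ref{connectivity} combined with Lemma~\ref{sharing_much_template} would again produce a template. Thus $H$ is $(s,t)$-structured for a single ratio up to scaling.

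Combining this with the $r$-wise $C_4$-condition on $H$, the goal is to invoke Lemma~\ref{lower_delta} (for $r \geq 6$) to conclude $\max\{\delta_0(H), 1-1/\chi^*(H)\} \geq (3r-5)/(3r-2)$, respectively Lemma~\ref{lower_delta_r=3} (for $r=3$) to conclude $\geq 5/8$, each of which contradicts our standing minimum-degree assumption. The main obstacle is that Lemma~\ref{lower_delta} covers only the normalized cases $(0,1)$ and $(1, t')$ with $t' \in \{-2,-1,0,1,2\}$, while the $(s,t)$ arising above may a priori have $s$ of denominator $r-2$ and ratio $s/t$ outside this range. The argument must therefore carefully exploit the freedom to vary the pair $L_1, L_2$ within the chain and apply the additional template lemmas of Section~\ref{sec:templates} (in particular Lemmas~\ref{like blowup2}, \ref{sharing_little_template}, \ref{non-balanced-uniform template2}, and~\ref{non-balanced-uniform template}) to show that either a template appears in $R$ or an admissible $(s,t)$-pair of the required form is realized. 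The $r = 3$ butterfly case is especially delicate: when some butterfly is not a template for $H$, the strengthened assumption $\delta(R)/|R| > 4/7$, together with Lemma~\ref{butterfly} and the rigid local structure forced around butterflies in $R$, is needed to dispose of those ratios $s/t$ not covered by Lemma~\ref{lower_delta_r=3}.
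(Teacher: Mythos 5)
Your opening moves match the paper's: work in the regularity setup, assume $R$ has no bounded-size template, use Lemma~\ref{all k positive} to get $r$-cliques of both discrepancy signs, connect them via Lemma~\ref{connectivity}, rule out the $(r-1)$-intersection case by Lemma~\ref{sharing_much_template}, and extract $(s,t)$-structuredness from Lemma~\ref{template_example}. But the core of your argument contains a fatal logical error, and the second half of the proof is missing entirely.

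The error: you claim that concluding $\max\{\delta_0(H),1-1/\chi^*(H)\}\geq \frac{3r-5}{3r-2}$ (via Lemma~\ref{lower_delta}) ``contradicts our standing minimum-degree assumption.'' It does not. The standing assumption is $\delta(R)/|R|\geq \max\{\delta_0(H),1-1/\chi^*(H)\}+\eta/2$, and since $\frac{3r-5}{3r-2}<1-1/r$, a lower bound of $\frac{3r-5}{3r-2}$ on the max is perfectly consistent with it. In the paper, Lemma~\ref{lower_delta} (resp.\ Lemma~\ref{lower_delta_r=3} and the butterfly hypothesis for $r=3$) is used only to \emph{upgrade} the minimum degree of $R$ to $\frac{3r-5}{3r-2}+\eta/2$; no contradiction is reached at that point. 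The proof then continues with a long structural analysis of $R$ under this boosted degree: the common neighbourhoods $X_1,Y_1,X_2,Y_2$ of $L_1,L_2$ with $V$, monochromaticity of edges inside them, disjointness and absence of edges between the two sides, a monochromatic $(r+1)$-clique forcing $L_1$ to be monochromatic, a further restriction of $(s,t)$ to $(1,0)$ or $(1,1)$ (resp.\ $(1,2)$ for $r=3$), and finally the conclusion that all edges of colour $-1$ lie in a set $W$ with $|W|\leq(\frac14-\eta)|R|$, from which a perfect $H$-factor of high discrepancy is constructed explicitly using the $H^*$-factor and the edge-counting identities of $(s,t)$-structured graphs. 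None of this appears in your proposal, and without it there is no mechanism for actually producing the high-discrepancy factor.

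Two further omissions: you do not treat the case where $H$ is uniform (the paper dispatches it via Claim~\ref{claim:all equal} against the existence of an unbalanced $r$-colouring), and for $r=3$ you do not address Cases 1--2 of the classification of $(L_1,L_2)$ — in particular the case $f_R(x_1y_1)=f_R(x_2y_2)$, which requires its own substantial argument involving the partition $W_+,W_-$, triangle-freeness of each part, and an alternating cycle to derive a contradiction. These cases cannot be folded into the $(3r-5)/(3r-2)$ degree boost, since for $r=3$ that boost is only available in Cases 3--4.
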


Let us comment on the proof of Lemma~\ref{high_min_deg}. Similarly to previous proofs, the proof of Lemma~\ref{high_min_deg} begins by finding two $r$-cliques $L_1,L_2$ with $|L_1 \cap L_2| = r-2$ such that $L_1$ has positive discrepancy and $L_2$ has negative discrepancy. As always, we may assume that $R$ contains no small template for $H$, as otherwise we are done by Lemma~\ref{template}. In particular, 
$L_1 \cup L_2$ is not a template for $H$. Then, by Lemma~\ref{template_example}, $H$ is $(s,t)$-structured (for $s,t$ given by that lemma). In the case $r \neq 3$ (namely $r \geq 6$), we can use
Lemma~\ref{lower_delta} to deduce that $\max\{\delta_0(H), 1-1/\chi^*(H)\} \geq \frac{3r-5}{3r-2}$, which allows us to assume that $\delta(R)/|R| > \frac{3r-5}{3r-2}$. This minimum degree assumption is crucial for the proof, allowing us to establish various structural properties of $R$ and $f_R$. Eventually we show that $R$ is strongly tilted towards one of the colors, which allows us to find a perfect $H$-factor of high disrepancy. 

The case $r = 3$ is somewhat different. 
Note that $\frac{3r-5}{3r-2} = \frac{4}{7}$ for $r=3$.  
Big parts of the proof for $r \neq 3$ carry over to the case $r=3$, provided we assume that $\delta(R)/|R| > \frac{4}{7}$. However, we may not make this assumption in all cases, because for some $3$-chromatic graphs $H$, the value of $\delta^*(H)$ is smaller than $\frac{4}{7}$. It turns out that making the assumption $\delta(R)/|R| > \frac{4}{7}$ is justified exactly when some butterfly is not a template for $H$ (cf. Lemma~\ref{butterfly}). 
The proof of Lemma~\ref{high_min_deg} is given in the next subsection. 

\subsection{Proof of Lemma~\ref{high_min_deg}}
By Lemma~\ref{c4_r_1}, we have 
\begin{equation}\label{low_bound}
\delta^*(H)\leq 1-1/r.
\end{equation}
Therefore, if $\max\{\delta_0(H),1-1/\chi^*(H)\}=1-1/r$ then the assertion of Lemma~\ref{high_min_deg} holds. 
So from now on, we assume that $\max\{\delta_0(H),1-1/\chi^*(H)\} < 1-1/r$. In particular,
$\chi^*(H)<r$. By the definition of $\chi^*$, this implies that 
\begin{align}\label{stmt: unbalanced exists}
    \text{$H$ has an unbalanced $r$-coloring.}
\end{align}  
 As always, we work under the setup of Section~\ref{sec_app_regularity}, therefore assuming that 
\begin{equation}\label{eq:delta(R) proof of Lemma 11}
    \delta(R)/|R|\geq 
\max\{\delta_0(H), 1-1/\chi^*(H)\} + \eta/2,
\end{equation}
and
\begin{equation}\label{stmt:4/7}
    \delta(R)/|R| \geq 4/7 + \eta/2 \text{ if $r=3$ and some butterfly is not a template for $H$.}
\end{equation}
Since $\chi^*(H) > r-1$ for every $r$-chromatic $H$, \eqref{eq:delta(R) proof of Lemma 11} implies that
\begin{equation}\label{eq:delta>(r-2)/(r-1)}
\delta(R)/|R| > \frac{r-2}{r-1}.
\end{equation}
We shall show that $G'$ has a perfect $H$-factor with high discrepancy. Throughout the proof, we assume that $R$ contains no template for $H$ on at most $r+2$ vertices, as otherwise we are done by Lemma~\ref{template}. 
The following claim is used multiple times throughout the proof.
\begin{claim}\label{claim:K_r_chain}
There is no sequence $L_1',L_2',\dots,L'_{\ell} \subseteq R$ of copies of $K_r$,  such that $|L_i' \cap L_{i+1}'| \geq r-1$ for every $1 \leq i \leq \ell-1$ and $f_R(L_1')\neq f_R(L_\ell')$.
\end{claim}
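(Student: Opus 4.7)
The plan is to derive a contradiction directly from the standing assumption (made at the start of the proof of Lemma~\ref{high_min_deg}) that $R$ contains no template for $H$ on at most $r+2$ vertices, combined with the non-regularity of $H$.

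Suppose for contradiction that such a sequence $L_1',\ldots,L_\ell'$ exists. Since $f_R(L_1') \neq f_R(L_\ell')$, there must be some index $1 \leq i \leq \ell-1$ such that $f_R(L_i') \neq f_R(L_{i+1}')$. In particular $L_i' \neq L_{i+1}'$, so the intersection condition $|L_i'\cap L_{i+1}'| \geq r-1$ forces $|L_i' \cap L_{i+1}'| = r-1$ exactly, and hence $|V(L_i'\cup L_{i+1}')| = r+1$.

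Now I would invoke Lemma~\ref{sharing_much_template}: it applies precisely to two copies of $K_r$ sharing $r-1$ vertices with different discrepancies, and concludes that their union (with the inherited coloring) is a template for $H$, provided $H$ is non-regular. Non-regularity of $H$ is part of the hypothesis of Lemma~\ref{high_min_deg}, so the lemma applies and shows that $(L_i' \cup L_{i+1}', f_R)$ is a template for $H$ of size $r+1 \leq r+2$. This contradicts the standing assumption, completing the proof.

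There is no real obstacle here; the claim is a short book-keeping consequence of Lemma~\ref{sharing_much_template} together with the reduction already made at the top of the proof of Lemma~\ref{high_min_deg} (namely, that we may assume no subgraph of $R$ on at most $r+2$ vertices is a template for $H$, by Lemma~\ref{template}). The role of the claim in the larger proof is to rule out, once and for all, any ``chain'' argument along $(r-1)$-intersecting $r$-cliques that would flip the sign of the discrepancy; this justification will later be combined with Lemma~\ref{connectivity}(1) to conclude that if $\delta(R)/|R| > (r-1)/r$, then all $r$-cliques of $R$ have the same discrepancy sign, enabling the application of Lemma~\ref{all k positive}.
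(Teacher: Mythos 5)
Your proof is correct and is essentially identical to the paper's: find a consecutive pair $L_i',L_{i+1}'$ with different discrepancies, note they share exactly $r-1$ vertices, and apply Lemma~\ref{sharing_much_template} (using non-regularity of $H$) to contradict the standing assumption that $R$ has no template for $H$ on at most $r+2$ vertices. Your extra remark that distinctness forces $|L_i'\cap L_{i+1}'|=r-1$ exactly is a small book-keeping point the paper leaves implicit.
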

\begin{proof}
As $f_R(L_1') \neq f_R(L_\ell')$, there exists $1 \leq i \leq \ell-1$ such that $f_R(L'_i) \neq f_R(L'_{i+1})$. Using that $H$ is non-regular, we get that $(L_i'\cup L_{i+1}',f_R)$ is a template for $H$ by Lemma~\ref{sharing_much_template}. However, we assumed that $R$ has no such template for $H$, a contradiction. 
\end{proof}

By \eqref{eq:delta>(r-2)/(r-1)}, $R$ contains a copy of $K_r$. 
Since $r \equiv_4 2,3$, $K_r$ has an odd number of edges and thus, without loss of generality, let us assume that $R$ contains a copy $L_1$ of $K_r$ with $f_R(L_1)>0$. 
If all copies of $K_r$ in $R$ have positive discrepancy, then we are done by
Lemma~\ref{all k positive}. Suppose therefore that $R$ also contains a copy $L_2$ of $K_r$ with $f_R(L_2)<0$.
By Lemma~\ref{connectivity} and \eqref{eq:delta>(r-2)/(r-1)}, there exists a sequence $L'_1,L'_2,\dots, L'_\ell\subseteq R$ of copies of $K_{r}$ with $L'_1 = L_1$ and $L'_\ell = L_2$ such that every pair of subsequent copies share at least $r-2$ vertices. Therefore, there exist two copies of $K_r$ sharing at least $r-2$ vertices with discrepancies of different signs. Without loss of generality, let us assume that $L_1$ and $L_2$ are such copies.
If $|L_1 \cap L_2| = r-1$ then this is a contradiction to Claim~\ref{claim:K_r_chain}. Suppose then that $|L_1 \cap L_2| = r-2$. Put $V = L_1 \cap L_2$, $L_1 \setminus L_2 = \{x_1,y_1\}$ and $L_2 \setminus L_1 = \{x_2,y_2\}$.

We proceed with the proof of Lemma~\ref{high_min_deg}. 
By assumption, $(L_1 \cup L_2, f_R)$ is not a template for $H$. Hence, by 
Lemma~\ref{template_example}, $H$ is $(s,t)$-structured with 
\begin{equation}\label{define s}
s := \frac{f_R(L_1)-f_R(x_1y_1)-f_R(L_2)+f_R(x_2y_2)}{2(r-2)}
\end{equation}
and 
\begin{equation}\label{define t}  
t := f_R(x_1y_1) - f_R(x_2y_2).
\end{equation}
Crucially, note that $(s,t) \neq (0,0)$. Indeed, if $t = 0$ then $s = \frac{f_R(L_1) - f_R(L_2)}{2(r-2)} \neq 0$ because $f_R(L_1) \neq f_R(L_2)$.

By the definition of being $(s,t)$-structured (see Definition~\ref{structured}), there exists $\rho \in \mathbb{R}$ such that for every $r$-coloring $A_1,\dots,A_r$ of $H$ and for all $1\leq i<j\leq r$, it holds that
\begin{equation}\label{eq:structured proof of 11}
\rho(|A_i|+|A_j|) = s\cdot e_H(A_i\cup A_j, V(H)\backslash(A_i\cup A_j)) + t\cdot e_H(A_i,A_j).
\end{equation}
First we handle the case that $H$ is uniform (recall Definition~\ref{def:uniform}). Then for every $r$-coloring $A_1,\dots,A_r$ of $H$ and for all $1 \leq i < j \leq r$, it holds that 
\begin{align*}
\rho(|A_i|+|A_j|) &= s \cdot e_H(A_i\cup A_j,V(H)\backslash (A_i\cup A_j))+t \cdot e_H(A_i,A_j) \\ &=
(2(r-2)s + t) \cdot \frac{e(H)}{\binom{r}{2}} = 
\frac{f_R(L_1)-f_R(L_2)}{\binom{r}{2}} \cdot e(H),
\end{align*}
where the second equality uses the uniformity of $H$. 
It follows that $\rho \neq 0$ because $f_R(L_1) \neq f_R(L_2)$. We get that $|A_i| + |A_j|$ is the same for all $1 \leq i < j \leq r$. 
By Claim~\ref{claim:all equal},
$|A_1| = \dots = |A_r|$. This means that $H$ only has balanced $r$-colorings, contradicting \eqref{stmt: unbalanced exists}. 

For the rest of the proof, we assume that $H$ is non-uniform. By Lemma~\ref{sharing_little_template} and as $(L_1 \cup L_2,f_R)$ is not a template for $H$ (by assumption), we have that 
\begin{equation}\label{eq:multiple of 2(r-2)}
f_R(L_1)-f_R(x_1y_1)-f_R(L_2)+f_R(x_2y_2)\in\{-4(r-2), -2(r-2), 0, 2(r-2), 4(r-2)\}.
\end{equation}

In the next claim, for the case $r = 3$, we classify the possible colorings of the triangles $L_1,L_2$. Also, for each case, we specify the corresponding values of $s,t$.

\begin{figure}
    \centering
    \includegraphics{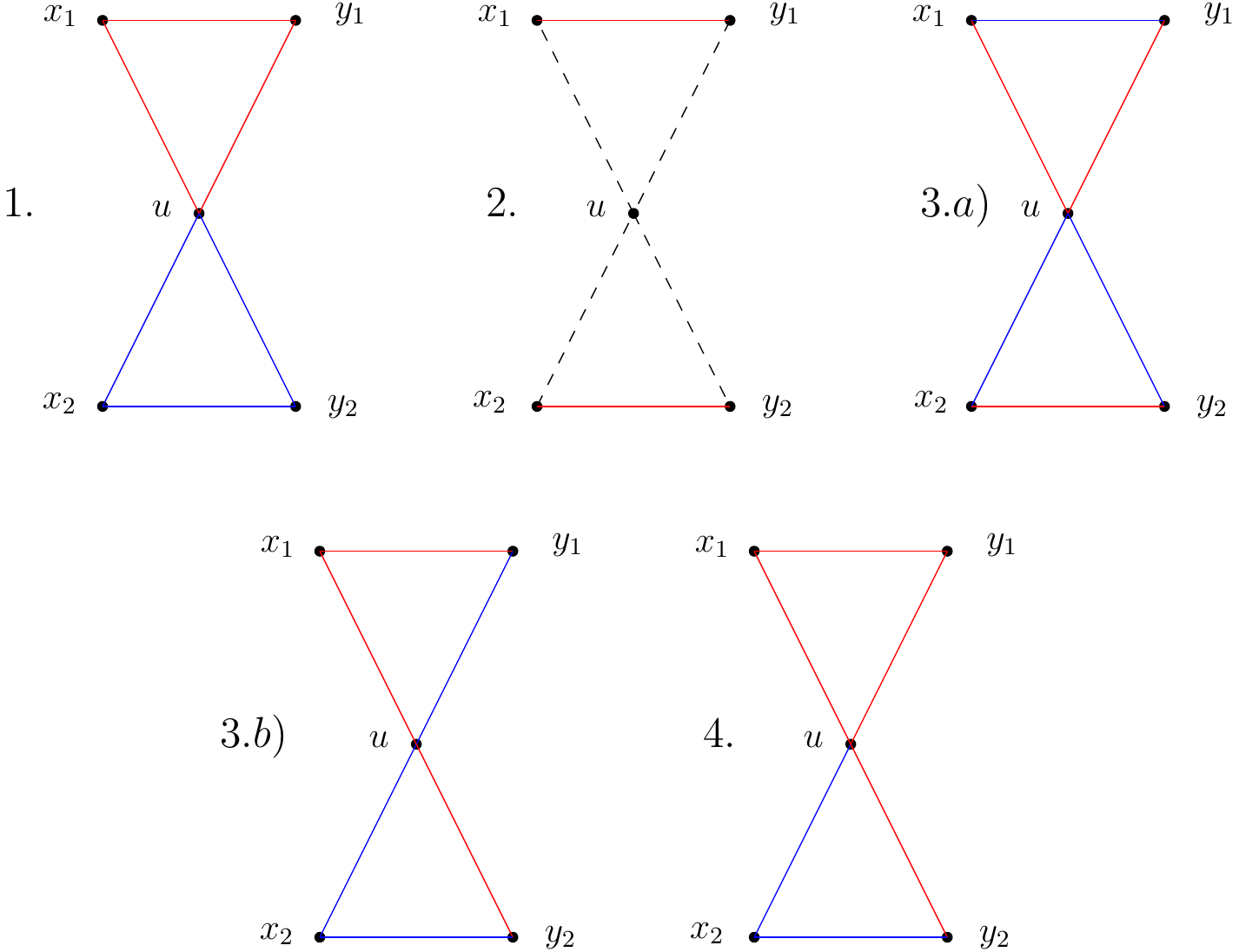}
    \caption{The possible configurations of $L_1, L_2$ corresponding to different cases in Claim~\ref{claim:possible colorings of L1,L2, r=3}.}
    \label{fig:claim11-6}
\end{figure}

\begin{claim}\label{claim:possible colorings of L1,L2, r=3}
    Suppose that $r = 3$. Then one of the following holds:
    \begin{enumerate}
    \item $L_1,L_2$ are monochromatic. In this case $(s,t)=(2,2)$.
    \item $f_R(x_1y_1) = f_R(x_2y_2)$. In this case $t = 0$.
    \item $(L_1 \cup L_2, f_R)$ is a butterfly and $L_1,L_2$ are not monochromatic. In this case $(s,t) \in \{(2,-2),(0,2)\}$.
    \item Exactly one of $L_1$ and $L_2$ is monochromatic and $f_R(x_1y_1)\neq f_R(x_2y_2)$. In this case $(s,t)=(1,2)$.
    \end{enumerate} 
\end{claim}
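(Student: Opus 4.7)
My plan is a finite case analysis driven by two parameters: whether $a := f_R(x_1y_1)$ equals $b := f_R(x_2y_2)$, and how many of $L_1, L_2$ are monochromatic. Since $r=3$ and $|L_1 \cap L_2| = 1$, each triangle's $f_R$-discrepancy lies in $\{\pm 1, \pm 3\}$, with $\pm 3$ occurring iff the triangle is monochromatic, so the sign constraints $f_R(L_1) > 0 > f_R(L_2)$ leave only a handful of configurations to enumerate. This dichotomy lines up exactly with the four cases of the claim: Case 2 will correspond to $a = b$, while Cases 1, 3, and 4 will correspond to $a = -b$ together with two, zero, or one monochromatic triangles, respectively.

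First, if $a = b$ then by \eqref{define t} we have $t = 0$, placing us in Case 2 with nothing further to verify, so I will assume $a = -b$ henceforth. The two sub-cases in which at least one of $L_1, L_2$ is monochromatic are purely mechanical: the sign constraints together with $a = -b$ fully determine all edge colors of $L_1 \cup L_2$, and substitution into \eqref{define s}--\eqref{define t} yields $(s,t)=(2,2)$ when both triangles are monochromatic (Case 1), and $(s,t)=(1,2)$ when exactly one is (Case 4, with the two symmetric sub-cases ``$L_1$ monochromatic'' and ``$L_2$ monochromatic'' handled identically).

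The only sub-case with real content is when neither $L_1$ nor $L_2$ is monochromatic, so $f_R(L_1) = 1 = -f_R(L_2)$. Writing $\alpha_i = f_R(vx_i)$ and $\beta_i = f_R(vy_i)$ for the shared vertex $v \in L_1 \cap L_2$, the identities $\alpha_i + \beta_i + f_R(x_iy_i) = f_R(L_i)$ together with $a = -b$ pin down the multisets $\{\alpha_i, \beta_i\}$ in each of the two choices of $a$: for $a=1$ both multisets equal $\{+1,-1\}$, while for $a=-1$ the signs force $\alpha_1 = \beta_1 = 1$ and $\alpha_2 = \beta_2 = -1$. The main (and essentially only) obstacle I anticipate is the alignment step: the butterfly definition requires a specific pairing of $L_1 \setminus L_2$ with $L_2 \setminus L_1$ so that $\alpha_1 = -\alpha_2$ and $\beta_1 = -\beta_2$ hold simultaneously, and I will need to check that the freedom to relabel $x_i \leftrightarrow y_i$ inside each triangle --- which affects neither $s$ nor $t$ --- suffices to realise this alignment in both choices of $a$. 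Once the alignment is made, all three antisymmetry conditions defining a butterfly are visibly satisfied, placing us in Case 3, and a direct substitution into \eqref{define s}--\eqref{define t} gives $(s,t) \in \{(0,2),(2,-2)\}$ as claimed.
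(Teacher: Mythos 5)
Your proposal is correct and follows essentially the same route as the paper: split on whether $f_R(x_1y_1)=f_R(x_2y_2)$, then on the number of monochromatic triangles, and use the freedom to swap $x_i\leftrightarrow y_i$ to align the butterfly; the paper additionally invokes color-switching symmetry to reduce to ``$L_1$ monochromatic, $L_2$ not'' rather than treating the two mixed sub-cases identically, but this is a cosmetic difference. The alignment concern you flag is indeed harmless, as you note, and the values of $(s,t)$ you obtain by direct substitution in each case agree with the claim.
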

\begin{proof}
Clearly, if $L_1$ is monochromatic then it has color $1$ (as $f_R(L_1) > 0$), and similarly, if $L_2$ is monochromatic then it has color $-1$. 
If $f_R(x_1y_1) = f_R(x_2y_2)$ then we are in Case 2, and $t = 0$ by \eqref{define t}. 
So let us assume that $f_R(x_1y_1) \neq f_R(x_2y_2)$.
If $L_1$ is monochromatic then $f_R(x_2y_2) = - f_R(x_1y_1) = -1$, so this is covered by Case 1 or Case 4. In both cases, it is immediate to compute $s,t$ from \eqref{define s} and \eqref{define t}.

Suppose then that $L_1$ is not monochromatic. By symmetry (with respect to switching the colors), we can also assume that $L_2$ is not monochromatic. 

Write $L_1 \cap L_2 = \{u\}$. Suppose first 
that $f_R(x_1y_1) = -1$. Then $f_R(x_2y_2) = 1$, and we also must have $f_R(ux_1) = f_R(uy_1) = 1$, $f_R(ux_2) = f_R(uy_2) = -1$ (because $f_R(L_1) > 0$, $f_R(L_2) < 0$). So $(L_1 \cup L_2, f_R)$ is a butterfly (Case 3), and $(s,t) = (2,-2)$ by \eqref{define s} and \eqref{define t}.
Suppose now that
that $f_R(x_1y_1) = 1$. Then, $f_R(x_2y_2) = -1$, and without loss of generality (up to switching $x_1,y_1$ or $x_2,y_2$), we have $f_R(ux_1) = 1$, $f_R(uy_1) = -1$, $f_R(ux_2) = -1$, $f_R(uy_2) = 1$. So again $(L_1 \cup L_2, f_R)$ is a butterfly, and $(s,t) = (0,2)$.
\end{proof}

Next, we address some subcases of the case $r=3$, namely, Cases 1-2 in Claim~\ref{claim:possible colorings of L1,L2, r=3}. These cases need to be handled separately.
\subsubsection{Cases 1-2 of Claim~\ref{claim:possible colorings of L1,L2, r=3}}
Throughout this section we assume that $r = 3$. Our goal is to complete the proof of Lemma~\ref{high_min_deg} in Cases 1-2 of Claim~\ref{claim:possible colorings of L1,L2, r=3}. Case 1 is simple, while Case 2 requires considerable work.
\paragraph{Case 1:} 
By \eqref{stmt: unbalanced exists}, $H$ has an unbalanced $3$-coloring. Fix such a coloring $A_1,A_2,A_3$, and suppose without loss of generality that $|A_1|<|A_3|$. By Claim~\ref{claim:possible colorings of L1,L2, r=3}, $(s,t) = (2,2)$. 
By \eqref{eq:structured proof of 11}, we have
$$
\rho(|A_i|+|A_j|)= 2e_H(A_i\cup A_j, V(H)\backslash(A_i\cup A_j))+2e_H(A_i,A_j)=2e(H)
$$
for every pair $1 \leq i < j \leq 3$. In particular, $\rho \neq 0$.
So $|A_i| + |A_j| = 2e(H)/\rho$ for all $1 \leq i < j \leq 3$. But this implies that
$
|A_1| + |A_2| = |A_2| + |A_3|,
$
in contradiction to $|A_1|<|A_3|$. This completes Case 1.

\paragraph{Case 2:} $f_R(x_1y_1) = f_R(x_2y_2)$. 
By Claim~\ref{claim:possible colorings of L1,L2, r=3}, we have
$t = 0$. Then $s \neq 0$, because $(s,t) \neq (0,0)$. By normalizing, we get that $H$ is $(1,0)$-structured (recall that if $H$ is $(s,t)$ structured then it is also $(\alpha \cdot s, \alpha \cdot t)$-structured). With a slight abuse of notation, we will set $(s,t) = (1,0)$ for the rest of Case 2.
By \eqref{eq:structured proof of 11}, 
\begin{equation}\label{eq:rho opposite edges same general}
    \rho(|A_i|+|A_j|) = 
    e_H(A_i\cup A_j, V(H)\backslash(A_i\cup A_j))= 
    e(H) - e_H(A_i,A_j)
\end{equation}
holds for every $3$-coloring $A_1,A_2,A_3$ of $H$ and for all $1 \leq i < j \leq 3$. Summing this over all $i,j$, we get
$$2\rho|H| = \rho \sum_{1 \leq i < j \leq 3}(|A_i|+|A_j|) = 3e(H) - \sum_{1 \leq i < j \leq 3}e_H(A_1,A_2) = 2e(H).$$ 
Hence,
\begin{equation}\label{eq:e(H) = rho|H|}
    e(H) = \rho|H|.
\end{equation}
This implies that $\rho > 0$. 
Combining \eqref{eq:e(H) = rho|H|} with \eqref{eq:rho opposite edges same general}, we get
\begin{equation}\label{eq:rho opposite edges same}
e_H(A_i,A_j) = \rho(|H| - |A_i| - |A_j|).
\end{equation}
\noindent
Next, we show that a pair of intersecting triangles in $R$ cannot have opposite edges of different color. 
\begin{claim}\label{claim:No triangles with different opposite edges}
    Let $L'_1,L'_2$ be two distinct triangles in $R$ with $|L'_1 \cap L'_2| \geq 1$. Let $e_1,e_2$ be edges with $L'_1 \setminus L'_2 \subseteq e_1$ and $L'_2 \setminus L'_1 \subseteq e_2$. Then $f_R(e_1) = f_R(e_2)$.
\end{claim}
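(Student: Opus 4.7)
The approach is to leverage Lemma~\ref{template_example} together with our standing assumption that $R$ contains no template for $H$ on at most $r+2 = 5$ vertices, exploiting that $H$ has already been shown to be $(1,0)$-structured with parameter $\rho \neq 0$ and that $H$ admits an unbalanced $3$-coloring by~\eqref{stmt: unbalanced exists}. The proof splits naturally according to $|L'_1 \cap L'_2| \in \{1, 2\}$.

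First I handle the core case $|L'_1 \cap L'_2| = 1$. Here $L'_1 \setminus L'_2$ and $L'_2 \setminus L'_1$ are each two-element sets forming edges of $R$ (as subsets of the triangles $L'_i$), so the containment $L'_i \setminus L'_{3-i} \subseteq e_i$ forces $e_1 = L'_1 \setminus L'_2$ and $e_2 = L'_2 \setminus L'_1$. Since $(L'_1 \cup L'_2, f_R)$ is not a template for $H$ by assumption, Lemma~\ref{template_example} implies that $H$ is $(s', t')$-structured with
\[
s' = \tfrac{1}{2}\bigl(f_R(L'_1) - f_R(e_1) - f_R(L'_2) + f_R(e_2)\bigr), \qquad t' = f_R(e_1) - f_R(e_2).
\]
Suppose for contradiction that $t' \neq 0$. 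Rescaling gives that $H$ is also $(s'/t', 1)$-structured. Apply Lemma~\ref{no_two} with $(s,1) = (s'/t', 1)$ and $(s', t) = (1, 0)$; the non-degeneracy condition $s' - t \cdot s = 1 - 0 \cdot (s'/t') = 1 \neq 0$ is satisfied, so the lemma concludes that every $3$-coloring of $H$ is balanced, contradicting~\eqref{stmt: unbalanced exists}. Hence $t' = 0$, i.e., $f_R(e_1) = f_R(e_2)$.

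Next I reduce the remaining case $|L'_1 \cap L'_2| = 2$ to the previous one via intermediate triangles. Write $L'_1 = \{a, b, c\}$ and $L'_2 = \{a, b, d\}$, so $L'_1 \setminus L'_2 = \{c\}$ and $L'_2 \setminus L'_1 = \{d\}$, while $e_1$ and $e_2$ are arbitrary edges of $R$ incident to $c$ and $d$, respectively. The minimum-degree bound $\delta(R)/|R| > 1/2$ coming from~\eqref{eq:delta>(r-2)/(r-1)} guarantees that every pair of adjacent vertices has a common neighbor, in fact many: for any specified finite set of forbidden vertices whose size is independent of $|R|$, the pair still shares a common neighbor outside this set. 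Using this, I construct triangles $T_1$ containing $e_1$ and $T_2$ containing $e_2$ with $|T_1 \cap T_2| = 1$, and more generally chain together a bounded-length sequence of triangles, each pair of consecutive ones sharing exactly one vertex. Applying the first case at each step propagates a sequence of color equalities of the ``opposite edges'', eventually yielding $f_R(e_1) = f_R(e_2)$.

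The main obstacle is the second case: one must design the chain of auxiliary triangles so that, at each step, the pair of triangles genuinely shares only one vertex (so the first case applies) and so that the chain of edge-equalities actually reaches from $e_1$ to $e_2$. This is handled by carefully selecting common neighbors avoiding previously used vertices, which is possible because the degrees in $R$ exceed $|R|/2$ by a constant factor, leaving ample slack to accommodate a bounded number of forbidden vertices.
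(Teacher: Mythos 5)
Your first case ($|L'_1\cap L'_2|=1$) is exactly the paper's argument: apply Lemma~\ref{template_example} to the two triangles to get that $H$ is $(s',t')$-structured with $t'=f_R(e_1)-f_R(e_2)$, and if $t'\neq 0$ combine (after normalizing) with the already-established $(1,0)$-structuredness via Lemma~\ref{no_two} to conclude that all $3$-colorings of $H$ are balanced, contradicting \eqref{stmt: unbalanced exists}. This part is complete and correct, and your verification of the hypothesis $s'-t\cdot s=1\neq 0$ of Lemma~\ref{no_two} is right.

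The second case is where your write-up has a genuine gap. Writing $L'_1=\{u,v,w_1\}$, $L'_2=\{u,v,w_2\}$, the relevant edges are $e_1=vw_1$ and $e_2=vw_2$, which \emph{share the vertex $v$}. Consequently, your opening move --- find triangles $T_1\supseteq e_1$ and $T_2\supseteq e_2$ with $|T_1\cap T_2|=1$ and apply the first case --- cannot work: since $v\in e_1\cap e_2$, any such pair has $T_1\cap T_2=\{v\}$, and the first case then equates $f_R(T_1\setminus\{v\})$ with $f_R(T_2\setminus\{v\})$, i.e.\ the edges \emph{opposite} $v$, not $e_1$ and $e_2$ themselves. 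More generally, a chain of triangles with consecutive pairs sharing exactly one vertex only propagates an equality of edges if all the triangles pass through the \emph{same} pivot vertex, and here that pivot must be the other common vertex $u$ (so that $e_1=L'_1\setminus\{u\}$ and $e_2=L'_2\setminus\{u\}$ are the edges being equated at the two ends). This is the one structural idea your sketch is missing; "avoiding previously used vertices" handles disjointness but not the pivoting. Once you add it, the argument closes quickly: the paper takes $z_i$ a common neighbour of $u,w_i$ avoiding $v$ and applies the first case to the pairs $(\{u,v,w_2\},\{u,w_1,z_1\})$, $(\{u,v,w_1\},\{u,w_2,z_2\})$ and $(\{u,w_1,z_1\},\{u,w_2,z_2\})$; alternatively a single auxiliary triangle $\{u,a,b\}$ with $a,b\in N_R(u)\setminus\{v,w_1,w_2\}$, meeting each of $L'_1,L'_2$ only in $u$, gives $f_R(vw_1)=f_R(ab)=f_R(vw_2)$. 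The existence of these triangles does follow from $\delta(R)/|R|>1/2+\eta/2$ as you say.
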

\begin{proof}
Suppose first $|L'_1 \cap L'_2| = 1$, so that $e_1 \cap e_2 = \emptyset$. Assume, for the sake of contradiction, that $f_R(e_1) \neq f_R(e_2)$.
By assumption, $(L'_1 \cup L'_2, f_R)$ is not a template for $H$. Then, by Lemma~\ref{template_example}, $H$ is $(s',t')$-structured with $t' = f_R(e_1) - f_R(e_2) \neq 0$ (the value of $s'$ will not be important). By normalizing, $H$ is $(s'',1)$-structured for some $s''$. 
In addition, we saw above that $H$ is $(1,0)$-structured. So by Lemma~\ref{no_two}, every $3$-coloring of $H$ is balanced, a contradiction.

Suppose now $|L'_1 \cap L'_2| = 2$, and write $e_1 = vw_1$, $e_2 = vw_2$, $L'_1 = \{u,v,w_1\}$, $L'_2 = \{u,v,w_2\}$. Recall that $\delta(R)/|R| \geq 1/2 + \eta/2$ by \eqref{eq:delta>(r-2)/(r-1)}. This implies that for each $i=1,2$, there exists a common neighbour $z_i$ of $u,w_i$, $z_i \neq v$. By the previous case for the triangles $\{u,v,w_2\}$ and $\{u,w_1,z_1\}$, it holds that $f_R(w_1z_1) = f_R(vw_2) = f_R(e_2)$. Similarly, $f_R(w_2z_2) = f_R(vw_1) = f_R(e_1)$. Finally, by considering the triangles $\{u,w_1,z_1\}$ and $\{u,w_2,z_2\}$, we get that $f_R(w_1z_1) = f_R(w_2z_2)$, so $f_R(e_1) = f_R(e_2)$.
\end{proof}
Claim~\ref{claim:No triangles with different opposite edges} means that for every vertex $v\in V(R)$, all edges opposite $v$ in triangles containing $v$ have the same color.
Let $W_+\subseteq V(R)$ be the vertices for which these edges have color $1$, and let $W_- = V(R)\backslash W_+$ be the vertices for which these edges have color $-1$. 
We now consider two cases according to the size of the sets $W_+,W_-$.
\paragraph{Case 2(a):} $|W_+|\geq (1/2+\eta/2)|R|$ or $|W_-|\geq (1/2+\eta/2)|R|$. Without loss of generality, assume that $|W_+|\geq (1/2+\eta/2)|R|$. 
Recall the definition of the graph $H^*$ (see Definition~\ref{def:H^*}). In particular, $H^*$ is a complete $3$-partite graph and has a perfect $H$-factor.
Recall the definition of $V_0$ (i.e., $V_0$ is the exceptional set in the regular partition of $G'$). 
Finally, recall that for a vertex-set $W \subseteq V(R)$, $V_W := \bigcup_{w \in W}V_w \subseteq V(G')$ denotes the union of clusters corresponding to the vertices of $W$. 
Put $V_+ = V_{W_+}$ and $V_- = V_{W_-}$. 
In the following key claim, we show that if $J$ is an $H^*$-copy in $G'$ disjoint from $V_0$, then for every perfect $H$-factor $F$ of $J$, the discrepancy of $F$ can be expressed in terms of the intersection of $V(J)$ with $V_+$ and $V_-$. We will then use this claim to conclude the proof in Case 2(a). 
\begin{claim}\label{claim:f(F_j)}
    Let $J$ be an $H^*$-copy in $G'$ with $V(J) \cap V_0 = \emptyset$. Then for every perfect $H$-factor $F$ of $J$,
    \begin{equation}\label{eq:f(F_J)}
        f(F) =\rho\left( \left|V(F_J)\cap V_+\right|-\left|V(F_J)\cap V_-\right|\right).
    \end{equation}
\end{claim}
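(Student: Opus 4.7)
The plan is to analyze each copy $H_0 \in F$ individually, determine the color of each of its edges via the structural property that defines $W_+$ and $W_-$, and then apply the identity $e_H(A_i,A_j)=\rho|A_k|$ that follows from $H$ being $(1,0)$-structured. Concretely, from (\ref{eq:rho opposite edges same}) and $|H|-|A_i|-|A_j|=|A_k|$ (for any 3-coloring $A_1,A_2,A_3$ of $H$ and $\{i,j,k\}=\{1,2,3\}$), I get
$$e_H(A_i,A_j) = \rho\,|A_k|.$$

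Next, fix $H_0 \in F$ and set $A_i := V(H_0)\cap B_i$, where $B_1,B_2,B_3$ are the three parts of $J$ inherited from $H^*$. Since $\chi(H)=3$ and $H^*$ is complete tripartite (no edges inside a part), every $A_i$ is nonempty, and for distinct parts $B_i,B_j$, each pair $u\in B_i,v\in B_j$ satisfies $V_u^R\neq V_v^R$ (else $u,v$ would be joined by an edge of $G'$ inside a single cluster). Thus for any edge $uv\in E(H_0)$ with $u\in A_i,v\in A_j$ and any $w\in A_k$, the clusters $V_u^R,V_v^R,V_w^R$ are pairwise distinct and form a triangle in $R$. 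By the definition of $W_+,W_-$, the opposite edge $V_u^R V_v^R$ in this triangle has color $c(V_w^R)$, where $c(z):=+1$ for $z\in W_+$ and $-1$ for $z\in W_-$. Using (\ref{eq:f_R}) to identify $f(uv)$ with $f_R(V_u^R V_v^R)$, we deduce $f(uv)=c(V_w^R)$ for every $w\in A_k$. Since the left-hand side does not depend on $w$, all vertices of $A_k$ share a common cluster-color, so $A_k$ lies entirely in $V_+$ or entirely in $V_-$; denote this common value by $c(A_k)\in\{+1,-1\}$. In particular, every edge between $A_i$ and $A_j$ in $H_0$ has color $c(A_k)$.

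Combining the two ingredients gives
$$f(H_0) \;=\; \sum_{\{i,j,k\}=\{1,2,3\},\,i<j} e_H(A_i,A_j)\cdot c(A_k) \;=\; \rho\sum_{k=1}^{3} |A_k|\,c(A_k) \;=\; \rho\bigl(|V(H_0)\cap V_+| - |V(H_0)\cap V_-|\bigr),$$
where the last step uses that $|A_k|\,c(A_k) = |A_k\cap V_+| - |A_k\cap V_-|$ because $A_k$ is entirely in one of $V_\pm$. Summing over all $H_0\in F$ and using that $F$ is perfect, so $V(F)=V(J)$, yields $f(F) = \rho(|V(J)\cap V_+| - |V(J)\cap V_-|)$, as required. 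The one point that needs care is the consistency argument forcing all of $A_k$ into a single $W_\pm$; this is not a computation but a direct consequence of the well-definedness of $f(uv)$ together with Claim~\ref{claim:No triangles with different opposite edges}. Everything else is bookkeeping.
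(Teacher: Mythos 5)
Your proof is correct and follows essentially the same route as the paper's: both rest on the definition of $W_+,W_-$ (i.e.\ Claim~\ref{claim:No triangles with different opposite edges}) to force each color class into a single side and to determine the color of the opposite edges, combined with the $(1,0)$-structured identity $e_H(A_i,A_j)=\rho|A_k|$. The only difference is organizational — you argue copy-by-copy on the classes $A_i$ of each $H_0$, whereas the paper first shows all bipartite pairs $(B_i,B_j)$ of $J$ are monochromatic and each $B_i$ lies wholly in $V_+$ or $V_-$ — which is immaterial.
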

\begin{proof} 
Recall that for a vertex $v \in V(G') \setminus V_0$, we use $V^R_v \in V(R)$ to denote the cluster of the regular partition containing $v$. The assumption $V(J) \cap V_0 = \emptyset$ means that $V^R_v$ is well-defined for every $v \in V(J)$.  

Let $B_1,B_2,B_3$ be the parts of $J$. 
We claim that all bipartite graphs $(B_i,B_j)$, $1 \leq i < j \leq 3$, are monochromatic. Indeed, assume by contradiction (and without loss of generality) that there exist $b_1,b_1'\in B_1$ and $b_2\in B_2$ with $f(b_1b_2)\neq f(b'_1b_2)$. Fix an arbitrary $b_3\in B_3$. 
Consider the clusters $V_{b_1}^R,V_{b'_1}^R,V_{b_2}^R,V_{b_3}^R \in V(R)$ corresponding to the vertices $b_1,b'_1,b_2,b_3$, respectively. 
Then $V_{b_1}^R,V_{b_2}^R,V_{b_3}^R$ and $V_{b'_1}^R,V_{b_2}^R,V_{b_3}^R$ are triangles in $R$, and the edges $V_{b_1}^RV_{b_2}^R$ and $V_{b'_1}^RV_{b_2}^R$ opposite $V_{b_3}^R$ in these triangles have different colors (as $f(b_1b_2)\neq f(b'_1b_2)$). 
This is a contradiction, proving our claim that all bipartite graphs $(B_i,B_j)$ are monochromatic. This means that $(J,f_R)$ is a blowup of $(K_3,c)$ for some coloring $c$ of $K_3$. 

Next, we claim that 
$B_i \subseteq V_+$ or $B_i \subseteq V_-$ for each $i=1,2,3$. Let us prove this for $i = 1$. 
Fix arbitrary vertices $b_2\in B_2$ and $b_3\in B_3$. As $J$ is a complete tripartite graph, every vertex $b_1\in B_1$ forms a triangle with $b_2,b_3$. Now, by definition, if $f_R(V_{b_2}^RV_{b_3}^R)=1$ then $V_{b_1}^R \in W_+$ and hence $b_1 \in V_+$ for all $b_1 \in B_1$, and if $f_R(V_{b_2}^RV_{b_3}^R)=-1$ then $V_{b_1}^R \in W_-$ and hence $b_1 \in V_-$ for all $b_1 \in B_1$. This proves our claim. 

Let $H'$ be a copy of $H$ in $F$, and let $A_i := V(H') \cap B_i$, so that $A_1,A_2,A_3$ is a $3$-coloring of $H'$. Observe that if $A_3 \subseteq V_+$ (resp. $A_3 \subseteq V_-$) then all edges between $A_1,A_2$ have color $1$ (resp. $-1$). Moreover, 
$e_{H'}(A_1,A_2) = \rho|A_3|$ by \eqref{eq:rho opposite edges same}. By using this and the analogous statement for the pairs $A_1,A_3$ and $A_2,A_3$, we get the following:
$$
f(H') = \sum_{i \in [3] : \; A_i \subseteq V_+}\rho|A_i| - 
\sum_{i \in [3] : \; A_i \subseteq V_-}\rho|A_i| = 
\rho\left( \left|V(H')\cap V_+\right|-\left|V(H')\cap V_-\right|\right).
$$
Summing the above over all $H$-copies $H'$ in $F$ gives \eqref{eq:f(F_J)}.
\end{proof} 
We now conclude Case 2(a). 
By Lemma~\ref{lem:H* factor}, $G'$ has a perfect $H^*$-factor $F^*$.
Recall that $e(H)=\rho |H|$ (by \eqref{eq:e(H) = rho|H|}) and therefore, any perfect $H$-factor of $H^*$ has exactly $e(H) \cdot |H^*|/|H| = \rho |H^*|$ edges. Let $F$ be a perfect $H$-factor of $G'$, obtained by taking a perfect $H$-factor $F_J$ of each $H^*$-copy $J$ in $F^*$. 
Our assumption $|W_+|\geq (1/2+\eta/2)|R|$ means that 
$|V_+|\geq (1/2+\eta/2)(n-|V_0|)$ and hence $|V_-|\leq (1/2-\eta/2)(n-|V_0|)$. 
There are at most $|V_0|$ copies of $H^*$ in $F^*$ which intersect $V_0$, and for each such copy $J$, $F_J$ has $\rho |H^*|$ edges. For all other $H^*$-copies $J \in F^*$, we use \eqref{eq:f(F_J)} with $F = F_J$.
Combining all of this, we get: 
\begin{equation}\label{eq:f(F) r=3}
\begin{split}
f(F) &= \sum_{J \in F^*}f(F_J) \geq 
\rho \sum_{J \in F^* : V(J) \cap V_0 = \emptyset}\left( \left|V(F_J)\cap V_+\right|-\left|V(F_J)\cap V_-\right|\right) - \rho|H^*| |V_0| \\ &\geq
\rho\left( |V_+| - |H^*||V_0| - |V_-| \right) - \rho|H^*| |V_0| \geq 
\rho \eta (n - |V_0|) - 2\rho|H^*||V_0| \geq 
\rho \eta n/2 \geq
\gamma n.
\end{split}
\end{equation}
The quantity $|V_+| - |H^*||V_0|$ in the second line of \eqref{eq:f(F) r=3} is a lower bound in $\sum_{J}|V(J) \cap V_+|$ over all $J$ with $V(J) \cap V_0 = \emptyset$.
The penultimate inequality in \eqref{eq:f(F) r=3} holds because $|V_0| \leq \varepsilon n$, $\varepsilon \ll \frac{1}{|H|},\eta$, and $H^*$ depends only on $H$ and $\eta$. Finally, the last inequality in \eqref{eq:f(F) r=3} holds because $\gamma \ll \frac{1}{|H|},\eta$, and $\rho > 0$ depends only on $H$. By \eqref{eq:f(F) r=3}, $F$ has high discrepancy. This concludes Case 2(a).

\paragraph{Case 2(b)}
$|W_+|,|W_-|< (1/2+\eta/2)|R|$. We will show that this case is impossible. First, we prove some structural properties of $R$ and $f_R$. 
\begin{claim}\label{|S_+|}
$R[W_+]$ and $R[W_-]$ are triangle-free. 
\end{claim}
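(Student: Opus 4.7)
The plan is to argue by contradiction; by the symmetry between the two colours it suffices to rule out a triangle in $R[W_+]$, so I will assume $T=\{u,v,w\}$ is a triangle of $R$ with $u,v,w\in W_+$. The first step is to observe that $T$ must be monochromatic of colour $+1$: this follows from the ``type-determines-colour'' principle already implicit in Claim~\ref{claim:No triangles with different opposite edges} and the surrounding discussion, which asserts that the colour of any edge of $R$ equals the type $\tau(\cdot)$ of any common neighbour of its endpoints; applied to each edge $ab$ of $T$ with the third vertex $c\in T$ as common neighbour, this gives $f_R(ab)=\tau(c)=+1$.

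The key second step is to show that no $a\in W_-$ can be adjacent to two vertices of $T$. If $a$ were adjacent to $t_1,t_2\in T$, the triangle $\{a,t_1,t_2\}$ would contain $t_1t_2$ as the edge opposite $a$, forcing $f_R(t_1t_2)=\tau(a)=-1$ and contradicting the $+1$ colour inherited from $T$. Consequently each $a\in W_-$ has at least two non-neighbours in $T$, and a double count of non-adjacent pairs $(a,t)\in W_-\times T$, together with $\delta(R)>(\tfrac12+\tfrac\eta2)|R|$, yields $2|W_-|\le 3(|R|-\delta(R))$, i.e. $|W_-|\le(\tfrac34-\tfrac{3\eta}4)|R|$.

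The main obstacle will be that this crude estimate is not by itself enough to contradict the Case~2(b) lower bound $|W_-|>(\tfrac12-\tfrac\eta2)|R|$, so I would need to tighten it by iterating the argument. For each pair $\{t_i,t_j\}\subseteq T$, every common neighbour $z\in N(t_i)\cap N(t_j)$ itself lies in $W_+$ (by the same colour-type analysis applied to the triangle $\{t_i,t_j,z\}$), so $\{t_i,t_j,z\}$ is again monochromatic of colour $+1$; applying the second step to this new triangle forces any $a\in W_-$ adjacent to $t_i$ to be non-adjacent to every $z\in N(t_i)\cap N(t_j)$, a set of size $\ge 2\delta(R)-|R|>\eta|R|$. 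Aggregating these extra non-adjacencies across the three pairs of vertices of $T$ should sharpen the estimate on $|W_-|$ strictly below $(\tfrac12-\tfrac\eta2)|R|$, yielding the desired contradiction with Case~2(b).
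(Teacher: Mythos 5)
Your first two steps are correct and coincide with the start of the paper's argument: any triangle $T$ in $W_+$ is monochromatic in colour $+1$, and no vertex of $W_-$ is adjacent to two vertices of $T$. But the remainder of your argument has a genuine gap. The paper does not fix an arbitrary triangle; it takes $S$ to be a \emph{maximum} clique in $W_+$ and assumes $|S|\ge 3$. Maximality is the crucial extra ingredient: it guarantees that every vertex of $W_+$ (whether in $S$ or not) has at most $|S|-1$ neighbours in $S$, while the ``one-neighbour'' property gives that every vertex of $W_-$ has at most one neighbour in $S$. Together these give
$$|S|\delta(R)\le \sum_{s\in S}|N_R(s)|\le |W_-|+(|S|-1)|W_+|<|S|\Bigl(\tfrac12+\tfrac{\eta}{2}\Bigr)|R|\le |S|\delta(R),$$
an immediate contradiction. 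Without maximality, you cannot bound $\sum_{s\in S}|N_R(s)\cap W_+|$ by $(|S|-1)|W_+|$, and indeed your own double-count only gives $|W_-|<(\tfrac34-\tfrac{3\eta}{4})|R|$, which you correctly observe is not a contradiction.

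The proposed ``iteration'' over triangles $\{t_i,t_j,z\}$ with $z\in N(t_i)\cap N(t_j)$ is not carried out, and as stated it does not close the gap: each such step only adds a set of size roughly $\eta|R|$ to the forbidden neighbourhood of a vertex $a\in W_-$ adjacent to $t_i$, and this extra savings is an order of magnitude too small (you need to go from $\tfrac34|R|$ down below $\tfrac12|R|$, a gain of $\Omega(|R|)$). Moreover, the constraints from different vertices $a$ overlap heavily, so there is no clean aggregation. The missing idea is simply to replace $T$ by a maximal clique in $W_+$ and use that maximality; this renders the iteration unnecessary.
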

\begin{proof} 
We only prove the assertion for $R[W_+]$; the proof for $R[W_-]$ is analogous. 
Let $S\subseteq W_+$ be the largest clique in $W_+$. We need to show that
$|S| \leq 2$.
Suppose by contradiction that $|S|\geq3$. Then, all the edges in $R[S]$ must have color $1$, because each such edge is contained in a triangle in $R[S]$, hence it is the edge opposite to a vertex from $W_+$ in a triangle. By \eqref{eq:delta>(r-2)/(r-1)},
\begin{equation}\label{eq:sum of degrees in S_+}
    \sum_{s\in S}|N_R(s)|\geq (1/2+\eta/2)|R||S|.
\end{equation}
Note that no two vertices in $S$ share a neighbor in $W_-$, because else we get a triangle containing a vertex from $W_-$ whose opposite edge
has color $1$, contradicting the definition of $W_-$. Also, each vertex in $W_+$ can only be connected to at most $|S|-1$ vertices in $S$, as otherwise we add this vertex to $S$ and get a larger complete graph inside $W_+$, contradicting the maximality of $S$. 
It follows that
$$
\sum_{s\in S}|N_R(s)|\leq |W_-|+(|S|-1)|W_+|<(1/2+\eta/2)|R||S|,
$$
using $|W_+|,|W_-|< (1/2+\eta/2)|R|$.
This contradicts \eqref{eq:sum of degrees in S_+}.
\end{proof} 
\begin{claim}\label{claim:properties of W_+,W_-}
        Every vertex in $W_+$ has an edge of color $1$ to some vertex in $W_-$, and every vertex in $W_-$ has an edge of color $-1$ to some vertex of $W_+$.
\end{claim}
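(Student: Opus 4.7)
The plan is to prove the first statement by contradiction; the second is symmetric under swapping the two colours (and hence the roles of $W_+$ and $W_-$). Fix $v \in W_+$ and suppose that every edge from $v$ to a vertex of $W_-$ has colour $-1$. Set $A := N_R(v) \cap W_+$ and $B := N_R(v) \cap W_-$, so that $N_R(v) = A \sqcup B$. The strategy is to establish three structural properties of this partition that are jointly incompatible with the minimum-degree bound \eqref{eq:delta>(r-2)/(r-1)}.

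First, $R[A]$ has no edges: any edge $a_1 a_2$ with $a_1,a_2 \in A$ would give a triangle $\{v,a_1,a_2\} \subseteq W_+$, contradicting Claim~\ref{|S_+|}. Second, there are no edges between $A$ and $B$: such an edge $au$ with $a \in A$, $u \in B$ would make $a$ a common neighbour of $v$ and $u$, but our hypothesis $f_R(vu) = -1$ combined with the defining property of $W_-$ (and the consistency from Claim~\ref{claim:No triangles with different opposite edges}) forces every common neighbour of $v$ and $u$ to lie in $W_-$, contradicting $a \in W_+$. Third, every vertex $u \in N_R(v)$ has at least $2\delta(R) - |R| \geq \eta|R|$ neighbours inside $N_R(v)$, since its set of common neighbours with $v$ sits in $N_R(v)$ and has size at least $2\delta(R) - |R|$. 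Applied to any $a \in A$, this last fact contradicts the first two, which forced $a$ to have no neighbour at all in $A \cup B = N_R(v)$. Hence $A = \emptyset$.

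But $A = \emptyset$ means $N_R(v) \subseteq W_-$, giving $|W_-| \geq \delta(R) \geq (1/2 + \eta/2)|R|$, contradicting the Case 2(b) hypothesis $|W_-| < (1/2 + \eta/2)|R|$. The only step that requires mild care is turning the assumption $f_R(vu) = -1$ into the assertion that every common neighbour of $u$ and $v$ lies in $W_-$; this is immediate from the definition of $W_\pm$ together with Claim~\ref{claim:No triangles with different opposite edges}, so I do not anticipate any substantive obstacle. The proof should be short, of only a few lines.
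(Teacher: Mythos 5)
Your proof is correct and rests on exactly the same ingredients as the paper's: the triangle-freeness of $R[W_+]$ (Claim~\ref{|S_+|}), the bound $\delta(R)\geq(1/2+\eta/2)|R|>|W_+|,|W_-|$, the resulting positive codegree of any two adjacent vertices, and the defining property of $W_\pm$. The paper simply runs the argument directly (pick a neighbour $u\in W_+$ of $v$, take a common neighbour $w$ of $u,v$, note $w\in W_-$ by triangle-freeness, and conclude $f_R(vw)=1$ since $vw$ is opposite $u\in W_+$), whereas you run the contrapositive; both are equally valid and essentially identical in content.
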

\begin{proof}
     We only prove the assertion for vertices $W_+$; the other case is symmetrical. So let $v \in W_+$. Since $\delta(R) \geq (1/2+\eta/2)|R| > |W_+|,|W_-|$ by \eqref{eq:delta>(r-2)/(r-1)}, $v$ must have a neighbour $u$ in $W_+$. 
     Since $\delta(R) > |R|/2$, there is a common neighbour $w$ of $u,v$. Then $w \in W_-$ because $R[W_+]$ is triangle-free by Claim~\ref{|S_+|}. 
     The edge $vw$ has color $1$ because it is the edge opposite $u \in W_+$ in the triangle $u,v,w$. So $v$ indeed has an edge of color $1$ to $W_-$. 
\end{proof}

Claim~\ref{claim:properties of W_+,W_-} implies that there exists a cycle $C$ in the bipartite graph $R[W_+,W_-]$ with edges of alternating color. To see this, consider an orientation of the edges of $R[W_+,W_-]$ where all edges of color $1$ are oriented from $W_+$ to $W_-$ and all edges of color $-1$ are oriented from $W_-$ to $W_+$. In this orientation, every vertex has outdegree at least $1$ (by Claim~\ref{claim:properties of W_+,W_-}), and therefore there exists a directed cycle, which corresponds to a cycle whose edges alternate in color.

By averaging, there is a vertex $v \in V(R)$ which is connected to at least $|C| \cdot \delta(R)/|R| > |C|/2$ vertices of $C$. 
Note that the edges of $C$ of color $1$ as well as the edges of color $-1$ form a perfect matching of $C$. Therefore, there exists an edge of color $1$ in $C$ such that $v$ is connected to both of its endpoints and similarly, an edge of color $-1$ in $C$ such that $v$ is connected to both of its endpoints. 
But then $v$ is contained in a triangle with opposite edge of color $1$ and in a triangle with opposite edge of color $-1$, a contradiction. This completes Case 2(b) and hence Case 2 altogether. 

\subsubsection{Minimum degree at least $\frac{3r-5}{3r-2}$ and the structure of $R$}
We continue with the proof of Lemma~\ref{high_min_deg}. From now on, we will assume that if $r=3$ then Cases 1-2 of Claim~\ref{claim:possible colorings of L1,L2, r=3} do not hold. The following key claim provides a lower bound on $\delta(R)/|R|$.
\begin{claim}\label{claim:(3r-5)/(3r-2)}
It holds that
\begin{equation}\label{eq: delta(R)>3r-5/3r-2}
    \delta(R)/|R| \geq \frac{3r-5}{3r-2} +\eta/2.
\end{equation}
\end{claim}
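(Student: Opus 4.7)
By \eqref{eq:delta(R) proof of Lemma 11}, it is enough to establish the inequality $\max\{\delta_0(H),\, 1-1/\chi^*(H)\} \geq \frac{3r-5}{3r-2}$, except possibly when $r=3$, in which case we may instead appeal to \eqref{stmt:4/7}. I will split into the cases $r \geq 6$ and $r = 3$, invoking Lemma~\ref{lower_delta} and Lemma~\ref{lower_delta_r=3} respectively.

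For $r \geq 6$, the plan is to normalize the pair $(s,t)$ from \eqref{define s} and \eqref{define t} and apply Lemma~\ref{lower_delta}. Since $f_R$ takes values in $\{\pm 1\}$, we have $t = f_R(x_1y_1) - f_R(x_2y_2) \in \{-2, 0, 2\}$, and by \eqref{eq:multiple of 2(r-2)} also $s \in \{-2,-1,0,1,2\}$. Being $(s,t)$-structured with parameter $\rho$ is equivalent to being $(\alpha s, \alpha t)$-structured with parameter $\alpha \rho$ for any nonzero $\alpha \in \mathbb{R}$; combined with the fact, established just above, that $(s,t) \neq (0,0)$, I can rescale to conclude that either $H$ is $(0,1)$-structured (if $s = 0$, so $t \in \{\pm 2\}$) or $H$ is $(1,t')$-structured for some $t' \in \{-2,-1,0,1,2\}$ (if $s \neq 0$). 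Since $H$ additionally satisfies the $r$-wise $C_4$-condition by the hypothesis of Lemma~\ref{high_min_deg}, Lemma~\ref{lower_delta} applies in either sub-case and yields the desired lower bound on $\max\{\delta_0(H),\, 1-1/\chi^*(H)\}$.

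For $r = 3$, I am in Case 3 or Case 4 of Claim~\ref{claim:possible colorings of L1,L2, r=3} (Cases 1 and 2 having just been excluded). In Case 3, $(L_1 \cup L_2, f_R)$ is a butterfly on $r+2 = 5$ vertices; since $R$ is assumed throughout the proof to contain no template for $H$ of size at most $r+2$, this particular butterfly is not a template for $H$, so \eqref{stmt:4/7} immediately gives $\delta(R)/|R| \geq 4/7 + \eta/2 = \frac{3r-5}{3r-2} + \eta/2$. In Case 4, $H$ is $(1,2)$-structured, so Lemma~\ref{lower_delta_r=3} produces $\max\{\delta_0(H),\, 1-1/\chi^*(H)\} \geq 5/8 > 4/7 = \frac{3r-5}{3r-2}$, and \eqref{eq:delta(R) proof of Lemma 11} then closes the case. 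The only delicate point in the whole argument is the normalization step for $r \geq 6$, but the integer constraints on $s$ and $t$ fit exactly into the shortlist covered by Lemma~\ref{lower_delta}, so no further case analysis is needed.
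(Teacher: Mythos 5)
Your proposal is correct and follows essentially the same route as the paper: the same normalization of $(s,t)$ (using $s\in\{-2,-1,0,1,2\}$ from \eqref{eq:multiple of 2(r-2)}, $t\in\{-2,0,2\}$, and $(s,t)\neq(0,0)$) followed by Lemma~\ref{lower_delta} for $r\geq 6$, and the same split into Cases 3 and 4 of Claim~\ref{claim:possible colorings of L1,L2, r=3} for $r=3$, invoking \eqref{stmt:4/7} and Lemma~\ref{lower_delta_r=3} respectively. No gaps.
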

\begin{proof}
    By \eqref{define s} and \eqref{eq:multiple of 2(r-2)}, we have $s \in \{-2,-1,0,1,2\}$.
    Note also that $t \in \{-2,0,2\}$ by \eqref{define t}, 
    and recall that $(s,t) \neq (0,0)$. 
    We now normalize the parameters $s,t$ in order to apply Lemma~\ref{lower_delta}. Recall that if $H$ is $(s,t)$-structured then it is also $(\alpha \cdot s, \alpha \cdot t)$-structured for every $\alpha \in \mathbb{R}$. If $s = 0$ then by normalizing, we may assume that $t = 1$. And if $s \neq 0$, then by dividing $(s,t)$ by $s$, we may assume that $s = 1$ and $t \in \{-2,-1,0,1,2\}$. 
    In any case, the normalized parameters fit the assumption of Lemma~\ref{lower_delta}. Hence, 
    if $r \geq 6$, then Lemma~\ref{lower_delta} gives
\begin{equation*}
    \max\{\delta_0(H),1-1/\chi^*(H)\}\geq\frac{3r-5}{3r-2}.
\end{equation*}
So \eqref{eq: delta(R)>3r-5/3r-2} follows from \eqref{eq:delta(R) proof of Lemma 11}. 

It remains to prove \eqref{eq: delta(R)>3r-5/3r-2} when $r = 3$. 
Note that $\frac{3r-5}{3r-2} = \frac{4}{7}$ for $r = 3$. 
By assumption, we are in Case 3 or 4 of Claim~\ref{claim:possible colorings of L1,L2, r=3}. Suppose first that we are in Case 3, so $(L_1 \cup L_2, f_R)$ is a butterfly. By assumption, $(L_1 \cup L_2, f_R)$ is not a template for $H$. Hence, there exists a butterfly which is not a template for $H$. Now \eqref{eq: delta(R)>3r-5/3r-2} holds by \eqref{stmt:4/7}. 
Finally, suppose that we are in Case 4 of Claim~\ref{claim:possible colorings of L1,L2, r=3}. Then $H$ is $(1,2)$-structured. Now, by Lemma~\ref{lower_delta_r=3}, $\max\{\delta_0(H),1-1/\chi^*(H)\} \geq \frac{5}{8} > \frac{4}{7}$, so again \eqref{eq: delta(R)>3r-5/3r-2} holds by \eqref{eq:delta(R) proof of Lemma 11}.
\end{proof}

The proof proceeds with a sequence of claims that slowly uncovers the structure of $R$, deducing the color of various edges from the assumption that $R$ does not have templates for $H$. For example, in the case $r \neq 3$, we will eventually show that $R$ is strongly tilted towards one of the colors, which will allow us to find a perfect $H$-factor with high discrepancy. 
The bound \eqref{eq: delta(R)>3r-5/3r-2} will be crucial. 

Recall that $V(L_1) = V\cup\{x_1,y_1\}$ and $V(L_2) = V\cup\{x_2,y_2\}$. Let $X_1\subseteq V(R)$ be the common neighborhood of $y_1$ and $V$, $Y_1\subseteq V(R)$ the common neighborhood of $x_1$ and $V$, $X_2\subseteq V(R)$ the common neighborhood of $y_2$ and $V$, and $Y_2\subseteq V(R)$ the common neighborhood of $x_2$ and $V$. Note that $x_i \in X_i, y_i \in Y_i$ for $i=1,2$.
Using \eqref{eq: delta(R)>3r-5/3r-2},
we get 
\begin{equation}\label{eq:X1,Y1,X2,Y2}
|X_1|, |Y_1|, |X_2|, |Y_2|\geq (r-1)\delta(R)-(r-2)|R| \geq \left(\frac{1}{3r-2}+\eta\right)|R|.
\end{equation}
\noindent
We now establish some properties of the sets $X_1,Y_1,X_2,Y_2$. 
\begin{claim}\label{clm: edges have same color}
For every edge $e\in R[X_1\cup Y_1]$ it holds that $f_R(e)=f_R(x_1y_1)$, and for every edge $e\in R[X_2\cup Y_2]$ it holds that $f_R(e)=f_R(x_2y_2)$.
\end{claim}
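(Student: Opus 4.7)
The plan is to cover two cases depending on whether the edge $uv \in R[X_1 \cup Y_1]$ is disjoint from $\{x_1,y_1\}$ or meets it. Throughout I exploit the standing assumption that $R$ contains no template for $H$ of size at most $r+2$, together with the $(s,t)$-structuredness of $H$ already derived from Lemma~\ref{template_example} applied to the original pair $(L_1, L_2)$.

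\textbf{Main case: $\{u,v\}\cap\{x_1,y_1\}=\emptyset$.} In this case $L':=V\cup\{u,v\}$ is an $r$-clique in $R$ (both $u,v$ are adjacent to all of $V$ by definition of $X_1,Y_1$ and to each other since $uv\in E(R)$), sharing exactly $V$ with $L_1$. Passing through the intermediate $r$-clique $V\cup\{y_1,u\}$ (or $V\cup\{x_1,u\}$ if $u\in Y_1\setminus X_1$), Claim~\ref{claim:K_r_chain} applied to this length-two chain in which consecutive cliques share $r-1$ vertices yields $f_R(L')=f_R(L_1)$. Lemma~\ref{template_example} applied to $(L_1,L')$ then forces
$$s'=\frac{f_R(uv)-f_R(x_1y_1)}{2(r-2)},\qquad t'=f_R(x_1y_1)-f_R(uv)=-2(r-2)s',$$
i.e.\ $H$ is $(s',t')$-structured. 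If $f_R(uv)\neq f_R(x_1y_1)$ then $s'\neq 0$ and $t'\neq 0$, and I combine this structuredness with the previously established $(s,t)$-structuredness via Lemma~\ref{no_two}. The non-degeneracy hypothesis of that lemma reduces, after a short computation, to $s'\bigl(t+2(r-2)s\bigr)\neq 0$; but $t+2(r-2)s=f_R(L_1)-f_R(L_2)>0$, so the hypothesis is satisfied (handling $t\neq 0$ by normalizing $(s,t)$ to $(s/t,1)$, and $t=0$ by normalizing $(s',t')$ to $(s'/t',1)$ and using that $s\neq 0$ because $(s,t)\neq(0,0)$). Lemma~\ref{no_two} then forces every $r$-coloring of $H$ to be balanced, contradicting \eqref{stmt: unbalanced exists}. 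Hence $f_R(uv)=f_R(x_1y_1)$.

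\textbf{Boundary case: $\{u,v\}\cap\{x_1,y_1\}\neq\emptyset$.} The case $\{u,v\}=\{x_1,y_1\}$ is trivial; otherwise, by symmetry, assume $u=x_1$ and $v\in(X_1\cup Y_1)\setminus\{x_1,y_1\}$. I first rule out $v\in\{x_2,y_2\}$: if it were, then $V\cup\{x_1,v\}$ would be an $r$-clique sharing $r-1$ vertices with both $L_1$ and $L_2$, so Claim~\ref{claim:K_r_chain} would force $f_R(L_1)=f_R(L_2)$, contradicting $f_R(L_1)>0>f_R(L_2)$. Hence $L^*:=V\cup\{x_1,v\}$ is a genuine $r$-clique sharing exactly $V$ with $L_2$, and by Claim~\ref{claim:K_r_chain} applied to the one-step chain $L_1\to L^*$ we have $f_R(L^*)=f_R(L_1)$. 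Applying Lemma~\ref{template_example} to $(L^*,L_2)$ then gives a further $(s^{**},t^{**})$-structuredness of $H$; since structuredness is closed under linear combinations (the defining equation is linear in $(\rho,s,t)$), subtracting the original $(s,t)$-structuredness yields an $(s^{**}-s,t^{**}-t)$-structuredness with $t^{**}-t=-2(r-2)(s^{**}-s)$, which has exactly the algebraic form of the main case. The same Lemma~\ref{no_two} argument then forces $f_R(x_1v)=f_R(x_1y_1)$. The symmetric statement for $R[X_2\cup Y_2]$ follows by swapping the roles of $L_1,L_2$ and negating the colors.

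The main obstacle is the Lemma~\ref{no_two} step: one must normalize the two structurednesses carefully (splitting into $t=0$ versus $t\neq 0$ and using $(s,t)\neq(0,0)$), and verify that the non-degeneracy condition reduces precisely to the strict inequality $f_R(L_1)-f_R(L_2)>0$. This inequality, coming purely from our choice of $L_1,L_2$ as cliques of opposite discrepancy sign, is the one fact that drives the entire argument.
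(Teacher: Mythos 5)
Your proof is correct, but takes a genuinely different route from the paper's. The paper forms $M_1 := V\cup\{u,v\}$ and, after establishing $f_R(M_1)=f_R(L_1)$ via a chain through $V\cup\{u,y_1\}$, always applies Lemma~\ref{template_example} to the pair $(M_1, L_2)$. The resulting second coordinate is $t'=f_R(uv)-f_R(x_2y_2)$, and the key observation is that since $f_R(uv)=-f_R(x_1y_1)$, exactly one of $t$ and $t'$ vanishes — so one of the two structurednesses can be normalized to have second coordinate $1$, and the non-degeneracy condition in Lemma~\ref{no_two} becomes trivial. This works uniformly, with no case split on whether $\{u,v\}$ meets $\{x_1,y_1\}$ (the only thing the paper needs to check is $|M_1\cap L_2|=r-2$, which it gets for free from Claim~\ref{claim:K_r_chain}).

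You instead compare the new clique against $L_1$ when possible. This is cleaner in spirit — you get the exact linear relation $t'=-2(r-2)s'$ with both entries nonzero, and the nondegeneracy becomes the single identity $s'(t+2(r-2)s)=s'\cdot(f_R(L_1)-f_R(L_2))\neq 0$ — but it forces a boundary case when $\{u,v\}$ hits $\{x_1,y_1\}$, which you patch by falling back to a comparison with $L_2$ and then taking a formal difference of structurednesses. That difference step is correct (the defining relation is indeed linear in $(\rho,s,t)$, though this closure property is never stated in the paper and deserves a sentence of justification), and after the subtraction you land exactly back in the main-case algebra. So the two arguments have the same underlying engine (Lemma~\ref{template_example} plus Lemma~\ref{no_two} plus the chain claim), but yours is slightly longer owing to the case split and the extra linear-combination observation. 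One small stylistic point: the phrase ``the non-degeneracy hypothesis reduces to $s'(t+2(r-2)s)\neq 0$'' is the correct formula only for $t\neq 0$; for $t=0$ the condition is simply $s\neq 0$, which you do handle in the parenthetical, so the argument is sound, but the lead sentence is a little misleading as stated.
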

\begin{proof}
We only prove the claim for $X_1 \cup Y_1$; the proof for $X_2 \cup Y_2$ is analogous. 
Let us assume by contradiction that there exist vertices $u,v\in X_1\cup Y_1$ such that $uv \in E(R)$ and $f_R(uv) = -f_R(x_1y_1)$. By definition, $u,v$ are adjacent to all vertices in $V$, hence 
$M_1 := V\cup\{u,v\}$ is an $r$-clique in $R$. Without loss of generality, let us assume that $u\in X_1$. Then $u$ is adjacent to $y_1$, so $M_2 := V\cup\{u,y_1\}$ is also an $r$-clique.
Now, $M_1,M_2,L_1$
is a sequence of $r$-cliques with $|M_1 \cap M_2| = |M_2 \cap L_1| = r-1$, so by Claim~\ref{claim:K_r_chain},
$$
f_R(M_1) = f_R(L_1).
$$
Now, consider the two $r$-cliques $M_1,L_2$. We have $V \subseteq M_1 \cap L_2$, so $|M_1 \cap L_2| \geq r-2$. Also, $f_R(M_1) \neq f_R(L_2)$ because $f_R(L_1) \neq f_R(L_2)$.
By Claim~\ref{claim:K_r_chain} we know that $|M_1 \cap L_2| \neq r-1$, 
so $|M_1 \cap L_2| = r-2$. As $(M_1 \cup L_2,f_R)$ is not a template for $H$, we have by Lemma~\ref{template_example} that $H$ is $(s',t')$-structured with 
$$
s' = \frac{f_R(M_1) - f_R(uv) - f_R(L_2) + f_R(x_2y_2)}{2(r-2)}
$$
and
$$
t' = f_R(uv) - f_R(x_2y_2).
$$
As before, $(s',t') \neq (0,0)$ because $f_R(M_1) \neq f_R(L_2)$.
As $f_R(uv)=-f_R(x_1y_1)$, exactly one of $t,t'$ is zero.
Also, for the pair among $(s,t),(s',t')$ where the second coordinate is not zero, we can normalize this coordinate to be $1$. 
Now $H$ satisfies the conditions of Lemma~\ref{no_two}. Hence, by Lemma~\ref{no_two}, all $r$-colorings of $H$ are balanced, contradicting \eqref{stmt: unbalanced exists}.
\end{proof}
\begin{claim}\label{clm: no edges between sides}
The sets $X_1 \cup Y_1$ and $X_2 \cup Y_2$ are disjoint, and $R$ has no edges between $X_1\cup Y_1$ and $X_2 \cup Y_2$.
\end{claim}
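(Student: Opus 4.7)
The plan is to use Claim~\ref{claim:K_r_chain} as the single engine: any two $r$-cliques of $R$ connected by a chain of $r$-cliques in which consecutive members share $r-1$ vertices must have equal $f_R$-discrepancy. Since $f_R(L_1) > 0 > f_R(L_2)$, any chain of this kind joining a ``$1$-side'' clique (built from $L_1$ by swapping in a vertex of $X_1 \cup Y_1$) to a ``$2$-side'' clique (similarly built from $L_2$) will produce the desired contradiction.

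For disjointness I would suppose $u \in (X_1 \cup Y_1) \cap (X_2 \cup Y_2)$. By the symmetry between $X_i$ and $Y_i$ we may assume $u \in X_1 \cap X_2$, which means $u$ is adjacent to $y_1$, to $y_2$, and to every vertex of $V$. Then $M := V \cup \{u,y_1\}$ and $M' := V \cup \{u,y_2\}$ are $r$-cliques sharing $V \cup \{u\}$, a set of $r-1$ vertices. Moreover $M$ shares $V \cup \{y_1\}$ with $L_1$, and $M'$ shares $V \cup \{y_2\}$ with $L_2$. Applying Claim~\ref{claim:K_r_chain} along the chain $L_1, M, M', L_2$ gives $f_R(L_1) = f_R(L_2)$, a contradiction.

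For the edge part I would assume an edge $uv \in E(R)$ with $u \in X_1 \cup Y_1$ and $v \in X_2 \cup Y_2$; by the disjointness already established, $u \neq v$. By symmetry we may take $u \in X_1$ and $v \in X_2$. Since $u$ and $v$ are each adjacent to all of $V$ and to each other, $M := V \cup \{u,v\}$ is an $r$-clique. Now the chain
\[
L_1, \quad V \cup \{u, y_1\}, \quad V \cup \{u, v\}, \quad V \cup \{v, y_2\}, \quad L_2
\]
consists of $r$-cliques, with consecutive members sharing $r-1$ vertices (namely $V \cup \{y_1\}$, $V \cup \{u\}$, $V \cup \{v\}$, $V \cup \{y_2\}$ respectively). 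Claim~\ref{claim:K_r_chain} again forces $f_R(L_1) = f_R(L_2)$, a contradiction.

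There is no real obstacle here: the argument is simply a bookkeeping exercise in constructing short $K_r$-chains between $L_1$ and $L_2$ through the hypothesized shared or adjacent vertex. The only thing to verify carefully is that in each chain the intermediate sets really are $r$-cliques (all needed edges exist because $u \in X_1$ and $v \in X_2$ supply adjacency to $y_1$ and $y_2$, respectively, and to $V$) and that consecutive cliques intersect in exactly the claimed $(r-1)$-vertex set.
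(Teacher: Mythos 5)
Your proof is correct and uses the same key engine as the paper, namely Claim~\ref{claim:K_r_chain}, with the identical chain $L_1, V\cup\{u,y_1\}, V\cup\{u,v\}, V\cup\{v,y_2\}, L_2$ for the edge part. The only cosmetic difference is in the disjointness part: you build a separate three-clique chain $L_1, V\cup\{u,y_1\}, V\cup\{u,y_2\}, L_2$, whereas the paper proves the edge statement first and then observes that a hypothetical $u\in X_1\cap X_2$ would already yield the forbidden edge $uy_2$ between $X_1$ and $Y_2$; both routes are valid and of comparable length.
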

\begin{proof}
We first prove the second part of the claim. So assume that there exist $u\in X_1\cup Y_1$ and $v\in X_2\cup Y_2$ such that $uv\in R$. Without loss of generality, let us assume that $u\in X_1$ and $v\in X_2$ (the other three cases are similar). 
Then, by definition, $u,v$ are adjacent to all vertices of $V$, $u$ is adjacent to $y_1$, and $v$ is adjacent to $y_2$. 
So $L_1,V\cup \nolinebreak \{u,y_1\},V \cup \nolinebreak \{u,v\}, V\cup\{v,y_2\},L_2$ is a sequence of copies of $K_r$ with every two consecutive copies sharing at least $r-1$ vertices. This contradicts Claim~\ref{claim:K_r_chain} as $f_R(L_1) \neq f_R(L_2)$.

Now assume by contradiction that there 
is $u \in (X_1 \cup Y_1) \cap (X_2 \cup Y_2)$. 
Without loss of generality, suppose that $u \in X_1 \cap X_2$. In particular, $u$ is adjacent to $y_2$. As $y_2 \in Y_2$, the edge $uy_2$ goes between $X_1$ and $Y_2$. This is a contradiction, as we already showed that there are no edges between $X_1\cup Y_1$ and $X_2 \cup Y_2$. 
\end{proof}
\begin{claim}
Either both $X_1$ and $Y_1$ are not independent sets, or both $X_2$ and $Y_2$ are not independent sets.
\end{claim}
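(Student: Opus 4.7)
The plan is to argue the contrapositive. Suppose the conclusion fails: then at least one of $\{X_1,Y_1\}$ is independent in $R$ and at least one of $\{X_2,Y_2\}$ is independent in $R$. The set-up is symmetric under swapping $x_i$ and $y_i$ (which just exchanges $X_i$ with $Y_i$ and preserves $L_i$, the coloring $f_R$, and all earlier structural properties), so we may assume without loss of generality that $X_1$ is independent. Interestingly, the argument does not even need to use the independence on the ``2-side''.

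The main step is a simple degree count inside $X_1$. Fix any $u \in X_1$. By Claim~\ref{clm: no edges between sides}, the sets $X_1\cup Y_1$ and $X_2\cup Y_2$ are disjoint and span no edge of $R$; in particular $u \notin X_2 \cup Y_2$ and $u$ has no neighbour in $X_2 \cup Y_2$. Since $X_1$ is independent, $u$ also has no neighbour in $X_1 \setminus \{u\}$. Hence the non-neighbours of $u$ in $V(R) \setminus \{u\}$ contain the disjoint union $(X_1 \setminus \{u\}) \cup X_2 \cup Y_2$, which gives
\[
\deg_R(u) \leq |R| - |X_1| - |X_2| - |Y_2|.
\]

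To finish, we plug in the two available quantitative inputs. By \eqref{eq:X1,Y1,X2,Y2} together with Claim~\ref{claim:(3r-5)/(3r-2)}, each of $|X_1|,|X_2|,|Y_2|$ is at least $\tfrac{1}{3r-2}|R| + \eta|R|$ (since $(r-1)(3r-5)-(r-2)(3r-2)=1$), so their sum is at least $\tfrac{3}{3r-2}|R| + 3\eta|R|$. Combining with $\deg_R(u) \geq \tfrac{3r-5}{3r-2}|R|+\tfrac{\eta}{2}|R|$ from Claim~\ref{claim:(3r-5)/(3r-2)} yields
\[
\tfrac{3r-5}{3r-2}|R| + \tfrac{\eta}{2}|R| \;\leq\; \deg_R(u) \;\leq\; |R| - (|X_1|+|X_2|+|Y_2|) \;\leq\; \tfrac{3r-5}{3r-2}|R| - 3\eta|R|,
\]
a contradiction for $\eta>0$. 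This refutes the assumption that $X_1$ is independent and hence establishes the claim (in fact it shows the formally stronger statement that none of $X_1,Y_1,X_2,Y_2$ is independent).

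There is no substantial obstacle here: the calibration $\delta(R)/|R| \geq \tfrac{3r-5}{3r-2} + \eta/2$ established in Claim~\ref{claim:(3r-5)/(3r-2)} is tailored precisely so that the common neighbourhoods $X_1,X_2,Y_2$ each have linear size, and the disjointness/non-adjacency provided by Claim~\ref{clm: no edges between sides} forbids $u$'s degree from absorbing the $\Theta(|R|)$ vertices of $X_2\cup Y_2$. The only mild point to verify is that the constants match with no slack to spare, which they do because $(r-1)(3r-5)-(r-2)(3r-2)=1$.
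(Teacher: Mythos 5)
The argument has a genuine gap, and it is precisely at the point where you claim you do not need the independence on the ``$2$-side.'' You write that the non-neighbours of $u$ contain the \emph{disjoint union} $(X_1 \setminus \{u\}) \cup X_2 \cup Y_2$, and then bound $\deg_R(u) \le |R| - |X_1| - |X_2| - |Y_2|$. But at this point in the proof nothing rules out $X_2 \cap Y_2 \neq \emptyset$ (the claim $X_2 \cap Y_2 = \emptyset$ is only proved later, as Claim~\ref{claim:X2,Y2 disjoint}, and via a different argument). Claim~\ref{clm: no edges between sides} gives only that $X_1 \cup Y_1$ is disjoint from $X_2 \cup Y_2$, so the honest conclusion is
$$
\deg_R(u) \le |R| - |X_1| - |X_2 \cup Y_2|,
$$
and if $X_2$ and $Y_2$ overlap substantially, $|X_2 \cup Y_2|$ could be as small as $\max\{|X_2|,|Y_2|\}$. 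Plugging in \eqref{eq:X1,Y1,X2,Y2} then yields only $\deg_R(u) \le \left(\frac{3r-4}{3r-2} - 2\eta\right)|R|$, which does \emph{not} contradict $\delta(R)/|R| \geq \frac{3r-5}{3r-2} + \eta/2$.

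The fix is exactly the piece of information you discarded: the assumption that (say) $X_2$ is independent \emph{is} needed, because $x_2 \in X_2$ is adjacent to every vertex of $Y_2$ (by the definition of $Y_2$ as a common neighbourhood of $x_2$ and $V$), so independence of $X_2$ forces $X_2 \cap Y_2 = \emptyset$. With that in hand the union $(X_1\setminus\{u\}) \cup X_2 \cup Y_2$ really is disjoint, the three lower bounds from \eqref{eq:X1,Y1,X2,Y2} add up, and the contradiction with \eqref{eq: delta(R)>3r-5/3r-2} goes through. This is the paper's proof. So your route is essentially correct except for the step you flagged as unnecessary; restoring it makes the proof work, and in fact the ``formally stronger statement that none of $X_1,Y_1,X_2,Y_2$ is independent'' does not follow from this argument, only the disjunction in the claim does.
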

\begin{proof}
Let us assume that one of $X_2,Y_2$ is independent, and show that then $X_1,Y_1$ are not independent. So suppose without loss of generality that $X_2$ is independent. Then $X_2$ and $Y_2$ are disjoint, because $x_2\in X_2$ is connected to all of $Y_2$.
    By Claim~\ref{clm: no edges between sides}, there are no edges between $X_1 \cup Y_1$ and $X_2 \cup Y_2$. Hence, if $X_1$ were also independent, then every vertex in $X_1$ would have degree at most 
$$
|R|-|X_1|-|X_2|-|Y_2|\leq \frac{3r-5}{3r-2}|R|,
$$
using \eqref{eq:X1,Y1,X2,Y2}. 
This
contradicts \eqref{eq: delta(R)>3r-5/3r-2}. By the same argument, $Y_1$ is also not independent.
\end{proof}
\noindent
Without loss of generality, let us assume that neither $X_1$ nor $Y_1$ is an independent set in $R$. For the rest of the proof, fix an edge $u_1v_1\in R[X_1]$. By definition, $u_1,v_1$ are adjacent to $y_1$ and all vertices of $V$. Hence, $M := V \cup \{u_1,v_1,y_1\}$ is a clique of size $r+1$ in $R$. We now show that $M$ is monochromatic.  
\begin{claim}\label{claim : clique monochromatic}
$R[M]$ is monochromatic with respect to $f_R$.
\end{claim}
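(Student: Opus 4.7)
The plan is in three steps. First, I would apply Corollary~\ref{non_reg_clique} to the $(r+1)$-clique $M$. Since $H$ is non-regular and we assume throughout that $R$ has no template for $H$ of size at most $r+2$, the colored clique $(M,f_R)$ cannot itself be such a template, and hence $M^+$ must be $d$-regular for some $d\in\{0,1,\dots,r\}$. Second, I would invoke Claim~\ref{clm: edges have same color}: since $u_1,v_1\in X_1$ and $y_1\in Y_1$, all three edges of the triangle on $\{u_1,v_1,y_1\}$ lie in $R[X_1\cup Y_1]$ and therefore share the color $f_R(x_1y_1)$, which after normalization is $+1$. This places the triangle inside $M^+$ and in particular forces $d\ge 2$.

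The crux is to upgrade $d\ge 2$ to $d=r$, so that $M^+=K_{r+1}$ and $M$ is monochromatic in color $+$. For $r=3$ this is essentially combinatorial: the unique $2$-regular graph on four vertices is $C_4$, which is triangle-free, so combined with the monochromatic $+$-triangle produced above we must already have $d=3$.

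For $r\ge 6$ (the remaining range under $r\equiv_4 2,3$), my plan is to show that every vertex $w\in M$ has all of its $r$ edges to $M\setminus\{w\}$ in the same color; equivalently, the $r$-sub-clique $L^*:=M\setminus\{w\}$ has $L^{*+}$ regular. Once this is established, propagation from the $+$-triangle $\{u_1,v_1,y_1\}$ immediately makes every edge of $M$ color $+$, giving $d=r$. To deduce regularity of $L^{*+}$, I would apply Lemma~\ref{non-balanced-uniform template} to $L^*$: the conditions $r\ge 4$, $r\ne 5$, and the $r$-wise $C_4$-condition for $H$ are all on the table, and a non-balanced-uniform $H$-factor is constructed from the non-uniformity of $H$ (established earlier in the proof of Lemma~\ref{high_min_deg}) together with the unbalanced $r$-coloring guaranteed by~\eqref{stmt: unbalanced exists}, by gluing a few copies of $H$ with cyclically shifted color classes so as to simultaneously balance the part sizes and preserve non-uniformity of some pair of edge counts. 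If $L^{*+}$ were non-regular, the lemma would then yield a template $(L^*, f_R)$ of size $r\le r+2$ in $R$, contradicting our standing no-template assumption.

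The main obstacle will be the construction of a non-balanced-uniform $H$-factor from a non-uniform but possibly unbalanced $r$-coloring of $H$; non-uniformity of $H$ alone does not a priori guarantee a non-balanced-uniform $H$-factor, and a careful averaging/cyclic-shift argument is needed to ensure that the non-uniform edge pattern survives the balancing. Should even this fail, i.e.\ every $H$-factor turn out to be balanced-uniform, I would instead invoke Lemma~\ref{lem:if balanced-uniform then not a template} to place every $K_r$-coloring inside $\mathcal{K}(H)$, and then exploit the arithmetic that $\binom{r}{2}$ is odd for $r\equiv_4 2,3$ (so no coloring of $K_r$ has zero discrepancy) to run a separate direct blowup argument forcing $M$ to be monochromatic, thereby completing the claim in all cases.
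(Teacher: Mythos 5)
Your setup is sound and your treatment of the case $r=3$ matches the paper (a $d$-regular graph on $4$ vertices containing a triangle must be $K_4$). Your reduction of ``$M$ monochromatic'' to ``some sub-$r$-clique $L^*=M\setminus\{w\}$ has $L^{*+}$ regular'' is also a nice observation: combined with the $d$-regularity of $M^+$ it forces $d\in\{0,r\}$, and the $c$-colored triangle then gives $d=r$.

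The gap is in the step for $r\geq 6$. You propose to get regularity of $L^{*+}$ from Lemma~\ref{non-balanced-uniform template}, whose hypothesis requires the \emph{existence of a non-balanced-uniform $H$-factor}. You flag this as the main obstacle, but it is in fact a dead end: at the point where Claim~\ref{claim : clique monochromatic} is invoked, we have already shown (via Lemma~\ref{template_example} and \eqref{define s}--\eqref{define t}) that $H$ is $(s,t)$-structured, with $(s,t)\neq(0,0)$ and $H$ satisfying the $r$-wise $C_4$-condition. In this situation \emph{every} $H$-factor is balanced-uniform. Indeed, if $s=0$ then $e_H(A_i,A_j)=\rho(|A_i|+|A_j|)$ directly; and if $s\neq 0$, normalize to $s=1$ and apply Lemma~\ref{get_c} (whose hypotheses hold here, and whose coefficient $(r-4+t)(2r-4+t)$ is nonzero for $r\geq 6$ except in the case $(r,t)=(6,-2)$, which is already excluded because it forces all $r$-colorings of $H$ to be balanced, contradicting \eqref{stmt: unbalanced exists}). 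In either case $e_H(A_i,A_j)$ is an affine function of $|A_i|+|A_j|$; summing this relation over the $H$-copies of any $H$-factor $F$ with a balanced $r$-coloring $B_1,\dots,B_r$ shows that $e_F(B_i,B_j)$ is the same for every pair, i.e.\ $F$ is balanced-uniform. So the hypothesis of Lemma~\ref{non-balanced-uniform template} is simply unavailable here, and the fallback route you sketch (via Lemma~\ref{lem:if balanced-uniform then not a template} and the parity of $\binom{r}{2}$) is not developed enough to be evaluated as a proof.

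The paper instead proceeds via Lemma~\ref{sharing_little_template}, which needs only that $H$ is non-uniform and satisfies the $r$-wise $C_4$-condition --- both of which are on hand, and neither of which is implied by (or implies) the existence of a non-balanced-uniform factor. Concretely: assuming $d<r$, pick $z\in V$ with $f_R(u_1z)=-c$, and compare the two $r$-cliques $M_1=(V\setminus\{z\})\cup\{y_1,u_1,v_1\}$ and $M_2=V\cup\{u_1,v_1\}$. They share $r-1$ vertices, so $f_R(M_1)=f_R(M_2)$ by Claim~\ref{claim:K_r_chain}. Taking $e_1=u_1y_1$, $e_2=u_1z$ (with $(V\setminus\{z\})\cup\{v_1\}$ playing the role of $V$ in Lemma~\ref{sharing_little_template}), the relevant quantity $f_R(M_1)-f_R(e_1)-f_R(M_2)+f_R(e_2)=f_R(e_2)-f_R(e_1)=\pm 2$ is not a multiple of $2(r-2)$ for $r\geq 4$, so $(M_1\cup M_2,f_R)$ is a template for $H$ --- the desired contradiction.
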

\begin{proof}
For convenience, put $c = f_R(x_1y_1)$.
By Claim~\ref{clm: edges have same color} and as $x_1,u_1,v_1 \in X_1$, $y_1 \in Y_1$, we have 
\begin{equation}\label{eq:x1,y1,u1,v1}
    f_R(u_1v_1)=f_R(u_1y_1)=f_R(v_1y_1)=f_R(x_1y_1) = c.
\end{equation}
By assumption, $(M,f_R)$ is not a template for $H$. By Corollary~\ref{non_reg_clique}, this means that $R[M]^{c}$ is $d$-regular for some $d$. If $d=r$ then $R[M]$ is monochromatic, so let us assume, by contradiction, that $d < r$. 

Suppose first that $r = 3$. By \eqref{eq:x1,y1,u1,v1}, $u_1,v_1,y_1$ is a monochromatic triangle of color $c$. A regular graph on $4$ vertices containing a triangle must be a complete graph, so $R[M]$ is monochromatic, as required. 

From now on, assume that $r \geq 4$. 
Since $d < r$, $u_1$ has an edge of color $-c$ to some vertex $z \in M$. 
By \eqref{eq:x1,y1,u1,v1}, $z \in V$. Now consider the two $r$-cliques $M_1 := (V \setminus \{z\}) \cup \{y_1,u_1,v_1\}$ and $M_2 := V \cup \{u_1,v_1\}$. Then $|M_1 \cap M_2| = r-1$, so $f_R(M_1) = f_R(M_2)$ by Claim~\ref{claim:K_r_chain}.  
We will now apply Lemma~\ref{sharing_little_template} to $M_1,M_2$ with $e_1 = u_1y_1$ and $e_2 = u_1z$ (and with $(V \setminus \{z\}) \cup \{v_1\}$ in the role of $V$). 
Note that
$$
f_R(M_1) - f_R(e_1) - f_R(M_2) + f_R(e_2) = f_R(e_1) - f_R(e_2) \in \{-2,2\},
$$
using that $f_R(e_2) = -c = -f_R(e_1)$. As $r \geq 4$, it follows that 
$$
f_R(M_1) - f_R(e_1) - f_R(M_2) + f_R(e_2) \notin \{-4(r-2),-2(r-2),0, 2(r-2), 4(r-2)\}.
$$
Now Lemma~\ref{sharing_little_template} implies that $(M_1 \cup M_2, f_R)$ is a template for $H$, contradicting our assumption that $R$ has no such template. 
\end{proof}
By Claim~\ref{claim : clique monochromatic}, $M$ is monochromatic. As $L_1$ intersects the $r$-clique $V \cup \{u_1,y_1\} \subseteq M$ in $r-1$ vertices, we must have $f_R(L_1) = f_R(V\cup\{u_1,y_1\})$ by Claim~\ref{claim:K_r_chain}. Since $f_R(L_1)>0$, it follows that $L_1$ is monochromatic\footnote{It is worth noting that from this point on, we must have $r \leq 7$; namely, the proof is already complete for all $r \geq 8$. Indeed, since all edges in $V$ have color $1$, and $L_2$ has exactly $2r-3$ edges not contained in $V$, we have $0 > f_R(L_2) \geq \binom{r-2}{2} - (2r-3),$ which only holds if $r \leq 7$. So the remaining cases are $r \in \{3,6,7\}$.} in color $1$, and in particular $f_R(x_1y_1)=1$. 

Recall that $H$ is $(s,t)$-structured. Using that $L_1$ is monochromatic in color $1$, we now show that only few options for $s$ and $t$ are possible.
\begin{claim}\label{claim:(s,t)}
The following holds:
\begin{enumerate}
    \item Suppose that $r \neq 3$. Then $H$ is $(1,0)$-structured or $(1,1)$-structured.
    \item Suppose that $r = 3$. Then $f_R(L_2) = -1$ and $H$ is $(1,2)$-structured. 
\end{enumerate}
\end{claim}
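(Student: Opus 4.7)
The plan is a direct computation leveraging the just-established monochromaticity of $L_1$ in color $+1$. That fact gives $f_R(L_1) = \binom{r}{2}$ and $f_R(x_1y_1) = 1$, and substituting into \eqref{define s}--\eqref{define t} (with $f_R(x_2y_2) = 1-t$) yields
$$s = \frac{\binom{r}{2} - t - f_R(L_2)}{2(r-2)}, \qquad t = 1 - f_R(x_2y_2) \in \{0, 2\}.$$
By \eqref{eq:multiple of 2(r-2)} we have $s \in \{-2,-1,0,1,2\}$; and since $r \equiv_4 2,3$ makes $\binom{r}{2}$ odd, $f_R(L_2)$ is an odd integer, so the assumption $f_R(L_2) < 0$ upgrades to $f_R(L_2) \le -1$. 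Everything else is a case split on $t$, sharpened by the integrality/bound constraints on $s$.

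For $r = 3$ no arithmetic is needed: by the standing assumption we are past Cases 1 and 2 of Claim~\ref{claim:possible colorings of L1,L2, r=3}, and Case 3 requires both $L_1$ and $L_2$ to be non-monochromatic, incompatible with $L_1$ being monochromatic. Hence Case 4 applies, which directly reads off $(s,t)=(1,2)$, and $f_R(L_2) = -1$ follows because the only non-monochromatic triangle with negative discrepancy has discrepancy $-1$.

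For $r \neq 3$, the footnote restricts us to $r \in \{6,7\}$, and there are only four $(r,t)$ sub-cases to check. Plugging each $t \in \{0,2\}$ into the displayed formula and imposing $f_R(L_2) \le -1$ odd together with $s \le 2$ forces $s = 2$ for a unique admissible value of $f_R(L_2)$, or eliminates the sub-case entirely: for instance $(r,t) = (7,0)$ is ruled out because $\binom{7}{2}+1 = 22$ is not divisible by $2(r-2) = 10$, and the next admissible $-f_R(L_2)$ already pushes $s$ past $2$. In every surviving sub-case $s = 2$, so $H$ is $(2,0)$- or $(2,2)$-structured, which after rescaling by $1/2$ is $(1,0)$- or $(1,1)$-structured, respectively, as the claim asserts.

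The only step requiring any care is the bookkeeping in the $r \in \{6,7\}$ sub-cases: one simultaneously needs the parity of $f_R(L_2)$ (forced by $r \equiv_4 2,3$), its negativity, and the bound $s \le 2$ to pin down the value of $s$, and dropping any one of these ingredients would admit spurious solutions.
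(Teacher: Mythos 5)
Your proof is correct, but it follows a genuinely different route from the paper's. For the $r=3$ case you argue identically to the paper (eliminate Case~3 of Claim~\ref{claim:possible colorings of L1,L2, r=3} via monochromaticity of $L_1$, read $(s,t)=(1,2)$ and $f_R(L_2)=-1$ off Case~4). For $r\neq 3$, however, the paper does \emph{not} restrict to $r\in\{6,7\}$: in the sub-case $f_R(x_2y_2)=1$ it notes $t=0$ and simply normalizes using the earlier observation $(s,t)\neq(0,0)$, never computing $s$; in the sub-case $f_R(x_2y_2)=-1$ it bounds $\binom{r}{2}-f_R(L_2)-2\geq\binom{r}{2}-1>2(r-2)$ for all $r>3$ and concludes from \eqref{eq:multiple of 2(r-2)} that the quantity must equal $4(r-2)$, i.e.\ $s=2$, uniformly in $r$. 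You instead invoke the footnote's remark that the ambient contradiction already finishes the proof of Lemma~\ref{high_min_deg} for $r\geq 8$, restrict to $r\in\{6,7\}$, and brute-force the finitely many $(r,t)$ pairs using integrality of $s$, the bound $|s|\leq 2$, and the parity of $f_R(L_2)$ (odd since $\binom{r}{2}$ is odd). This is valid and gives slightly sharper information (it pins down $f_R(L_2)$ exactly in each sub-case), but it leans on the footnote observation, which the paper treats as a side remark rather than as a lemma in the chain of deductions, and it does not generalize to larger $r$ the way the paper's inequality argument does. The paper's approach is therefore cleaner and more robust; yours buys explicitness at the cost of depending on the footnote and on tedious arithmetic. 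Both establish the claim.
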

\begin{proof}
We begin with the case $r \neq 3$, namely $r \geq 6$. 
If $f_R(x_2y_2) = 1$ then $t = 0$ by definition, recall \eqref{define t}. As $(s,t) \neq (0,0)$, we can normalize to get that $H$ is $(1,0)$-structured. 

Suppose now that $f_R(x_2y_2) = -1$. Then $t = 2$. Also, using that $f_R(L_1) = \binom{r}{2}$ and $f_R(L_2) < 0$, we get
\begin{equation}\label{different choices} 
f_R(L_1) - f_R(x_1y_1) - f_R(L_2) + f_R(x_2y_2) = \binom{r}{2} - f_R(L_2) - 2 \geq \binom{r}{2} - 1 > 2(r-2),
\end{equation}
where the last inequality holds for $r > 3$. Contrasting this with \eqref{eq:multiple of 2(r-2)}, we see that the LHS of \eqref{different choices} equals $4(r-2)$, which implies that $s = 2$ by \eqref{define s}. So $H$ is $(2,2)$-structured and hence $(1,1)$-structured. 

Now suppose that $r = 3$. By assumption, we are in Case 3 or 4 of Claim~\ref{claim:possible colorings of L1,L2, r=3}. Case 3 is impossible because $L_1$ is monochromatic, hence we are in Case 4. So $H$ is $(1,2)$-structured and $f_R(L_2) = -1$ (as $L_2$ is not monochromatic and $f_R(L_2) < 0$).
\end{proof}

\begin{claim}\label{claim:X2,Y2 disjoint}
    $X_2 \cap Y_2 = \emptyset$.
\end{claim}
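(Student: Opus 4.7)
The plan is to argue by contradiction. Suppose some $z\in X_2\cap Y_2$. Then $z$ is adjacent to every vertex of $L_2=V\cup\{x_2,y_2\}$, so $M:=V\cup\{x_2,y_2,z\}$ is a copy of $K_{r+1}$ in $R$. The coloring $f_R$ restricted to $M$ is then very constrained: applying Claim~\ref{clm: edges have same color} to the three vertices $x_2,y_2,z\in X_2\cup Y_2$ forces the triangle on $\{x_2,y_2,z\}$ to be monochromatic of color $f_R(x_2y_2)$, while the already-established fact that $L_1$ is monochromatic of color $+1$ tells us that every edge of $R[V]$ has color $+1$.

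Since we are assuming throughout that $R$ contains no template for $H$ of size at most $r+2$, the clique $(M,f_R)$ is not a template, and Corollary~\ref{non_reg_clique} combined with the non-regularity of $H$ then forces $M^+$ to be $d$-regular for some integer $d$. Using the known colorings of $R[V]$ and of the triangle on $\{x_2,y_2,z\}$, the regularity implies that each vertex $w\in\{x_2,y_2,z\}$ has the same number $a$ of color-$+1$ neighbors in $V$; a short double count of vertex degrees in $M^+$ together with the count of color-$+1$ edges between $V$ and $\{x_2,y_2,z\}$ then yields the single linear equation $(r-5)a=(r-2)(r-5)$ when $f_R(x_2y_2)=+1$, and $(r-5)a=(r-2)(r-3)$ when $f_R(x_2y_2)=-1$.

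The contradiction then comes from a short case analysis. If $f_R(x_2y_2)=+1$, the first equation gives $a=r-2$ (using $r\ne 5$), so every edge of $M$ turns out to have color $+1$; but this contradicts $f_R(L_2)<0$ since $L_2\subseteq M$. If $f_R(x_2y_2)=-1$, then for $r\ge 6$ the second equation yields $a=(r-2)(r-3)/(r-5)>r-2=|V|$, which is impossible; and for $r=3$ it yields $a=0$, which means every edge from $\{x_2,y_2,z\}$ to the unique vertex of $V$ has color $-1$, so $L_2$ becomes monochromatic of color $-1$ and hence $f_R(L_2)=-3$, contradicting the identity $f_R(L_2)=-1$ supplied by Claim~\ref{claim:(s,t)}. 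The main point to be careful about is that the case split on $f_R(x_2y_2)$ must match the classification of Claim~\ref{claim:(s,t)}—in particular, that claim forces $f_R(x_2y_2)=-1$ whenever $r=3$—but this is already built in, so the argument will go through uniformly.
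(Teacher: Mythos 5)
Your proof is correct. It takes a mildly different but more unified route than the paper's. The paper establishes the stronger intermediate fact that $X_2$ (and symmetrically $Y_2$) is an independent set, from which disjointness follows immediately since $x_2 \in X_2$ is adjacent to all of $Y_2$; and within that argument the paper handles $r=3$ via the triangle-chain tool (Claim~\ref{claim:K_r_chain}) and $r\ge 6$ via Corollary~\ref{non_reg_clique} plus a rougher ``$(r-2)d'\le 3d'$'' count. You instead go straight for the claim: take a hypothetical $z\in X_2\cap Y_2$, form the $(r+1)$-clique $M=L_2\cup\{z\}$, pin down the colouring of $R[\{x_2,y_2,z\}]$ via Claim~\ref{clm: edges have same color} and of $R[V]$ via the monochromaticity of $L_1$, apply Corollary~\ref{non_reg_clique} to get $M^+$ regular, and then solve a single degree-count identity. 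This treats all admissible $r$ (recall $r\equiv_4 2,3$ so $r\ne 5$) uniformly, with only the final numerical contradiction depending on whether $f_R(x_2y_2)=\pm 1$ and on $r=3$ versus $r\ge 6$. The one delicate point — needing $f_R(L_2)=-1$ for the $r=3$ subcase — is correctly sourced from Claim~\ref{claim:(s,t)}, and your observation that this claim already forces $f_R(x_2y_2)=-1$ when $r=3$ is right. The trade-off is that your argument yields only disjointness and not independence of $X_2,Y_2$, but the rest of the paper's proof uses only disjointness, so nothing is lost.
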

\begin{proof}
    We will show that $X_2$ is an independent set (the same is true for $Y_2$). This will imply the claim because $x_2 \in X_2$ is adjacent to all vertices in $Y_2$ (by the definition of $Y_2$), so if there existed $u \in X_2 \cap Y_2$, then $x_2u$ would be an edge inside $X_2$, a contradiction. 

    So let us assume, by contradiction, that $R$ has an edge $u_2v_2$ with $u_2,v_2 \in X_2$.
    By the definition of $X_2$, $u_2,v_2$ are adjacent to $V \cup \{y_2\}$.
    
    Suppose first that $r = 3$. By Claim~\ref{clm: edges have same color}, the triangle $M_1:=\{u_2,v_2,y_2\}$ is monochromatic (with color $f_R(x_2y_2)$). Write $V=\{v\}$ (recall that $|V|=r-2$). So $M_2 = \{v,v_2,y_2\}$ is a triangle. Now, $M_1,M_2,L_2 = \{v,x_2,y_2\}$ is a sequence of triangles with $|M_1 \cap M_2| = |M_2 \cap L_2| = 2$. Also, $M_1$ is monochromatic, while $L_2$ is not monochromatic because $f_R(L_2) = -1$ by Claim~\ref{claim:(s,t)}.  
    This contradicts Claim~\ref{claim:K_r_chain}.

    Suppose now that $r \neq 3$, namely $r \geq 6$. Note that $M_0 := V \cup \{y_2,u,v\}$ is a copy of $K_{r+1}$ in $R$. By assumption, $(M_0,f_R)$ is not a template for $H$. Hence, by Corollary~\ref{non_reg_clique}, there is $d' \in \mathbb{N}$ such that $M_0^-$ is $d'$-regular. All edges inside $V$ have color $1$, so all edges of $M_0$ of color $-1$ must touch $\{y_2,u,v\}$. Considering the edges of color $-1$ touching $V$, we get $(r-2)d' = |V|d' \leq 3d'$. As $r \geq 6$, we get that $d' = 0$, i.e. $M_0$ is monochromatic in color $1$. In particular, $uv$ and all edges between $y_2$ and $V$ have color $1$. By Claim~\ref{clm: edges have same color}, $f_R(x_2y_2) = f_R(uv) = 1$. Hence, the only edges of $L_2$ that can have color $-1$ are edges between $x_2$ and $V$. The number of these edges is the same as the number of edges between $y_2$ and $V$, which all have color $1$. So $f_R(L_2) > 0$, a contradiction. This completes the proof of the claim
\end{proof}

We can now complete the proof in the case $r=3$. 
By Claim~\ref{claim:(s,t)}, $H$ is $(1,2)$-structured.
By Lemma~\ref{lower_delta_r=3}, we have $\max\{\delta_0(H),1-1/\chi^*(H)\}\geq 5/8$. Thus, by 
\eqref{eq: delta(R)>3r-5/3r-2},
\begin{equation}\label{eq:5/8 new delta(R)}
    \delta(R)/|R|\geq 5/8.
\end{equation}
This allows us to improve on \eqref{eq:X1,Y1,X2,Y2} as follows:
\begin{equation*}
    |X_2|,|Y_2|\geq 2\delta(R)-|R|\geq |R|/4.
\end{equation*}
By Claim~\ref{claim:X2,Y2 disjoint}, $X_2$ and $Y_2$ are disjoint and therefore, $|X_2\cup Y_2|\geq |R|/2$.
Now recall that $x_1\in X_1$ and by Claim~\ref{clm: no edges between sides}, $x_1$ is not adjacent to any vertex in $X_2\cup Y_2$. This contradicts with \eqref{eq:5/8 new delta(R)}, completing the proof of Lemma~\ref{high_min_deg} for $r=3$. For the rest of the proof, we assume that $r \neq 3$, namely $r \geq 6$. This case is handled in the following subsection.

\subsubsection{Concluding the proof: The case $r \geq 6$}
Recall that $u_1v_1$ is an edge of $R$ with $u_1,v_1 \in X_1$, so that $u_1,v_1$ are adjacent to all vertices in $V \cup \{y_1\}$. 
Let $N\subseteq V(R)$ be the common neighborhood of $u_1$ and $v_1$, and note that $V \subseteq N$.
By Claim~\ref{clm: no edges between sides}, $u_1$ and $v_1$ are not adjacent to any vertex in $X_2\cup Y_2$. Using that $X_2$ and $Y_2$ are disjoint (by Claim~\ref{claim:X2,Y2 disjoint}), 
\begin{equation}\label{eq:size of N}
|N|\geq 2\delta(R) - (|R| - |X_2| - |Y_2|) \geq \left(\frac{3r-6}{3r-2}+\eta\right)|R|,
\end{equation}
where the last inequality uses \eqref{eq: delta(R)>3r-5/3r-2} and \eqref{eq:X1,Y1,X2,Y2}. 
As $\delta(R) > \frac{3r-5}{3r-2}|R|$, each vertex in $R$ is adjacent to all but at most $\frac{3}{3r-2}|R|$ of the vertices, and hence
\begin{equation}\label{eq:minimum degree of N}
    \delta(R[N])\geq |N|-\frac{3}{3r-2}|R| > \frac{r-3}{r-2}|N|.
\end{equation}
The minimum degree of $R[N]$ implies the following:
\begin{align}\label{stmt: cliques are contained in larger cliqeus}
    \text{For each $k<r-1$, every copy of $K_k$ in $R[N]$ is contained in a copy of $K_{r-1}$ in $R[N]$.}
\end{align} 
\begin{claim}\label{clm: all edges have same color}
Every edge inside $N\cup\{u_1,v_1\}$ has color $1$.
\end{claim}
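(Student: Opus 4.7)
The plan is to propagate the color-$1$ monochromaticity of $M = V \cup \{u_1,v_1,y_1\}$ (from Claim~\ref{claim : clique monochromatic}) outward within $N \cup \{u_1,v_1\}$. The engine of propagation is the following \emph{general principle}: any $(r+1)$-clique $L' \subseteq R$ containing a color-$1$ monochromatic $r$-subclique $K^*$ is itself monochromatic in color~$1$. Indeed, by the standing assumption that $R$ has no template for $H$ of size at most $r+2$, $(L',f_R)$ is not a template, so Corollary~\ref{non_reg_clique} (using that $H$ is non-regular) forces $L'^+$ to be $d$-regular for some $d$. Each vertex of $K^*$ has color-$1$ degree at least $r-1$ in $L'$, so $d \geq r-1$; if $d = r-1$, the unique vertex $z \in L' \setminus K^*$ would need every edge to $K^*$ of color $-1$, giving $z$ itself $+$-degree $0$---a contradiction---so $d = r$ and $L'$ is color-$1$ monochromatic.

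Next, let $\mathcal{M}$ denote the family of $(r-1)$-cliques $K \subseteq N$ such that $K \cup \{u_1,v_1\}$ is color-$1$ monochromatic. Then $V \cup \{y_1\} \in \mathcal{M}$ (since $M$ is exactly that clique), and the general principle yields the propagation rule: if $K \in \mathcal{M}$ and $K'$ is any $(r-1)$-clique in $N$ with $|K \cap K'| = r-2$, then $(K \cap K') \cup \{u_1,v_1\}$ is a color-$1$ monochromatic $r$-subclique of $K' \cup \{u_1,v_1\}$, so $K' \in \mathcal{M}$. Using the minimum degree $\delta(R[N]) > \tfrac{r-3}{r-2}|N|$ from~(\ref{eq:minimum degree of N}), I would propagate $\mathcal{M}$ through chains of such single-swap moves until it exhausts every $(r-1)$-clique of $N$. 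The claim then follows: every edge of $R[N \cup \{u_1,v_1\}]$ lies inside some $K \cup \{u_1,v_1\}$ with $K \in \mathcal{M}$, because $u_1v_1$ sits in $M$, any edge $u_1z$ or $v_1z$ with $z \in N$ sits in $K \cup \{u_1,v_1\}$ for some $(r-1)$-clique $K \subseteq N$ containing $z$ (existing by~(\ref{stmt: cliques are contained in larger cliqeus})), and any edge $z_1z_2$ inside $N$ sits in such a clique by extending the edge $\{z_1,z_2\}$.

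The main obstacle is ensuring connectivity of the single-swap graph on $(r-1)$-cliques of $N$. The bound $\delta(R[N]) > \tfrac{r-3}{r-2}|N|$ directly yields only chains with consecutive intersection $\geq r-3$ via Item~2 of Lemma~\ref{connectivity}, whereas our single-swap propagation requires intersection exactly $r-2$, a threshold met by Item~1 only if $\delta(R[N]) > \tfrac{r-2}{r-1}|N|$. I expect this gap to be bridged locally: given any intersection-$(r-3)$ step $K_1 = A \cup \{a_1,a_2\}$, $K_2 = A \cup \{b_1,b_2\}$ in $N$, one interpolates an intermediate $(r-1)$-clique $A \cup \{a_i,b_j\} \subseteq N$ sharing $r-2$ vertices with each endpoint, using the very high minimum degree $\delta(R) \geq \tfrac{3r-5}{3r-2}|R| + \eta/2$ of Claim~\ref{claim:(3r-5)/(3r-2)} (which strictly exceeds $\tfrac{r-2}{r-1}|R|$) to force at least one edge $a_ib_j$ to lie in $R$.
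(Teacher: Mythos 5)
Your ``general principle'' is correct (for an $(r+1)$-clique $L'$ containing a monochromatic $r$-subclique $K^*$, Corollary~\ref{non_reg_clique} forces $L'^+$ to be $d$-regular, and $d < r$ is ruled out exactly as you say), and so is the resulting single-swap propagation rule on $(r-1)$-cliques in $N$. The gap is in the bridging step, and it is genuine: the minimum-degree bound $\delta(R) \geq \left(\frac{3r-5}{3r-2}+\eta/2\right)|R|$ permits each vertex to have $\Theta(|R|)$ non-neighbours, so nothing forces even one of the four specific pairs $a_ib_j$ to be an edge. You cannot deduce a single local edge from a global minimum-degree bound, so the interpolation $A\cup\{a_i,b_j\}$ is not available in general, and your propagation only reaches $(r-1)$-cliques connected to $V\cup\{y_1\}$ through single swaps, not through the double swaps that Item~2 of Lemma~\ref{connectivity} produces.

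The paper sidesteps this entirely by working one clique-size lower. Given a bad edge $e$, it takes an $(r-2)$-clique $L\subseteq N$ through $e\setminus\{u_1,v_1\}$ and chains $(r-2)$-cliques from $L$ to $V$ inside $R[N]$ using Item~1 of Lemma~\ref{connectivity} with $k=r-2$; this item requires precisely $\delta(R[N]) > \frac{r-3}{r-2}|N|$, which is exactly what \eqref{eq:minimum degree of N} gives, and yields consecutive intersections of size $r-3$. Adding $u_1,v_1$ to every clique in the chain produces $r$-cliques with consecutive intersection $r-1$, at which point Claim~\ref{claim:K_r_chain} (rather than the general principle and Corollary~\ref{non_reg_clique}) forces all of them to have discrepancy $\binom{r}{2}$, contradicting the presence of $e$. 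In other words, working with $(r-2)$-cliques in $N$ (rather than $(r-1)$-cliques) puts the required intersection at the threshold that the ambient degree actually delivers.

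If you do want to keep your monochromaticity-propagation scheme, the correct fix is not interpolation but a strengthening of your principle: an $(r+1)$-clique $L'$ containing a \emph{monochromatic $(r-1)$-subclique} $K^*$ is already monochromatic. Each vertex of $K^*$ then has $+$-degree $\geq r-2$ in $L'$, so $d\geq r-2$; $d=r-2$ and $d=r-1$ can both be excluded for $r\geq 6$ by the same degree-counting on $L'\setminus K^*$. This lets you propagate across intersections of size $r-3$, matching what Item~2 of Lemma~\ref{connectivity} actually gives, and removes the need for the bridging step.
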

\begin{proof}
Assume by contradiction that there is an edge $e$ of color $-1$ inside $N\cup\{u_1,v_1\}$. By \eqref{stmt: cliques are contained in larger cliqeus}, there is an $(r-2)$-clique $L \subseteq N$ which contains $e \setminus \{u_1,v_1\}$. Using that $u_1,v_1$ are adjacent to all vertices in $N$, we see that $L \cup \{u_1,v_1\}$ is an $r$-clique containing $e$ and $u_1,v_1$. As $f_R(e) = -1$, we have $f_R(L \cup \{u_1,v_1\}) < \binom{r}{2}$. 

Now consider $L$ and $V$, which are both cliques of size $r-2$ contained in $N$. By Item 1 of Lemma~\ref{connectivity} with $k=r-2$ and $J = R[N]$, using \eqref{eq:minimum degree of N}, there is a sequence $M_1,M_2,\dots,M_\ell$ of $(r-2)$-cliques inside $R[N]$, such that $M_1 = L$, $M_\ell = V$, and $M_{i-1},M_i$ share at least $r-3$ vertices for all $1 \leq i < \ell$. Let $M'_i := M_i \cup \{u_1,v_1\}$. Then $M'_i$ is an $r$-clique in $R$, and $M'_{i-1},M'_i$ share at least $r-1$ vertices for all $1 \leq i < \ell$. Also, as $V \cup \{u_1,v_1\}$ is monochromatic in color $1$, we have 
$$
f_R(M'_1) = f_R(L \cup \{u_1,v_1\}) \neq \binom{r}{2} = 
f_R(V \cup \{u_1,v_1\}) = f_R(M'_{\ell}).
$$
This contradicts Claim~\ref{claim:K_r_chain}.
\end{proof}
Let $W= V(R)\backslash N\cup\{u_1,v_1\}$. By Claim~\ref{clm: all edges have same color}, all edges of color $-1$ in $R$ are incident to $W$. Also, by \eqref{eq:size of N}, we have 
\begin{equation}\label{eq:|W|}
|W|\leq \left(\frac{4}{3r-2}-\eta\right)|R|\leq \left(\frac{1}{4}-\eta\right)|R|,
\end{equation}
using $r \geq 6$.
In the following claim we derive some properties of $r$-cliques which intersect $W$ in only one or two vertices.
\begin{claim}\label{clm: need more than one}
The following holds:
\begin{enumerate}
    \item Let $L$ be a copy of $K_r$ in $R$ which has exactly one vertex in $W$. Then $L$ is monochromatic in color $1$. 
    \item Suppose that $H$ is $(1,0)$-structured. 
    Let $L$ be a copy of $K_r$ in $R$ which has exactly two vertices $w_1,w_2$ in $W$. Then $f_R(w_1w_2) = 1$.
\end{enumerate}

\end{claim}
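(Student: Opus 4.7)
The plan is to compare the given $r$-clique $L$ against a reference $r$-clique $L' \subseteq N \cup \{u_1,v_1\}$, chosen to share as many vertices with $L$ as possible. Since Claim~\ref{clm: all edges have same color} guarantees that any such $L'$ is monochromatic of color~$1$, the comparison will let us conclude about $L$ itself. A recurring elementary observation is that any $w \in W$ is not a common neighbour of $u_1, v_1$, so an $r$-clique containing $w$ cannot contain both $u_1$ and $v_1$; hence in item~1 the vertex $w \in L \cap W$, and in item~2 the vertices $w_1, w_2 \in L \cap W$, force $L$ to contain at most one of $u_1, v_1$.

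For item~1, I will build $L'$ sharing $r-1$ vertices with $L$ as follows. If $u_1, v_1 \notin L$, then $L \setminus \{w\} \subseteq N$ and $u_1$ is adjacent to all of $N$, so $L' := (L\setminus\{w\})\cup\{u_1\}$ is an $r$-clique. If $u_1 \in L$ (symmetric for $v_1$), then $v_1$ is adjacent to $u_1$ and to $N$, so $L' := (L\setminus\{w\})\cup\{v_1\}$ is an $r$-clique. In both cases $L' \subseteq N\cup\{u_1,v_1\}$ is monochromatic of color~$1$ and $|L \cap L'| = r-1$, so Claim~\ref{claim:K_r_chain} gives $f_R(L) = f_R(L') = \binom{r}{2}$, i.e., $L$ is monochromatic of color~$1$.

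For item~2, I will build $L'$ sharing $r-2$ vertices with $L$ and lying in $N\cup\{u_1,v_1\}$. If $u_1, v_1 \notin L$, take $L' := (L\setminus\{w_1,w_2\})\cup\{u_1,v_1\}$. If $u_1 \in L$ (symmetric for $v_1$), I will first produce a vertex $z^* \in N \setminus L$ adjacent to all $r-1$ vertices of $\{v_1\}\cup(L\setminus\{w_1,w_2\})$: a standard common-neighbourhood bound using $\delta(R)/|R| \ge \frac{3r-5}{3r-2}+\eta/2$ from Claim~\ref{claim:(3r-5)/(3r-2)} yields at least $\tfrac{|R|}{3r-2} + \tfrac{(r-1)\eta}{2}|R| - O(1)$ candidates, and any such vertex lies in $N$ automatically because it is a common neighbour of $u_1,v_1$. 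Setting $L' := (L\setminus\{w_1,w_2\})\cup\{v_1,z^*\}$ completes the construction. In either subcase $|L \cup L'| = r+2$, so our standing assumption that $R$ contains no template for $H$ on at most $r+2$ vertices, combined with Lemma~\ref{template_example} applied to $L,L'$, forces $H$ to be $(s',t')$-structured with $t' = f_R(w_1w_2) - 1$ (using that $L' \setminus L$ is an edge of color~$1$). If $f_R(w_1w_2) = -1$, then $t' = -2 \neq 0$, and after normalization $H$ is $(-s'/2, 1)$-structured. Combining this with the hypothesis that $H$ is $(1,0)$-structured, Lemma~\ref{no_two} (taking $s = -s'/2$ and $(s'_{\text{lem}},t_{\text{lem}}) = (1,0)$, so $s'_{\text{lem}} - t_{\text{lem}}s = 1 \neq 0$) forces every $r$-coloring of $H$ to be balanced, contradicting~\eqref{stmt: unbalanced exists}.

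The only slightly technical point will be producing the auxiliary vertex $z^*$ in the second subcase of item~2; this is the single step in the proof that genuinely uses the sharper degree bound of Claim~\ref{claim:(3r-5)/(3r-2)}. Everything else is a direct assembly of Claim~\ref{clm: all edges have same color}, Claim~\ref{claim:K_r_chain}, Lemma~\ref{template_example}, and Lemma~\ref{no_two}.
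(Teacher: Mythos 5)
Your proof is correct and follows essentially the same strategy as the paper: construct a comparison clique $L' \subseteq N \cup \{u_1,v_1\}$ (hence monochromatic by Claim~\ref{clm: all edges have same color}), then invoke Claim~\ref{claim:K_r_chain} for item~1 and Lemma~\ref{template_example} together with Lemma~\ref{no_two} for item~2. The only cosmetic difference is that you produce the auxiliary vertex $z^*$ via a direct common-neighbourhood count in $R$, whereas the paper first reduces to the min-degree bound~\eqref{eq:minimum degree of N} on $R[N]$ and invokes~\eqref{stmt: cliques are contained in larger cliqeus}; both rest on the same bound from Claim~\ref{claim:(3r-5)/(3r-2)}.
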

\begin{proof}
We begin with the first item. 
Let $w$ be the unique vertex in $L \cap W$. We claim that there is an $r$-clique $L'$ in $R$ with $L' \subseteq N \cup \{u_1,v_1\}$ and $|L \cap L'| = r-1$. If $u_1 \notin L$ then $L' = (L \setminus \{w\}) \cup \{u_1\}$ is such an $r$-clique (here we use the fact that $u_1$ is adjacent to all vertices in $V(R) \setminus (W \cup \{u_1\})$, and hence to all vertices of $L \setminus \{w\}$). So assume that $u_1 \in L$. By the same argument, we may assume that $v_1 \in L$. Then $L \setminus \{u_1,v_1,w\}$ is a clique of size $r-3$ contained in $N$. By \eqref{stmt: cliques are contained in larger cliqeus}, there is a clique $M'$ of size $r-2$ with $M' \subseteq N$ and $L \setminus \{u_1,v_1,w\} \subseteq M'$. Now $L' = M' \cup \{u_1,v_1\}$ satisfies our requirements. As $|L \cap L'| = r-1$, we have $f_R(L) = f_R(L')$ by Claim~\ref{claim:K_r_chain}. By Claim~\ref{clm: all edges have same color}, $L'$ is monochromatic in color $1$, as $L' \subseteq N \cup \{u_1,v_1\}$. So $L$ is also monochromatic in color $1$. 

We now prove the second item. Put $M' := L \setminus \{w_1,w_2\}$; so $M'$ is a clique of size $r-2$ and $M' \subseteq N \cup \{u_1,v_1\}$. By \eqref{stmt: cliques are contained in larger cliqeus}, there is an $(r-2)$-clique $M'' \subseteq N$ with $M' \setminus \{u_1,v_1\} \subseteq M''$. Now, $L' := M'' \cup \{u_1,v_1\}$ is an $r$-clique with $|L \cap L'| = r-2$, $L \setminus L' = \{w_1,w_2\}$ and
$L' \subseteq N \cup \{u_1,v_1\}$. 
In particular, the edge $e := L' \setminus L$ has color $1$.
Suppose by contradiction that $f_R(w_1w_2) = -1$.
By assumption, $(L' \cup L, f_R)$ is not a template for $H$. By Lemma~\ref{template_example}, $H$ is $(s',t')$-structured with $t' = f_R(e) - f_R(w_1w_2) = 2$ (the value of $s'$ will not be important). 
By normalizing, $H$ is $(\frac{s'}{2},1)$-structured. 
Additionally, $H$ is $(1,0)$-structured by assumption. Now, by Lemma~\ref{no_two}, $H$ has only balanced $r$-colorings, a contradiction to \eqref{stmt: unbalanced exists}.
\end{proof}
Recall the definition of the graph $H^*$ (see Definition~\ref{def:H^*}). In particular, $H^*$ is a complete $r$-partite graph and has a perfect $H$-factor. 
 Recall the definition of $V_0$ in Section~\ref{sec_app_regularity} (namely, $V_0$ is the exceptional set given by the regularity lemma). 
The following key claim shows that if an $H^*$-copy $J$ in $G'$ does not intersect $V_0$, then every perfect $H$-factor of $J$ has only few edges of color $-1$. Using this claim, we then easily complete the proof of the lemma. Recall that $H$ is $(1,0)$- or $(1,1)$-structured by Item 1 of Claim~\ref{claim:(s,t)}. Let $\rho'$ be the corresponding parameter (as in Definition~\ref{structured}), and note that $\rho' > 0$.
Recall that $V_W = \bigcup_{w \in W}V_w \subseteq V(G')$ denotes the union of clusters which correspond to the vertices in $W \subseteq V(R)$. 

\begin{claim}\label{claim:Lemma 13.3 main}
Let $J$ be an $H^*$-copy in $G'$ with $V(J) \cap V_0 = \emptyset$, and let $A_1,\dots,A_r$ be the parts of $J$. Let $I$ be the set of indices $i \in [r]$ such that $A_i\subseteq V_W$. Then for every perfect $H$-factor $F$ of $J$,
\begin{equation}\label{eq:e(F^-) A1,..,Ar}
    e(F^-)\leq \rho'\sum_{i\in I}|A_i| \leq \rho'|V(J) \cap V_W|.
\end{equation}
\end{claim}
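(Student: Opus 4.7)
The plan is to first establish a structural dichotomy for how the parts of $J$ intersect $V_W$ -- namely, that for each $i \in [r]$, either $A_i \subseteq V_W$ or $A_i \cap V_W = \emptyset$ -- and then to combine this dichotomy with the $(s,t)$-structure of $F$ and the coloring constraints on $R$ proved in Claims~\ref{clm: all edges have same color} and~\ref{clm: need more than one}.

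For the dichotomy, I would argue by contradiction: assume some $A_i$ contains vertices $a \in V_W$ and $a' \notin V_W$. For any choice of $a_j \in A_j$ ($j \neq i$), the two $r$-cliques $L_a = \{V_a^R\} \cup \{V_{a_j}^R : j \neq i\}$ and $L_{a'} = \{V_{a'}^R\} \cup \{V_{a_j}^R : j \neq i\}$ in $R$ are well defined (different parts of $J$ occupy disjoint clusters of $R$ since $V(J) \cap V_0 = \emptyset$), share $r-1$ vertices, and so have equal discrepancy by Claim~\ref{claim:K_r_chain}. Choosing the $a_j$'s to avoid $V_W$ whenever $A_j \not\subseteq V_W$, the clique $L_{a'}$ has strictly fewer $W$-vertices than $L_a$; when $L_{a'}$ has at most one $W$-vertex, Claim~\ref{clm: all edges have same color} and Item~1 of Claim~\ref{clm: need more than one} force $L_{a'}$ to be monochromatic in color $+1$, and the equal-discrepancy condition then pins every edge incident to $V_a^R$ in $L_a$ to color $+1$ as well. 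Iterating across all possible $a_j$'s (and in the $(1,0)$ case invoking Item~2 of Claim~\ref{clm: need more than one} to handle edges between two $W$-vertices) propagates this coloring constraint and contradicts the existence of an edge of color $-1$ incident to $V_a^R \in W$.

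Once the dichotomy holds, $V(J) \cap V_W = \bigcup_{i \in I} A_i$, which gives the second inequality of the claim immediately. For the first inequality I split on $|I|$. If $|I| \in \{0, 1\}$, the same clique analysis (using only Item~1 of Claim~\ref{clm: need more than one}) shows that every edge of $J$ incident to $V(J) \cap V_W$ already has color $+1$, so $e(F^-) = 0$. If $|I| \geq 2$, I apply the $(1,t)$-structuredness of $F$ with $t \in \{0,1\}$ given by Claim~\ref{claim:(s,t)}: summing the defining identity
\[
\rho'(|A_i|+|A_j|) = e_F(A_i \cup A_j,\, V(F) \setminus (A_i \cup A_j)) + t \cdot e_F(A_i, A_j)
\]
over all pairs $\{i,j\} \subseteq I$ and counting edge contributions in the standard way yields
\[
\rho'(|I|-1) \sum_{i \in I} |A_i| = (|I|-1)\, e_F(I, \bar I) + (2|I|-4+t)\, e_F(I),
\]
where $e_F(I, \bar I)$ counts edges of $F$ between $V(J) \cap V_W$ and its complement and $e_F(I)$ counts edges of $F$ inside $V(J) \cap V_W$. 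In the $(1,0)$ case, Item~2 of Claim~\ref{clm: need more than one} (applied by extending any edge of $F$ inside $V_W$ to an $r$-clique of $R$ with exactly two $W$-vertices, possible by the high minimum degree of $R$) forces such edges to have color $+1$, so $e(F^-) \leq e_F(I, \bar I) = \rho' \sum_{i \in I}|A_i| - \tfrac{2(|I|-2)}{|I|-1}\, e_F(I) \leq \rho' \sum_{i \in I}|A_i|$. In the $(1,1)$ case one only has $e(F^-) \leq e_F(I, \bar I) + e_F(I) = \rho' \sum_{i \in I}|A_i| - \tfrac{|I|-2}{|I|-1}\, e_F(I) \leq \rho' \sum_{i \in I}|A_i|$. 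The main obstacle is the dichotomy, whose proof requires careful bookkeeping across the subcases of how many $A_j$'s lie inside $V_W$; once it is established, the remaining counting is routine.
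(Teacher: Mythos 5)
Your proposal hinges on establishing a ``dichotomy'' --- that each $A_i$ is either contained in $V_W$ or disjoint from $V_W$ --- but this is both unprovable by the argument you give and different from what the paper actually shows. The contradiction you aim for at the end of the dichotomy argument does not close: you conclude that the cliques $L_a$ built through $V_a^R$ are monochromatic in color $+1$, and then claim this ``contradicts the existence of an edge of color $-1$ incident to $V_a^R \in W$.'' But membership of $V_a^R$ in $W$ does \emph{not} guarantee any incident color-$(-1)$ edge. The set $W$ is defined as $V(R)\setminus(N\cup\{u_1,v_1\})$; the implication proved in Claim~\ref{clm: all edges have same color} is that every color-$(-1)$ edge touches $W$, not the converse. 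A $W$-cluster can perfectly well have all its incident edges colored $+1$, so the coloring constraint you propagate is consistent with $V_a^R\in W$ and there is no contradiction. Consequently the dichotomy (which is in any case stronger than needed) is not available via this route, and the reduction of $e(F^-)$ to edges incident to $\bigcup_{i\in I}A_i$ is left unjustified.

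The paper's proof uses a key idea you omit entirely: the standing assumption (fixed earlier in the proof of Lemma~\ref{high_min_deg}) that $H$ is non-uniform. The paper applies Claim~\ref{claim:non-uniform} to obtain an $r$-coloring $B_1,\dots,B_r$ of $H$ and a vertex $b\in B_1$ with $d_H(b,B_2)\neq d_H(b,B_3)$, and then invokes Lemma~\ref{like blowup2} to conclude that $R$ would contain a template for $H$ if some bipartite graph $(A_i,A_j)$ of $J$ were not monochromatic. This forces all bipartite graphs $(A_i,A_j)$ to be monochromatic, i.e.\ $(J,f)$ is a blowup of $(L^R,f_R)$. Monochromaticity --- not your dichotomy --- is what delivers that every color-$(-1)$ edge of $J$ is incident to $\bigcup_{i\in I}A_i$: for $i,j\notin I$ one picks $a_i\in A_i\setminus V_W$ and $a_j\in A_j\setminus V_W$, sees $f(a_ia_j)=+1$ (the corresponding $R$-edge avoids $W$), and monochromaticity then makes the whole bipartite graph color $+1$. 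Your closing double-count over pairs in $I$ matches the paper's computation and is fine, but in the $(1,0)$, $|I|=2$ subcase your plan to extend each internal edge of $F$ to an $r$-clique of $R$ with exactly two $W$-vertices is an unnecessary (and not-quite-verified) detour: the paper simply applies Item~2 of Claim~\ref{clm: need more than one} to the already-available clique $L^R=\{V_{a_1}^R,\dots,V_{a_r}^R\}$, which by the choice of $a_i\notin V_W$ for $i\notin I$ has exactly two $W$-vertices, and then uses monochromaticity once more.
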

\begin{proof}
The right inequality in \eqref{eq:e(F^-) A1,..,Ar} is immediate from the definitions. We prove the left inequality. 
For each $i \in [r]$, choose $a_i \in A_i$ such that 
$a_i \in A_i \setminus V_W$ if $i \notin I$, and else $a_i$ is arbitrary. Set $L = \{a_1,\dots,a_r\}$, so $L \subseteq J$ is an $r$-clique in $G'$.  
Let $L^R \subseteq R$ be the corresponding $r$-clique in $R$, namely $L^R = \{V_{a_1}^R,\dots,V_{a_r}^R\}$. The assumption $V(J) \cap V_0 = \emptyset$ means that the cluster $V_a \in V(R)$ is well-defined for every $a \in V(J)$.

First, suppose that there exist $1\leq i \neq j\leq r$ and $u,v\in A_i$ and $w\in A_j$ so that $f(uw)\neq f(vw)$. 
Without loss of generality, let us assume that $i=1,j=2$. 
Recall that we assume that $H$ is non-uniform. Hence, by Claim~\ref{claim:non-uniform}, there exists an $r$-coloring $B_1,B_2,\dots, B_r$ of $H$ such that $e_H(B_1,B_2)\neq e_H(B_1,B_3)$ and thus, there exists $b\in B_1$ such that $d_H(b,B_2)\neq d_H(b,B_3)$.
Now consider the two $r$-cliques $M_1 := \{V_u^R,V_w^R,V_{a_3}^R,\dots,V_{a_r}^R\}$ and $M_2 := \{V_v^R,V_w^R,V_{a_3}^R,\dots,V_{a_r}^R\}$ in $R$. We have $|M_1 \cap M_2| = r-1$, so $f_R(M_1) = f_R(M_2)$ by Claim~\ref{claim:K_r_chain}. Also, $f_R(V_u^RV_w^R) \neq f_R(V_v^RV_w^R)$. Hence, by Lemma~\ref{like blowup2}, $(M_1 \cup M_2, f_R)$ is a template for $H$, contradicting our assumption that $R$ contains no such template. 

So from now on, we assume that for all $1 \leq i \neq j \leq r$ and $u,v \in A_i, w \in A_j$, it holds that $f(uw)\neq f(vw)$. This means that all bipartite graphs $(A_i,A_j)$ are monochromatic. In other words, $(J,f)$ is a blowup of $(L^R,f_R)$. 
 As all the edges in $R$ of color $-1$ are incident to $W$, all the edges of color $-1$ in $J$ must be incident to $\bigcup_{i\in I}A_i$. Indeed, if $i,j \notin I$ then $A_i,A_j \not\subseteq V_W$, so there must be an edge of color $1$ between $A_i,A_j$. But as $(A_i,A_j)$ is monochromatic, all edges between $A_i,A_j$ have color $1$. 
 
 By Claim~\ref{clm: need more than one}, if $|I|\leq 1$ then $e(F^-) = 0$ and the claim holds trivially. Hence, we assume that $|I|\geq 2$. Let us now distinguish two cases. 
 Suppose first that $H$ is $(1,1)$-structured (with parameter $\rho'$). 
 Then $F$ is also $(1,1)$-structured (with parameter $\rho'$), as $F$ is an $H$-factor. Hence (recall Definition~\ref{structured}), we have
 \begin{equation}\label{eq:(1,1) structured}
     \rho'(|A_i| + |A_j|) = e_F(A_i,A_j) + e_{F}(A_i\cup A_j,V(F)\backslash (A_i\cup A_j))
 \end{equation}
for all $1 \leq i < j \leq r$. Now, summing \eqref{eq:(1,1) structured} over all pairs $i,j$ with $i,j \in I$, we get
\begin{align*}\label{eq:sum e(Ai,Aj) for (1,1) structured}
    \rho' (|I|-1)\sum_{i \in I}|A_i| &= \sum_{i,j \in I}\rho'(|A_i| + |A_j|) = \sum_{i,j \in I}\left[e_F(A_i,A_j) + e_{F}(A_i\cup A_j,V(J)\backslash (A_i\cup A_J)) \right] \nonumber \\ &= 
(|I|-1)\sum_{k \in I, \ell \in [r] \setminus I}e_F(A_k,A_{\ell}) + 
(2|I|-3)\sum_{k,\ell \in I}e_F(A_k,A_{\ell}) \nonumber \\&\geq 
(|I|-1)\sum_{k,\ell : \{k,\ell\} \cap I \neq \emptyset}e_F(A_k,A_{\ell}) \\&\geq 
(|I|-1) \cdot e(F^-),
\end{align*}
where the last inequality holds because every edge of color $-1$ in $F$ is incident to $\bigcup_{i\in I}A_i$. 
Dividing through by $|I|-1 \geq 1$, we get the left inequality in \eqref{eq:e(F^-) A1,..,Ar},
as required.

Now suppose that $H$, and hence also $F$, are $(1,0)$-structured. This means that
\begin{equation}\label{eq:(1,0) structured}
    \rho'(|A_i| + |A_j|) = e_{F}(A_i\cup A_j,V(J)\backslash (A_i\cup A_J))
\end{equation}
for all $1 \leq i < j \leq r$. Summing \eqref{eq:(1,0) structured} over all pairs $(i,j)$ with $i,j\in I $, we get
\begin{align}\label{eq:sum e(Ai,Aj) for (1,0) structured}
    \rho' (|I|-1)\sum_{i \in I}|A_i| &= \sum_{i,j \in I}\rho'(|A_i| + |A_j|) = \sum_{i,j \in I}e_{F}(A_i\cup A_j,V(J)\backslash (A_i\cup A_J)) \nonumber \\ &= 
(|I|-1)\sum_{k \in I, \ell \in [r] \setminus I}e_F(A_k,A_{\ell}) + 
2(|I|-2)\sum_{k,\ell \in I}e_F(A_k,A_{\ell}).
\end{align}
If $|I| \geq 3$ then $2(|I|-2) \geq |I|-1$, so \eqref{eq:sum e(Ai,Aj) for (1,0) structured} counts $e_F(A_k,A_{\ell})$ at least $|I|-1$ times for every $1 \leq k < \ell \leq r$ with $\{k,\ell\} \cap I \neq \emptyset$. Hence, \eqref{eq:sum e(Ai,Aj) for (1,0) structured} is an upper bound for $(|I|-1) \cdot e(F^-)$, and the assertion of the claim follows by dividing \eqref{eq:sum e(Ai,Aj) for (1,0) structured} through by $|I|-1$. Now suppose that $|I|=2$, say $I = \{1,2\}$ without loss of generality. Then, by Item 2 of Claim~\ref{clm: need more than one}, all edges between $A_1$ and $A_2$ have color $1$. Therefore,
$$
e(F^-) \leq \sum_{k \in I, \ell \in [r] \setminus I}e_F(A_k,A_{\ell}) \leq \rho' \sum_{i \in I}|A_i|, 
$$
using \eqref{eq:sum e(Ai,Aj) for (1,0) structured}. So again the left inequality in \eqref{eq:e(F^-) A1,..,Ar} holds.
\end{proof}
We now complete the proof of Lemma~\ref{high_min_deg}. By Lemma~\ref{lem:H* factor}, $G'$ has a perfect $H^*$-factor $F^*$. For each $H^*$-copy $J \in F^*$, let $F_J$ be a perfect $H$-factor of $J$. Let $F = \bigcup_{J \in F^*}F_J$ be the resulting perfect $H$-factor of $G'$. 
We now use Lemma~\ref{lem:(s,t)-structured edge count} to estimate the number of edges of $H$, using that $H$ is $(s',t')$-structured for $s'=1$ and $t' \in\{0,1\}$. By Lemma~\ref{lem:(s,t)-structured edge count},  
$
e(H) = \rho'\frac{r-1}{(2r-4)s'+t'}|H|,
$
so 
$$
\rho'|H|/2 \leq \rho'\frac{r-1}{2r-3}|H| \leq e(H) \leq \rho'\frac{r-1}{2r-4}|H| \leq \rho'|H|.
$$
It follows that $e(F) = \frac{n}{|H|} \cdot e(H) \geq \rho'n/2$.
There are at most $|V_0|$ copies of $H^*$ in $F^*$ intersecting $V_0$, and the $H$-factors of these copies of $H^*$ contain therefore at most $|V_0| \cdot \frac{|H^*|}{|H|} \cdot e(H) \leq \rho'|V_0||H^*|$ edges. Also, if an $H^*$-copy $J \in F^*$ does not intersect $V_0$, then $F_J$ contains at most $\rho'|V(J) \cap V_W|$ edges of color $-1$, by Claim~\ref{claim:Lemma 13.3 main}. 
It follows that
\begin{align}\label{eq: e(F^-) upper bound}
    e(F^-) &\leq \rho'|V_0||H^*| + \rho'\sum_{J \in F^*}|V(J) \cap V_W| = \rho'|V_0||H^*| + \rho'|V_W| \overset{\text{(a)}}{\leq}
\rho' \left( \varepsilon n |H^*| + \left(\frac{1}{4} - \eta \right)n \right) \notag\overset{\text{(b)}}{\leq} 
\\ &\rho'\left( \frac{1}{4} - \frac{\eta}{2} \right)n \leq \frac{e(F)}{2} - \frac{\rho'\eta}{2}n \overset{\text{(c)}}{\leq} \frac{e(F)}{2} - \gamma n.
\end{align}
Here, inequality (a) uses that $|V_0| \leq \varepsilon n$ and that 
$|V_W| \leq \left(\frac{1}{4} - \eta \right)n$ by \eqref{eq:|W|}. Inequality (b) uses that $H^*$ depends only on $H,\eta$ and $\varepsilon \ll \frac{1}{|H|},\eta$. And inequality (c) uses that $\rho' > 0$ depends only on $H$ and $\gamma \ll \frac{1}{|H|},\eta$. So we got that $f(F) = e(F) - 2e(F^-) \geq \gamma n$, namely $F$ has high discrepancy. This completes the proof. \label{sec:non-reg}
\section{Proof of the main results}\label{sec:main}
\subsection{Proof of Theorem~\ref{bipartite}}\label{subsec:proof_bipartite}
\begin{proof}
Let $H$ be a bipartite graph.
By Corollary~\ref{cor: no_c4 for r<4}, we have $\delta^*(H) \leq 3/4.$
By Lemma~\ref{H_regular}, this is tight if $H$ is regular.
Therefore, let us assume from now on that $H$ is non-regular.

First, suppose that there exists $\rho$ such that for every connected component $U$ of $H$ it holds that $e_H(U)=\rho|U|$, which corresponds to the second case of Theorem \ref{bipartite}. By Lemma~\ref{claim:bipartite_rho}, we get that 
$$
\delta^*(H)\geq 1/2.
$$
Now, let us show that $\delta^*(H)\leq 1/2$. 
We work in the setup described in Section~\ref{sec_app_regularity}.
In particular, we assume that 
$\delta(R)/|R|\geq 1/2+\eta/2$. We need to show that $G'$ has a perfect $H$-factor with high discrepancy.

If $R$ is monochromatic, then there exists a perfect $H$-factor in $G'$ with high discrepancy by Lemma~\ref{all k positive}.
Therefore, let us assume that $R$ is not monochromatic. 
This implies that there exist vertices $u,v,w\in V(R)$ such that $f_R(uv) = -f_R(vw)=1$. Indeed, $R$ is not monochromatic so there are edges $xy,st$ of different colors. By Lemma~\ref{connectivity} applied with $k=2,$ there is a path in $R$ whose first and last edges are $xy$ and $st.$ On this path, there must be two consecutive edges of different colors, giving the vertices $u,v,w$ as above.

Note that $uv,vw$ are two copies of $K_2$ with different discrepancies and $H$ is non-regular. By Lemma~\ref{sharing_much_template}, $(\{u,v,w\},f_R)$ is a template for $H$, and then by Lemma~\ref{template} (with $r=2$), $G'$ has a perfect $H$-factor with high discrepancy, as required.  

Now suppose that we are in the last case of Theorem \ref{bipartite}, meaning that there are two connected components $U,W$ of $H$ and $\rho \neq \rho'$ such that $e_H(U) = \rho|U|$ and $e_H(W)=\rho'|W|$. In other words, $e_H(U)/|U| \neq e_H(W)/|W|$. Recall that $\delta^*(H)\geq1-1/\chi^*(H)$ trivially holds for every $H$, so we only need to show that $\delta^*(H)\leq1-1/\chi^*(H)$. 
Again, we show that $G'$ has a perfect $H$-factor of high discrepancy under the setting of Section~\ref{sec_app_regularity}.
As in the previous case, we may assume that $R$ is non-monochromatic as otherwise we are done by Lemma~\ref{all k positive}.
Thus, there exist edges $e_1,e_2$ with $f_R(e_1) \neq f_R(e_2)$. If $e_1,e_2$ are not disjoint then $(e_1\cup e_2,f_R)$ is a template for $H$ by Lemma~\ref{sharing_much_template}, and if they are disjoint then it is a template by Lemma~\ref{bipartite template}. Either way, we can apply Lemma \ref{template} to conclude that $G'$ has a perfect $H$-factor with high discrepancy. Thus, we get $\delta^*(H) = 1-1/\chi^*(H)$. 
\end{proof}
\subsection{Proof of Theorem~\ref{tripartite1}}\label{subsec:proof_tripartite}
Let $H$ be a graph with $\chi(H) = 3$.
By Corollary~\ref{cor: no_c4 for r<4}, $\delta^*(H) \leq 3/4$. If $H$ is regular, then $\delta^*(H) = 3/4$ by Lemma~\ref{H_regular}.
So suppose from now on that $H$ is non-regular. 
Recall that every graph $H$ satisfies $\delta^*(H) \geq \max\{\delta_0(H),1-1/\chi^*(H)\}$. 
If some butterfly is not a template for $H$, then $\delta^*(H) \geq 4/7$ by Lemma~\ref{butterfly} and $\delta^*(H) \leq  \max\{\delta_0(H),1-1/\chi^*(H),4/7\}$ by Lemma~\ref{high_min_deg}. And if every butterfly is a template for $H$, then $\delta^*(H) \leq \max\{\delta_0(H),1-1/\chi^*(H)\}$ by Lemma~\ref{high_min_deg}. This concludes the proof.  
\subsection{Proof of Theorem~\ref{rpartite}}
\label{subsec:proof_rpartite}
Let $H$ be an $r$-chromatic graph, $r \geq 4$. Throughout the proof, we use the fact that
\begin{equation}\label{eq:alpha(H)}
1-1/(r-1) \leq \max\{\delta_0(H),1-1/\chi^*(H)\} \leq 1-1/r,
\end{equation}
where the first inequality holds because $\chi^*(H) \geq r-1$. 
We begin with the first case of Theorem~\ref{rpartite}, where we assume that $H$ satisfies Condition~\ref{cond1}.  
By Corollary~\ref{cor: no_c4 for r<4}, $\delta^*(H)\leq 1-1/(r+1)$.
We now use Condition~\ref{cond1} to show that $\delta^*(H)\geq 1-1/(r+1).$
Indeed, if $r \equiv_4 0$ then this follows from Lemma~\ref{construction} with $k = r+1 \equiv_4 1$, and if $r \not\equiv_4 0$ then this follows from Lemma~\ref{construction2} with $k = r+1 \not\equiv_4 1$ (using that $H$ \nolinebreak is \nolinebreak regular). 

We now move on to the second case of Theorem~\ref{rpartite}. 
First, we show that if $H$ violates Condition~\ref{cond1} then $\delta^*(H) \leq 1-1/r$. Indeed, if $H$ violates the $(r+1)$-wise $C_4$-condition, then $\delta^*(H) \leq 1 - 1/r$ by Lemma~\ref{no_c4} with $k = r+1$. 
And if $H$ is non-regular and $r \not\equiv_4 0$, then $\delta^*(H) \leq 1 - 1/r$ by Lemma~\ref{c4_r_1}. 
Next, observe that if $H$ satisfies Condition~\ref{cond2}, then $\delta^*(H) \geq 1-1/r$ by Lemmas~\ref{construction}-\ref{construction2}.

Finally, we handle the last case of Theorem~\ref{rpartite}. Here we show that if $H$ violates Conditions~\ref{cond1} and \ref{cond2}, then $\delta^*(H) \leq \max\{\delta_0(H),1-1/\chi^*(H)\} =: \alpha(H)$. This is tight because 
$\delta^*(H) \geq \alpha(H)$ for every graph $H$. 
If $H$ violates the $r$-wise $C_4$-condition, then $\delta^*(H) \leq \alpha(H)$ by Lemma~\ref{no_c4} with $k = r$, using that $1-1/(r-1) \leq \alpha(H)$ by \eqref{eq:alpha(H)}. So suppose that $H$ satisfies the $r$-wise $C_4$-condition. Then, as $H$ violates Condition~\ref{cond2}, it must be that $H$ is non-regular. 
Now, if $r \equiv_4 2,3$, then $\delta^*(H) \leq \alpha(H)$ by Lemma~\ref{high_min_deg}. 
If $r \equiv_4 1$ then $\alpha(H) \geq \delta_0(H) = 1-1/r$ by Lemma~\ref{construction} and $\delta^*(H) \leq 1-1/r$ by Lemma~\ref{c4_r_1}, so $\delta^*(H) \leq \alpha(H)$ holds. 
Suppose now that $r \equiv_4 0$. Then, as $H$ violates Condition~\ref{cond1}, $H$ must violate the $(r+1)$-wise $C_4$-condition. Now $\delta^*(H) \leq \alpha(H)$ holds by Lemma~\ref{rest_4}. 
This completes the proof. 
\section{Examples}\label{sec: examples}
The purpose of this section is to demonstrate that the cases in our theorems are necessary. For the bipartite case, Theorem~\ref{bipartite}, this is fairly easy to see so we only discuss Theorems~\ref{tripartite1} and~\ref{rpartite}. Towards this, we give graphs $H$ as examples for what we consider to be the more interesting cases.
The task of finding examples of $r$-partite graphs becomes much simpler when they have exactly one proper $r$-coloring (up to permutations of the color-labels). To achieve this, we use the following construction in most of the examples:
\begin{enumerate}[label=\textbf{C}]
    \item\label{example_graph} Let $H$ be a graph on vertex-set $V(H)$ with $r$-partition $A_1,A_2,\dots A_r$ and vertices $a_1\in A_1,a_2\in A_2,\dots, a_r\in A_r$. For $1\leq i\leq r$, $a_i$ is connected to every vertex in $\bigcup_{j\neq i}A_j$.
\end{enumerate}
Then, given an $r$-coloring of $a_1,a_2,\dots a_r$, we get that for every $1\leq i\leq r$, all the vertices in $A_i$ must have the same color as $a_i$ and therefore, the coloring is unique up to permutation of the labels. Note that for $1\leq i<j\leq r$, we can add any edges to $H[A_i,A_j]$ and this property does not change. Additionally such a graph $H$, is never regular, unless $|A_1|=|A_2|=\dots=|A_r|$ and $H$ is the complete $r$-partite graph. One constraint that such graphs $H$ have is that for $1\leq i<j\leq r$, $e_H(A_i,A_j) \geq |A_i|+|A_j|-1$, given by the edges incident to $a_i$ and $a_j$.
\begin{itemize}
    \item First, we give a tripartite graph $H$ for which $$\delta_0(H)<\delta^*(H)=1-1/\chi_{cr}(H).$$ Towards this, consider $H$ as described in \ref{example_graph} with $|A_1|=10, |A_2|=11, |A_3|=100$. Note that $hcf(H) = 1$, as $|A_2|-|A_1|=1$. Besides the edges given by $a_1,a_2,a_3$, let there be arbitrary additional edges such that $e_H(A_1,A_2)=e_H(A_1,A_3)=e_H(A_2,A_3)=110$. Note that this means that $H[A_1,A_2]$ is complete and $H[A_2,A_3]$ has no extra edges besides the ones touching $a_2$ or $a_3$. It is not hard to see that $\delta_0(H) = 0$ and $1-1/\chi_{cr}(H)<1-1/r$. Additionally, $H$ can use any butterfly as a template, as otherwise by Lemma~\ref{template_example} $H$ is either $(2,2)$-, $(0,2)$- or $(2,-2)$-structured, which it is clearly not. 
    \item Next, we give an example for a tripartite graph $H$ with $$1-1/\chi^*(H)<\delta^*(H)=\delta_0(H).$$
    
    Let $H$ be a tripartite graph as described in \ref{example_graph} with $|A_1|= 5,|A_2| = 20 ,|A_3|= 21$ and $e_H(A_1,A_2)=28, e_H(A_1,A_3)=42$ and $e_H(A_2,A_3)=252$. It is not hard to check that such $H$ is indeed $(1,2)$-structured for $\rho=14$. Additionally, since $|A_3|-|A_2| = 1$, we have $hcf(H) = 1$ and thus 
    $$1-1/\chi^*(H) = 1-1/\chi_{cr}(H) = 1-\frac{41}{92}<5/8.$$ 
    By Lemma~\ref{lower_delta_r=3}, it follows that $\delta_0(H) \geq 5/8$. As $H$ is non-regular and since $5/8>4/7$, we get by Theorem~\ref{tripartite1} that indeed
    $$1-1/\chi^*(H)<5/8\leq \delta^*(H) = \delta_0(H).$$
    
    \item Next, we give an example corresponding to the second case of Theorem~\ref{tripartite1} such that $\delta^*(H)=4/7>\max\{1-1/\chi^*(H),\delta_0(H)\}$. Towards this, some butterfly should not be a template for $H$. Let $|A_1|= 5,|A_2| = 20 ,|A_3|= 21$ and $e_H(A_1,A_2)=67, e_H(A_1,A_3)=66$ and $e_H(A_2,A_3)=51$. We get that $H$ is non-regular and $(1,-1)$-structured for $\rho = 2$. Consider the butterfly given by $(L,c)$ (see the third graph in Figure~\ref{fig:butterflies}), where $L$ consists of the two triangles $L_1=\{u,v_1,w_1\}$ and $L_2=\{u,v_2,w_2\}$ with 
    $$c(uv_1)=c(uw_1)=-c(uv_2)=-c(uw_2)=-c(v_1w_1)=c(v_2w_2)=-1.$$ 
    We will show that $L$ is not a template for $H$. Note that by Lemma~\ref{template_example}, it is necessary that then, $H$ is $(-2,2)$-structured (or by normalizing $(1,-1)$-structured).
    Let $B$ be an arbitrary blowup of $(L,c)$. Note that given some $H$ constructed as described in \ref{example_graph}, any copy of $H$ in $B$ is either included in $V_{V(L_1)}$ or in $V_{V(L_2)}$. To see this, consider the placement of the three vertices $a_1,a_2,a_3$. As they form a triangle, they must be either on $V_{V(L_1)}$ or $V_{V(L_2)}$. Say they are on $V_{V(L_1)}$. But each vertex of $H$ forms a triangle with two of $a_1,a_2,a_3$ and must therefore also be on $V_{V(L_1)}$. Then, it is not hard to see that since $H$ is $(1,-1)$-structured,
    $L$ is not a template for $H$. We have that $\delta_0(H) = 0$, as $H$ is $(1,-1)$-structured with nonzero $\rho$ and also $1-1/\chi^*(H) = 1-41/92<4/7$ as in the previous example. We then get by the second case in Theorem~\ref{tripartite1} that $\delta^*(H) = 4/7$.
    \item 
    Let us now give an example of an $r$-partite, regular graph $H$ for some $r\geq 4$ which fulfills the $r$-wise $C_4$-condition for some $r\not\equiv_4 0, 1 $ and has $1-1/r>\max\{\delta_0(H),1-1/\chi^*(H)\}$. Note that Theorem~\ref{rpartite} shows that $\delta^*(H) = 1-1/r$ in that case. To find such a graph, the construction given in \eqref{example_graph} is not very helpful, as the only regular graph constructed in such a way is the complete, balanced $r$-partite graph. Thus, let us consider a different construction. For some integer $m$, let $H$ be the graph obtained from the complete $r$-partite graph with parts $A_1, \dots, A_r$ with sizes $|A_1| = (r-2)m+1$ and $|A_2|=|A_3|=\dots = |A_r=(r-2)m$, by removing a matching of size $m$ between every pair $A_i, A_j$ with $2 \le i < j \le r$ such that for any vertex not in $A_1,$ exactly one of its incident edges is removed.
    
    Counting the edges per vertex, it is not hard to see that for $v\in A_1$, we have that $d_H(v)= (r-1)(r-2)m$ and for $v\in A_i$ for $2\leq i\leq r$ we have that $v$ is connected to everything but the vertices in $A_i$ and one other vertex. It follows that these vertices are incident to $(r-2)^2m-1+(r-2)m+1=(r-1)(r-2)m$ edges. Therefore, $H$ is regular. Additionally, we have for $2\leq i<j\leq r$, $$e_H(A_i,A_j) = (r-2)^2m^2-m$$ and for all $2 \le i \le r$, $$e_H(A_1,A_i)=(r-2)m\cdot((r-2)m+1).$$ Then, it is not hard to see that since $r\geq 4$, $H$ has only one $r$-coloring (up to permutation of the labelling) and the $r$-wise $C_4$-condition holds for this coloring, but the $(r+1)$-wise $C_4$ condition does not. To see the latter, consider the natural $r$-coloring of $H$ and add an additional empty color class $A_0$. Then the $4$-cycle $A_0, A_1, A_2, A_3$ shows that $H$ violates the $(r+1)$-wise $C_4$-condition. The coloring $A_1, \dots, A_r$ shows that $\chi^*(H)<r$. Let us also prove that $\delta_0(H)<1-1/r$. To see this, consider a $b$-blowup $B$ of $(K_r,c)$ for some $b\in\mathbb{N}$ and $2$-edge-coloring $c$ of $K_r$. As $r\not\equiv_4 0,1$, $c(K_r)\neq 0$. It is not hard to see that there is at most one way (up to permutations of $A_2,A_3,\dots, A_r$) to find a perfect $H$-factor $F$ in $B$. Note that this $H$-factor uses the same amount of edges in every bipartite graph $B[V_v,V_u]$, where $u,v\in V(K_r)$. Therefore, $c(F)$ has the same sign as $c(K_r)$ and is non-zero. It follows that $\delta_0(H)<1-1/r$.
\end{itemize}
\bibliographystyle{abbrv}
\bibliography{sources.bib}

\begin{thebibliography}{10}

\bibitem{AlonFischer}
N.~Alon and E.~Fischer.
\newblock Refining the graph density condition for the existence of almost
  $k$-factors.
\newblock {\em Ars Combinatoria}, 52:296--308, 1999.

\bibitem{AlonYuster}
N.~Alon and R.~Yuster.
\newblock $h$-factors in dense graphs.
\newblock {\em Journal of Combinatorial Theory, Series B}, 66(2):269--282,
  1996.

\bibitem{Balogh2020HamiltonCycles}
J.~Balogh, B.~Csaba, Y.~Jing, and A.~Pluh{\'a}r.
\newblock On the discrepancies of graphs.
\newblock {\em arXiv preprint arXiv:2002.11793}, 2020.

\bibitem{balogh2021discrepancy}
J.~Balogh, B.~Csaba, A.~Pluh{\'a}r, and A.~Treglown.
\newblock A discrepancy version of the {H}ajnal--{S}zemer{\'e}di theorem.
\newblock {\em Combinatorics, Probability and Computing}, 30(3):444--459, 2021.

\bibitem{bradavc2021powers}
D.~Brada{\v{c}}.
\newblock Powers of hamilton cycles of high discrepancy are unavoidable.
\newblock In {\em Extended Abstracts EuroComb 2021}, pages 459--464. Springer,
  2021.

\bibitem{Diestel}
R.~Diestel.
\newblock {\em Graph Theory}.
\newblock Springer Publishing Company, Incorporated, 5th edition, 2017.

\bibitem{dujella2021number}
A.~Dujella.
\newblock {\em Number theory}.
\newblock {\v{S}}kolska knjiga Zagreb, 2021.

\bibitem{erdHos1995discrepancy}
P.~Erd{\H{o}}s, Z.~F{\"u}redi, M.~Loebl, and V.~T~S{\'o}s.
\newblock Discrepancy of trees.
\newblock {\em Studia Scientiarum Mathematicarum Hungarica}, 30(1-2):47--57,
  1995.

\bibitem{FHLT}
A.~Freschi, J.~Hyde, J.~Lada, and A.~Treglown.
\newblock A note on color-bias hamilton cycles in dense graphs.
\newblock {\em SIAM Journal on Discrete Mathematics}, 35(2):970--975, 2021.

\bibitem{GKM_hamilton}
L.~Gishboliner, M.~Krivelevich, and P.~Michaeli.
\newblock Color-biased hamilton cycles in random graphs.
\newblock {\em Random Structures \& Algorithms}, 60(3):289--307, 2022.

\bibitem{GKM_trees}
L.~Gishboliner, M.~Krivelevich, and P.~Michaeli.
\newblock Discrepancies of spanning trees and hamilton cycles.
\newblock {\em Journal of Combinatorial Theory, Series B}, 154:262--291, 2022.

\bibitem{hajnal1970proof}
A.~Hajnal and E.~Szemer{\'e}di.
\newblock Proof of a conjecture of {P}. {E}rdos.
\newblock {\em Combinatorial theory and its applications}, 2:601--623, 1970.

\bibitem{kirkpatrick1983complexity}
D.~G. Kirkpatrick and P.~Hell.
\newblock On the complexity of general graph factor problems.
\newblock {\em SIAM Journal on Computing}, 12(3):601--609, 1983.

\bibitem{KomlosCriticalChi}
J.~Komlos.
\newblock Tiling {T}ur{\'a}n theorems.
\newblock {\em Combinatorica}, 20:203--218, 2000.

\bibitem{KSS_AlonYuster}
J.~Koml{\'o}s, G.~S{\'a}rk{\"o}zy, and E.~Szemer{\'e}di.
\newblock Proof of the {A}lon--{Y}uster conjecture.
\newblock {\em Discrete Mathematics}, 235(1-3):255--269, 2001.

\bibitem{komlos1997blow}
J.~Koml{\'o}s, G.~N. S{\'a}rk{\"o}zy, and E.~Szemer{\'e}di.
\newblock Blow-up lemma.
\newblock {\em Combinatorica}, 17(1):109--123, 1997.

\bibitem{komlos1996szemeredi}
J.~Koml{\'o}s and M.~Simonovits.
\newblock {S}zemer{\'e}di's regularity lemma and its applications in graph
  theory.
\newblock 1996.

\bibitem{existence}
D.~K{\"u}hn and D.~Osthus.
\newblock The minimum degree threshold for perfect graph packings.
\newblock {\em Combinatorica}, 29(1):65--107, 2009.

\bibitem{Matousek}
J.~Matou{\v{s}}ek.
\newblock Geometric discrepancy: An illustrated guide.
\newblock 1999.

\bibitem{ShokoufandehZhao}
A.~Shokoufandeh and Y.~Zhao.
\newblock Proof of a tiling conjecture of {K}oml{\'o}s.
\newblock {\em Random Structures \& Algorithms}, 23(2):180--205, 2003.

\end{thebibliography}
\end{document}